\newcommand\addtag{\refstepcounter{equation}\tag{\theequation}}
\newtheorem{Theorem}{Theorem}[section]
\newtheorem{Proposition}[Theorem]{Proposition}
\newtheorem{Lemma}[Theorem]{Lemma}
\newtheorem{Claim}[Theorem]{Claim}
\newtheorem{Corollary}[Theorem]{Corollary}
\newtheorem{Defclaim}[Theorem]{Definition-Claim}
\newtheorem{Main Conjecture}[Theorem]{Main Conjecture}
\newtheorem{Definition}[Theorem]{Definition}
\theoremstyle{remark}
\newtheorem{Example}[Theorem]{Example}
\newcommand\iso{{\cong}}
\newcommand\olambda{{\overline\lambda}}
\newcommand\omu{{\overline\mu}}
\newcommand\onu{{\overline\nu}}
\newcommand\oalpha{{\overline\alpha}}
\newcommand\obeta{{\overline\beta}}
\newcommand\okappa{{\overline\kappa}}
\newcommand\otau{{\overline\tau}}
\newcommand{\excise}[1]{}%{$\star$\textsc{#1}$\star$}
\newcommand\ourflag{{Fl}_{1,n-1;n}}
\theoremstyle{plain}
\newtheorem{Fact}[Theorem]{Fact}
\newcommand\leftass{\sctableau{{\ }\\{\ }}\diamond(\oalpha\diamond\obeta)}
\newcommand\rightass{(\sctableau{{\ }\\{\ }}\diamond\oalpha)\diamond\obeta}
\newcommand\leftassB{\sctableau{{\ }\\{\ }}\star(\olambda\star\omu)}
\newcommand\rightassB{(\sctableau{{\ }\\{\ }}\star\olambda)\star\omu}
\newcommand\leftassbox{\tableau{{\ }}\diamond\Pi(\olambda\star\omu)}
\newcommand\leftassboxB{\tableau{{\ }}\star(\olambda\star\omu)}
\newcommand\rightassboxB{(\tableau{{\ }}\star\olambda)\star\omu}
\newcommand\leftassgenB{g^{\uparrow}\star(\olambda\star\omu)}
\newcommand\rightassgenB{(g^{\uparrow}\star\olambda)\star\omu}
\newcommand{\scellsize}{5}
\newlength{\scellsz} \setlength{\scellsz}{\scellsize\unitlength}
\newsavebox{\scell}
\sbox{\scell}{\begin{picture}(\scellsize,\scellsize)
\put(0,3){\line(1,0){\scellsize}}
\put(0,3){\line(0,1){\scellsize}}
\put(\scellsize,3){\line(0,1){\scellsize}}
\put(0,8){\line(1,0){\scellsize}}
\end{picture}}
\newcommand\scellify[1]{\def\thearg{#1}\def\nothing{}%
\ifx\thearg\nothing
\vrule width0pt height\scellsz depth0pt\else
\hbox to 0pt{\usebox{\scell} \hss}\fi%
\vbox to \scellsz{
\vss
\hbox to \scellsz{\hss$#1$\hss}
\vss}}
\newcommand\sctableau[1]{\vtop{\let\\\cr
\baselineskip -16000pt \lineskiplimit 16000pt \lineskip 0pt
\ialign{&\scellify{##}\cr#1\crcr}}}
\newcommand{\cellsize}{9}
\newlength{\cellsz} \setlength{\cellsz}{\cellsize\unitlength}
\newsavebox{\cell}
\sbox{\cell}{\begin{picture}(\cellsize,\cellsize)
\put(0,0){\line(1,0){\cellsize}}
\put(0,0){\line(0,1){\cellsize}}
\put(\cellsize,0){\line(0,1){\cellsize}}
\put(0,\cellsize){\line(1,0){\cellsize}}
\end{picture}}
\newcommand\cellify[1]{\def\thearg{#1}\def\nothing{}%
\ifx\thearg\nothing
\vrule width0pt height\cellsz depth0pt\else
\hbox to 0pt{\usebox{\cell} \hss}\fi%
\vbox to \cellsz{
\vss
\hbox to \cellsz{\hss$#1$\hss}
\vss}}
\newcommand\tableau[1]{\vtop{\let\\\cr
\baselineskip -16000pt \lineskiplimit 16000pt \lineskip 0pt
\ialign{&\cellify{##}\cr#1\crcr}}}
\newcommand{\kellsize}{9}
\newlength{\kellsz} \setlength{\kellsz}{\kellsize\unitlength}
\newsavebox{\kell}
\sbox{\kell}{\begin{picture}(\kellsize,\kellsize)
\put(0,0){\line(1,0){\kellsize}}
\put(0,0){\line(0,1){\kellsize}}
\put(\kellsize,0){\line(0,1){\kellsize}}
\put(0,\kellsize){\line(1,0){\kellsize}}
\end{picture}}
\newcommand\kellify[1]{\def\thearg{#1}\def\nothing{}%
\ifx\thearg\nothing
\vrule width0pt height\kellsz depth0pt\else
\hbox to 0pt{\usebox{\kell} \hss}\fi%
\vbox to \kellsz{
\vss
\hbox to \kellsz{\hss$#1$\hss}
\vss}}
\newcommand\ktableau[1]{\vtop{\let\\\cr
\baselineskip -16000pt \lineskiplimit 16000pt \lineskip 0pt
\ialign{&\kellify{##}\cr#1\crcr}}}
\newcommand{\sellsize}{5}
\newlength{\sellsz} \setlength{\sellsz}{\sellsize\unitlength}
\newsavebox{\sell}
\sbox{\sell}{\begin{picture}(\sellsize,7)
\put(0,0){\line(1,0){\sellsize}}
\put(0,0){\line(0,1){\sellsize}}
\put(\sellsize,0){\line(0,1){\sellsize}}
\put(0,\sellsize){\line(1,0){\sellsize}}
\end{picture}}
\newcommand\sellify[1]{\def\thearg{#1}\def\nothing{}%
\ifx\thearg\nothing
\vrule width0pt height\sellsz depth0pt\else
\hbox to 0pt{\usebox{\sell} \hss}\fi%
\vbox to \sellsz{
\vss
\hbox to \sellsz{\hss$#1$\hss}
\vss}}
\newcommand\stableau[1]{\vtop{\let\\\cr
\baselineskip -16000pt \lineskiplimit 16000pt \lineskip 0pt
\ialign{&\sellify{##}\cr#1\crcr}}}
\begin{document}
\pagestyle{plain}
\title{Root-theoretic Young diagrams and Schubert calculus:\\ 
planarity and the adjoint varieties}
\author{Dominic Searles}
%\address{Department of Mathematics\\
%University of Illinois at Urbana-Champaign\\
%Urbana, IL 61801}
%\email{searles2@illinois.edu}

\author{Alexander Yong}
\address{Dept. of Mathematics\\
University of Illinois at Urbana-Champaign\\
Urbana, IL 61801}
\email{searles2@uiuc.edu, ayong@uiuc.edu}
\subjclass[2000]{14M15, 14N15}
\keywords{Root-theoretic Young diagrams, Adjoint varieties, Schubert calculus}

\date{August 29, 2013}

\maketitle

\begin{abstract}
We study root-theoretic Young diagrams 
to investigate the existence of a Lie-type uniform and nonnegative 
combinatorial rule for Schubert calculus. 

We provide formulas for (co)adjoint varieties of classical Lie type. This is a simplest case after the (co)minuscule family (where a rule has been proved
by H.~Thomas and the second author using work of R.~Proctor).
Our results build on earlier Pieri-type rules of P.~Pragacz-J.~Ratajski and of A.~Buch-A.~Kresch-H.~Tamvakis.
Specifically, our formula for $OG(2,2n)$ is the first complete rule for a case where diagrams are non-planar. 
Yet the formulas possess both uniform and non-uniform features. 

Using these classical type rules, as well as results of P.-E.~Chaput-N.~Perrin in the exceptional types,
we suggest a connection between polytopality of the set of nonzero Schubert structure constants
and planarity of the diagrams. This is an addition to work of A.~Klyachko and A.~Knutson-T.~Tao on the Grassmannian and of K.~Purbhoo-F.~Sottile on cominuscule varieties, where the diagrams are always planar.
\end{abstract}

\tableofcontents
\section{Introduction}

\subsection{Overview} Consider the following problem:
\begin{quote}
Does there exist a \emph{root-system uniform} and \emph{manifestly nonnegative} combinatorial rule for Schubert calculus?
\end{quote}

Let $G$ be a complex reductive Lie group.
Fix a choice $B$ of a Borel subgroup and maximal torus $T$, and
let $W$ be its Weyl group: $W\cong N_G(T)/T$.
Write $\Phi = \Phi^{+}\cup\Phi^{-}$ to be the partition
of roots into positives and negatives,
and let $\Delta$ be the base of simple roots. 
Let $\Omega_G = (\Phi^+,\prec)$ denote the canonical poset structure on $\Phi^+$. 
Suppose $\Delta_P=\{\beta(P)_1,\ldots, \beta(P)_k\}\subseteq \Delta$ identifies
the parabolic subgroup $P$, and set $W_P:=W_{\Delta_P}$ as the associated parabolic subgroup of $W$. Consider the subposet
\[\Lambda_{G/P}=\{\alpha\in \Phi^{+}: \mbox{$\beta_i(P)\prec \alpha$ for some $i$}\}\subseteq \Omega_G.\]
The
{\bf Schubert varieties} in $G/P$ are $X_{wW_P}={\overline{B_{-}w P/P}}$
where $wW_P\in W/W_P$. Suppose $w$ is the minimal length coset
representative of $wW_P$; $w$'s inversion set
$\olambda$ sits inside $\Lambda_{G/P}$.
Let us write $X_{\olambda}:=X_{wW_P}$.
For short, call $\olambda$ a {\bf root-theoretic Young diagram} (RYD).
Let ${\mathbb Y}_{G/P}$ be the set of RYDs for $G/P$.

The cohomology ring $H^{\star}(G/P)$ has a ${\mathbb Z}$-additive basis of
{\bf Schubert classes} $\sigma_{\olambda}$. Let $C_{\olambda,\omu}^{\onu}(G/P)$ denote the Schubert structure constants for $G/P$, i.e.,
    \[\sigma_{\olambda}\cdot\sigma_{\omu}=\sum_{\onu} C_{\olambda,\omu}^{\onu}(G/P)\sigma_{\onu}.\]
    When $G/P$ is the Grassmannian $Gr_k({\mathbb C}^n)$,  $C_{\olambda,\omu}^{\onu}:=C_{\olambda,\omu}^{\onu}(Gr_k({\mathbb C}^n))$
    is computed by the \emph{Littlewood-Richardson rule}. 

Ideally, there is a generalization to compute 
$C_{\olambda,\omu}^{\onu}(G/P)$ in a cancellation-free fashion, only using the associated root datum 
(cancellative formulas are known, see, e.g.,
\cite{Knutson:algorithm}). 

Actually, often
the main question is phrased presuming the existence of a rule. 
However, in that case, what is the qualitative nature of such a putative rule? 
Is it too much to expect a counting rule like the \emph{Littelmann path model}? 
Perhaps it makes more sense to search for a patchwork of counting rules and 
nonnegative recursions through different $G/P$'s for varying $G$'s. How can one classify special cases? Why are some special cases of the problem seemingly harder than others? Finally, if one believes that such a rule does \emph{not} exist, what are
concrete and/or falsifiable reasons for that belief?

The main thesis of this paper is that 
RYDs provide a simple but uniform combinatorial perspective to discuss such questions mathematically,
make precise comparisons, and to measure progress towards a rule (uniform, counting, patchwork, or otherwise). 

For example, from this perspective, Grassmannians are special because they sit in the family of $G/P$'s 
for which the above root-system setup is especially graphical:
    \begin{itemize}
    \item[(I)] $\Lambda_{G/P}$ is a planar poset;
\medskip
    \item[(II)] the RYDs are
    lower order ideals (and in fact classical Young diagrams, thus explaining the nomenclature);
\medskip
    \item[(III)] Bruhat order is containment of RYDs.
    \end{itemize}
    These properties also hold for all cominuscule $G/P$'s. Using work of R.~Proctor \cite{Proctor}, they help demonstrate a uniform rule for (co)minuscule Schubert calculus \cite{Thomas.Yong:comin}.

At present, using RYDs is the only known way to solve the problem for (co)minuscule $G/P$. 
Conversely, it is only for (co)minuscule $G/P$'s that there is a uniform rule. It is therefore sensible to 
use RYDs to study other families. 

It seems to us that the key next case is the family of (co)adjoint $G/P$'s. One reason is that 
this family extends the (co)minuscule $G/P$'s, see, e.g., \cite{Lak} and
Section~2.1. However, our main point is that in terms of RYDs, none of the properties (I), (II) or (III) hold in general
for (co)adjoint varieties. Equally important to us is that the
failures of these properties are quantifiably mild (see Fact~\ref{PropAdjointfacts} below). 

Note that use of RYDs, even for (co)adjoint varieties, is not mandatory: there is a different way to index their Schubert varieties, see
\cite{Chaput.Perrin}. For isotropic Grassmannians of classical type, \cite{Prag, PragD} uses another way, and \cite{BKT:Inventiones} yet another.

We obtain positive Schubert calculus rules in the classical (co)adjoint types; this is the principal new evidence we have for the thesis beyond what already fits from the literature. These rules 
have significant, but far from complete, uniformity. 
Additional complexity of $OG(2,2n)$ comes from the nonplanarity of $\Lambda_{OG(2,2n)}$.
To our best knowledge, we give the first complete formula for any $G/P$ with 
nonplanarity --- and what we find is
that it has patchwork features for which we have no broad explanation. Indeed, it separates out the cases covered by the Pieri rule of \cite{BKT:Inventiones}. Perhaps
surprisingly, it is these ``Pieri cases'' that bring unappetizing complications to our rule. Also, our rule depends on the parity of $n$. This is  
traceable to the fact that $\Lambda_{{\mathbb Q}^{2n-4}}$ is a 
subposet of $\Lambda_{OG(2,2n)}$
and that the even-dimensional quadric 
${\mathbb Q}^{2n-4}$ has this dependency as well \cite{Thomas.Yong:comin}.

We think it is plausible that the patchwork features of our rule for $OG(2,2n)$ are essentially unavoidable if maintaining uniformity 
with the other (co)adjoint and (co)minuscule
varieties. That is, we would infer our results present a specific challenge  to
the existence of a root-system uniform counting rule.

Now, there are a number of reasons to doubt this interpretation.
First, in \cite{Chaput.Perrin:Kac}, RYDs are used to generalize \cite{Thomas.Yong:comin}. Their extension uniformly covers a subset of the Schubert problems in each of the (co)adjoint varieties --- but precisely those that are ``cominuscule-like''. Second, the ``flattening trick'' used for the $OG(2,2n)$
problem is non-uniform. However, this step is what allows us to make comparisons with the other (co)adjoint formulas. Third,
there are alternative and powerful models such as \emph{Chains in Bruhat order}, see, e.g., \cite{Bergeron.Sottile:duke}, 
\emph{Puzzles} \cite{Knutson.Tao} and \emph{Mondrian tableaux} \cite{Coskun} among others. However, we reiterate that other approaches are not known yet to (uniformly) resolve the (co)minuscule case, which we think is the simpler problem. 

Some additional support for the main thesis comes from analysis of 
another Schubert calculus problem:

\begin{quote}
What is the set $S^{\tt nonzero}(G/P)$ of $(\olambda,\omu,\onu)\in ({\mathbb Y}_{G/P})^3$ where $C_{\olambda,\omu}^{\onu}(G/P)\neq 0$?
\end{quote}

There is a celebrated \emph{polytopal} answer for $Gr_k({\mathbb C}^n)$. More specifically,
identifying a Young diagram $\olambda$ with its partition $(\olambda_1\geq \olambda_2\geq \ldots \geq\olambda_k)\in {\mathbb Z}^k$, $S^{\tt nonzero}({Gr}_k({\mathbb C}^n))$ can be viewed as the lattice points of the \emph{Horn polytope}
in ${\mathbb Z}^{3k}$. This result was first established
by the combined work of A.~Klyachko \cite{Klyachko} and of
A.~Knutson-T.~Tao \cite{Knutson.Tao:sat}; see also W.~Fulton's survey 
\cite{Fultona} for a historical discussion. 
More recently, K.~Purbhoo-F.~Sottile \cite{Purbhoo.Sottile} established similar descriptions for cominuscule $G/P$ using RYDs. Also, RYDs are used to study the nonzeroness problem in K.~Purbhoo's \cite{Purbhoo}. Both of the latter two papers also provide pre-existing evidence for the thesis.

It is natural to ask when one can expect a polytopal answer to the above question. Indeed, in the Introduction of \cite{Purbhoo.Sottile} the authors 
write ``\emph{We use that $G/P$ is cominuscule in many essential ways in our arguments, which suggests that
cominuscule flag varieties are the natural largest class of flag varieties for which these tangent space methods can be
used to study the non-vanishing of [generic Schubert intersections].}'' For comparison, 
we offer a partial answer. Using specific drawings of $\Lambda_{G/P}$ we associate, in
a type by type manner, a vector of row lengths to each RYD in ${\mathbb Y}_{G/P}$. We will call these descriptions \emph{partition-like}
since they mimic the partition description of Young diagrams used to formulate the Horn polytope. 
Our most general finding is:

\begin{Theorem}
\label{thm:iff}
For adjoint $G/P$, there is a polytopal description of $S^{\tt nonzero}(G/P)$ using the partition-like description of RYDs if and only if $\Lambda_{G/P}$ is planar. 

For coadjoint $G/P$, let $G/Q$ be the adjoint partner. Then 
\[C_{\olambda,\omu}^{\onu}(G/P)=m(G)^{{\rm sh}(\olambda)+{\rm sh}(\omu)-{\rm sh}(\onu)}
C_{\olambda,\omu}^{\onu}(G/Q).\]
\end{Theorem}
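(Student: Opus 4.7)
The plan is to treat the two assertions separately, since they require rather different arguments despite appearing in the same statement. For the polytopality equivalence, I would proceed type-by-type through the (co)adjoint $G/P$, using the classical-type rules established earlier in this paper together with the Chaput--Perrin rules in the exceptional cases. In every planar case, I would use the specific drawing of $\Lambda_{G/P}$ to encode each RYD as a row-length vector, and then read off the defining inequalities of $S^{\tt nonzero}(G/P)$ directly from the combinatorial rule. The point is that when $\Lambda_{G/P}$ is planar, the RYDs really are (possibly several shifted copies of) classical Young diagrams, so the partition-like encoding is a bijection, and the nonvanishing condition carved out by the rule is a finite conjunction of linear inequalities on $(\olambda,\omu,\onu)$ — yielding a polytope in direct analogy with the Horn polytope. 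Exceptional planar cases can be checked by direct inspection from the tables in \cite{Chaput.Perrin}.

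For the ``only if'' direction, the single obstruction is $OG(2,2n)$, and here I would use the rule from earlier in the paper to construct an explicit counterexample. The idea is that the non-planarity of $\Lambda_{OG(2,2n)}$ forces the partition-like encoding of an RYD to be ambiguous, and the patchwork features of our rule — in particular the separation of ``Pieri cases'' and the dependence on the parity of $n$ — imply that vanishing of $C_{\olambda,\omu}^{\onu}$ depends on finer data than the row-length vectors. Concretely, I would exhibit a pair of triples of RYDs with identical partition-like data but with one structure constant zero and the other nonzero; this rules out any polytopal characterization in this encoding. The main obstacle of the proof is precisely extracting such a pair cleanly from the rule, which requires careful bookkeeping against the non-uniform Pieri cases.

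For the second assertion, the simply laced case is vacuous since $m(G)=1$ and the adjoint variety equals its coadjoint partner. In the non-simply-laced classical types, the plan is to compare the explicit rules for the coadjoint $G/P$ and adjoint $G/Q$ derived earlier in the paper. The RYDs in ${\mathbb Y}_{G/P}$ and ${\mathbb Y}_{G/Q}$ are naturally identified via the root-theoretic setup; one can then verify that each structure constant on the coadjoint side agrees with the corresponding constant on the adjoint side up to a factor $m(G)^{d}$, where the exponent $d$ is precisely ${\rm sh}(\olambda)+{\rm sh}(\omu)-{\rm sh}(\onu)$, with ${\rm sh}$ counting the appropriate long-root (or short-root) shape statistic. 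The exceptional types $F_4$ and $G_2$ are finite and small enough to verify by direct computation using the Chaput--Perrin formulas. The conceptual reason behind this clean scaling is that the difference between adjoint and coadjoint in a given Lie type amounts only to swapping the role of long and short roots at the marked node, and this swap contributes uniformly via $m(G)$ on each box of the diagram, which is the content that $\rm sh$ is designed to record.
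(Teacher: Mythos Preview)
Your overall plan for the planar direction and for the short-roots correspondence matches the paper's: type-by-type verification, reading off linear inequalities from the classical rules (Corollaries~\ref{cor:Anonzero}, \ref{cor:BCnonzero}, \ref{cor:Dnonzero} and the analogous facts for $F_4/P_4$, $G_2/P_2$, $C_n/P_1$), and direct computation from \cite{Chaput.Perrin, Chaput.Perrin:computations} in the exceptional types. That part is fine.

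The ``only if'' direction, however, has two genuine gaps.

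\textbf{You are missing the exceptional non-planar cases.} It is not true that $OG(2,2n)$ is the single obstruction. The adjoint varieties $E_6/P_2$, $E_7/P_7$ and $E_8/P_8$ also have non-planar $\Lambda_{G/P}$ (visible already in the pictures: each has two layers joined along an edge), and each requires its own counterexample to polytopality. The paper handles these by computer search against the Chaput--Perrin data, producing in each type explicit collinear (or convex-combination) triples that alternate between $S^{\tt nonzero}$ and $S^{\tt zero}$. Nothing in your plan covers these, and the classical $OG(2,2n)$ argument does not transfer.

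\textbf{Your mechanism for $OG(2,2n)$ does not work.} The partition-like description actually used for $OG(2,2n)$ records the full vector $(\lambda^{(1)}_1,\lambda^{(1)}_2,\lambda^{(2)}_1,\lambda^{(2)}_2,\bullet/\circ)\in\mathbb{Z}^5$; this is injective on ${\mathbb Y}_{OG(2,2n)}$, so there is no pair of RYD triples ``with identical partition-like data but different nonzeroness''. You seem to be conflating this encoding with the flattening map $\Pi$ used internally in the rule. The obstruction to polytopality is not ambiguity of the encoding but failure of convexity of the image set. The paper produces, for $n\ge 5$, four collinear points in $\mathbb{Z}^{15}$ (with $\olambda=\omu=(n-2,0,0,0,0)$ and four specific $\onu$) that alternate between $S^{\tt nonzero}$ and $S^{\tt zero}$, with a separate small check for $n=4$. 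That is the argument you need to supply.
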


Here ${\rm sh}(\olambda)$ is the number of short roots of $\olambda$
and $m(G)$ is the maximum multiplicity of an edge (hence, e.g., $m(G_2)=3$ and $m(F_4)=2$) of the Dynkin diagram of $G$.
For $G$ simply-laced, $G/P=G/Q$ and $m(G)=1$. This uniformly extends the shortroots
correspondence from \cite{Thomas.Yong:comin} in terms of the Cartan classification. (One can index
the Schubert varieties of coadjoint $G/P$ using the RYDs for the adjoint partner $G/Q$.)

We emphasize that Theorem~\ref{thm:iff} does 
not rule out possible polytopal solutions that begin with a different, natural vectorial description of RYDs. Does such a description exist?
Example~\ref{exa:D5badness} for $OG(2,10)$ encapsulates our doubts. In any case, our point is that Theorem~\ref{thm:iff} again indicates a planar/non-planar 
dichotomy in combinatorial Schubert calculus. Theorem~\ref{thm:iff} complements the results of
\cite{Knutson.Tao:sat, Purbhoo.Sottile} on the 
nonzeroness problem within the family of $G/P$'s for quasi-(co)minuscule $P$ (see Section~2.1).

For the classical types of Theorem~\ref{thm:iff}, we obtain a description of $S^{\tt nonzero}(G/P)$ directly from our formulas.  For the non-planar cases we find a ``zero triple'' $(\olambda,\omu,\onu)$ that is a convex
combination of some ``nonzero triples''.
In the exceptional types, we use explicit calculation using \cite{Chaput.Perrin, Chaput.Perrin:computations}. The computations are done by computer, but the ones needed for the proof are actually small enough to be (onerously) checked by hand.

Another piece of evidence for the value of RYDs comes from a new rule for the $GL_n$ Belkale-Kumar coefficients \cite{Searles:13+} (after A.~Knutson-K.~Purbhoo \cite{Knutson.Purbhoo}). A comparison of RYDs to the indexing system of \cite{BKT:Inventiones} is also given in \emph{loc.~cit.} (We also mention that RYDs can also be applied to the study of Kazhdan-Lusztig polynomials \cite{Woo.Yong:KL}.)

\subsection{Definition of (co)adjoint varieties}
The following is standard. Fix a representation $\rho:G\to GL(V)$ for some finite dimensional
complex vector space $V$. The group $G$ acts on ${\mathbb P}(V)$
through the projection $\pi:V\setminus\{0\}\to {\mathbb P}(V)$.
If $\vec v$ is a highest weight vector of $\rho$, then
\[\pi(G\cdot v)\subseteq {\mathbb P}(V)\] is a homogeneous projective variety, see, e.g., \cite[Section~23.3]{Fulton.Harris}. This variety is
{\bf adjoint} if $\rho$ is the adjoint representation of $G$. Adjoint varieties have a root-system theoretic classification, see, e.g., \cite{Chaput.Perrin} and the references therein as well as Table~1 in Section~2. A variety is {\bf coadjoint} if it is adjoint for the dual root system.
%  On Freudentals geometry and generalized adjoint varieties (Omoda)

       The highest root of $\Lambda_{G/P}$ is the {\bf adjoint root}. If ${\overline\lambda}$ uses it we say $\olambda$ is {\bf on} and we write $\olambda=\langle\lambda|\bullet\rangle$; otherwise we say $\olambda$ is {\bf off} and we write $\olambda=\langle\lambda|\circ\rangle$, where $\lambda$ comprises the roots of
$\Lambda_{G/P}\setminus\{\mbox{adjoint root}\}$ used by $\olambda$. Let $\prec_{\rm Bruhat}$ be the
order on ${\mathbb Y}_{G/P}$ defined by the closure order on Schubert cells.
We recall some facts; cf., \cite[Section~2]{Chaput.Perrin} and the references therein.

\begin{Fact}\label{PropAdjointfacts}
If $G/P$ is adjoint then:
\begin{itemize}
\item[(i)] $|\Lambda_{G/P}|$ is odd
\item[(ii)] If $\olambda=\langle\lambda|\circ\rangle$ then $|\olambda|\leq\frac{|\Lambda_{G/P}|-1}{2}$
\item[(iii)] If $\olambda=\langle\lambda|\bullet\rangle$ then $|\olambda|\geq \frac{|\Lambda_{G/P}|+1}{2}$
\item[(iv)] $\lambda$ is a lower order ideal in the poset $\Lambda_{G/P}\setminus\{\mbox{adjoint root}\}$
\item[(v)] $\langle\lambda|\circ\rangle\prec_{\rm  Bruhat} \langle \mu|\circ\rangle$ and 
$\langle\lambda|\bullet\rangle\prec_{\rm Bruhat} \langle \mu|\bullet\rangle$
if and only if $\lambda\subseteq\mu$
\item[(vi)] $\langle\lambda|\circ\rangle\prec_{\rm Bruhat} \langle \mu|\bullet\rangle$ if and only if $|\lambda
\setminus \mu| \le 1$
\end{itemize}
\end{Fact}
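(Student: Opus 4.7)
The plan centers on a single structural observation about adjoint $G/P$: the map $\iota:\alpha\mapsto\theta-\alpha$ is a free involution on $\Lambda_{G/P}\setminus\{\theta\}$, where $\theta$ is the adjoint root. I would verify this by checking three things: $\theta-\alpha$ is a positive root (since $\theta$ is the unique maximal root and $\alpha\prec\theta$); $\theta-\alpha$ lies in $\Lambda_{G/P}$ (by comparing coefficients at the defining simple root $\beta(P)$ via the classification in Table~1); and $\iota(\alpha)\neq\alpha$ since $\theta/2$ is never a root. Part (i) is then immediate, as $\Lambda_{G/P}\setminus\{\theta\}$ partitions into $\iota$-pairs and so has even cardinality.

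For (ii) and (iii), I identify $\olambda$ with the inversion set $I(w)=\{\alpha\in\Phi^+: w^{-1}(\alpha)\in\Phi^-\}$ of its minimal coset representative $w\in W^P$, noting $I(w)\subseteq\Lambda_{G/P}$. The linear relation
\[w^{-1}(\alpha)+w^{-1}(\theta-\alpha)=w^{-1}(\theta)\]
is the engine: if $\theta\notin I(w)$ (the off case) then at most one of $\alpha,\theta-\alpha$ lies in $I(w)$ for each $\iota$-pair, giving $|\olambda|\leq(|\Lambda_{G/P}|-1)/2$, hence (ii); if $\theta\in I(w)$ (the on case) then at least one of $\alpha,\theta-\alpha$ lies in $I(w)$ for each pair, giving $|\lambda|\geq(|\Lambda_{G/P}|-1)/2$ and hence (iii).

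For (iv), I would argue that any reduced expression for the minimal coset representative $w$ of $\langle\lambda|\bullet\rangle$ can be arranged with a single occurrence of $s_\theta$ in a canonical position, and that deleting it produces the minimal representative of $\langle\lambda|\circ\rangle$. This decouples the $\theta$-contribution from the rest of the inversion set, after which the lower-order-ideal property for $\lambda$ in $\Lambda_{G/P}\setminus\{\theta\}$ follows from the fact that the latter poset is a union of (co)minuscule-like pieces for the Levi of $P$, where RYDs are lower order ideals by properties (I)--(III). Equivalently, one may verify case-by-case via Table~1 that $\Lambda_{G/P}\setminus\{\theta\}$ is the poset of a quasi-(co)minuscule quotient of a smaller group.

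Parts (v) and (vi) then fall out from this reduced-word picture. In the same-parity comparison (v), the canonical $s_\theta$-factor (present or absent) plays no role, so Bruhat order on $W^P$ restricts to the cominuscule-like piece and becomes containment $\lambda\subseteq\mu$. In the crossing comparison (vi), inserting $s_\theta$ into the reduced word for $\langle\lambda|\circ\rangle$ either appends a factor (giving $\mu=\lambda$, so $|\lambda\setminus\mu|=0$) or braids past and replaces one generator (giving $|\lambda\setminus\mu|=1$); both options are captured by the stated bound, and conversely any Bruhat step between an off and on RYD is realized this way. The main obstacle is part (iv): the involution handles (i)--(iii) uniformly and cleanly, and (v), (vi) are essentially formal once the reduced-word decoupling is available, but establishing the cominuscule-like structure of $\Lambda_{G/P}\setminus\{\theta\}$ (or the decoupling uniformly) is exactly what distinguishes adjoint $G/P$ as the first nontrivial case beyond cominuscule, and is the content that appeals to \cite{Chaput.Perrin}.
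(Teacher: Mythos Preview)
The paper does not prove this Fact; it is stated as background with a citation to \cite[Section~2]{Chaput.Perrin}. So there is no in-paper argument to compare against, and your proposal should be judged on its own.

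Your treatment of (i)--(iii) via the involution $\iota:\alpha\mapsto\theta-\alpha$ on $\Lambda_{G/P}\setminus\{\theta\}$ is correct and is the standard argument. The underlying reason $\theta-\alpha$ is again a root in $\Lambda_{G/P}$ is that for adjoint $P$ the nilradical $\mathfrak n_P$ is a Heisenberg Lie algebra with one-dimensional center $\mathfrak g_\theta$, so the Lie bracket gives a nondegenerate pairing $\mathfrak g_\alpha\otimes\mathfrak g_{\theta-\alpha}\to\mathfrak g_\theta$; this is more robust than appealing to Table~1. The inversion-set inequality you use for (ii) and (iii), namely that a sum of two positive (resp.\ negative) roots which is itself a root is again positive (resp.\ negative), is exactly right.

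Your plan for (iv)--(vi) has a genuine gap. You propose to ``arrange a reduced expression with a single occurrence of $s_\theta$'' and then delete it. But $s_\theta$ is the reflection in the \emph{highest} root, not a simple reflection, so it is never a letter in a reduced word. If you meant the simple reflection $s_{\beta(P)}$, the decoupling still fails: in type $B_n$ (and $D_n$, $E_n$, $F_4$) the coefficient of $\beta(P)$ in $\theta$ is $2$, so reduced words of elements of $W^P$ may use $s_{\beta(P)}$ more than once, and there is no single deletion that toggles on/off. More fundamentally, your claim that deleting produces ``the minimal representative of $\langle\lambda|\circ\rangle$'' cannot hold as stated: by (ii) and (iii) the shape $\langle\lambda|\circ\rangle$ exists only when $|\lambda|\le(|\Lambda_{G/P}|-1)/2$, whereas $\langle\lambda|\bullet\rangle$ forces $|\lambda|\ge(|\Lambda_{G/P}|-1)/2$, so the same $\lambda$ supports both only at the middle rank. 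Consequently your derivations of (v) and (vi), which rest on this decoupling, are not justified. The arguments in Chaput--Perrin instead go through a direct combinatorial description of $W^P$ and its weak and strong Bruhat orders in the quasi-minuscule case; for (iv) one can alternatively use biconvexity of inversion sets together with the explicit cover relations in $\Lambda_{G/P}\setminus\{\theta\}$, but either route requires real work beyond what you have sketched.
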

% Classical types: iv, v checked by Dominic Aug 25, 2013 (uses easy fact that
% \lambda<\mu (cover) iff [\lambda]*[box] includes [\mu], which is true in the cases
% at hand.
% F4: v, vi checked with F4P4forM2final.txt (BruhatcheckF4) on Aug 23, 2013
% E6: v, vi checked with E6P2forM2.txt (BruhatcheckE6) on Aug 23, 2013
% E7: v, vi checked with E7P1forM2.txt (BruhatcheckE7) on Aug 23, 2013
% E8: v, vi checked with E8P8forM2.txt (BruhatcheckE8) on Aug 23, 2013
Points (iv)--(vi) explain in what sense the failures of (II) and (III) 
are  ``mild''. 

In the  classical Lie types, there are three (co)adjoint varieties of main interest.
Those are what we discuss in the technical core of our results below.

\subsection{Definition of ${\mathbb A}_{\olambda,\omu}$; main theorem for odd orthogonal Grassmannians $OG(2,2n+1)$ and Lagrangian Grassmannians $LG(2,2n)$}
For the Lie type $B_n$, the adjoint variety $G/P=OG(2,2n+1)$ is the space of isotropic $2$-planes
with respect to a non-degenerate symmetric bilinear form on ${\mathbb C}^{2n+1}$. It has
dimension $|\Lambda_{G/P}|=4n-5$. 

\excise{
\begin{figure}[h]
\begin{center}
\epsfig{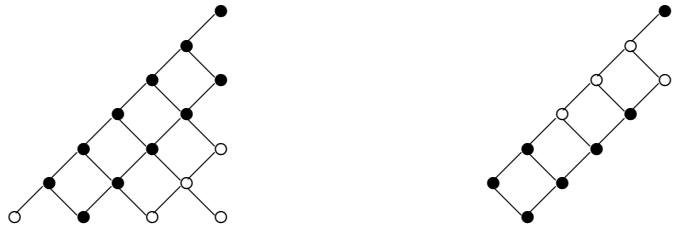}
\end{center}
\caption{%\label{fig:B5adj} 
$\Lambda_{OG(2,2n+1)}$,
$\Omega_{SO_{2n+1}}$ and a shape (for $n=5$)}
\end{figure}
}

\[\begin{picture}(300,105)
\multiput(24.5,29.5)(13,13){6}{\line(1,1){10}}
\multiput(50.5,29.5)(13,13){4}{\line(1,1){10}}
\multiput(76.5,29.5)(13,13){2}{\line(1,1){10}}

\put(21,25){$\circ$}
\multiput(34,38)(13,13){6}{$\bullet$}
\multiput(47,25)(13,13){5}{$\bullet$}
\multiput(73,25)(13,13){3}{$\circ$}
\put(99,25){$\circ$}

\multiput(89.5,91)(13,-13){1}{\line(1,-1){10}}
\multiput(76.5,78)(13,-13){2}{\line(1,-1){10}}
\multiput(63.5,65)(13,-13){3}{\line(1,-1){10}}
\multiput(50.5,52)(13,-13){2}{\line(1,-1){10}}
\multiput(37.5,39)(13,-13){1}{\line(1,-1){10}}

\multiput(202,38)(13,13){2}{$\bullet$}
\multiput(228,64)(13,13){3}{$\circ$}
\multiput(267,103)(13,13){1}{$\bullet$}

\multiput(215,25)(13,13){4}{$\bullet$}
\put(267,77){$\circ$}

\multiput(205.5,42.5)(13,13){5}{\line(1,1){10}}

\multiput(218.5,29.5)(13,13){4}{\line(1,1){10}}

\multiput(205.5,39)(13,-13){1}{\line(1,-1){10}}
\multiput(218.5,52)(13,-13){1}{\line(1,-1){10}}
\multiput(231.5,65)(13,-13){1}{\line(1,-1){10}}
\multiput(244.5,78)(13,-13){1}{\line(1,-1){10}}
\multiput(257.5,91)(13,-13){1}{\line(1,-1){10}}

\put(40,0){$\Lambda_{OG(2,2n+1)}$, $\Omega_{SO_{2n+1}}$ and a shape (for $n=4$)}
\end{picture}
\]

The coadjoint partner to $OG(2,2n+1)$ in the $C_n$ root system is the variety $LG(2,2n)$ of isotropic $2$-planes
with respect to a non-degenerate skew-symmetric bilinear form on ${\mathbb C}^{2n}$.  As with all cases, it makes sense to
index the Schubert varieties for the coadjoint variety with RYDs for its adjoint partner. This is analogous to \cite{Thomas.Yong:comin}. We denote the shapes $\olambda$ by $\langle \lambda|{\bullet/}{\circ\rangle}$, where $\lambda$ is a partition in
$2\times \left(\frac{|\Lambda_{G/P}|-1}{2}\right)$.

We will need a reusable definition. For any $\nu=(\nu_1,\nu_2)\in {\mathbb Z}^2$ let $\nu^{\star}=(\nu_1-1,\nu_2)$ and $\nu_{\star}=(\nu_1,\nu_2-1)$.
Fix $\olambda$ and $\omu$ and define   
\[{\mathbb{A}}_{\olambda,\omu}(\nu)=
\begin{cases}
0 & \mbox{if $\olambda$ and $\omu$ are on}\\
\sigma_{\langle \nu|\bullet\rangle} & \mbox{if exactly one of $\olambda$ or $\omu$ is on}\\
\sigma_{\langle \nu|\circ\rangle} & \mbox{if $|\olambda|+|\omu| \leq \frac{|\Lambda_{G/P}|-1}{2}$} \\
\sigma_{\langle \nu^{\star}|\bullet\rangle}+\sigma_{\langle \nu_{\star}|\bullet\rangle} & \mbox{otherwise}.
\end{cases}\]

In the ``otherwise'' case of the definition of ${\mathbb{A}}_{\olambda,\omu}(\nu)$ a nonadjoint root from $\nu$ 
has ``jumped'' to become the adjoint root. Understanding how this occurs in each
type is key in the (co)adjoint cases. This reflects the
additional complexity coming from the failure of (II). 

Say $\sigma_{\langle \nu|\bullet/\circ\rangle}$, $\sigma_{\langle \nu^{\star}|\bullet\rangle}$ or $\sigma_{\langle\nu_{\star}|\bullet\rangle}$ is zero if $\nu$, $\nu^{\star}$ or
$\nu_{\star}$ is not a partition in $2\times \left(\frac{|\Lambda_{G/P}|-1}{2}\right)$. Define ${\rm sh}(\onu)$ to be the number of short roots used by $\onu$. The short roots of $\Lambda_{OG(2,2n+1)}$ consist of the middle pair of the nonadjoint roots. 

\begin{Theorem}\label{Thm:BCintro}
\[\sigma_{\olambda}\cdot \sigma_{\omu}=\sum_{\nu\subseteq \left(\frac{|\Lambda_{G/P}|+1}{2},\frac{|\Lambda_{G/P}|-1}{2}\right)}C_{\lambda,\mu}^{\nu} {\mathbb{A}}_{\olambda,\omu}(\nu)\in H^{\star}(LG(2,2n)).\]
In $H^{\star}(OG(2,2n+1))$, multiply each coefficient by $2^{{\rm sh}(\onu)-{\rm sh}(\olambda)-{\rm sh}(\omu)}$; the result
is provably integral.
\end{Theorem}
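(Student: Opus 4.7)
My plan is to establish the formula in $H^{\star}(LG(2,2n))$ first, then deduce the $OG(2,2n+1)$ version via the short-roots correspondence.

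First, I would set up a Pieri reduction. Since $H^{\star}(LG(2,2n))$ is generated as a $\mathbb{Z}$-algebra by the special (single-box) Schubert classes, it suffices to verify the asserted identity when $\omu$ is a single box and then propagate to arbitrary $\omu$ via associativity. When $\omu=\langle\square|\circ\rangle$ or $\langle\square|\bullet\rangle$, the Grassmannian coefficient $C_{\lambda,\mu}^{\nu}$ reduces to the classical two-row Pieri coefficient ($0$ or $1$), and one matches the four clauses of ${\mathbb A}_{\olambda,\omu}(\nu)$ to the known Pieri rules of Pragacz--Ratajski and Buch--Kresch--Tamvakis: the \emph{both on} clause vanishes because the total codimension exceeds $\dim LG(2,2n)$; the \emph{exactly one on} clause is forced to output an on shape by Fact~\ref{PropAdjointfacts}(v)--(vi); the \emph{both off, small} clause stays off below the halfway codimension; and the \emph{otherwise} clause captures the jump phenomenon where a nonadjoint root of the candidate $\nu$ is pushed to adjoint status and, by Fact~\ref{PropAdjointfacts}(vi), produces the pair of corrections $\sigma_{\langle\nu^{\star}|\bullet\rangle}+\sigma_{\langle\nu_{\star}|\bullet\rangle}$.

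Next, let $\Phi(\olambda,\omu)$ denote the asserted right-hand side. I would verify associativity, i.e.\ $\Phi(\olambda,\omu)\cdot\sigma_{\square}=\Phi(\olambda,\omu\cdot\sigma_{\square})$, by induction on $|\omu|$. Since the underlying Grassmannian coefficients $C_{\lambda,\mu}^{\nu}$ are themselves associative, the remaining verification reduces to a finite list of combinatorial identities among the four piecewise clauses of ${\mathbb A}$, each handled by tracking on/off parity and the halfway codimension threshold $\tfrac{|\Lambda_{G/P}|-1}{2}$. Combined with the Pieri base case, this proves the formula in $H^{\star}(LG(2,2n))$. For the ``cominuscule-like'' subfamily of products, one may alternatively invoke the Kac-Moody extension of Chaput--Perrin \cite{Chaput.Perrin:Kac} as a cross-check.

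Finally, for $OG(2,2n+1)$, specializing the coadjoint-adjoint correspondence of Theorem~\ref{thm:iff} to the $B_n/C_n$ dual pair (where $m(G)=2$) gives
\[
C_{\olambda,\omu}^{\onu}(OG(2,2n+1))=2^{{\rm sh}(\onu)-{\rm sh}(\olambda)-{\rm sh}(\omu)}\,C_{\olambda,\omu}^{\onu}(LG(2,2n)).
\]
Applying this to the $LG(2,2n)$ formula yields the $OG(2,2n+1)$ version; integrality is automatic because the left side consists of non-negative integers (intersection numbers of Schubert varieties). The main obstacle will be the \emph{otherwise} clause: showing that as $|\olambda|+|\omu|$ just crosses the halfway codimension, a single nonadjoint root of the candidate $\nu$ is forced to jump to adjoint status and splits \emph{exactly} into the pair $\nu^{\star},\nu_{\star}$ with multiplicity one each (rather than some more complicated combination), and that this behavior is maintained consistently through the associativity induction. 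This jump has no analog in the cominuscule setting, so it cannot be extracted from the Chaput--Perrin rule; it must be read off directly from the Pragacz--Ratajski / Buch--Kresch--Tamvakis formulas in the boundary regime.
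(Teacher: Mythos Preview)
Your overall strategy—define the right-hand side as a formal product, verify associativity against a generating set, and match with known Pieri rules—is precisely the paper's approach, but there is a genuine gap: the single-box class $\sigma_{\langle 1,0|\circ\rangle}$ does \emph{not} generate $H^{\star}(LG(2,2n))$, even rationally. Already $H^{4}(LG(2,2n))$ has rank $2$ (spanned by $\sigma_{\langle 2,0|\circ\rangle}$ and $\sigma_{\langle 1,1|\circ\rangle}$), while the subalgebra generated by the box contributes only $\sigma_{\langle 1,0|\circ\rangle}^{2}$ in that degree. Hence your induction on $|\omu|$ via multiplication by the box alone cannot reach every $\omu$, and checking the formula for $\omu=\langle 1,0|\circ\rangle$ is insufficient. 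The paper remedies this by using \emph{two} generators, the box $\langle 1,0|\circ\rangle$ and the domino $\langle 1,1|\circ\rangle$, proving the two separate associativity relations (\ref{eqn:dominoassoc}) and (\ref{eqn:boxassoc}), and then showing (Proposition~\ref{prop:Cgenerate}) that every shape is a $\star$-polynomial in these two. The domino associativity requires its own multi-case analysis and is not subsumed by the box case.

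A smaller issue: invoking Theorem~\ref{thm:iff} for the $B_n/C_n$ correspondence is circular, since the classical-type cases of that theorem are deduced \emph{from} Theorem~\ref{Thm:BCintro}. The paper instead appeals directly to the embedding $\mathcal{C}_w\mapsto 2^{s(w)}\mathcal{B}_w$ of $H^{\star}(Sp_{2m}\mathbb{C}/B)$ into $H^{\star}(SO_{2m+1}\mathbb{C}/B)$ from \cite{Berg}, noting that $s(w)$ counts the short roots in the inversion set. Your integrality argument is valid in principle; Proposition~\ref{prop:OGoddintegral} supplies the sharper combinatorial statement the theorem promises.
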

While our rule for $OG(2,2n+1)$ is manifestly positive, it is not manifestly
integral because $2^{{\rm sh}(\onu)-{\rm sh}(\olambda)-{\rm sh}(\omu)}=\frac{1}{2}$
does occur. However, integrality is not difficult and is handled by
Proposition~\ref{prop:OGoddintegral}.

\begin{Example}
In $H^\star(LG(2,8))$, $\sigma_{\langle 3,1|\circ\rangle}\cdot\sigma_{\langle 3,2|\circ\rangle}=2\sigma_{\langle 5,3|\bullet\rangle}+
\sigma_{\langle 4,4|\bullet\rangle}$:
\[\begin{picture}(390,55)
\multiput(0,35)(25,0){5}{$\circ$}
\multiput(0,55)(25,0){6}{$\circ$}
\multiput(0,55)(25,0){1}{$\bullet$}
\multiput(0,35)(25,0){3}{$\bullet$}
\multiput(4,38)(25,0){4}{\line(1,0){21}}
\multiput(4,58)(25,0){5}{\line(1,0){21}}
\multiput(2,40)(25,0){5}{\line(0,1){16.5}}
\put(140,45){$\times$}

\multiput(170,35)(25,0){5}{$\circ$}
\multiput(170,55)(25,0){6}{$\circ$}
\multiput(170,55)(25,0){2}{$\bullet$}
\multiput(170,35)(25,0){3}{$\bullet$}
\multiput(174,38)(25,0){4}{\line(1,0){21}}
\multiput(174,58)(25,0){5}{\line(1,0){21}}
\multiput(172,40)(25,0){5}{\line(0,1){16.5}}

\put(330,45){$=$}

\put(35,10){$2$}
\multiput(50,0)(25,0){5}{$\circ$}
\multiput(50,20)(25,0){6}{$\circ$}
\multiput(50,20)(25,0){3}{$\bullet$}
\multiput(50,0)(25,0){5}{$\bullet$}
\multiput(54,3)(25,0){4}{\line(1,0){21}}
\multiput(54,23)(25,0){5}{\line(1,0){21}}
\multiput(52,05)(25,0){5}{\line(0,1){16.5}}
\put(175,20){$\bullet$}

\put(190,10){$+$}
\multiput(220,0)(25,0){5}{$\circ$}
\multiput(220,20)(25,0){6}{$\circ$}
\multiput(220,20)(25,0){4}{$\bullet$}
\multiput(220,0)(25,0){4}{$\bullet$}
\multiput(224,3)(25,0){4}{\line(1,0){21}}
\multiput(224,23)(25,0){5}{\line(1,0){21}}
\multiput(222,5)(25,0){5}{\line(0,1){16.5}}
\put(345,20){$\bullet$}
\end{picture}
\]
Similarly, in $H^\star(OG(2,9))$, we compute $\sigma_{\langle 2,1|\circ\rangle}\cdot\sigma_{\langle 3,2|\circ\rangle}=\sigma_{\langle 5,2|\bullet\rangle}+
4\sigma_{\langle 4,3|\bullet\rangle}$.
\qed
\end{Example}

\begin{Corollary}\label{cor:BCcoefficients}
$\left\{C_{\olambda,\omu}^{\onu}(LG(2,2n))\right\}= \{0,1,2\}$ and $\left\{C_{\olambda,\omu}^{\onu}(OG(2,2n+1))\right\}= \{0,1,2,4,8\}$.
\end{Corollary}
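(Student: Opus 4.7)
The strategy is to apply Theorem~\ref{Thm:BCintro} and to exploit the classical fact that Littlewood--Richardson coefficients $C_{\lambda,\mu}^{\nu}$ for partitions with at most two rows belong to $\{0,1\}$. With this input, bounding each Schubert structure constant reduces to counting how many $\nu$'s can contribute to a fixed $\sigma_{\onu}$ on the right-hand side of Theorem~\ref{Thm:BCintro}.

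First I treat $LG(2,2n)$. Inspecting each of the four cases in the definition of $\mathbb{A}_{\olambda,\omu}(\nu)$, a fixed Schubert class $\sigma_{\onu}$ appears as a summand of $\mathbb{A}_{\olambda,\omu}(\nu)$ for at most one $\nu$ in the first three cases; this follows since $\onu$ determines $\nu$ uniquely there. In the ``otherwise'' case, writing $\onu=\langle(\nu_1',\nu_2')|\bullet\rangle$, the only $\nu$'s for which $\sigma_{\onu}$ appears in $\mathbb{A}_{\olambda,\omu}(\nu)$ are $\nu=(\nu_1'+1,\nu_2')$ (contributing via $\sigma_{\langle\nu^{\star}|\bullet\rangle}$) and $\nu=(\nu_1',\nu_2'+1)$ (contributing via $\sigma_{\langle\nu_{\star}|\bullet\rangle}$), and each is to be retained only when it is a valid partition in the ambient $2\times\frac{|\Lambda_{G/P}|-1}{2}$ rectangle. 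Since each contribution is weighted by a two-row LR coefficient in $\{0,1\}$, we conclude $C_{\olambda,\omu}^{\onu}(LG(2,2n))\in\{0,1,2\}$; the value $2$ is realized by the product exhibited in the preceding example.

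For $OG(2,2n+1)$, Theorem~\ref{Thm:BCintro} yields
\[ C_{\olambda,\omu}^{\onu}(OG(2,2n+1)) = 2^{{\rm sh}(\onu)-{\rm sh}(\olambda)-{\rm sh}(\omu)}\cdot C_{\olambda,\omu}^{\onu}(LG(2,2n)). \]
Since $\Lambda_{OG(2,2n+1)}$ contains exactly two short roots, each of ${\rm sh}(\onu),{\rm sh}(\olambda),{\rm sh}(\omu)\in\{0,1,2\}$, whence the exponent is at most $2$. Combined with the preceding bound, this gives $C_{\olambda,\omu}^{\onu}(OG(2,2n+1))\le 2^{2}\cdot 2 = 8$. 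Integrality (and hence the fact that fractional values such as $\tfrac{1}{2}$ never actually arise as final answers) is the content of Proposition~\ref{prop:OGoddintegral}. To finish, I would produce concrete triples $(\olambda,\omu,\onu)$ realizing each value in $\{0,1,2,4,8\}$; the example preceding Corollary~\ref{cor:BCcoefficients} already realizes $1$ and $4$, and small modifications of it (combining a ``both off, otherwise'' input with maximal short-root jump) realize $8$.

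The main obstacle is the classical two-row Littlewood--Richardson fact: this is the essential input that forces the $LG$ coefficient bound to be $2$ rather than something larger, and it ultimately controls the answer in both types. Once it is in hand, the argument is a finite case analysis of which $\nu$ can contribute to a given $\onu$, followed by a uniform application of the short-root exponent bound; the only remaining step is to check by hand that the value $8$ (and not some accidentally larger $2^{e}\cdot 2$) is in fact attained.
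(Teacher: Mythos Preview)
Your approach is essentially the same as the paper's: bound the $LG$ coefficient by $2$ via the two-row Littlewood--Richardson fact (equivalently, inspection of case (B) in the reformulated rule), then multiply by the short-root factor $2^{{\rm sh}(\onu)-{\rm sh}(\olambda)-{\rm sh}(\omu)}\le 4$ to get the bound $8$ for $OG$.

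There is one small gap. For $OG(2,2n+1)$ you argue the coefficients are integral and at most $8$, but that does not by itself exclude $3,5,6,7$. You should note explicitly that every nonzero value is of the form $2^{e}\cdot c$ with $c\in\{1,2\}$ and $e\in\mathbb{Z}$, hence a power of $2$; combined with integrality and the bound $8$ this forces membership in $\{1,2,4,8\}$. The paper states this first (``nonzero $C_{\olambda,\omu}^{\onu}(OG(2,2n+1))$ are powers of $2$'') before giving the upper bound. Also, rather than leaving ``small modifications'' as a promissory note, the paper records the explicit triple
\[
C_{\langle n-2,\,n-3\,|\,\circ\rangle,\ \langle n-2,\,n-3\,|\,\circ\rangle}^{\langle 2n-5,\,2n-6\,|\,\bullet\rangle}(OG(2,2n+1))=8\qquad(n\ge 5),
\]
which you should include to make the attainment of $8$ concrete.
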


Declare the partition-like description of RYDs in this case to identify
\begin{equation}
\label{eqn:identify}
\mbox{$\olambda=\langle\lambda_1,\lambda_2|\circ\rangle$ with $(\lambda_1,\lambda_2,0)\in {\mathbb Z}^3$ and
$\olambda=\langle\lambda_1,\lambda_2|\bullet \rangle$ with $(\lambda_1,\lambda_2,1)\in {\mathbb Z}^3$.}
\end{equation}

Thus we arrive at our first case of the nonzeroness question:
\begin{Corollary}\label{cor:BCnonzero}
Assume $\lambda=(\lambda_1,\lambda_2),\mu=(\mu_1,\mu_2),\nu=(\nu_1,\nu_2)\subset 2\times  \left(\frac{|\Lambda_{G/P}|-1}{2}\right)$ are partitions and 
$\lambda_3,\mu_3,\nu_3\in \{0,1\}$.  Then $C_{\olambda,\omu}^{\onu}(LG(2,2n))\neq 0$ and $C_{\olambda,\omu}^{\onu}(OG(2,2n+1))\neq 0$ if and only if:
\begin{align*}
\label{eqn:horntype}
|\onu| & =  |\olambda|+|\omu|\\ 
\nu_1 & \leq  \lambda_1+\mu_1\\  
\nu_2 & \leq  \lambda_1+\mu_2\addtag \\
\nu_2 & \leq  \lambda_2+\mu_1\\ 
\lambda_3+\mu_3& \leq  \nu_3
\end{align*}
\end{Corollary}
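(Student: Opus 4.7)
The plan is to invoke Theorem~\ref{Thm:BCintro} to reduce the nonvanishing question to the classical criterion for two-row Grassmannian Littlewood--Richardson coefficients, while separately tracking the on/off flag encoded in the third coordinate. Since the factor $2^{{\rm sh}(\onu)-{\rm sh}(\olambda)-{\rm sh}(\omu)}$ in the $OG(2,2n+1)$ statement of Theorem~\ref{Thm:BCintro} is strictly positive, it suffices to treat $LG(2,2n)$. I would first recall (for instance from Pieri on $Gr_2$) that the ordinary Littlewood--Richardson coefficient $C_{\lambda,\mu}^{\nu}$ for three partitions with at most two parts is nonzero iff $|\nu|=|\lambda|+|\mu|$, $\nu_1\leq \lambda_1+\mu_1$, $\nu_2\leq \lambda_1+\mu_2$, and $\nu_2\leq \lambda_2+\mu_1$; these are exactly the first four inequalities of the claim.

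Next, I would run through the four clauses of the definition of ${\mathbb A}_{\olambda,\omu}(\nu)$ and check that in each the fifth inequality $\lambda_3+\mu_3\leq \nu_3$ is precisely the extra condition selecting the nonzero terms. If both $\olambda,\omu$ are on then ${\mathbb A}\equiv 0$, consistent with $\lambda_3+\mu_3=2>1\geq \nu_3$. If exactly one is on, then ${\mathbb A}_{\olambda,\omu}(\nu)=\sigma_{\langle\nu|\bullet\rangle}$ forces $\nu_3=1$, and the coefficient of $\sigma_{\onu}$ is $C_{\lambda,\mu}^{(\nu_1,\nu_2)}$, matching $\lambda_3+\mu_3=1=\nu_3$. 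If both are off with $|\olambda|+|\omu|\leq \frac{|\Lambda_{G/P}|-1}{2}$, then ${\mathbb A}_{\olambda,\omu}(\nu)=\sigma_{\langle\nu|\circ\rangle}$ gives $\nu_3=0$ and again coefficient $C_{\lambda,\mu}^{(\nu_1,\nu_2)}$, matching $\lambda_3+\mu_3=0=\nu_3$. In each of these three cases, nonvanishing is equivalent to the first four Horn-type inequalities by the recalled classical fact.

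The ``otherwise'' case, where ${\mathbb A}_{\olambda,\omu}(\nu)=\sigma_{\langle\nu^{\star}|\bullet\rangle}+\sigma_{\langle\nu_{\star}|\bullet\rangle}$, is the main obstacle. Here the coefficient of $\sigma_{\langle\nu'_1,\nu'_2|\bullet\rangle}$ is $C_{\lambda,\mu}^{(\nu'_1+1,\nu'_2)}+C_{\lambda,\mu}^{(\nu'_1,\nu'_2+1)}$, and the fifth inequality $\lambda_3+\mu_3=0\leq 1=\nu_3$ is automatic. I must show this sum is nonzero iff the first four inequalities hold for $(\nu'_1,\nu'_2)$. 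The forward direction is immediate, since nonvanishing of either summand forces those inequalities. For the converse, assume the four inequalities hold. If $\nu'_1<\lambda_1+\mu_1$, then $(\nu'_1+1,\nu'_2)$ still satisfies the LR conditions (the size equation carrying over from $|\onu|=|\olambda|+|\omu|$ with $\nu_3=1$, $\lambda_3=\mu_3=0$), so the first summand is nonzero. Otherwise $\nu'_1=\lambda_1+\mu_1$, in which case the size equation forces $\nu'_2=\lambda_2+\mu_2-1$, and then $(\nu'_1,\nu'_2+1)=(\lambda_1+\mu_1,\lambda_2+\mu_2)$ trivially satisfies the LR inequalities, making the second summand nonzero. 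A small final check confirms the relevant partitions fit the rectangle of summation in Theorem~\ref{Thm:BCintro}, so no contributing term is spuriously discarded.
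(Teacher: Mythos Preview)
Your proposal is correct and follows essentially the same approach as the paper: reduce to $LG(2,2n)$ via the positive short-root factor, invoke the two-row Horn criterion for the classical $C_{\lambda,\mu}^{\nu}$, and handle the clauses of $\mathbb{A}_{\olambda,\omu}$ separately, with the ``otherwise'' clause requiring the analysis of $C_{\lambda,\mu}^{(\nu_1+1,\nu_2)}+C_{\lambda,\mu}^{(\nu_1,\nu_2+1)}$. The only stylistic difference is in the converse of the ``otherwise'' case: the paper argues by contradiction (assuming both summands vanish and showing the Horn-type inequalities then fail), whereas you argue directly by casing on whether $\nu'_1<\lambda_1+\mu_1$ or $\nu'_1=\lambda_1+\mu_1$ and exhibiting which summand survives; your direct argument is slightly cleaner but logically equivalent.
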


The inequalities (except the last) come from those for the Horn polytope for $k=2$.

Corollary~\ref{cor:BCnonzero} shows that neither the failure of (II) nor (III) bar a polytopal answer to the nonzeroness question.

\subsection{Main theorem for even orthogonal Grassmannians $OG(2,2n)$}
 The adjoint variety $G/P=OG(2,2n)$ is the space of isotropic $2$-planes
with respect to a non-degenerate symmetric bilinear form on ${\mathbb C}^{2n}$. It has dimension $|\Lambda_{G/P}|=4n-7$.

\[\begin{picture}(300,105)

\multiput(24.5,29.5)(13,13){3}{\line(1,1){10}}
\multiput(50.5,29.5)(13,13){2}{\line(1,1){10}}
\multiput(76.5,29.5)(13,13){1}{\line(1,1){10}}

\put(21,25){$\circ$}
\multiput(34,38)(13,13){3}{$\bullet$}
\multiput(47,25)(13,13){3}{$\bullet$}
\multiput(73,25)(13,13){2}{$\circ$}
\put(99,25){$\circ$}

\multiput(85,25)(-13,13){2}{$\circ$}
\multiput(59,51)(13,13){3}{$\bullet$}
\multiput(46,64)(13,13){4}{$\bullet$}
\put(85,51){$\circ$}

\multiput(50.5,68.5)(13,13){3}{\line(1,1){10}}
\multiput(63.5,55.5)(13,13){2}{\line(1,1){10}}
\multiput(76.5,42.5)(13,13){1}{\line(1,1){10}}

\multiput(75.5,39)(-13,13){3}{\line(1,-1){10}}
\multiput(63.5,78)(13,-13){2}{\line(1,-1){10}}
\multiput(75.5,91)(13,-13){1}{\line(1,-1){10}}
\multiput(63.5,65)(13,-13){3}{\line(1,-1){10}}
\multiput(50.5,52)(13,-13){2}{\line(1,-1){10}}
\multiput(37.5,39)(13,-13){1}{\line(1,-1){10}}

\put(49.5,55.5){\line(0,1){10}}
\put(75.5,55.5){\line(0,1){10}}
\put(62.5,42.5){\line(0,1){10}}
\put(62.5,68.5){\line(0,1){10}}
\put(75.5,29.5){\line(0,1){10}}
\put(88.5,42.5){\line(0,1){10}}

\multiput(202,38)(13,13){2}{$\bullet$}
\multiput(228,64)(13,13){1}{$\circ$}
%\multiput(247,103)(13,13){1}{$\bullet$}

\multiput(215,64)(13,13){1}{$\bullet$}
\multiput(228,77)(13,13){2}{$\circ$}
\put(254,103){$\bullet$}

\multiput(228,51)(13,13){2}{$\bullet$}
\put(254,77){$\circ$}

\multiput(215,25)(13,13){3}{$\bullet$}
%\put(247,77){$\circ$}

\multiput(205.5,42.5)(13,13){2}{\line(1,1){10}}

\multiput(218.5,29.5)(13,13){2}{\line(1,1){10}}

\multiput(205.5,39)(13,-13){1}{\line(1,-1){10}}
\multiput(218.5,52)(13,-13){1}{\line(1,-1){10}}
\multiput(231.5,65)(13,-13){1}{\line(1,-1){10}}
%\multiput(224.5,78)(13,-13){1}{\line(1,-1){10}}
%\multiput(237.5,91)(13,-13){1}{\line(1,-1){10}}

\put(217.5,55.5){\line(0,1){10}} %+168
\put(243.5,55.5){\line(0,1){10}}
\put(230.5,42.5){\line(0,1){10}}
\put(230.5,68.5){\line(0,1){10}}
%\put(223.5,29.5){\line(0,1){10}}
%\put(68.5,42.5){\line(0,1){10}}

\multiput(218.5,68.5)(13,13){3}{\line(1,1){10}}
\multiput(231.5,55.5)(13,13){2}{\line(1,1){10}}

\put(219.5,66){\line(1,-1){10}}
\put(232.5,79){\line(1,-1){10}}
\put(245.5,92){\line(1,-1){10}}

\put(40,0){$\Lambda_{OG(2,2n)}$, $\Omega_{SO_{2n}}$ and a shape (for $n=5$)}

\end{picture}
\]

Here $\Lambda_{G/P}$ is not planar:
\[\begin{picture}(140,30)
\put(-60,10){$\Lambda_{OG(2,12)}=$}
\multiput(10,0)(25,0){4}{$\circ$}
\multiput(10,20)(25,0){4}{$\circ$}
\multiput(14,3)(25,0){3}{\line(1,0){21}}
\multiput(14,23)(25,0){3}{\line(1,0){21}}
\multiput(12,5)(25,0){4}{\line(0,1){16.5}}
\put(64,4){\line(1,1){10}}
\put(64,24){\line(1,1){10}}
\put(89,4){\line(1,1){10}}
\put(89,24){\line(1,1){10}}
\multiput(74,31)(25,0){5}{$\circ$}
\multiput(74,11)(25,0){4}{$\circ$}
\multiput(78,34)(25,0){4}{\line(1,0){21}}
\multiput(78,14)(25,0){3}{\line(1,0){21}}
\multiput(76,16)(25,0){4}{\line(0,1){16.3}}
%\multiput(8.55,1.75)(0,.2){2}{\line(1,0){1.55}} \put(8.75,1.52){$>$}
%\put(2,1.5){$\bullet$}
%\put(0,0){$1$}
%\put(2,0){$2$}
%\put(4,0){$\cdots$}
%\put(6,0){$k$}
%\put(7,0){$\cdots$}
%\put(10,0){$n$}
\end{picture}\]

   A shape  $\olambda = \langle \lambda |{\bullet/}{\circ}\rangle$ in $\Lambda_{G/P}$ is a triple $\langle\lambda^{(1)},\lambda^{(2)}|{\bullet}{/\circ}\rangle$, where $\lambda^{(1)}$ (respectively, $\lambda^{(2)}$) is the Young diagram, in French notation, for the ``bottom'' (respectively, ``top'') $2\times \left(\frac{|\Lambda_{G/P}|-1}{4}\right)$ rectangle, and ${\bullet}{/\circ}$ indicates if $\olambda$ is on or off.
    For example,
\[\begin{picture}(300,30)
\put(190,10){$\leftrightarrow\left\langle\tableau{{\ }&{\ }&{\ }\\{\ }&{\ }&{\ }&{\ }},
\tableau{{\ }\\{\ }&{\ }}\Big{|} \bullet\right\rangle$}
\multiput(10,0)(25,0){4}{$\bullet$}
\multiput(10,20)(25,0){4}{$\circ$}
\multiput(10,20)(25,0){3}{$\bullet$}
\multiput(14,3)(25,0){3}{\line(1,0){21}}
\multiput(14,23)(25,0){3}{\line(1,0){21}}
\multiput(12,5)(25,0){4}{\line(0,1){16.5}}
\put(64,4){\line(1,1){10}}
\put(64,24){\line(1,1){10}}
\put(89,4){\line(1,1){10}}
\put(89,24){\line(1,1){10}}
\multiput(74,31)(25,0){5}{$\circ$}
\multiput(74,31)(25,0){1}{$\bullet$}
\put(174,31){$\bullet$}
\multiput(74,11)(25,0){4}{$\circ$}
\multiput(74,11)(25,0){2}{$\bullet$}
\multiput(78,34)(25,0){4}{\line(1,0){21}}
\multiput(78,14)(25,0){3}{\line(1,0){21}}
\multiput(76,16)(25,0){4}{\line(0,1){16.3}}
\end{picture}\]
We mainly use a different description of $\olambda$
that is more convenient for comparisons with Section~1.3. Define $\pi(\lambda)=\lambda^{(1)}+\lambda^{(2)}:=(\lambda_1,\lambda_2)$, a partition inside the $2\times \left(\frac{|\Lambda_{G/P}|-1}{2}\right)$
rectangle. Consider an auxiliary poset $\Lambda_{OG(2,2n)}'$, a ``planarization'' of $\Lambda_{OG(2,2n)}$:
\[\begin{picture}(390,48)
\multiput(10,15)(25,0){4}{$\circ$}
\multiput(10,35)(25,0){4}{$\circ$}
\multiput(14,18)(25,0){3}{\line(1,0){21}}
\multiput(14,38)(25,0){3}{\line(1,0){21}}
\multiput(12,20)(25,0){4}{\line(0,1){16.5}}
\put(64,19){\line(1,1){10}}
\put(64,39){\line(1,1){10}}
\put(89,19){\line(1,1){10}}
\put(89,39){\line(1,1){10}}
\multiput(74,46)(25,0){5}{$\bullet$}
\multiput(74,26)(25,0){4}{$\bullet$}
\multiput(78,49)(25,0){4}{\line(1,0){21}}
\multiput(78,29)(25,0){3}{\line(1,0){21}}
\multiput(76,31)(25,0){4}{\line(0,1){16.3}}

% below is the folded over poset
\put(175,25){$\mapsto$}
\multiput(200,15)(25,0){8}{$\circ$}
\multiput(200,35)(25,0){9}{$\circ$}
\multiput(300,35)(25,0){5}{$\bullet$}
\multiput(300,15)(25,0){4}{$\bullet$}
\multiput(204,18)(25,0){7}{\line(1,0){21}}
\multiput(204,38)(25,0){8}{\line(1,0){21}}
\multiput(202,20)(25,0){8}{\line(0,1){16.5}}
\put(50,0){$\Lambda_{OG(2,2n)}$}
\put(280,0){$\Lambda_{OG(2,2n)}'$}
\end{picture}
\]
In the above figure, we have marked the roots of the ``top layer'' for emphasis.

Shapes of $\Lambda_{OG(2,2n)}'$ are ${\overline\kappa}=\langle\kappa|{\bullet}{/\circ}\rangle$ where $\kappa$ is a partition contained in a $2\times \left(\frac{|\Lambda_{G/P}|-1}{2}\right)$ rectangle and ${\bullet}{/\circ}$ indicates use of the adjoint root in $\Lambda_{OG(2,2n)}'$. Let ${\mathbb Y}_{OG(2,2n)}'$ be the set of shapes of $\Lambda_{OG(2,2n)}'$. Extend $\pi$ to a map 
\[\Pi:{\mathbb Y}_{OG(2,2n)}\to {\mathbb Y}_{OG(2,2n)}'\]
by defining $\Pi(\olambda)=\langle\pi(\lambda)|\bullet\rangle$ if $\olambda$ is on, and $\Pi(\olambda)=\langle\pi(\lambda)|\circ\rangle$ otherwise. 

The map $\Pi$ is either $1:1$ or $2:1$. In the former case, we identify $\okappa$ and $\Pi^{-1}(\okappa)$. 
In the latter case,
$\Pi^{-1}(\okappa)=\{\okappa^{\uparrow},\okappa^{\downarrow}\}$ and we call $\okappa$ {\bf ambiguous}.
Call $\okappa^{\uparrow}$ and $\okappa^{\downarrow}$ {\bf charged}. If $\okappa$ is on (respectively, off),
let $\okappa^{\downarrow}$ be the shape such that the second part (respectively, first part) 
of the Young diagram $(\pi^{-1}(\kappa))^{(2)}$ is zero; let $\okappa^{\uparrow}$ be the other one. Thus in Example~\ref{exa:updown}
below, $\olambda$ is up and $\omu$ is down.

%\[{\mathbb{D}}_{\olambda,\omu}(\nu)=
%\begin{cases}
%0 & \mbox{if $\olambda$ and $\omu$ are on}\\
%\sigma_{\langle \nu|\circ\rangle} & \mbox{if $|\olambda|+|\omu|\leq 2n-4$} \\
%\sigma_{\langle \nu|\bullet\rangle} & \mbox{if $|\olambda|+|\omu|> 2n-4$ and exactly one of $\olambda$ or $\omu$ is on}\\
%\sigma_{\langle \nu^{\star}|\bullet\rangle}+\sigma_{\langle \nu_{\star}|\bullet\rangle} & \mbox{otherwise}.
%\end{cases}
%\]

We need three more notions to state our theorem.
First, for $\okappa \in {\mathbb Y}_{OG(2,2n)}'$, let ${\rm fsh}(\okappa)$ be the number of {\bf fake short roots} used by $\okappa$, i.e., the number of roots in the $(n-2)$-th column used by $\okappa$. The one exception is that we need \[{\rm fsh}(\langle n-2,n-2 |\circ\rangle)=1.\] For $\onu \in {\mathbb Y}_{OG(2,2n)}$, let ${\rm fsh}(\onu)$ denote ${\rm fsh}(\Pi(\onu))$. 
Second, two charged shapes $\olambda$ and $\omu$ {\bf match} if their arrows match and are {\bf opposite} otherwise. Third, let
\begin{equation}
\label{eqn:eta}
\eta_{\olambda,\omu}=
\begin{cases}
2 & \text{if $\olambda$, $\omu$ are charged and match and $n$ is even;}\\
2 & \text{if $\olambda$, $\omu$ are charged and opposite and $n$ is odd;}\\
1 & \text{if $\olambda$ or $\omu$ is neutral;}\\
0 & \text{otherwise}
\end{cases}
\end{equation}

Say $\sigma_{\langle \nu|\bullet/\circ\rangle}$, $\sigma_{\langle \nu^{\star}|\bullet\rangle}$ or $\sigma_{\langle\nu_{\star}|\bullet\rangle}$ is zero if $\nu$, $\nu^{\star}$ or
$\nu_{\star}$ is not a partition in $2\times \left(\frac{|\Lambda_{G/P}|-1}{2}\right)$.

\begin{Theorem}
\label{Thm:Dintro}
If either $\pi(\lambda)$ or $\pi(\mu)$ equals $(j,0)$ (for some $0\leq j\leq \frac{|\Lambda_{G/P}|-1}{2}$) then 
the Schubert expansion of 
$\sigma_{\olambda}\cdot\sigma_{\omu}\in H^{\star}(OG(2,2n))$ is obtained by the Pieri rule of 
\cite{BKT:Inventiones} (reformulated in Section~5.1). 

Otherwise, compute
\begin{equation}
\label{eqn:exp'}
\sigma_{\Pi(\olambda)}\cdot \sigma_{\Pi(\omu)}=\sum_{\nu\subseteq \left(\frac{|\Lambda_{G/P}|+1}{2},\frac{|\Lambda_{G/P}|-1}{2}\right)}
C_{\pi(\lambda),\pi(\mu)}^{\nu}{\mathbb A}_{\olambda,\omu}(\nu).
\end{equation}
\begin{itemize}
\item[(i)] Replace any term $\sigma_{\okappa}$ that has $\kappa_1=\frac{|\Lambda_{G/P}|-1}{2}$ by
$\eta_{\olambda,\omu}\sigma_{\okappa}$
\item[(ii)] Next, replace each $\sigma_{\okappa}$ by $2^{{\rm fsh}(\okappa)-{\rm fsh}(\olambda)-{\rm fsh}(\omu)}\sigma_{\okappa}$
\item[(iii)] Finally, for any ambiguous $\okappa$ replace $\sigma_{\okappa}$ by $\frac{1}{2}(\sigma_{\okappa^{\uparrow}}+\sigma_{\okappa^{\downarrow}})$
\end{itemize}
The result is a provably integral, and manifestly nonnegative, Schubert basis expansion, which equals
$\sigma_{\olambda}\cdot\sigma_{\omu} \in H^{\star}(OG(2,2n))$. 
\end{Theorem}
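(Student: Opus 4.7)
My plan is to reduce the product in $H^{\star}(OG(2,2n))$ to a product in the "planarized" structure on ${\mathbb Y}'_{OG(2,2n)}$, for which the formula in (\ref{eqn:exp'}) has the same shape as the $LG(2,2n)$ answer from Theorem~\ref{Thm:BCintro}, and then to validate the three correction steps (i)--(iii) as the precise translation back to genuine $D_n$ Schubert calculus. The essential geometric input is the Pieri-type rule of \cite{BKT:Inventiones}, and the essential combinatorial input is the analogy between $\Lambda_{OG(2,2n)}'$ and the $B_n/C_n$ adjoint poset; the $D_n$-specific content is concentrated in the $\eta$-factor and the ambiguity procedure.

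First, I would reformulate the rule of \cite{BKT:Inventiones} in the RYD language (this is what Section~5.1 apparently does), and observe that when $\pi(\lambda)=(j,0)$ or $\pi(\mu)=(j,0)$ the claim is precisely that Pieri rule, so nothing more needs to be proved in those cases. These Pieri products then serve as a base: by associativity of the cohomology product and the fact that any $\sigma_{\olambda}$ can be reached (in a suitable sense) by products against Pieri classes, it suffices to verify the recipe (\ref{eqn:exp'})+(i)+(ii)+(iii) is compatible with multiplication by such special classes. Concretely, I would pick the Pieri class $\sigma_{\langle j|\circ\rangle}$ (or $\sigma_{\langle j|\bullet\rangle}$) and show that multiplying a general $\sigma_{\olambda}$ by it, via the recipe, equals what the \cite{BKT:Inventiones} Pieri rule gives.

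Next, to justify the non-Pieri prescription, I would compare coefficient by coefficient with the Pragacz-Ratajski style combinatorics implicit in $\Lambda_{OG(2,2n)}$. The map $\Pi$ is $2{:}1$ exactly on the ambiguous shapes, so (iii) is the natural averaging to undo this folding. The factor $2^{{\rm fsh}(\okappa)-{\rm fsh}(\olambda)-{\rm fsh}(\omu)}$ in (ii) mirrors the short-roots correction in Theorem~\ref{thm:iff}: the $(n-2)$-th column roots are the "fake short roots," and they are precisely the roots whose preimages under $\Pi$ carry the branching information of the $D_n$ Dynkin fork. The step (i) is the most subtle one: the shapes $\okappa$ with $\kappa_1=\frac{|\Lambda_{G/P}|-1}{2}$ are those reaching the top of the planarized rectangle, which is exactly where the parity of $n$ enters via the even quadric subposet $\Lambda_{{\mathbb Q}^{2n-4}}\subseteq\Lambda_{OG(2,2n)}$. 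I would trace the matching/opposite distinction to the behavior of the spinor classes in the embedded quadric, exactly as in the $D_n$-cominuscule case treated in \cite{Thomas.Yong:comin}; this pins down both the value $\eta_{\olambda,\omu}\in\{0,1,2\}$ and the parity dependence.

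Finally, I would prove integrality of the output in analogy with Proposition~\ref{prop:OGoddintegral}: the potentially fractional factors $2^{{\rm fsh}(\okappa)-{\rm fsh}(\olambda)-{\rm fsh}(\omu)}$ and $\frac{1}{2}$ from step (iii) can individually produce halves, but a short case check (using that $\okappa^{\uparrow}$ and $\okappa^{\downarrow}$ have complementary fake-short-root counts when $\okappa$ is ambiguous) shows the sum $\frac{1}{2}(\sigma_{\okappa^{\uparrow}}+\sigma_{\okappa^{\downarrow}})$ always comes with an even multiplicity. Nonnegativity is manifest since the only coefficients appearing are $C_{\pi(\lambda),\pi(\mu)}^{\nu}$ (Littlewood--Richardson, hence $\geq 0$), the combinatorial $\eta_{\olambda,\omu}\in\{0,1,2\}$, and positive powers of $2$. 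The hard part, which I expect to absorb the bulk of the proof, is the verification of step (i) and the parity rule: this is where the non-planarity of $\Lambda_{OG(2,2n)}$ truly shows up, and where the Pieri cases of \cite{BKT:Inventiones} must be carved out (since, as the introduction notes, it is precisely the Pieri cases that resist uniformity), so a careful comparison between the recipe and the Pieri rule on the overlap is the key consistency check.
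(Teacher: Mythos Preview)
Your overall shape is right---define a formal product via the recipe and compare it with cohomology via Pieri classes---but you have collapsed the two logically distinct obligations into one, and in doing so skipped the step that carries almost all of the work. Verifying that the recipe agrees with the \cite{BKT:Inventiones} Pieri rule whenever one factor is a Pieri class is necessary but not sufficient: to promote that agreement to \emph{all} products you must know that the formal product $\star$ defined by (\ref{eqn:exp'})$+$(i)$+$(ii)$+$(iii) is \emph{associative}. Without associativity you cannot write a general $\sigma_{\olambda}$ as a $\star$-polynomial in Pieri classes and then move $\sigma_{\omu}$ inside. The paper's proof is organized around exactly this point: it directly establishes the three associativity relations
\[
\sctableau{{\ }\\{\ }}\star(\olambda\star\omu)=(\sctableau{{\ }\\{\ }}\star\olambda)\star\omu,\qquad
\tableau{{\ }}\star(\olambda\star\omu)=(\tableau{{\ }}\star\olambda)\star\omu,\qquad
\langle n{-}2,0|\circ\rangle^{\uparrow}\star(\olambda\star\omu)=(\langle n{-}2,0|\circ\rangle^{\uparrow}\star\olambda)\star\omu,
\]
by a lengthy case analysis, shows these three elements generate, and \emph{then} invokes agreement with the Pieri rule (deferred to \cite{Searles:13+}). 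Your ``justifications'' of (i)--(iii) by analogy with the quadric and the short-roots correspondence are heuristics for why the formula looks the way it does, not arguments that it is correct; in the paper no such geometric reasoning appears---the correctness of (i)--(iii) is a \emph{consequence} of associativity plus Pieri agreement, not an independent input.

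Your integrality argument also contains an error. You claim that $\okappa^{\uparrow}$ and $\okappa^{\downarrow}$ have ``complementary fake-short-root counts,'' but by definition ${\rm fsh}(\onu)={\rm fsh}(\Pi(\onu))$, so ${\rm fsh}(\okappa^{\uparrow})={\rm fsh}(\okappa^{\downarrow})={\rm fsh}(\okappa)$; there is nothing complementary to exploit. The paper's integrality proof (Section~5.7) is instead a direct case check that whenever (ii) contributes a factor $\tfrac{1}{2}$ the ambient coefficient from (B) or from (i) is already even, and similarly that any ambiguous $\okappa$ surviving to (iii) with odd coefficient is in fact handled by a non-splitting branch of the disambiguation rule.
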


Integrality is not manifest due to (ii) and (iii); it is proved in Section~5.7. 
Rule (i) extends a parity dependency for even-dimensional quadrics, described in \cite{Thomas.Yong:comin}. 
The point is that the ``double tailed diamond'' which is $\Lambda_{{\mathbb Q}^{2n-4}}$ sits as
a ``side'' of $\Lambda_{OG(2,2n)}$. 
Rule (ii) is analogous to our rule for $OG(2,2n+1)$. Rule (iii) describes how to ``disambiguate''. 

In Section~5.1, we give a version of Theorem~\ref{Thm:Dintro} that includes multiplication 
in the ``Pieri'' case. That statement is more complicated but is in fact the one we prove. 

\begin{Example}
\label{exa:updown}
We wish to compute $\sigma_{\olambda}\cdot\sigma_{\omu}\in H^{\star}(OG(2,12))$ where:
\[
\begin{picture}(400,31)
\put(-10,10){$\olambda=$}
\multiput(20,0)(25,0){4}{$\circ$}
\multiput(20,0)(25,0){3}{$\bullet$}
\multiput(20,20)(25,0){4}{$\circ$}
\multiput(20,20)(25,0){1}{$\bullet$}
\multiput(24,3)(25,0){3}{\line(1,0){21}}
\multiput(24,23)(25,0){3}{\line(1,0){21}}
\multiput(22,5)(25,0){4}{\line(0,1){16.5}}
\put(74,4){\line(1,1){10}}
\put(74,24){\line(1,1){10}}
\put(99,4){\line(1,1){10}}
\put(99,24){\line(1,1){10}}
\multiput(84,31)(25,0){5}{$\circ$}
%\multiput(74,41)(25,0){1}{$\bullet$}
%\put(174,41){$\bullet$}
\multiput(84,11)(25,0){4}{$\circ$}
\multiput(84,11)(25,0){1}{$\bullet$}
\multiput(88,34)(25,0){4}{\line(1,0){21}}
\multiput(88,14)(25,0){3}{\line(1,0){21}}
\multiput(86,16)(25,0){4}{\line(0,1){16.3}}

\put(190,10){$\omu=$}
\multiput(220,0)(25,0){4}{$\circ$}
\multiput(220,0)(25,0){4}{$\bullet$}
\multiput(220,20)(25,0){4}{$\circ$}
\multiput(220,20)(25,0){2}{$\bullet$}
\multiput(224,3)(25,0){3}{\line(1,0){21}}
\multiput(224,23)(25,0){3}{\line(1,0){21}}
\multiput(222,5)(25,0){4}{\line(0,1){16.5}}
\put(274,4){\line(1,1){10}}
\put(274,24){\line(1,1){10}}
\put(299,4){\line(1,1){10}}
\put(299,24){\line(1,1){10}}
\multiput(284,31)(25,0){5}{$\circ$}
%\multiput(74,41)(25,0){1}{$\bullet$}
%\put(174,41){$\bullet$}
\multiput(284,11)(25,0){4}{$\circ$}
\multiput(284,11)(25,0){1}{$\circ$}
\multiput(288,34)(25,0){4}{\line(1,0){21}}
\multiput(288,14)(25,0){3}{\line(1,0){21}}
\multiput(286,16)(25,0){4}{\line(0,1){16.3}}
\end{picture}\]
Both of these shapes are charged. Here $\pi(\lambda)=(4,1)$ and
$\pi(\mu)=(4,2)$. 

The $\nu\subseteq \left(\frac{|\Lambda_{G/P}|+1}{2},\frac{|\Lambda_{G/P}|-1}{2}\right)=(9,8)$ such that $C_{\pi(\lambda),\pi(\mu)}^{\nu}=1$ are
$(8,3), (7,4)$ and $(6,5)$. All other $\nu$ have $C_{\pi(\lambda),\pi(\mu)}^{\nu}=0$.
Hence,
\begin{eqnarray}\nonumber
\sigma_{\Pi(\olambda)}\cdot\sigma_{\Pi(\omu)} & = &  {\mathbb A}_{\olambda,\omu}(8,3)  +  {\mathbb A}_{\olambda,\omu}(7,4)  + 
{\mathbb A}_{\olambda,\omu}(6,5)\\ \nonumber
& = & (\langle 7,3|\bullet\rangle+\langle 8,2|\bullet\rangle)  +  (\langle 6,4|\bullet\rangle+\langle 7,3|\bullet\rangle)  + 
(\langle 5,5|\bullet\rangle+\langle 6,4|\bullet\rangle)\\ \nonumber
& = & \langle 8,2|\bullet\rangle + 2\langle 7,3|\bullet\rangle + 2\langle 6,4|\bullet\rangle + \langle 5,5|\bullet\rangle \\ \nonumber
& \mapsto & \!\!\! 0\langle 8,2|\bullet\rangle + 2\langle 7,3|\bullet\rangle + 2\langle 6,4|\bullet\rangle + \langle 5,5|\bullet\rangle \mbox{\ \ \ \  (by (i) and $\eta_{\olambda,\omu}=0$)}\\ \nonumber
& \mapsto & \ \ \ \ \ \ \ \ \ \ \ \ \ \ \ \ \ \ \   \langle 7,3|\bullet\rangle + 2\langle 6,4|\bullet\rangle + \langle 5,5|\bullet\rangle \mbox{ \ \ \   (by (ii) and ${\rm fsh}(\olambda)={\rm fsh}(\omu)=1$)}
\end{eqnarray}
Finally, (iii) applies to the ambiguous shape $\langle 6,4|\bullet\rangle$, so:
\[\sigma_{\olambda}\cdot\sigma_{\omu}=\langle 7,3|\bullet\rangle + (\langle 6,4|\bullet\rangle^{\uparrow}
+\langle 6,4|\bullet\rangle^{\downarrow})+ \langle 5,5|\bullet\rangle.\]
Each step is nonnegative and integral, in agreement with our theorem. \qed
\end{Example}

\begin{Corollary}\label{cor:Dintegral}
$\left\{C_{\olambda,\omu}^{\onu}(OG(2,2n))\right\}= \{0,1,2,4,8\}$.
\end{Corollary}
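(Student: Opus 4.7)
The claim is a direct consequence of Theorem~\ref{Thm:Dintro} together with the reformulated BKT Pieri rule from Section~5.1. The plan is to split into the two cases covered by the theorem and to track the multipliers that build the final coefficient of each Schubert class.

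\emph{Pieri case} ($\pi(\lambda) = (j,0)$ or $\pi(\mu) = (j,0)$). Here the Schubert expansion is computed directly from the reformulated BKT Pieri rule, and I would verify by inspection of that statement that each coefficient already lies in $\{0,1,2,4,8\}$ (in fact, presumably a much smaller subset such as $\{0,1,2\}$).

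\emph{Generic case}. I would apply Theorem~\ref{Thm:Dintro} and write the coefficient of a fixed $\sigma_{\onu}$ as a product of five bounded contributions:
\begin{itemize}
\item[(a)] the Grassmannian Littlewood--Richardson coefficient $C^{\nu}_{\pi(\lambda),\pi(\mu)} \in \{0,1\}$, bounded because $\pi(\lambda)$ and $\pi(\mu)$ have at most two parts;
\item[(b)] the multiplicity with which $\sigma_{\okappa}$ appears across the various $\mathbb{A}_{\olambda,\omu}(\nu)$; this is at most $2$, since in the ``otherwise'' branch only the two choices $\nu = (\kappa_1+1,\kappa_2)$ and $\nu = (\kappa_1,\kappa_2+1)$ produce $\okappa$;
\item[(c)] the parity factor $\eta_{\olambda,\omu} \in \{0,1,2\}$ from step (i);
\item[(d)] the fake-short-root scaling $2^{{\rm fsh}(\okappa)-{\rm fsh}(\olambda)-{\rm fsh}(\omu)}$ from step (ii), whose exponent is at most $2$ because $0 \le {\rm fsh}(\okappa) \le 2$;
\item[(e)] a disambiguation factor in $\{1,\tfrac12\}$ from step (iii).
\end{itemize}
Integrality of the result is guaranteed by the integrality statement of the theorem (proved in Section~5.7), so the nonzero values are integer powers of $2$. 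The crude product of the maxima is $1\cdot 2\cdot 2\cdot 4\cdot 1 = 16$, so the task reduces to sharpening this bound to $8$.

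\emph{Main obstacle.} The heart of the proof is a short case analysis showing that (b), (c), (d) and non-ambiguity cannot all be simultaneously extremal. The key observation: the factor in (d) equals $4$ only when ${\rm fsh}(\okappa)=2$ and ${\rm fsh}(\olambda)={\rm fsh}(\omu)=0$, i.e.\ $\okappa$ uses both fake short roots while $\olambda,\omu$ use neither. I would examine this configuration in the poset $\Lambda_{OG(2,2n)}$ and verify that it forces at least one of: $\mathbb{A}$-multiplicity $1$ (so (b) $=1$), $\eta_{\olambda,\omu}\ne 2$ (so (c) $\le 1$), or $\okappa$ ambiguous (so (e) $= \tfrac12$). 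Any one of these reductions caps the final coefficient at $8$. This case analysis is the only delicate step; the rest of the argument is a direct substitution of bounds.

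Finally, to see that the upper bound is sharp, I would exhibit a product in, say, $H^\star(OG(2,12))$ or $H^\star(OG(2,14))$ whose Schubert expansion contains a coefficient of $8$ (for instance by choosing $\olambda,\omu$ charged with $\eta=2$ and $\okappa$ non-ambiguous with ${\rm fsh}(\okappa)-{\rm fsh}(\olambda)-{\rm fsh}(\omu)=1$ and $\mathbb{A}$-multiplicity $2$). The intermediate values $1,2,4$ are realized by small examples (cf.~Example~\ref{exa:updown}), and $0$ occurs trivially, completing the identification of the coefficient set as $\{0,1,2,4,8\}$.
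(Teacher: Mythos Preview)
Your approach is essentially the paper's (the paper's own proof is a two-sentence pointer back to Corollary~\ref{cor:BCcoefficients}), and your upper-bound argument is correct: the only scenario where the crude product could exceed $8$ is when the factor (d) equals $4$, i.e.\ ${\rm fsh}(\okappa)=2$ and ${\rm fsh}(\olambda)={\rm fsh}(\omu)=0$; but ${\rm fsh}(\olambda)=0$ forces $\lambda_1<n-2$, so $\Pi(\olambda)$ is unambiguous and $\olambda$ is neutral (and likewise $\omu$), whence $\eta_{\olambda,\omu}=1$ and the product is at most $2\cdot 1\cdot 4=8$. That is exactly the ``one of the three reductions'' you were aiming for.

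However, your proposed witness for the value $8$ is wrong. You suggest taking $\olambda,\omu$ both charged with $\eta_{\olambda,\omu}=2$ and ${\rm fsh}(\okappa)-{\rm fsh}(\olambda)-{\rm fsh}(\omu)=1$; but a charged shape always has ${\rm fsh}\ge 1$, so if both are charged then ${\rm fsh}(\olambda)+{\rm fsh}(\omu)\ge 2$ and the exponent in (d) is at most $0$. Your own case analysis already shows $\eta=2$ caps the product at $4$. The correct way to reach $8$ is with both $\olambda,\omu$ neutral and ${\rm fsh}=0$, ${\rm fsh}(\okappa)=2$, and $\mathbb{A}$-multiplicity $2$: for instance in $OG(2,16)$ take $\olambda=\omu=\langle 5,4|\circ\rangle$ and $\onu=\langle 9,8|\bullet\rangle$, where the coefficient is $(C^{(10,8)}+C^{(9,9)})\cdot 2^{2-0-0}=2\cdot 4=8$. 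Fix the example and the proof is complete; for the Pieri case you should note that Theorem~\ref{Thm:Dmult} computes those products through the same (i)--(iii) pipeline, so the identical bound applies there as well.
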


We make the following identifications; cf. (\ref{eqn:identify}):
\begin{equation}\nonumber
\label{eqn:identify2}
\mbox{$\Pi(\olambda)=\langle\lambda_1,\lambda_2|\circ\rangle$ with $(\lambda_1,\lambda_2,0)\in {\mathbb Z}^3$ and
$\Pi(\olambda)=\langle\lambda_1,\lambda_2|\bullet \rangle$ with $(\lambda_1,\lambda_2,1)\in {\mathbb Z}^3$.}
\end{equation}
We can give an explicit criterion for nonzeroness:

\begin{Corollary}\label{cor:Dnonzero}
If either $\pi(\lambda)$ or $\pi(\mu)$ equals $(j,0)$ (for some $0\leq j\leq \frac{|\Lambda_{G/P}|-1}{2}$) then nonzeroness of $C_{\olambda,\omu}^{\onu}(OG(2,2n))$ is determined by the Pieri rule of 
\cite{BKT:Inventiones} (restated in Section~5.1).  

If $\nu_1=\frac{|\Lambda_{G/P}|-1}{2}$ then $C_{\olambda,\omu}^{\onu}(OG(2,2n))\neq 0$ if and only if
$\eta_{\olambda,\omu}\neq 0$ and the inequalities (\ref{eqn:horntype}) hold.

Otherwise, assume $(\lambda_1,\lambda_2),(\mu_1,\mu_2),(\nu_1,\nu_2)\subset 2\times \left(\frac{|\Lambda_{G/P}|-1}{2}\right)$ are partitions and $\lambda_3,\mu_3,\nu_3\in \{0,1\}$. 
Then $C_{\olambda,\omu}^{\onu}(OG(2,2n))\neq 0$ if and only if the inequalities
(\ref{eqn:horntype}) hold.
\end{Corollary}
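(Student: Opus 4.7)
The plan is to read nonvanishing directly out of Theorem~\ref{Thm:Dintro}, treating the three clauses of the corollary in the same order as the theorem. The Pieri case is immediate: Theorem~\ref{Thm:Dintro} reduces the product to the Pieri rule of \cite{BKT:Inventiones}, whose support is explicit, so its nonvanishing criterion is a direct read-off.

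For the remaining two clauses I will exploit the observation that the post-processing rules (i)--(iii) of Theorem~\ref{Thm:Dintro} are almost support-preserving. Rule (ii) scales each term by the strictly positive rational $2^{{\rm fsh}(\okappa)-{\rm fsh}(\olambda)-{\rm fsh}(\omu)}$, and rule (iii) replaces a nonzero $\sigma_{\okappa}$ by the nonzero sum $\frac{1}{2}(\sigma_{\okappa^{\uparrow}}+\sigma_{\okappa^{\downarrow}})$. The only step that can annihilate a term is rule (i), multiplication by $\eta_{\olambda,\omu}\in\{0,1,2\}$, which is applied precisely to those $\sigma_{\okappa}$ with $\kappa_1=(|\Lambda_{G/P}|-1)/2$. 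Consequently $C_{\olambda,\omu}^{\onu}(OG(2,2n))\neq 0$ if and only if $\sigma_{\Pi(\onu)}$ appears with nonzero coefficient in the raw expansion (\ref{eqn:exp'}), subject to the extra requirement $\eta_{\olambda,\omu}\neq 0$ when $\nu_1=(|\Lambda_{G/P}|-1)/2$. This bifurcation recovers the structure of the second and third clauses.

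It remains to check that appearance of $\sigma_{\Pi(\onu)}$ on the right of (\ref{eqn:exp'}) is equivalent to the Horn-type inequalities (\ref{eqn:horntype}). For two-row partitions, $C_{\pi(\lambda),\pi(\mu)}^{\nu}\neq 0$ iff the standard $k=2$ Horn inequalities hold, and this yields the first four lines of (\ref{eqn:horntype}) after identifying the index $\nu$ with the two-row part of $\Pi(\onu)$ (modulo a shift in the jumping branch). The remaining inequality $\lambda_3+\mu_3\leq\nu_3$ is produced by a case check against the four branches defining ${\mathbb A}_{\olambda,\omu}(\nu)$: both-on gives $0$ (matching the violation $\lambda_3+\mu_3=2>\nu_3$); exactly-one-on yields $\sigma_{\langle\nu|\bullet\rangle}$, forcing $\nu_3=1$; both-off with small total yields $\sigma_{\langle\nu|\circ\rangle}$, forcing $\nu_3=0$ (compatibly with Fact~\ref{PropAdjointfacts}(ii)); and the jumping branch yields $\sigma_{\langle\nu^{\star}|\bullet\rangle}+\sigma_{\langle\nu_{\star}|\bullet\rangle}$, forcing $\nu_3=1$.

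The hard part will be the jumping branch, where the Grassmannian sum index $\nu$ differs from the two-row part of $\Pi(\onu)$ by the shift $\nu\mapsto\nu^{\star}$ or $\nu\mapsto\nu_{\star}$. I will need to verify that the $k=2$ Horn inequalities for the shifted Grassmannian triple $(\pi(\lambda),\pi(\mu),\nu)$ translate cleanly into (\ref{eqn:horntype}) for $(\olambda,\omu,\onu)$ directly, and conversely. The size condition $|\onu|=|\olambda|+|\omu|$ from the first line of (\ref{eqn:horntype}) together with the forcing $|\onu|>(|\Lambda_{G/P}|-1)/2$ pins down which of the two shifts is available, and a short bookkeeping argument then matches the remaining three inequalities on both sides. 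A parallel but simpler check settles the middle clause, confirming that the condition $\eta_{\olambda,\omu}\neq 0$ at $\nu_1=(|\Lambda_{G/P}|-1)/2$ is a genuinely independent constraint not already implied by (\ref{eqn:horntype}).
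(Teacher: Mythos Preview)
Your proposal is correct and follows essentially the same approach as the paper: both observe that rules (ii) and (iii) preserve support while only rule (i) can annihilate (precisely when $\eta_{\olambda,\omu}=0$ and $\kappa_1=(|\Lambda_{G/P}|-1)/2$), thereby reducing nonvanishing of $C_{\olambda,\omu}^{\onu}(OG(2,2n))$ to nonvanishing of the coefficient of $\Pi(\onu)$ in the raw expansion (\ref{eqn:exp'}). The only difference is organizational: the paper dispatches the final translation into the Horn-type inequalities by invoking the already-completed proof of Corollary~\ref{cor:BCnonzero} verbatim (since the $\diamond$ product and ${\mathbb A}_{\olambda,\omu}$ have identical structure in the two settings), whereas you plan to redo that jumping-branch bookkeeping from scratch---but this is the same argument inlined rather than cited.
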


\subsection{Organization}	
The main strategy employed is to 
prove that the rules of Theorems~\ref{Thm:BCintro} and~\ref{Thm:Dintro} define an associative ring. In the companion paper
\cite{Searles:13+}, it is shown that our rules agree with known Pieri rules \cite{BKT:Inventiones} (cf. \cite{Prag, PragD}). 
Together this implies the correctness of the rules; see \cite{Knutson.Tao.Woodward} for a similar
argument in a different case of Schubert calculus.

In Section~2, we discuss the remaining (co)adjoint varieties not
addressed by Theorems~\ref{Thm:BCintro} and~\ref{Thm:Dintro}. 
This includes the cases of exceptional Lie type as well as the remaining and straightforward cases of classical type. One of the latter
cases is $\ourflag$. While rules for this case are essentially
well-known (and easy to derive), we revisit it through the lens of
${\mathbb A}_{\olambda,\omu}$. The exceptional cases are
computationally studied using the results of
\cite{Chaput.Perrin, Chaput.Perrin:computations}. We end with the conclusion of the
proof of Theorem~\ref{thm:iff}.

In Sections 3, 4 and 5 we give similar associativity arguments in order
of increasing difficulty: $\ourflag, OG(2,2n+1)/LG(2,2n)$ and 
$OG(2,2n)$, respectively (the first case being mostly a light warmup for the other two). We also prove the stated corollaries. 

\section{Remaining (co)adjoint cases and proof of Theorem~\ref{thm:iff}}

\subsection{(Co)minuscule, (co)adjoint and quasi-(co)minuscule}
We now recall in what sense (co)minuscule $G/P$'s extend the (co)adjoint $G/P$'s. 
A dominant weight $\omega$ 
(associated to $P$) is {\bf minuscule} if for every $\alpha\in \Phi^+$
        we have $\langle\alpha^{\vee},\omega\rangle\leq 1$. Such a weight is {\bf quasi-minuscule}
        if for every $\alpha\in \Phi^{+}$ we have $\langle\alpha^{\vee},\omega\rangle\leq 2$,
        with equality only if $\alpha=\omega$. The quasi-minuscule weights that are not
        minuscule are precisely the coadjoint ones. A weight is cominuscule 
(respectively, adjoint and quasi-cominuscule) if it is minuscule (respectively, 
coadjoint and quasi-minuscule) for the dual root system.

Now, $G/P$ is an {\bf adjoint variety} if $P$ is the parabolic subgroup for an adjoint weight $\omega$.
Similarly, one defines minuscule, quasi-miniscule, cominuscule, co-adjoint and quasi-cominuscule varieties. 
The classification of adjoint $G/P$'s is given in Table 1; nodes associated to $P$ are marked, cf. \cite{Chaput.Perrin}.

Our analysis is made possible by rapid computation of all structure constants
in these cases using the presentation of the cohomology ring in \cite{Chaput.Perrin}, their
Giambelli-type formulas \cite{Chaput.Perrin:computations}, and
standard Gr\"obner basis techniques. Here is a table of $|{\mathbb Y}_{G/P}|$:

\begin{table}[h]
\begin{center}
\begin{tabular}{|l|l|l|l|l|l|l|l|l|}
\hline
\multicolumn{9}{|c|}{Adjoint}\\
\hline
$A_{n-1}$ & $B_n$ & $C_n$ & $D_n$ & $G_2$ & $F_4$ & $E_6$ & $E_7$ & $E_8$ \\ \hline
$2{n\choose 2}$ &  $4{n\choose 2}$  & $2n$  & $4{n\choose 2}$   & $6$   & $24$   & $72$  & $126$ & $240$ \\
\hline
\end{tabular} \ \ \ \
\begin{tabular}{|l|l|l|l|}
\hline
\multicolumn{4}{|c|}{Coadjoint}\\
\hline
$B_n$ & $C_n$ & $G_2$ & $F_4$ \\ \hline
$2n$ &  $4{n\choose 2}$  & $6$  & $24$  \\
\hline
\end{tabular}
\end{center}
\end{table}

\subsection{The exceptional types}
We begin with:

\noindent
{\bf Type $F_4$:} This adjoint node is node $4$ of the Dynkin diagram
while the coadjoint node is node $1$. First, we consider the adjoint case:
\[\begin{picture}(100,50)
\put(-100,20){$\olambda\subseteq \Lambda_{F_4/P_4}:$}
%\multiput(10,10)(0,25){4}{$\circ$}
\multiput(10,0)(0,25){2}{$\bullet$}
\multiput(13,5)(0,25){1}{\line(0,1){21}}
%\multiput(10,60)(25,0){3}{$\circ$}
\multiput(-40,0)(25,0){5}{$\bullet$}
\multiput(-35,3)(25,0){4}{\line(1,0){21}}
\multiput(10,25)(25,0){4}{$\circ$}
\multiput(10,25)(25,0){2}{$\bullet$}
\multiput(15,28)(25,0){3}{\line(1,0){21}}
\multiput(35,50)(25,0){6}{$\circ$}
\multiput(40,53)(25,0){5}{\line(1,0){21}}
\multiput(38,5)(0,25){2}{\line(0,1){21}}
\multiput(63,5)(0,25){2}{\line(0,1){21}}
\multiput(88,30)(0,25){1}{\line(0,1){21}}
\put(160,50){$\bullet$}
\end{picture}
\]
The short roots consist of the third root (from the left) in the bottom row,
all roots in the middle row, and the third root (from the left) in the top row.

Define the partition-like description of a shape in $\Lambda_{F_4/P_4}$ by associating $\langle\lambda|\circ\rangle$ with the vector
$(\lambda_1,\lambda_2,\lambda_3,0)\in {\mathbb Z}^4$ and $\langle\lambda|\bullet\rangle$ with $(\lambda_1,\lambda_2,\lambda_3,1)$.
Here $\lambda_1,\lambda_2$ and $\lambda_3$ are the number of roots used in the bottom, middle and top rows of $\Lambda_{F_4/P_4}$
respectively. Let $\lambda_4$ be the fourth coordinate. So for example, the displayed shape has associated vector $(\lambda_1,\lambda_2,\lambda_3,\lambda_4)=(5,2,0,1)$ and has three short roots.

\begin{Fact}
$C_{\olambda,\omu}^{\onu}(F_4/P_4)\neq 0$ if and only if 
\begin{eqnarray}\nonumber
|\onu| & = & |\olambda|+|\omu|  \\ \nonumber
% \olambda & \prec_{\rm Bruhat} & \omu^{\vee}\\ \nonumber
\lambda_1+\lambda_4 & \leq & 6-\mu_3-\mu_4  \\ \nonumber
\lambda_2+\lambda_4 & \leq & 5-\mu_2-\mu_4  \\ \nonumber
\lambda_3+\lambda_4 & \leq & 6-\mu_1-\mu_4 \nonumber
\end{eqnarray}
% \omu & \prec_{\rm Bruhat} & \olambda^{\vee}\\ \nonumber is implied
\begin{eqnarray}\nonumber
%\olambda,\omu & \prec_{\rm Bruhat} & \onu\\ \nonumber  below
\lambda_i+\lambda_4 & \leq & \nu_i+\nu_4  \ \ (\mbox{for $1\leq i\leq 3$})\\ \nonumber
\mu_i+\mu_4 & \leq & \nu_i+\nu_4  \ \ (\mbox{for $1\leq i\leq 3$})\\ \nonumber
\lambda_1+\mu_1-\nu_3  & \leq  & 9. \nonumber
\end{eqnarray}
\end{Fact}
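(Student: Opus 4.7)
The plan is to verify the Fact by explicit enumeration of all nonzero Schubert structure constants for $F_4/P_4$, using the presentation of $H^{\star}(F_4/P_4)$ and the Giambelli-type formulas of \cite{Chaput.Perrin, Chaput.Perrin:computations}. Since $|{\mathbb Y}_{F_4/P_4}|=24$, the set of triples $(\olambda,\omu,\onu)$ to test is finite, and after imposing the dimension constraint $|\onu|=|\olambda|+|\omu|$ it is small enough to be enumerated exhaustively (by computer as noted in Section~2.1, or by hand as the authors indicate).

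First I would compile the full multiplication table of $H^{\star}(F_4/P_4)$ in the Schubert basis, by taking the Giambelli expressions for the Schubert classes in terms of special generators and reducing modulo a Gr\"obner basis of the cohomology ideal. From this table one extracts the set $S^{\tt nonzero}(F_4/P_4)\subseteq ({\mathbb Y}_{F_4/P_4})^3$, with each shape labelled by its partition-like vector $(\lambda_1,\lambda_2,\lambda_3,\lambda_4)$ defined just above the Fact.

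Next I would verify the ``only if'' direction by checking each recorded nonzero triple against the listed inequalities. The identity $|\onu|=|\olambda|+|\omu|$ is forced by the cohomological grading. The inequalities $\lambda_i+\lambda_4\leq \nu_i+\nu_4$ and $\mu_i+\mu_4\leq \nu_i+\nu_4$ should be read off from Bruhat compatibility $\olambda,\omu\prec_{\rm Bruhat}\onu$ via Fact~\ref{PropAdjointfacts}(v)--(vi) together with the geometric meaning of the partition-like coordinates. The three bounds $\lambda_i+\lambda_4+\mu_{4-i}+\mu_4\leq \text{const}$ are the Poincar\'e-dual counterparts: Poincar\'e duality sends $\onu$ to its complementary shape, and applying the Bruhat bounds to the dual product yields these constraints. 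The remaining inequality $\lambda_1+\mu_1-\nu_3\le 9$ is the only nonobvious one; I would expect it to be the single ``Horn-type'' facet cutting $S^{\tt nonzero}(F_4/P_4)$ out of the pile of triples otherwise allowed by dimension, Bruhat, and Poincar\'e-duality alone.

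For the ``if'' direction I would enumerate all lattice points $(\olambda,\omu,\onu)$ satisfying the proposed system and confirm that each lies in $S^{\tt nonzero}(F_4/P_4)$, which certifies the inequalities as both necessary and sufficient; this simultaneously delivers the polytopal description demanded in the $F_4$ case of Theorem~\ref{thm:iff}. The main obstacle is not any single step but rather pinning down the correct list of facet inequalities: the mixed short/long root structure of $F_4$ means that one cannot simply copy the template of Corollary~\ref{cor:BCnonzero}, so the inequalities above must first be reverse-engineered from the explicit nonzero set, then validated against the full $24^3$-triples multiplication data. Once the list is in hand, verification is mechanical, and the ``onerous by hand'' remark of Section~1 is apt.
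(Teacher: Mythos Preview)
Your proposal is correct and matches the paper's approach: the Fact is stated without proof, and the paper indicates (in the paragraph preceding the exceptional-type discussion) that all such facts are obtained by computing the full multiplication table via the Chaput--Perrin presentation, Giambelli formulas, and Gr\"obner basis techniques. Your additional interpretive remarks about Bruhat order and Poincar\'e duality go a bit beyond what the paper records, but the verification strategy is the same.
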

The first five inequalities partly encode $\olambda\prec_{\rm Bruhat} \omu^{\vee}$ and $\olambda,\omu\prec_{\rm Bruhat} \onu$.

We do \emph{not} have an isomorphism between $\Lambda_{F_4/P_4}$ and $\Lambda_{F_4/P_1}$. In fact
\[\begin{picture}(100,58)
\put(-100,20){$\Lambda_{F_4/P_1}:$}
%\multiput(10,10)(0,25){4}{$\circ$}
\multiput(10,0)(0,25){2}{$\circ$}
\multiput(13,5)(0,25){2}{\line(0,1){21}}
%\multiput(10,60)(25,0){3}{$\circ$}
\multiput(-40,0)(25,0){4}{$\circ$}
\multiput(-35,3)(25,0){3}{\line(1,0){21}}
\multiput(10,25)(25,0){4}{$\circ$}
\multiput(10,25)(25,0){2}{$\circ$}
\multiput(15,28)(25,0){3}{\line(1,0){21}}
\multiput(10,50)(25,0){7}{$\circ$}
\multiput(15,53)(25,0){6}{\line(1,0){21}}
\multiput(38,5)(0,25){2}{\line(0,1){21}}
\multiput(63,30)(0,25){1}{\line(0,1){21}}
\multiput(88,30)(0,25){1}{\line(0,1){21}}
\put(160,50){$\circ$}
\end{picture}
\]

However, there is still a natural correspondence of ${\mathbb Y}_{F_4/P_4}$ with ${\mathbb Y}_{F_4/P_1}$: given a reduced word $s_{i_1}\cdots s_{i_\ell}$ for a minimal coset representative of $F_4^{P_4}$, then 
$s_{5-i_1}\cdots s_{5-i_{\ell}}$ is a reduced word of a minimal coset representative of $F_4^{P_1}$. If $\olambda\in {\mathbb Y}_{F_4/P_4}$ is the RYD associated
to the first reduced word, we may declare it to be the RYD indexing the
Schubert class of $H^{\star}(F_4/P_1)$ associated to the second reduced word.
Thus when we write $C_{\olambda,\omu}^{\onu}(F_4/P_1)$ we refer to the proxy shapes
from ${\mathbb Y}_{F_4/P_4}$. (Similar reasoning works in the other types as well, allowing us to
index coadjoint Schubert varieties with proxy adjoint shapes.)

\begin{Fact}
$C_{\olambda,\omu}^{\onu}(F_4/P_1)=2^{{\rm sh}(\olambda)+{\rm sh}(\omu)-{\rm sh}(\onu)}C_{\olambda,\omu}^{\onu}(F_4/P_4)$.

All numbers below $8$ except $7$ appear as Schubert structure constants for (adjoint) $F_4/P_4$.
% Above sentence checked Aug 23, 2013 with F4P4forM2final.txt
% using maxdegreecheckF4
For (coadjoint) $F_4/P_1$ it is all numbers below $6$.
% Above sentence checked Aug 23, 2013 with F4P4andP1combinedforM2.txt maxdegreecheckF4coadj(0,23,23,23)
\end{Fact}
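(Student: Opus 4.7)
The first identity follows directly from Theorem~\ref{thm:iff}: $F_4/P_4$ is the adjoint variety in type $F_4$, its coadjoint partner is $F_4/P_1$, and $m(F_4) = 2$. Combined with the proxy correspondence ${\mathbb Y}_{F_4/P_4} \leftrightarrow {\mathbb Y}_{F_4/P_1}$ described just above for indexing coadjoint Schubert classes, this specializes to the displayed formula.

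For the two enumeration claims, the plan is the exhaustive computation already outlined in Section~2.1. Using the presentation of $H^\star(F_4/P_4)$ from \cite{Chaput.Perrin} together with the Giambelli-type formulas of \cite{Chaput.Perrin:computations}, one writes each Schubert class $\sigma_{\olambda}$ as a polynomial in the generators of the cohomology ring. Expand each product $\sigma_{\olambda} \cdot \sigma_{\omu}$ and reduce modulo the defining ideal using standard Gr\"obner basis techniques; record the resulting coefficients $C_{\olambda,\omu}^{\onu}(F_4/P_4)$. Since $|{\mathbb Y}_{F_4/P_4}| = 24$, there are only $24^3$ triples, comfortably within reach of a computer algebra system and (onerously) of a by-hand check. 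Reading off the set of distinct values that arise should yield $\{0,1,2,3,4,5,6\}$, establishing the adjoint assertion and confirming in particular that $7$ never occurs.

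The coadjoint statement then follows immediately from the first identity: each $C_{\olambda,\omu}^{\onu}(F_4/P_1)$ is obtained from the tabulated adjoint data by multiplication by $2^{{\rm sh}(\olambda)+{\rm sh}(\omu)-{\rm sh}(\onu)}$, and collecting the distinct nonnegative integer values produced should yield $\{0,1,2,3,4,5\}$. The main obstacle is not conceptual but rather the need for confidence in the completeness and correctness of the enumeration: one must rule out a missed appearance of $7$ among the adjoint constants, and verify that no ${\rm sh}$-twist lifts a coadjoint entry into $\{6,7,\ldots\}$. At this modest size, an independent by-hand spot-check on targeted triples (in particular those with $C_{\olambda,\omu}^{\onu}(F_4/P_4)\in\{5,6\}$ and nonzero ${\rm sh}$-imbalance, which are the cases where the value $7$ could plausibly appear, and where $F_4/P_1$ values near the boundary could arise) is feasible and guards against implementation errors.
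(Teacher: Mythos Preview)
Your approach to the enumeration claims is essentially the paper's own: the Fact is established by direct computer calculation using the presentation and Giambelli formulas of \cite{Chaput.Perrin, Chaput.Perrin:computations}, and the paper offers no further argument. So that part is fine.

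However, your derivation of the first identity is circular. You invoke Theorem~\ref{thm:iff} to deduce the short-roots correspondence for $F_4$, but look at how Theorem~\ref{thm:iff} is actually proved. Section~2.5 says: ``The results of Section~1 and the discussion of this section prove Theorem~\ref{thm:iff}.'' In other words, the second assertion of Theorem~\ref{thm:iff} (the identity $C_{\olambda,\omu}^{\onu}(G/P)=m(G)^{{\rm sh}(\olambda)+{\rm sh}(\omu)-{\rm sh}(\onu)}C_{\olambda,\omu}^{\onu}(G/Q)$) is established type by type, and for $F_4$ the input is precisely this Fact. The Fact is not a corollary of Theorem~\ref{thm:iff}; it is part of its proof. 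The first line of the Fact must therefore also be verified computationally, by tabulating the structure constants on both sides (using the proxy indexing ${\mathbb Y}_{F_4/P_4}\leftrightarrow{\mathbb Y}_{F_4/P_1}$) and checking the identity triple by triple.
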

% checked Aug 23, 2013 with F4P4andP1combinedforM2.txt using shortcompareall()

\smallskip
\begin{center}
\begin{table}[b]
\begin{tabular}{||c|c|c||}\hline
\emph{Root system} & \emph{Dynkin Diagram} & \emph{Nomenclature (if any)}\\\hline \hline
$A_{n-1}$ &
\setlength{\unitlength}{3mm}

\begin{picture}(11,3)
\multiput(0,1.5)(2,0){6}{$\circ$}
\multiput(0.55,1.85)(2,0){5}{\line(1,0){1.55}}
\put(0,1.5){$\bullet$}
\put(10,1.5){$\bullet$}
\put(0,0){$1$}
\put(2,0){$2$}
\put(3.5,0){$\cdots$}
\put(5.9,0){$k$}
\put(7.1,0){$\cdots$}
\put(9.1,0){$n\!-\!1$}
\end{picture}
& Point-hyperplane incidence in ${\mathbb P}^{n-1}$; $\ourflag$\\ \hline
$B_n$ &
\setlength{\unitlength}{3mm} \begin{picture}(11,3)
\multiput(0,1.5)(2,0){6}{$\circ$}
\multiput(0.55,1.85)(2,0){4}{\line(1,0){1.55}}
\multiput(8.55,1.75)(0,.2){2}{\line(1,0){1.55}} \put(8.75,1.52){$>$}
\put(2,1.5){$\bullet$}
\put(0,0){$1$}
\put(2,0){$2$}
\put(4,0){$\cdots$}
%\put(6,0){$k$}
\put(7,0){$\cdots$}
\put(10,0){$n$}\end{picture}
& Odd orthogonal Grassmannian; $OG(2,2n+1)$\\ \hline
$C_n, n\geq 3$ &
\setlength{\unitlength}{3mm} \begin{picture}(11,3)
\multiput(0,1.5)(2,0){6}{$\circ$}
\multiput(0.55,1.85)(2,0){4}{\line(1,0){1.55}}
\multiput(8.55,1.75)(0,.2){2}{\line(1,0){1.55}} \put(8.85,1.53){$<$}
\put(0,1.5){$\bullet$}
\put(0,0){$1$}
\put(2,0){$2$}
\put(4,0){$\cdots$}
%\put(6,0){$k$}
\put(7,0){$\cdots$}
\put(10,0){$n$}
\end{picture}
& Odd Projective space; ${\mathbb P}^{2n-1}$\\ \hline
$D_n, n\geq 4$ &
\setlength{\unitlength}{2.9mm} \begin{picture}(11,3.5)
\multiput(0,1.6)(2,0){5}{$\circ$}
\multiput(0.55,2)(2,0){4}{\line(1,0){1.55}}
\put(8.5,1.95){\line(2,-1){1.55}}
\put(8.5,1.95){\line(2,1){1.55}}
\put(10,2.5){$\circ$}
\put(10,0.7){$\circ$}
\put(2,1.6){$\bullet$}
\put(0,0){$1$}
\put(2,0){$2$}
\put(4,0){$\cdots$}
\put(7,0){$\cdots$}
\put(9.1,0){$n\!-\!1$}
\put(11, 2.3){$n$}\end{picture}

& Even orthogonal Grassmannian; $OG(2,2n)$
\\\hline
$E_6$ &
\setlength{\unitlength}{3mm}
\begin{picture}(9,3.6)
\multiput(0,0.5)(2,0){5}{$\circ$}
\multiput(0.55,0.95)(2,0){4}{\line(1,0){1.6}}
\put(4,2.6){$\bullet$}
\put(4,2.6){$\circ$}
\put(4.35,1.2){\line(0,1){1.5}}
\put(0,-.6){$1$}
\put(2,-0.6){$3$}
\put(4,-.6){$4$}
\put(6,-.6){$5$}
\put(5,2.5){$2$}
\put(8,-.6){$6$}
\end{picture}
& $E_6/P_2$
\\ \hline
$E_7$ &
\setlength{\unitlength}{3mm}
\begin{picture}(11,4)
\put(10,0.9){$\bullet$}
\multiput(2,0.9)(2,0){5}{$\circ$}
\multiput(0.55,1.35)(2,0){5}{\line(1,0){1.6}}
\put(10,0.9){$\circ$}
\put(6,3){$\circ$}
\put(0,0.9){$\circ$}
\put(6.35,1.6){\line(0,1){1.5}}
\put(0,-.2){$1$}
\put(2,-0.2){$3$}
\put(4,-.2){$4$}
\put(6,-.2){$5$}
\put(7,2.9){$2$}
\put(8,-.2){$6$}
\put(10,-.2){$7$}
\end{picture}
& $E_7/P_7$
\\ \hline
$E_8$ &
\setlength{\unitlength}{3mm}
\begin{picture}(15,4)
\put(12,0.9){$\bullet$}
\multiput(2,0.9)(2,0){6}{$\circ$}
\multiput(0.55,1.35)(2,0){6}{\line(1,0){1.6}}
\put(10,0.9){$\circ$}
\put(4,3){$\circ$}
\put(0,0.9){$\circ$}
\put(4.35,1.6){\line(0,1){1.5}}
\put(0,-.2){$1$}
\put(2,-0.2){$3$}
\put(4,-.2){$4$}
\put(6,-.2){$5$}
\put(5,2.9){$2$}
\put(8,-.2){$6$}
\put(10,-.2){$7$}
\put(12,-.2){$8$}
\end{picture}
& $E_8/P_8$
\\ \hline
$F_4$ &
\setlength{\unitlength}{3mm}
\begin{picture}(11,4)
\put(6,0.9){$\bullet$}
\put(0,0.9){$\circ$}
\multiput(2,0.9)(2,0){3}{$\circ$}
\multiput(0.55,1.35)(2,0){3}{\line(1,0){1.6}}
\put(0,-.2){$1$}
\put(2,-0.2){$2$}
\put(4,-.2){$3$}
\put(6,-.2){$4$}
\put(2.3,1.1){\line(1,0){1.8}}
\put(2,1){\line(-1,1){1.1}}
\put(2.9,.8){$<$}
\end{picture}
& $F_4/P_4$
\\ \hline
$G_2$ &
\setlength{\unitlength}{3mm}
\begin{picture}(6,4)
\put(0,0.9){$\circ$}
\put(2,0.9){$\bullet$}
\put(0,-.2){$1$}
\put(2,-0.2){$2$}
\put(.5,1.3){\line(1,0){1.5}}
\put(.5,1.1){\line(1,0){1.5}}
\put(.5,1.5){\line(1,0){1.5}}
\put(.6,.9){$<$}
\end{picture}
& $G_2/P_2$ \\ \hline
\end{tabular}
\caption{%\label{table:classif} 
Classification of adjoint $G/P$'s}
\end{table}
\end{center}
%\excise{ 
%\noindent
%{\bf Type $G_2$:} Here $\Lambda_{G_2/P_2}$ is a chain of five elements, with th%e maximal
%element being the adjoint root. The short roots are the middle two nonadjoint r%oots.
%Also $\Lambda_{G_2/P_1}$ is isomorphic to $\Lambda_{G_2/P_2}$ and one can use t%he elements of
%${\mathbb Y}_{G_2/P_2}$ to index Schubert class of $G_2/P_1$. 
%
%\begin{Fact}
%$c_{\olambda,\omu}^{\onu}(G_2/P_2)\neq 0$ if and only if $|\lambda|+|\mu|=|\nu|%$. Also,
%\[c_{\olambda,\omu}^{\onu}(G_2/P_2)=3^{{\rm short}(\onu)-({\rm short}(\olambda)%+{\rm short}(\omu))}
%    c_{\olambda,\omu}^{\onu}(G_2/P_1).\]
%Finally $c_{\olambda,\omu}^{\onu}(G_2/P_1)\in\{0,1,2\}$ and 
%$c_{\olambda,\omu}^{\onu}(G_2/P_2)\in \{...\}$.
%\end{Fact}
%}
\vspace{-.4in} \noindent
{\bf Type $G_2$:} Both the adjoint $\Lambda_{G_2/P_2}$ and  coadjoint $\Lambda_{G_2/P_1}$ are a chain of five elements, with the maximal element being the adjoint root. Both $\mathbb{Y}_{G_2/P_2}$ and $\mathbb{Y}_{G_2/P_1}$ have six elements, one each of size $k$ for $0\le k \le 5$.
We identify each element of $\mathbb{Y}_{G_2/P_1}$ with the element of $\mathbb{Y}_{G_2/P_2}$ having the same size, and compute using the elements of $\mathbb{Y}_{G_2/P_2}$.
The short roots of $\Lambda_{G_2/P_2}$ are the middle two nonadjoint roots.

\begin{Fact}
If $(\olambda,\omu,\onu)\neq (\langle\lambda|\circ\rangle,\langle\mu|\circ\rangle,\langle\nu|\bullet\rangle)$, then $C_{\olambda,\omu}^{\onu}(G_2/P_1) = C_{\lambda,\mu}^{\nu}(Gr_1(\mathbb{C}^5))$. Otherwise $C_{\langle\lambda|\circ\rangle, \langle \mu| \circ\rangle}^{\langle\nu|\bullet\rangle}(G_2/P_1) = 2\cdot C_{\lambda,\mu}^{\nu^+}(Gr_1(\mathbb{C}^6))$, where $\nu^+$ is $\nu$ with one additional root. Also,
%The adjoint $OG(2,2n+1)$ is related to the coadjoint $LG(2,2n)$ by %a short roots
%factor. The same is true for $G_2/P_2$ and $G_2/P_1$.
\[C_{\olambda,\omu}^{\onu}(G_2/P_2)= 3^{{\rm sh}(\onu)-{\rm sh}(\olambda)-{\rm sh}(\omu)}C_{\olambda,\omu}^{\onu}(G_2/P_1).\] 
Moreover, $C_{\olambda,\omu}^{\onu}(G_2/P_2)\neq 0$ if and only if $|\onu|=|\olambda|+|\omu|$. Finally, $C_{\olambda,\omu}^{\onu}(G_2/P_2)\in \{0,1,2,3\}$ and $C_{\olambda,\omu}^{\onu}(G_2/P_1)\in\{0,1,2\}$.
\end{Fact}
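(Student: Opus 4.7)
Both $\Lambda_{G_2/P_1}$ and $\Lambda_{G_2/P_2}$ are $5$-element chains, so Bruhat order on each $\mathbb{Y}_{G_2/P_i}$ is also a chain of $6$ elements and $H^\star(G_2/P_i)$ has a unique Schubert class, call it $\tau_k$, in each codegree $k\in\{0,\ldots,5\}$. Each ring is therefore singly generated by $\tau_1$, and every multiplication is determined by the four Pieri coefficients $c_1,\ldots,c_4\in\mathbb{Z}_{>0}$ satisfying $\tau_1\cdot\tau_k=c_k\tau_{k+1}$.

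My first step is to compute each $c_k$ in each ring using Chevalley's formula on the minimal coset representatives in $W^{P_i}\subset W(G_2)$. Because Bruhat order on $W^{P_i}$ is a chain, each cover $u_k\lessdot u_{k+1}$ is realized by a unique $\beta_k\in\Phi^+(G_2)$ with $u_{k+1}=u_ks_{\beta_k}$, so
\[c_k=\langle\omega,\beta_k^\vee\rangle,\]
where $\omega$ is the fundamental weight of the simple root not in the Levi of $P_i$. A direct calculation in the $G_2$ root system (careful with the short/long distinction and $\beta^\vee=2\beta/(\beta,\beta)$) yields
\[(c_1,c_2,c_3,c_4)=(1,2,1,1)\text{ for }G_2/P_1,\quad(c_1,c_2,c_3,c_4)=(3,2,3,1)\text{ for }G_2/P_2.\]
As a sanity check this gives $\int_{G_2/P_1}\tau_1^5=2$ and $\int_{G_2/P_2}\tau_1^5=18$, recovering the degrees of $\mathbb{Q}^5\cong G_2/P_1$ and the $G_2$ adjoint variety $G_2/P_2$.

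From the Pieri data, every nonzero product $\tau_i\tau_j$ (necessarily $i+j\le 5$) is a positive integer multiple of $\tau_{i+j}$, and one tabulates the six nonzero products in each ring. This immediately yields the nonzeroness criterion $|\onu|=|\olambda|+|\omu|$ and the value sets $\{0,1,2\}$ and $\{0,1,2,3\}$. Entry-by-entry comparison of the two tables, together with the short-root usage sequence ${\rm sh}(\tau_0),\ldots,{\rm sh}(\tau_5)=0,0,1,1,2,2$ for $\Lambda_{G_2/P_2}$, verifies the identity $C_{\olambda,\omu}^{\onu}(G_2/P_2)=3^{{\rm sh}(\onu)-{\rm sh}(\olambda)-{\rm sh}(\omu)}C_{\olambda,\omu}^{\onu}(G_2/P_1)$; this is the $G_2$ instance of the coadjoint/adjoint short-roots comparison.

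Finally, to match with $Gr_1(\mathbb{C}^5)=\mathbb{P}^4$ (respectively $Gr_1(\mathbb{C}^6)=\mathbb{P}^5$), one enumerates the six triples. Outside the exceptional pattern $(\langle\lambda|\circ\rangle,\langle\mu|\circ\rangle,\langle\nu|\bullet\rangle)$, the inner partitions $\lambda,\mu,\nu$ are single-row with $|\nu|=|\lambda|+|\mu|$, so $C_{\lambda,\mu}^{\nu}(\mathbb{P}^4)=1$, matching $C_{\olambda,\omu}^{\onu}(G_2/P_1)$. In the exceptional pattern the product crosses the adjoint root: $|\nu|=|\olambda|+|\omu|-1$, so $\nu^+$ of size $|\olambda|+|\omu|$ lives in $\mathbb{P}^5$ with $C_{\lambda,\mu}^{\nu^+}(\mathbb{P}^5)=1$, and the factor of $2$ in $C_{\olambda,\omu}^{\onu}(G_2/P_1)$ is precisely the ``middle'' quadric Pieri jump $c_2=2$. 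The main obstacle is careful bookkeeping of the Chevalley computations; the rest is a finite inspection of six triples.
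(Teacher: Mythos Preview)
Your argument is correct. The paper does not give a proof of this Fact; it is one of several exceptional-type statements that the authors verify by direct machine computation of all structure constants, using the presentation of $H^\star(G/P)$ from \cite{Chaput.Perrin} together with the Giambelli-type formulas of \cite{Chaput.Perrin:computations} and Gr\"obner basis techniques (see the opening of Section~2.1).

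Your route is genuinely different and more transparent for this small case. Rather than invoking an external presentation, you exploit that $H^\star(G_2/P_i)$ has one Schubert class per degree, so the entire ring is determined by the four Chevalley coefficients $c_k=\langle\omega_i,\beta_k^\vee\rangle$ along the Bruhat chain in $W^{P_i}$. Your values $(c_1,c_2,c_3,c_4)=(1,2,1,1)$ and $(3,2,3,1)$ are correct (the first is the familiar quadric pattern for $G_2/P_1\cong\mathbb{Q}^5$; the second passes the Poincar\'e-duality symmetry check $C_{i,j}^{i+j}=C_{i,5-i-j}^{5-j}$ and gives $\deg(G_2/P_2)=18$). From these the six nonzero structure constants in each ring are immediate, and your short-root count ${\rm sh}(\tau_k)=0,0,1,1,2,2$ matches the paper's description of $\Lambda_{G_2/P_2}$, so the $3^{{\rm sh}}$ comparison and the $Gr_1$ matching follow by the finite inspection you describe. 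The payoff of your approach is that it is self-contained and hand-checkable; the paper's approach, while less explicit here, scales uniformly to the larger exceptional cases $F_4$ and $E_n$ treated in the same section.
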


\noindent
{\bf Type $E_{n}$ series:}  Below is an example of 
$\olambda\subseteq \Lambda_{E_6/P_2}$ and $\olambda\subseteq \Lambda_{E_7/P_7}$.
In both cases the adjoint root is the rightmost root.

\excise{ % this was the original picture
\[\begin{picture}(380,90)
\multiput(10,0)(25,0){4}{$\circ$}
\multiput(10,0)(25,0){4}{$\bullet$}
\multiput(15,2)(25,0){3}{\line(1,0){21}}
\multiput(38,5)(25,0){3}{\line(0,1){16.5}}

\multiput(35,20)(25,0){3}{$\circ$}
\multiput(35,20)(25,0){3}{$\bullet$}

\multiput(35,40)(25,0){3}{$\circ$}
\multiput(35,40)(25,0){3}{$\bullet$}
\multiput(74,71)(25,0){5}{$\circ$}
\multiput(174,71)(25,0){1}{$\bullet$}
\multiput(74,71)(25,0){1}{$\bullet$}

\multiput(74,51)(25,0){1}{$\bullet$}
\multiput(74,31)(25,0){1}{$\bullet$}
\multiput(78,74)(25,0){4}{\line(1,0){21}}
\multiput(77,56)(25,0){3}{\line(0,1){16.3}}

\multiput(39,23)(25,0){2}{\line(1,0){21}}
\multiput(39,43)(25,0){2}{\line(1,0){21}}
\multiput(37,25)(25,0){3}{\line(0,1){16.5}}
\put(64,24){\line(1,1){10}}
\put(64,44){\line(1,1){10}}
\put(89,24){\line(1,1){10}}
\put(89,44){\line(1,1){10}}
\multiput(74,51)(25,0){3}{$\circ$}
\multiput(74,31)(25,0){3}{$\circ$}
\multiput(78,54)(25,0){2}{\line(1,0){21}}
\multiput(78,34)(25,0){2}{\line(1,0){21}}
\multiput(77,36)(25,0){3}{\line(0,1){16.3}}

% E7

\multiput(210,15)(25,0){4}{$\circ$}
\multiput(215,17)(25,0){3}{\line(1,0){21}}
\multiput(238,20)(25,0){3}{\line(0,1){16.5}}
\multiput(210,15)(25,0){3}{$\bullet$}

\multiput(160,-5)(25,0){6}{$\circ$}
\multiput(165,-3)(25,0){5}{\line(1,0){21}}
\multiput(213,0)(25,0){4}{\line(0,1){16.5}}
\multiput(160,-5)(25,0){6}{$\bullet$}

\multiput(235,35)(25,0){3}{$\circ$}
\multiput(235,55)(25,0){3}{$\circ$}

\multiput(235,35)(25,0){2}{$\bullet$}
\multiput(235,55)(25,0){1}{$\bullet$}

\multiput(274,86)(25,0){4}{$\circ$}
\multiput(278,89)(25,0){3}{\line(1,0){21}}
\multiput(277,71)(25,0){3}{\line(0,1){16.3}}

\multiput(274,106)(25,0){7}{$\circ$}
\multiput(278,109)(25,0){6}{\line(1,0){21}}
\multiput(277,91)(25,0){4}{\line(0,1){16.3}}

\multiput(239,38)(25,0){2}{\line(1,0){21}}
\multiput(239,58)(25,0){2}{\line(1,0){21}}
\multiput(237,40)(25,0){3}{\line(0,1){16.5}}
\put(264,39){\line(1,1){10}}
\put(264,59){\line(1,1){10}}
\put(289,39){\line(1,1){10}}
\put(289,59){\line(1,1){10}}
\multiput(274,66)(25,0){3}{$\circ$}
\multiput(274,46)(25,0){3}{$\circ$}
\multiput(278,69)(25,0){2}{\line(1,0){21}}
\multiput(278,49)(25,0){2}{\line(1,0){21}}
\multiput(277,51)(25,0){3}{\line(0,1){16.3}}
\end{picture}\]
}

\excise{  %columns diagram for E7
\[\begin{picture}(380,200)
\multiput(10,50)(25,0){4}{$\circ$}
\multiput(10,50)(25,0){4}{$\bullet$}
\multiput(15,52)(25,0){3}{\line(1,0){21}}
\multiput(38,55)(25,0){3}{\line(0,1){16.5}}

\multiput(35,70)(25,0){3}{$\circ$}
\multiput(35,70)(25,0){3}{$\bullet$}

\multiput(35,90)(25,0){3}{$\circ$}
\multiput(35,90)(25,0){3}{$\bullet$}
\multiput(74,121)(25,0){5}{$\circ$}
\multiput(174,121)(25,0){1}{$\bullet$}
\multiput(74,121)(25,0){1}{$\bullet$}

\multiput(74,101)(25,0){1}{$\bullet$}
\multiput(74,81)(25,0){1}{$\bullet$}
\multiput(78,124)(25,0){4}{\line(1,0){21}}
\multiput(77,106)(25,0){3}{\line(0,1){16.3}}

\multiput(39,73)(25,0){2}{\line(1,0){21}}
\multiput(39,93)(25,0){2}{\line(1,0){21}}
\multiput(37,75)(25,0){3}{\line(0,1){16.5}}
\put(64,74){\line(1,1){10}}
\put(64,94){\line(1,1){10}}
\put(89,74){\line(1,1){10}}
\put(89,94){\line(1,1){10}}
\multiput(74,101)(25,0){3}{$\circ$}
\multiput(74,81)(25,0){3}{$\circ$}
\multiput(78,104)(25,0){2}{\line(1,0){21}}
\multiput(78,84)(25,0){2}{\line(1,0){21}}
\multiput(77,86)(25,0){3}{\line(0,1){16.3}}

% E7

\multiput(210,65)(25,0){4}{$\circ$}
\multiput(215,67)(25,0){3}{\line(1,0){21}}
\multiput(238,70)(25,0){3}{\line(0,1){16.5}}
\multiput(210,65)(25,0){3}{$\bullet$}

\multiput(215,47)(25,0){3}{\line(1,0){21}}
\multiput(213,50)(25,0){4}{\line(0,1){16.5}}
\multiput(210,45)(25,0){4}{$\bullet$}

\multiput(235,85)(25,0){3}{$\circ$}
\multiput(235,105)(25,0){3}{$\circ$}

\multiput(235,85)(25,0){2}{$\bullet$}
\multiput(235,105)(25,0){1}{$\bullet$}

\multiput(274,136)(25,0){4}{$\circ$}
\multiput(278,139)(25,0){3}{\line(1,0){21}}
\multiput(277,121)(25,0){3}{\line(0,1){16.3}}

\multiput(274,156)(25,0){4}{$\circ$}
\multiput(278,159)(25,0){3}{\line(1,0){21}}
\multiput(277,141)(25,0){4}{\line(0,1){16.3}}

\multiput(239,88)(25,0){2}{\line(1,0){21}}
\multiput(239,108)(25,0){2}{\line(1,0){21}}
\multiput(237,90)(25,0){3}{\line(0,1){16.5}}
\put(264,89){\line(1,1){10}}
\put(264,109){\line(1,1){10}}
\put(289,89){\line(1,1){10}}
\put(289,109){\line(1,1){10}}
\multiput(274,116)(25,0){3}{$\circ$}
\multiput(274,96)(25,0){3}{$\circ$}
\multiput(278,119)(25,0){2}{\line(1,0){21}}
\multiput(278,99)(25,0){2}{\line(1,0){21}}
\multiput(277,101)(25,0){3}{\line(0,1){16.3}}

\put(210,25){$\bullet$}
\put(210,5){$\bullet$}
\put(213,29){\line(0,1){16.5}}
\put(213,9){\line(0,1){16.5}}

\put(349,176){$\circ$}
\put(349,196){$\circ$}
\put(349,216){$\circ$}
\put(352,160){\line(0,1){16.5}}
\put(352,180){\line(0,1){16.5}}
\put(352,200){\line(0,1){16.5}}

\end{picture}\]
}

\[\begin{picture}(380,100)
\multiput(-15,0)(25,0){4}{$\circ$}
\multiput(-15,0)(25,0){4}{$\bullet$}
\multiput(-10,2)(25,0){3}{\line(1,0){21}}
\multiput(13,5)(25,0){3}{\line(0,1){16.5}}

\multiput(10,20)(25,0){3}{$\circ$}
\multiput(10,20)(25,0){3}{$\bullet$}

\multiput(10,40)(25,0){3}{$\circ$}
\multiput(10,40)(25,0){3}{$\bullet$}
\multiput(49,71)(25,0){5}{$\circ$}
\multiput(149,71)(25,0){1}{$\bullet$}
\multiput(49,71)(25,0){1}{$\bullet$}

\multiput(49,51)(25,0){1}{$\bullet$}
\multiput(49,31)(25,0){1}{$\bullet$}
\multiput(53,74)(25,0){4}{\line(1,0){21}}
\multiput(52,56)(25,0){3}{\line(0,1){16.3}}

\multiput(14,23)(25,0){2}{\line(1,0){21}}
\multiput(14,43)(25,0){2}{\line(1,0){21}}
\multiput(13,25)(25,0){3}{\line(0,1){16.5}}
\put(39,24){\line(1,1){10}}
\put(39,44){\line(1,1){10}}
\put(64,24){\line(1,1){10}}
\put(64,44){\line(1,1){10}}
\multiput(49,51)(25,0){3}{$\circ$}
\multiput(49,31)(25,0){3}{$\circ$}
\multiput(53,54)(25,0){2}{\line(1,0){21}}
\multiput(53,34)(25,0){2}{\line(1,0){21}}
\multiput(52,36)(25,0){3}{\line(0,1){16.3}}

% E7

\multiput(135,-5)(25,0){4}{$\circ$}
\multiput(140,-3)(25,0){3}{\line(1,0){21}}
\multiput(188,0)(25,0){2}{\line(0,1){16.5}}
\multiput(135,-5)(25,0){4}{$\bullet$}

\multiput(185,15)(25,0){4}{$\circ$}
\multiput(190,17)(25,0){3}{\line(1,0){21}}
\multiput(188,20)(25,0){4}{\line(0,1){16.5}}
\multiput(185,15)(25,0){4}{$\bullet$}

\multiput(185,35)(25,0){4}{$\circ$}
\multiput(185,55)(25,0){4}{$\circ$}

\multiput(185,35)(25,0){3}{$\bullet$}
\multiput(185,55)(25,0){1}{$\bullet$}

\multiput(249,86)(25,0){4}{$\circ$}
\multiput(253,89)(25,0){3}{\line(1,0){21}}
\multiput(252,71)(25,0){4}{\line(0,1){16.3}}

\multiput(299,106)(25,0){5}{$\circ$}
\multiput(303,109)(25,0){4}{\line(1,0){21}}
\multiput(302,91)(25,0){2}{\line(0,1){16.3}}

\multiput(189,38)(25,0){3}{\line(1,0){21}}
\multiput(189,58)(25,0){3}{\line(1,0){21}}
\multiput(188,40)(25,0){4}{\line(0,1){16.3}}

\put(239,39){\line(1,1){10}}
\put(239,59){\line(1,1){10}}
\put(264,39){\line(1,1){10}}
\put(264,59){\line(1,1){10}}

\multiput(249,66)(25,0){4}{$\circ$}
\multiput(249,46)(25,0){4}{$\circ$}

\multiput(253,69)(25,0){3}{\line(1,0){21}}
\multiput(253,49)(25,0){3}{\line(1,0){21}}
\multiput(252,51)(25,0){4}{\line(0,1){16.3}}
\end{picture}\]

%    A graphical description of $\olambda^{\vee}$ in this case (as well as $E_7$ and $E_8$) is possible.
%    Briefly, since a ``cube'' appears as it did in $D_n$, one can define ``ambiguous shapes''
%and describe $\olambda^{\vee}$ in those terms. Since we don't need this anywhere else, we omit the
%description.

\begin{Fact}
All integers in $[0,7]$ appear as structure constants for $E_6/P_2$.
\end{Fact}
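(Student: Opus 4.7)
The claim is a finite, numerical statement about the $72\times 72$ multiplication table of $H^{\star}(E_6/P_2)$, so the plan is computational verification in the style used for the other exceptional Facts in this section. Concretely, I would use the presentation of $H^{\star}(E_6/P_2)$ as a quotient of a polynomial ring by an explicit ideal given in \cite{Chaput.Perrin}, together with the Giambelli-type formulas of \cite{Chaput.Perrin:computations}, which express each Schubert class $\sigma_{\olambda}$ ($\olambda\in{\mathbb Y}_{E_6/P_2}$) as a specific polynomial in the generators. A standard Gr\"obner basis with respect to a fixed monomial order then reduces any product $\sigma_{\olambda}\cdot\sigma_{\omu}$ to a normal form whose expansion in the Schubert basis reads off the constants $C_{\olambda,\omu}^{\onu}(E_6/P_2)$.

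Given this setup, the verification reduces to enumerating the multiset of all $C_{\olambda,\omu}^{\onu}(E_6/P_2)$ and checking that each of $0,1,2,3,4,5,6,7$ is attained. The search space is very small: $|\mathbb{Y}_{E_6/P_2}|=72$ and $\dim(E_6/P_2)=21$, so by Poincar\'e duality one only needs to compute products with $|\olambda|+|\omu|\le 21$, and the degree constraint $|\onu|=|\olambda|+|\omu|$ inside each cohomological degree further trims this. The value $0$ is immediate (e.g.\ any triple violating this degree constraint), and $1$ is immediate (e.g.\ from $\sigma_{\emptyset}\cdot\sigma_{\olambda}=\sigma_{\olambda}$). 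The nontrivial part is to exhibit triples realizing each of $2,3,4,5,6,7$.

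To do this I would scan the output of the Gr\"obner reduction for the largest coefficients appearing and record a witness triple for each value. The values $2$ and $3$ already appear in the analogous calculations for $F_4/P_4$ and $G_2/P_2$ documented earlier in the section, so by dimension reasons and the pattern exhibited in those cases one expects modest-size examples to suffice here as well. The hypothesis that $7$ is attained (while $7$ was explicitly missing for $F_4/P_4$) is the most delicate to confirm, and it is the step most dependent on the actual computation; once a triple $(\olambda,\omu,\onu)$ with $C_{\olambda,\omu}^{\onu}(E_6/P_2)=7$ is produced, the Fact follows.

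The main obstacle is not conceptual but bookkeeping: correctly implementing the Chaput--Perrin presentation and Giambelli formulas for $E_6/P_2$, and organizing the output so that each of the seven nonzero values is certified by an explicit witness triple. Because the authors note these computations are in fact small enough to be carried out by hand from \cite{Chaput.Perrin, Chaput.Perrin:computations}, the writeup would simply exhibit one witness $(\olambda^{(k)},\omu^{(k)},\onu^{(k)})$ with $C_{\olambda^{(k)},\omu^{(k)}}^{\onu^{(k)}}(E_6/P_2)=k$ for each $k\in\{0,1,\dots,7\}$, leaving the underlying Gr\"obner/Giambelli reduction as a verifiable finite check.
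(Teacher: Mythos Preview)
Your proposal is correct and takes essentially the same approach as the paper: the Fact is stated there without proof, as a direct consequence of the computer calculation of all structure constants using the Chaput--Perrin presentation, their Giambelli formulas, and Gr\"obner basis reduction, exactly as you describe. The paper does not record explicit witness triples for each value $0,\dots,7$, so your plan to exhibit them would in fact be slightly more detailed than what the paper provides.
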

%checked Aug 23, 2013 using E6P2forM2.txt (maxdegreecheck(0,71,71,71))

\begin{Fact} The only integer in $[0,33]$ not appearing as a structure constant for $E_7/P_7$
is $25$.
\end{Fact}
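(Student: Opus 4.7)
The plan is a straightforward computational verification following the framework already indicated in Section~2.1: use the presentation of the cohomology ring $H^\star(E_7/P_7)$ from \cite{Chaput.Perrin}, combine it with the Giambelli-type formulas of \cite{Chaput.Perrin:computations}, and carry out Schubert multiplication via Gr\"obner basis reduction. Specifically, first set up the ambient polynomial ring in the generators (the special Schubert classes) together with the explicit ideal of relations presenting $H^\star(E_7/P_7)$. Compute a Gr\"obner basis of this ideal, yielding a canonical normal form for every polynomial modulo the ideal.

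Next, for each of the $|\mathbb{Y}_{E_7/P_7}|=126$ shapes, use the Giambelli-type formulas to write $\sigma_{\olambda}$ as a polynomial in the generators and reduce it to normal form. The resulting $126$ polynomials form an explicit additive $\mathbb{Z}$-basis of $H^\star(E_7/P_7)$. Then, for each pair $(\olambda,\omu)\in \mathbb{Y}_{E_7/P_7}^2$, multiply the corresponding polynomials, reduce the product modulo the Gr\"obner basis, and re-expand the result in the Schubert basis by solving a triangular linear system (with the normal-form Schubert basis as columns). This produces the complete multiplication table of structure constants $C_{\olambda,\omu}^{\onu}(E_7/P_7)$.

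The Fact is then verified by inspecting the resulting finite set of integers. One checks that $25$ does not appear among the computed constants, and exhibits a witness triple $(\olambda,\omu,\onu)$ for each integer in $[0,33]\setminus\{25\}$. Symmetry $C_{\olambda,\omu}^{\onu}=C_{\omu,\olambda}^{\onu}$ and Poincar\'e duality cut the number of products roughly in half, but no further structural reduction is necessary.

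The main obstacle is simply the scale: there are $126^2 = 15876$ products, each expanding into up to $126$ terms, so the computation is infeasible by hand but well within reach of any modern computer algebra system (as used in \cite{Chaput.Perrin:computations}). Once the multiplication table is produced, extracting witnesses for each target value and verifying the absence of $25$ is mechanical. There is no conceptual difficulty beyond ensuring that the Gr\"obner basis computation is correct and that the change of basis between Giambelli monomials and Schubert classes is implemented accurately; both can be cross-checked against known Pieri-type products and against the dimensions $\dim H^{2k}(E_7/P_7)$ predicted by Poincar\'e polynomial of $E_7/P_7$.
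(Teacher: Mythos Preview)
Your proposal is correct and matches the paper's own approach: the Fact is asserted as a computational verification using exactly the ingredients you name (the Chaput--Perrin presentation of $H^\star(E_7/P_7)$, their Giambelli-type formulas, and Gr\"obner basis reduction), with no additional argument given in the paper beyond stating that all structure constants were computed and inspected.
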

% Checked Aug 23, 2013 using E7P1forM2.txt (maxdegreecheckE7(0,125,125,125)
For
 $\Lambda_{E_8/P_8}$, the adjoint root is the rightmost one. 
An example of $\olambda \subseteq \Lambda_{E_8/P_8}$ is:
 \[\begin{picture}(350,200)
\multiput(-10,5)(20,0){7}{$\circ$}
\multiput(-5,8)(20,0){6}{\line(1,0){15.5}}
\multiput(73,11)(20,0){3}{\line(0,1){15}}
\multiput(70,25)(20,0){3}{$\circ$}
\multiput(75,28)(20,0){2}{\line(1,0){15.5}}
\multiput(90,45)(20,0){3}{$\circ$}
\multiput(95,48)(20,0){2}{\line(1,0){15.5}}
\multiput(93,30)(20,0){2}{\line(0,1){16}}

\multiput(-10,5)(20,0){7}{$\bullet$}
\multiput(70,25)(20,0){3}{$\bullet$}
\multiput(90,45)(20,0){3}{$\bullet$}
\multiput(90,65)(20,0){3}{$\bullet$}
\multiput(90,85)(20,0){2}{$\bullet$}

\multiput(90,65)(20,0){5}{$\circ$}
\multiput(95,68)(20,0){4}{\line(1,0){15.5}}
\multiput(93,50)(20,0){3}{\line(0,1){16}}

\multiput(90,85)(20,0){5}{$\circ$}
\multiput(95,88)(20,0){4}{\line(1,0){15.5}}
\multiput(93,70)(20,0){5}{\line(0,1){16}}

\multiput(90,105)(20,0){5}{$\circ$}
\multiput(95,108)(20,0){4}{\line(1,0){15.5}}
\multiput(93,90)(20,0){5}{\line(0,1){16}}

\put(154,89){\line(1,1){10}}
\put(174,89){\line(1,1){10}}

\multiput(163,98)(20,0){5}{$\circ$}
\multiput(168,101)(20,0){4}{\line(1,0){15.5}}
\multiput(166,103)(20,0){5}{\line(0,1){16}}

\put(154,109){\line(1,1){10}}
\put(174,109){\line(1,1){10}}

\multiput(163,118)(20,0){5}{$\circ$}
\multiput(168,121)(20,0){4}{\line(1,0){15.5}}
\multiput(166,123)(20,0){5}{\line(0,1){16}}

\multiput(163,138)(20,0){5}{$\circ$}
\multiput(168,141)(20,0){4}{\line(1,0){15.5}}

\multiput(203,158)(20,0){3}{$\circ$}
\multiput(208,161)(20,0){2}{\line(1,0){15.5}}
\multiput(206,143)(20,0){3}{\line(0,1){16}}

\multiput(223,178)(20,0){3}{$\circ$}
\multiput(228,181)(20,0){2}{\line(1,0){15.5}}
\multiput(226,163)(20,0){2}{\line(0,1){16}}

\multiput(223,198)(20,0){8}{$\circ$}
\multiput(228,201)(20,0){7}{\line(1,0){15.5}}
\multiput(226,183)(20,0){3}{\line(0,1){16}}
\end{picture}\]

\begin{Fact}
In the range $[0,975]$, $469$ of the integers appear as structure constants for $E_8/P_8$. The smallest missing value is $221$.
% checked August 23, 2013 with maxdegreecheckE8(0,239,239,239)
\end{Fact}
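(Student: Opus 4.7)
The plan is purely computational, following the strategy sketched in Section~2.1 for the other exceptional cases. Using the presentation of $H^{\star}(E_8/P_8)$ given in \cite{Chaput.Perrin} together with the Giambelli-type formulas of \cite{Chaput.Perrin:computations}, each Schubert class $\sigma_{\olambda}$ can be expressed as a polynomial in a small set of distinguished generators. Each product $\sigma_{\olambda}\cdot\sigma_{\omu}$ is then multiplied as a polynomial and reduced modulo a Gr\"obner basis for the ideal of relations, with the resulting normal form read off as an integer linear combination of Schubert classes.

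First I would generate the $240$ elements of ${\mathbb Y}_{E_8/P_8}$ (indexed by minimal coset representatives of $W(E_8)/W_{P_8}$), together with their lengths and Giambelli expansions. Next, iterating over all pairs $(\olambda,\omu)$ with $|\olambda|\leq |\omu|$ and $|\olambda|+|\omu|\leq |\Lambda_{E_8/P_8}|=57$, I expand each product $\sigma_{\olambda}\cdot\sigma_{\omu}$ and record every coefficient that appears. Poincar\'e duality $\sigma_{\olambda}=\sigma_{\olambda^{\vee}}^{\vee}$ and the identity $\sigma_{\olambda}\cdot\sigma_{\omu}=\sigma_{\omu}\cdot\sigma_{\olambda}$ cut the work by a further factor of roughly four. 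Collecting the coefficients yields the set $\mathcal{S}\subseteq {\mathbb Z}_{\geq 0}$ of values realized as $C_{\olambda,\omu}^{\onu}(E_8/P_8)$.

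The asserted statement is then the numerical observation $|\mathcal{S}\cap [0,975]|=469$ together with $\min([0,975]\setminus\mathcal{S})=221$; both are read off the completed table. Explicit witness triples $(\olambda,\omu,\onu)$ can be recorded for every value $c\in \mathcal{S}\cap[0,975]$, and for $221$ one verifies by inspection of the table that no such triple exists.

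The main obstacle is purely one of scale: there are roughly $240^3/4\sim 3.5\times 10^{6}$ triples to examine, and the intermediate polynomial expressions in the Giambelli reduction can grow large. This is mitigated by (a) storing the reduced expansion of each $\sigma_{\olambda}$ once and reusing it; (b) using the filtration $|\onu|=|\olambda|+|\omu|$ to restrict which triples need to be recorded; and (c) performing arithmetic in the integer Schubert basis rather than repeatedly reducing polynomials. With these optimizations the calculation is tractable on modest hardware, and the verification is a routine -- but essential -- computer algebra exercise, exactly as in the preceding Facts for $F_4$, $E_6$, and $E_7$.
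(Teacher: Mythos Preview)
Your proposal is correct and follows essentially the same approach as the paper: the Fact is stated without a detailed proof, relying on the computational framework of \cite{Chaput.Perrin, Chaput.Perrin:computations} and Gr\"obner basis reduction described at the start of Section~2.1. Your outline of the computation, including the use of Poincar\'e duality and the cohomological degree constraint $|\onu|=|\olambda|+|\omu|$ to reduce the search space, is exactly what is needed.
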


Our partition-like description for $G/P=E_6/P_2$ identifies
a shape $\olambda$ with a vector
in ${\mathbb Z}^7$. The first three coordinates describe the number of roots
used in each row on the ``bottom layer'' of $\Lambda_{E_6/P_2}$, 
the second three similarly describe the second layer, and the last coordinate indicates use of the adjoint root. For example, the displayed shape for $E_6/P_2$ above is encoded as $(4,3,3,1,1,1,1)$.

\begin{Fact}
Let $\olambda = (4,0,0,0,0,0,0)$ and $\omu = (3,2,0,1,0,0,0)$. Then \[C_{\olambda,\omu}^{\onu}(E_6/P_2)\neq 0 \mbox{ \ for $\onu \in \{(4,3,0,3,0,0,0), (4,3,2,1,0,0,0)\}$ and}\] 
\[C_{\olambda,\omu}^{\onu}(E_6/P_2)= 0 \mbox{\  for $\onu \in \{(4,3,1,2,0,0,0), (4,3,3,0,0,0,0)\}$.}\]
\end{Fact}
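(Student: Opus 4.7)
The plan is to verify this by direct computation in $H^{\star}(E_6/P_2)$ using the presentation and Giambelli-type formulas of Chaput--Perrin \cite{Chaput.Perrin, Chaput.Perrin:computations}, in the same spirit as the other exceptional Facts of this section. First I would translate the three input shapes and the four target shapes from the partition-like vector encoding into the indexing of minimal coset representatives of $W^{P_2}$ used by \cite{Chaput.Perrin}, matching sizes ($|\olambda|=4$, $|\omu|=6$, $|\onu|=10$ in all four cases, as required by degree).

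Second, using the Giambelli-type expressions, I would write $\sigma_{\olambda}$ and $\sigma_{\omu}$ as polynomials in the special generators of $H^{\star}(E_6/P_2)$, multiply them in the ambient polynomial ring, and reduce modulo a Gr\"obner basis of the ideal of relations for this presentation. This yields an expression in the Schubert basis. I would then extract the coefficients of $\sigma_{\onu}$ for the four listed $\onu$ and check that the first two are positive while the last two vanish.

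Third, since $|{\mathbb Y}_{E_6/P_2}|=72$ and the generating classes are few, the computation is of modest size and can be carried out by a computer algebra system; as the authors remark in the introduction, it is also small enough to be verified by hand. The main obstacle is not mathematical but organizational: correctly matching conventions between the vector encoding of RYDs used in this paper and the Weyl-group / divided-difference indexing underlying the Chaput--Perrin Giambelli formulas. Once that dictionary is fixed (and sanity-checked against a small, already-known product, e.g.\ a Pieri product), the verification is purely mechanical.

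Finally, I would record the arithmetic observation that makes this Fact relevant to Theorem~\ref{thm:iff}: the ``zero'' triple $(\olambda,\omu,(4,3,1,2,0,0,0))$ satisfies
\[
(4,3,1,2,0,0,0) = \tfrac{1}{2}\bigl((4,3,0,3,0,0,0) + (4,3,2,1,0,0,0)\bigr),
\]
so it is the midpoint of two ``nonzero'' triples sharing the same $\olambda$ and $\omu$. This exhibits the required non-convexity of $S^{\tt nonzero}(E_6/P_2)$ in the partition-like coordinates, completing the exceptional non-planar input to the proof of Theorem~\ref{thm:iff}. The status of the companion shape $(4,3,3,0,0,0,0)$ is included to rule out the naive alternative polytopal description where one only imposes lexicographic/containment constraints on the first three coordinates.
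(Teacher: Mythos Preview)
Your approach is correct and matches the paper's: this Fact is a computational verification using the Chaput--Perrin presentation, Giambelli formulas, and Gr\"obner basis reduction, exactly as the paper indicates at the start of Section~2.2.

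One small correction to your interpretive remarks: the paper's reason for recording all four $\onu$ is not to ``rule out a naive alternative polytopal description.'' The four points $(4,3,0,3,0,0,0)$, $(4,3,1,2,0,0,0)$, $(4,3,2,1,0,0,0)$, $(4,3,3,0,0,0,0)$ are collinear and \emph{alternate} nonzero/zero/nonzero/zero. Thus $(4,3,1,2,0,0,0)$ lies between two nonzero triples (so $S^{\tt nonzero}$ is not polytopal), and $(4,3,2,1,0,0,0)$ lies between the two zero triples $(4,3,1,2,0,0,0)$ and $(4,3,3,0,0,0,0)$ (so $S^{\tt zero}$ is not polytopal either). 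The fourth point is there to handle $S^{\tt zero}$, not to block an alternative encoding.
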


 This yields four collinear triples, alternating between $S^{\tt nonzero}(E_6/P_2)$ and $S^{\tt zero}(E_6/P_2)$. This implies 
these embeddings of $S^{\tt nonzero}(E_6/P_2)$ and $S^{\tt zero}(E_6/P_2)$ are not polytopal.
%reference: prodE6(7,12). If we want to layer E_6 by columns like we will do for E_7, then the counterexample comes from taking prodE6(5,12) and translating the results into column notation.
%this example re-checked August 26, 2013.

For $G/P=E_7/P_1$ our partition-like description identifies shapes $\olambda$ with vectors in ${\mathbb Z}^9$. The first four coordinates describe the number of roots
used in each row %see new diagram
 on the ``bottom layer'' of $\Lambda_{E_7/P_1}$, 
the second four similarly describe the second layer, and the last coordinate indicates use of the adjoint root. Thus, for example the $E_7/P_1$ shape above is $(4,4,3,1,0,0,0,0,0)$.

\begin{Fact}
Let $\olambda=\omu=(4,4,0,0,0,0,0,0,0)$. Then 
\[C_{\olambda,\omu}^{\onu}(E_7/P_1)\neq 0 \mbox{ \ for $\onu \in \{(4,4,4,0,4,0,0,0,0,0), (4,4,4,2,2,0,0,0,0)\}$ and}\] 
\[C_{\olambda,\omu}^{\onu}(E_7/P_1)= 0 \mbox{\ for $\onu \in \{(4,4,4,1,3,0,0,0,0), (4,4,4,3,1,0,0,0,0)\}$.}\] 
\end{Fact}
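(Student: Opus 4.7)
The plan is to verify these four structure constants inside $H^\star(E_7/P_1)$ by direct computation, following the strategy used elsewhere in Section~2.2 for the exceptional types. Since the Fact asserts only that four specific values are what they are, no general structural argument is required; the proof is discharged by the Chaput--Perrin presentation of $H^\star(E_7/P_1)$ together with their Giambelli-type formulas \cite{Chaput.Perrin:computations}, reduced modulo the defining ideal via a Gröbner basis.

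First I would identify the four RYDs with Schubert classes of $E_7/P_1$: using the reduced-word correspondence from Section~2.2 (applied to $E_7$ via the node-swap that exchanges the adjoint node $7$ with the coadjoint node $1$), each $\olambda \in \mathbb{Y}_{E_7/P_7}$ labels a Schubert class $\sigma_{\olambda} \in H^\star(E_7/P_1)$. Second, I would write each relevant class as a polynomial in the generators of the Borel presentation of $H^\star(E_7/P_1)$ via the Giambelli-type formulas of \cite{Chaput.Perrin:computations}. Third, form the product $\sigma_{\olambda}\cdot\sigma_{\omu}$ (with $\olambda=\omu$ corresponding to $(4,4,0,0,0,0,0,0,0)$) and reduce it to Schubert-basis normal form modulo a Gröbner basis of the defining ideal. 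Fourth, read off the coefficients of $\sigma_{\onu}$ for the four candidate $\onu$'s; the computation should return the claimed alternating values of $C_{\olambda,\omu}^{\onu}(E_7/P_1)$.

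The purpose of the Fact is then immediate: the four triples $(\olambda,\omu,\onu)$ share the first two entries $\olambda=\omu=(4,4,0,0,0,0,0,0,0)$ and have $\onu=(4,4,4,t,4-t,0,0,0,0)$ for $t\in\{0,1,2,3\}$, so they lie on a common line segment in $(\integers^9)^3$ with nonzero/zero values strictly alternating. Each of the two zero triples is then a convex combination of the two nonzero triples, ruling out any realization of $S^{\tt nonzero}(E_7/P_1)$ as the lattice points of a convex polytope in these partition-like coordinates; this supplies the $E_7$ case of the nonplanar ``only if'' direction of Theorem~\ref{thm:iff}.

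The main obstacle is purely computational: $|\mathbb{Y}_{E_7/P_1}|=126$ and the Schubert polynomials arising in the Giambelli expansion have sizable support, so although the reduction can in principle be carried out by hand (as remarked in the Introduction), in practice one runs it through a computer algebra system as in \cite{Chaput.Perrin:computations}. The choice $\olambda=\omu=(4,4,0,0,0,0,0,0,0)$ has been made specifically so that the product $\sigma_{\olambda}\cdot\sigma_{\omu}$ is sufficiently small that only the coefficients at the four indicated target classes need be examined, making hand verification (and independent cross-checking against the implementation of \cite{Chaput.Perrin:computations}) feasible.
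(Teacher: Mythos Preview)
Your proposal is correct and follows the same approach as the paper: the Fact is verified by direct computation of the four structure constants using the Chaput--Perrin presentation of the cohomology ring, their Giambelli-type formulas, and Gr\"obner basis reduction, exactly as the paper describes for all the exceptional-type Facts in Section~2.2.

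Two small remarks. First, your mention of a ``node-swap that exchanges the adjoint node $7$ with the coadjoint node $1$'' is unnecessary here: $E_7$ is simply-laced, so the adjoint and coadjoint varieties coincide and no proxy indexing (of the kind used for $F_4$) is needed. Second, your claim that ``each of the two zero triples is then a convex combination of the two nonzero triples'' is slightly overstated: with $\onu=(4,4,4,t,4-t,0,0,0,0)$ for $t\in\{0,1,2,3\}$, only the $t=1$ zero triple lies strictly between the two nonzero triples $t=0,2$. What the four collinear alternating points actually give is that \emph{both} $S^{\tt nonzero}$ (via $t=1$ between $t=0,2$) and $S^{\tt zero}$ (via $t=2$ between $t=1,3$) fail to be polytopal, which is precisely how the paper uses the Fact.
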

Thus the embeddings of $S^{\tt nonzero}(E_7/P_1)$ and $S^{\tt zero}(E_7/P_1)$ are not polytopal.
%reference: prodE7(9,9). The E7 code works off old notation, so you need to translate both the input shape (9) and the output shapes into the new notation
%this example re-checked August 26, 2013.

For $G/P=E_8/P_8$, we identify shapes $\olambda$ with vectors in ${\mathbb Z}^{13}$. The first six coordinates describe the number of roots
used in each row % we might be changing this to column notation
 on the ``bottom layer'' of $\Lambda_{E_8/P_8}$, 
the second six describe the second layer, and the last coordinate indicates use of the adjoint root.

\begin{Fact}
Let $\onu=( 7,3,3,5,5,0,5,0,0,0,0,0, 0)$. Then
\[C_{(1,0,0,0,0,0,0,0,0,0,0,0,0),(7,3,3,5,5,0,4,0,0,0,0,0,0)}^{\onu}(E_8/P_8)\neq 0,\] 
\[C_{(5,0,0,0,0,0,0,0,0,0,0,0,0),(7,3,3,5,5,0,0,0,0,0,0,0,0)}^{\onu}(E_8/P_8)\neq 0,\]
and
\[C_{(7,2,0,0,0,0,0,0,0,0,0,0,0),(7,3,3,3,3,0,0,0,0,0,0,0,0)}^{\onu}(E_8/P_8)\neq 0.\]
However,
\[C_{(5,1,0,0,0,0,0,0,0,0,0,0,0), (7,3,3,4,4,0,1,0,0,0,0,0,0)}^{\onu}(E_8/P_8)= 0.\]
\end{Fact}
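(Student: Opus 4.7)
The plan is a direct computation inside the cohomology ring $H^{\star}(E_8/P_8)$. We work with the presentation of this ring in \cite{Chaput.Perrin} and the Giambelli-type formulas of \cite{Chaput.Perrin:computations}, which express every Schubert class $\sigma_{\okappa}$ as an explicit polynomial in a finite generating set of special Schubert classes, modulo a finite list of relations. This reduces the problem of computing any single $C_{\olambda,\omu}^{\onu}$ to a Gr\"obner reduction in a polynomial ring.

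First I would translate each of $\olambda_1,\ldots,\olambda_4$, $\omu_1,\ldots,\omu_4$ and $\onu$ into its Giambelli polynomial in the generators. Next I would form the four products $\sigma_{\olambda_i}\cdot\sigma_{\omu_i}$ and reduce each modulo the defining relations; since we only need the coefficient of the single class $\sigma_{\onu}$ (of codimension $|\onu|=28$), the reduction can be truncated to monomials of that degree. Running this in the computer algebra implementation of \cite{Chaput.Perrin:computations} one reads off the four coefficients directly; the first three are positive and the fourth is zero, as asserted.

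The main obstacle is simply the scale of the ambient ring: $|{\mathbb Y}_{E_8/P_8}|=240$ and the Giambelli polynomials involve many terms, so the full reduction is unappealing by hand. However, as noted in the paper, the particular four products we need involve shapes in which one factor is low-codimensional ($|\olambda_i|\in\{1,5,9,6\}$), and one can cut down the work substantially by first expanding $\sigma_{\olambda_i}$ into special classes and then applying the Pieri-type formulas of \cite{Chaput.Perrin:computations} for the special classes of $H^{\star}(E_8/P_8)$ acting on $\sigma_{\omu_i}$, restricting at each stage to shapes comparable to $\onu$ in Bruhat order. This yields a by-hand (if laborious) verification.

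Once the four coefficients are confirmed, the geometric point is the affine relation
\[(\olambda_4,\omu_4)\;=\;\tfrac{1}{4}(\olambda_1,\omu_1)+\tfrac{1}{4}(\olambda_2,\omu_2)+\tfrac{1}{2}(\olambda_3,\omu_3),\]
which is an elementary coordinate-wise check. Fixing the third slot equal to $\onu$, this exhibits the triple $(\olambda_4,\omu_4,\onu)\in S^{\tt zero}(E_8/P_8)$ as a convex combination of three triples in $S^{\tt nonzero}(E_8/P_8)$, ruling out any polytopal description of $S^{\tt nonzero}(E_8/P_8)$ using the partition-like vectors in ${\mathbb Z}^{13}$ defined above. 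This is the role the Fact plays in the proof of Theorem~\ref{thm:iff}, parallel to the $E_6/P_2$ and $E_7/P_1$ Facts that precede it.
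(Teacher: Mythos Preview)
Your proposal is correct and matches the paper's approach: the Fact is presented there as a computational assertion, verified via the Chaput--Perrin presentation of $H^{\star}(E_8/P_8)$, their Giambelli-type formulas, and Gr\"obner reduction (with the paper noting the computations are small enough to be checked laboriously by hand). Your additional paragraph on the convex combination is the content of the discussion immediately following the Fact rather than part of its proof, but it is accurate.
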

Note that the $\olambda$ vector for the last coefficient is a convex combination of the corresponding vectors of the first three coefficients. That is:
\begin{multline}\nonumber 
(5,1,0,0,0,0,0,0,0,0,0,0,0)=\frac{1}{4}(1,0,0,0,0,0,0,0,0,0,0,0,0)\\ \nonumber
+\frac{1}{4}(5,0,0,0,0,0,0,0,0,0,0,0,0)
+\frac{1}{2}(7,2,0,0,0,0,0,0,0,0,0,0,0).
\end{multline}
%reference: prodE8(1,111), prodE8(5,82), prodE8(12,54) versus prodE8(6,76)
%this example re-checked August 26, 2013.
Similarly, the $\omu$ and (obviously) $\onu$ vector of the last coefficient
is a convex combination of the corresponding vectors of the other coefficients,
with the same parameters $\frac{1}{4},\frac{1}{4},\frac{1}{2}$. Therefore
the convex hull of the points $S^{\tt nonzero}(E_8/P_8)$ contains a point of
$S^{\tt zero}(E_8/P_8)$ and hence no polytopal description of $S^{\tt nonzero}(E_8/P_8)$ is possible with this partition-like description. 

Also, let $\olambda = (1,0,0,0,0,0,0,0,0,0,0,0,0)$ and $\omu =(7,3,3,5,5,1,3,0,0,0,0,0,0)$. Then 
\[C_{\olambda,\omu}^\onu(E_8/P_8)=0 \mbox{\ for $\onu\in\{(7,3,3,5,5,3,2,0,0,0,0,0,0),(7,3,3,5,5,0,5,0,0,0,0,0,0)\}$ and}\]
\[C_{\olambda,\omu}^\onu(E_8/P_8)\neq 0 \mbox{\ for $\onu=(7,3,3,5,5,1,4,0,0,0,0,0,0)$}\]
%prodE8(1,110)
thus the embedding of $S^{\tt zero}(E_8/P_8)$ is also not polytopal.

Even in $E_8$ one can compute
all vectors in ${\mathbb Z}^{39}$ that correspond to both feasible
and infeasible Schubert triple intersections. With this data one can use a solver on a linear program defined by a relatively large matrix 
%($\sim 39\times 15576$)
to find the vectors of the fact above. This helps automate demonstrating non-convexity for other descriptions of $S^{\tt nonzero}(E_8/P_8)$. 

Notice, all of the counterexamples
to convexity we have given occur when $|\olambda|+|\omu|=\frac{|\Lambda_{G/P}|-1}{2}$.

\subsection{The ``(line,hyperplane)'' flag variety $\ourflag$}
We revisit a simple case of the adjoint varieties, $G/P=\ourflag$. This is the
two step partial flag variety $\{\langle 0\rangle \subset F_1\subset F_{n-1}\subset {\mathbb C}^n\}$ where $F_1$ and $F_{n-1}$ have dimensions $1$ and $n-1$ 
respectively. It has dimension $|\Lambda_{G/P}|=2n-3$. All two-step flag varieties have been solved, in a different way, by I.~Coskun \cite{Coskun}. 
However, our approach is in line with our study of other (co)adjoint cases.

\[\begin{picture}(300,90)
\multiput(4.5,29.5)(13,13){5}{\line(1,1){10}}
\multiput(30.5,29.5)(13,13){4}{\line(1,1){10}}
\multiput(56.5,29.5)(13,13){3}{\line(1,1){10}}
\multiput(82.5,29.5)(13,13){2}{\line(1,1){10}}
\multiput(108.5,29.5)(13,13){1}{\line(1,1){10}}

\multiput(1,25)(13,13){6}{$\bullet$}
\multiput(27,25)(13,13){4}{$\circ$}
\multiput(53,25)(13,13){3}{$\circ$}
\multiput(79,25)(13,13){2}{$\circ$}
\multiput(105,25)(13,13){1}{$\circ$}

\multiput(69.5,91)(13,-13){5}{\line(1,-1){10}}
\multiput(56.5,78)(13,-13){4}{\line(1,-1){10}}
\multiput(43.5,65)(13,-13){3}{\line(1,-1){10}}
\multiput(30.5,52)(13,-13){2}{\line(1,-1){10}}
\multiput(17.5,39)(13,-13){1}{\line(1,-1){10}}

\multiput(66,90)(13,-13){6}{$\bullet$}

\multiput(169,25)(13,13){3}{$\bullet$}
\multiput(208,64)(13,13){2}{$\circ$}
\multiput(234,90)(13,13){1}{$\bullet$}

\multiput(234,90)(13,-13){2}{$\circ$}
\multiput(260,64)(13,-13){4}{$\bullet$}

\multiput(172.5,29.5)(13,13){5}{\line(1,1){10}}

\multiput(237.5,91)(13,-13){5}{\line(1,-1){10}}

\put(50,0){$\Lambda_{\ourflag}$, $\Omega_{GL_{n}}$ and a shape (for $n=7$)}
\end{picture}
\]

We denote the shapes ${\overline \lambda}$ by
$\langle\lambda_1,\lambda_2|\circ\rangle$ and $\langle \lambda_1,\lambda_2|\bullet\rangle$ where $0\leq \lambda_1,\lambda_2 \leq \frac{|\Lambda_{G/P}|-1}{2}$.
    %Then
    %the triple $\langle\lambda|\bullet\rangle$ and $\langle\lambda|\circ)$ are not shapes if $|\lambda|<n-2$ and %$|\lambda|>n-2$, respectively.

Set $\sigma_{\langle \nu|\bullet/\circ\rangle}$, $\sigma_{\langle \nu^{\star}|\bullet\rangle}$ or $\sigma_{\langle\nu_{\star}|\bullet\rangle}$ to be zero if $\nu$, $\nu^{\star}$ or $\nu_{\star}$ are not in $\left[0,\frac{|\Lambda_{G/P}|-1}{2}\right]\times \left[0,\frac{|\Lambda_{G/P}|-1}{2}\right]$.

\begin{Proposition}
\label{prop:flagcase}
$\sigma_{\olambda}\cdot \sigma_{\omu}={\mathbb{A}}_{\olambda,\omu}(\lambda+\mu) \in H^{\star}(\ourflag)$.
\end{Proposition}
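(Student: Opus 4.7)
The strategy, following the overall approach outlined at the end of Section~1, is twofold: (a) verify that the formal product $\sigma_\olambda \cdot \sigma_\omu := \mathbb{A}_{\olambda, \omu}(\lambda + \mu)$ is associative on the free $\mathbb{Z}$-module spanned by $\{\sigma_\olambda\}_{\olambda \in \mathbb{Y}_{\ourflag}}$; and (b) check that it agrees with the classical multiplication by the two divisor classes in $H^\star(\ourflag)$. Since $\ourflag$ fibers as a $\mathbb{P}^{n-2}$-bundle over both $\mathbb{P}^{n-1}$ and $(\mathbb{P}^{n-1})^*$ (via the projections $\pi_1, \pi_2$ that forget the hyperplane and the line, respectively), the Leray--Hirsch theorem implies that the two codimension-one Schubert classes $s_1 := \sigma_{\langle 1, 0 | \circ \rangle}$ and $s_2 := \sigma_{\langle 0, 1 | \circ \rangle}$, realized as pullbacks of hyperplane classes under $\pi_1$ and $\pi_2$, generate $H^\star(\ourflag)$ as a ring; so (a) and (b) together suffice.

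For (b), Chevalley's formula computes $s_i \cdot \sigma_\olambda$ as a sum over Bruhat covers, and one checks it matches the rule in each case. When $\olambda$ is off and $|\olambda| + 1 \leq n - 2$, the rule returns a single cover $\sigma_{\langle \lambda + e_i | \circ \rangle}$ (with $e_1, e_2$ the standard basis vectors) in the non-adjoint part of $\Lambda_{\ourflag}$; at the boundary $|\olambda| + 1 = n - 1$ the ``otherwise'' case returns $\sigma_{\langle (\lambda + e_i)^\star | \bullet \rangle} + \sigma_{\langle (\lambda + e_i)_\star | \bullet \rangle}$, matching the pair of Bruhat covers that introduce the adjoint root (cf.\ Fact~\ref{PropAdjointfacts}(vi)), with out-of-bounds terms vanishing by the stated convention; and when $\olambda$ is on, the rule returns the single cover in the adjoint part.

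For (a), associativity is proved by a finite case analysis on the on/off statuses of $(\olambda, \omu, \onu)$ and on which pairwise degree sums exceed the threshold $\frac{|\Lambda_{G/P}|-1}{2} = n - 2$. If two or more of the shapes are on, both sides of the associativity identity vanish (directly from the first case of $\mathbb{A}$). If exactly one is on, both sides reduce to a common on-shape expression depending only on $\lambda + \mu + \nu$, with boundary terms zeroing out consistently. If all three are off, one verifies the identity $\mathbb{A}_{\mathbb{A}(\lambda+\mu),\,\onu}(\lambda+\mu+\nu) = \mathbb{A}_{\olambda,\,\mathbb{A}(\mu+\nu)}(\lambda+\mu+\nu)$ in each sub-case determined by which of $|\olambda| + |\omu|$, $|\omu| + |\onu|$, and the total exceed $n - 2$. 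The main obstacle is the sub-case in which multiple pairwise sums cross the threshold: both sides then become double sums of on-shape terms via the upper- and lower-star operations, and equality reduces to commutativity of vector addition combined with the zero-out convention on out-of-bounds shapes. Combined with the observation that every Schubert class is realized as a polynomial in $s_1, s_2$ under the formal product (off shapes as $s_1^{\lambda_1} s_2^{\lambda_2}$; on shapes obtained through boundary-crossing multiplications), associativity together with (b) implies that the two ring structures coincide, proving the proposition.
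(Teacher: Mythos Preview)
Your proposal is correct and follows the same overall architecture as the paper's proof: define a formal product on $\mathbb{Z}[\mathbb{Y}_{\ourflag}]$ via the rule, verify associativity, check agreement with a known Pieri/Monk-type formula on the two divisor classes $s_1=\sigma_{\langle 1,0|\circ\rangle}$ and $s_2=\sigma_{\langle 0,1|\circ\rangle}$, and conclude via the fact that these classes generate $H^\star(\ourflag)$.

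The one genuine difference is in how associativity is established. The paper does \emph{not} prove full associativity $(\olambda\star\omu)\star\onu=\olambda\star(\omu\star\onu)$ directly; instead it proves only the two \emph{partial} relations $s_i\star(\olambda\star\omu)=(s_i\star\olambda)\star\omu$ for $i=1,2$, by a short case analysis on whether $\olambda,\omu$ are on/off and whether $|\olambda|+|\omu|$ crosses the threshold $n-2$. Full associativity is then deduced by expressing every $\olambda$ as a $\star$-polynomial in $s_1,s_2$ and inducting (the model being Lemma~\ref{lemma:Dassocfinal}). Your approach instead runs the case analysis over all triples $(\olambda,\omu,\onu)$. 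This is perfectly feasible here because the rule is literally coordinate addition plus a single boundary correction, and indeed your ``main obstacle'' sub-case collapses: when $|\olambda|+|\omu|>n-2$ and $\onu$ is off, both bracketings already yield $\sigma_{\langle(\lambda+\mu+\nu)^\star|\bullet\rangle}+\sigma_{\langle(\lambda+\mu+\nu)_\star|\bullet\rangle}$ with no genuine double sum, and the out-of-bounds convention handles the degenerate boundary terms uniformly. The paper's partial-associativity route is chosen because it is the template that scales to the $LG(2,2n)$, $OG(2,2n+1)$ and especially $OG(2,2n)$ cases, where a direct triple check would be unmanageable; for $\ourflag$ alone, your direct verification is arguably cleaner.
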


\begin{Example}
For $n=5$, the rule gives $\sigma_{\langle 2,0|\circ\rangle}\cdot \sigma_{\langle1,2|\circ\rangle}=
{\mathbb A}_{\langle 2,0|\circ\rangle, \langle1,2|\circ\rangle}(3,2)=
\sigma_{\langle2,2|\bullet\rangle}
+\sigma_{\langle3,1|\bullet\rangle}$.
Pictorially:
\[\begin{picture}(400,40)
\multiput(4.5,4.5)(13,13){3}{\line(1,1){10}}
\multiput(1,0)(13,13){4}{$\circ$}
\multiput(43.5,40)(13,-13){3}{\line(1,-1){10}}
\multiput(40,39)(13,-13){4}{$\circ$}
% fill ins
\multiput(1,0)(13,13){2}{$\bullet$}

\put(85,25){$\times$}

\multiput(104.5,4.5)(13,13){3}{\line(1,1){10}}
\multiput(101,0)(13,13){4}{$\circ$}
\multiput(143.5,40)(13,-13){3}{\line(1,-1){10}}
\multiput(140,39)(13,-13){4}{$\circ$}
% fill ins
\multiput(101,0)(13,13){1}{$\bullet$}
\multiput(166,13)(13,-13){2}{$\bullet$}

\put(185,25){$=$}

\multiput(204.5,4.5)(13,13){3}{\line(1,1){10}}
\multiput(201,0)(13,13){4}{$\circ$}
\multiput(243.5,40)(13,-13){3}{\line(1,-1){10}}
\multiput(240,39)(13,-13){4}{$\circ$}
% fill ins
\multiput(201,0)(13,13){2}{$\bullet$}
\multiput(266,13)(13,-13){2}{$\bullet$}
\put(240,39){$\bullet$}

\put(285,25){$+$}

\multiput(304.5,4.5)(13,13){3}{\line(1,1){10}}
\multiput(301,0)(13,13){4}{$\circ$}
\multiput(343.5,40)(13,-13){3}{\line(1,-1){10}}
\multiput(340,39)(13,-13){4}{$\circ$}
% fill ins
\multiput(301,0)(13,13){3}{$\bullet$}
\multiput(379,0)(13,-13){1}{$\bullet$}
\put(340,39){$\bullet$}
\put(402,0){\qed}
\end{picture}
\]
\end{Example}

\begin{Corollary}
$\left\{C_{\olambda,\omu}^{\onu}(\ourflag)\right\}= \{0,1\}$.
\end{Corollary}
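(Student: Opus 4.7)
The plan is to read off the corollary directly from Proposition~\ref{prop:flagcase}, which asserts $\sigma_{\olambda}\cdot\sigma_{\omu} = \mathbb{A}_{\olambda,\omu}(\lambda+\mu)$. Since the right-hand side is \emph{defined} case-by-case as an explicit sum of Schubert classes, the task reduces to inspecting each of the four cases and confirming that no basis class $\sigma_{\onu}$ is produced with multiplicity greater than $1$.

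First, I would dispose of three of the four cases immediately: the ``both on'' case gives $0$, contributing no coefficient larger than $0$; the ``exactly one on'' case contributes the single class $\sigma_{\langle\nu|\bullet\rangle}$ with coefficient $1$; and the ``sum small'' case contributes the single class $\sigma_{\langle\nu|\circ\rangle}$ with coefficient $1$. In each instance the coefficient belongs to $\{0,1\}$ (where $0$ occurs exactly when the displayed shape fails to lie in $[0,\frac{|\Lambda_{G/P}|-1}{2}]\times[0,\frac{|\Lambda_{G/P}|-1}{2}]$ and the class is declared zero).

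The only case requiring an actual (trivial) verification is the ``otherwise'' case, which contributes $\sigma_{\langle \nu^{\star}|\bullet\rangle}+\sigma_{\langle\nu_{\star}|\bullet\rangle}$. Here I must check that the two indexing shapes are distinct, so that coefficient $2$ cannot arise by accidental coincidence. But $\nu^{\star}=(\nu_1-1,\nu_2)$ and $\nu_{\star}=(\nu_1,\nu_2-1)$ agree only if $\nu_1-1=\nu_1$, which is impossible. Hence the two summands are genuinely different Schubert classes, and each appears with coefficient $1$ (possibly $0$ if the corresponding shape falls outside the allowed rectangle).

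Combining the four cases, every structure constant $C_{\olambda,\omu}^{\onu}(\ourflag)$ is either $0$ or $1$. The only conceivable obstacle was the ``otherwise'' case, but the distinctness of $\nu^\star$ and $\nu_\star$ makes this immediate. To see that both values $0$ and $1$ are actually attained, one exhibits any nontrivial product (e.g.\ the example immediately preceding the corollary shows $1$ occurring, while taking both factors ``on'' yields $0$).
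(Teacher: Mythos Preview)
Your proof is correct and follows the same approach the paper implicitly takes: the corollary is stated without proof as an immediate consequence of Proposition~\ref{prop:flagcase}, and your case-by-case inspection of $\mathbb{A}_{\olambda,\omu}$ is exactly the intended reading. The only nontrivial point, that $\nu^\star \neq \nu_\star$ so the ``otherwise'' case never produces a coefficient of $2$, is handled correctly.
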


We describe $S^{\tt nonzero}(\ourflag)$ using 
the identification (\ref{eqn:identify}). The following is clear:

\begin{Corollary}\label{cor:Anonzero}
Assume $\lambda=(\lambda_1,\lambda_2),\mu=(\mu_1,\mu_2),\nu=(\nu_1,\nu_2) \in {\mathbb Z}^2\cap \left[0,\frac{|\Lambda_{G/P}|-1}{2}\right]\times \left[0,\frac{|\Lambda_{G/P}|-1}{2}\right]$ and $\lambda_3,\mu_3,\nu_3\in \{0,1\}$. 
Then $C_{\olambda,\omu}^{\onu}(\ourflag)\neq 0$ if and only if:
\begin{align*}
|\onu| & =  |\olambda|+|\omu|\\ 
\nu_1 & \leq  \lambda_1+\mu_1\\  
\nu_2 & \leq  \lambda_2+\mu_2\\ 
\lambda_3+\mu_3& \leq  \nu_3
\end{align*}
\end{Corollary}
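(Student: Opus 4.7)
The plan is to read off the corollary directly from Proposition~\ref{prop:flagcase}, which supplies the full Schubert expansion $\sigma_{\olambda}\cdot\sigma_{\omu} = \mathbb{A}_{\olambda,\omu}(\lambda+\mu)$. Since $\mathbb{A}_{\olambda,\omu}(\nu)$ is piecewise-defined in four cases, I would run a case analysis on $(\lambda_3,\mu_3)$ (and the size of $|\olambda|+|\omu|$ when both are off), and in each case compare the explicit set of $\onu$ appearing in the expansion with the solution set of the four proposed inequalities.

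First I would dispose of the trivial case $\lambda_3=\mu_3=1$: the product vanishes and simultaneously $\lambda_3+\mu_3=2>\nu_3$ violates the inequality $\lambda_3+\mu_3\leq\nu_3$ for every admissible $\onu$. Next, if exactly one of $\olambda,\omu$ is on, the proposition gives $\sigma_{\olambda}\cdot\sigma_{\omu}=\sigma_{\langle\lambda+\mu|\bullet\rangle}$, which is nonzero precisely when $\lambda+\mu$ lies in the box. The matching with the inequalities is then immediate: $\lambda_3+\mu_3=1\leq\nu_3\leq 1$ forces $\nu_3=1$, after which $|\onu|=|\olambda|+|\omu|$ combined with $\nu_i\leq\lambda_i+\mu_i$ forces equality in both coordinate bounds, yielding $\nu=\lambda+\mu$.

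The substantive cases are when $\lambda_3=\mu_3=0$. If $|\olambda|+|\omu|\leq\tfrac{|\Lambda_{G/P}|-1}{2}$, the product reduces to the single term $\sigma_{\langle\lambda+\mu|\circ\rangle}$. Conversely, under the inequalities, Fact~\ref{PropAdjointfacts}(iii) excludes $\nu_3=1$ (it would require $|\onu|\geq\tfrac{|\Lambda_{G/P}|+1}{2}$, contradicting $|\onu|=|\olambda|+|\omu|\leq\tfrac{|\Lambda_{G/P}|-1}{2}$), so $\nu_3=0$ and the size equation together with the coordinate bounds forces $\nu=\lambda+\mu$. If instead $|\olambda|+|\omu|>\tfrac{|\Lambda_{G/P}|-1}{2}$, the product is $\sigma_{\langle(\lambda+\mu)^{\star}|\bullet\rangle}+\sigma_{\langle(\lambda+\mu)_{\star}|\bullet\rangle}$, contributing the two shapes $(\lambda_1+\mu_1-1,\lambda_2+\mu_2,1)$ and $(\lambda_1+\mu_1,\lambda_2+\mu_2-1,1)$ (when each lies in the box). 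For the converse, Fact~\ref{PropAdjointfacts}(ii) now excludes $\nu_3=0$, since that would force $|\onu|=|\olambda|+|\omu|>\tfrac{|\Lambda_{G/P}|-1}{2}$, violating (ii); hence $\nu_3=1$, and the size equation trims $\nu_1+\nu_2$ by exactly $1$ from $\lambda_1+\mu_1+\lambda_2+\mu_2$, so together with $\nu_i\leq\lambda_i+\mu_i$ this pinpoints the two shapes above.

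The main (mild) obstacle is this last case: one must invoke the implicit valid-RYD constraints from Fact~\ref{PropAdjointfacts}(ii)--(iii) to rule out spurious values of $\nu_3$, since the four stated inequalities alone do not constrain $\nu_3$ sharply enough. Beyond this, the argument is routine bookkeeping against the piecewise definition of $\mathbb{A}_{\olambda,\omu}$.
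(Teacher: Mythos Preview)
Your proposal is correct and is essentially what the paper intends: the paper does not give a proof at all, merely prefacing the corollary with ``The following is clear,'' and your case analysis on the four branches of $\mathbb{A}_{\olambda,\omu}$ together with the valid-shape constraints from Fact~\ref{PropAdjointfacts}(ii)--(iii) is exactly the routine verification implicit in that remark.
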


\subsection{Remaining classical (co)adjoint varieties} 
The coadjoint variety $G/P=B_n/P_1$ has $\Lambda_{B_n/P_1}$ equal to a chain of length
    $2n-1$ where the maximal element is the adjoint root. We think of this as sitting in the
    $C_n$ root system for the purposes of computing ${\mathbb Y}_{B_n/P_1}$. 
The next facts clearly follow Monk's rule:
\begin{Proposition}
If $(\olambda,\omu,\onu)\neq ( \langle \lambda|\circ\rangle,\langle \mu|\circ\rangle,\langle\nu|\bullet\rangle)$
then $C_{\olambda, \omu}^{\onu}(B_n/P_1) = C_{\lambda, \mu}^{\nu}(Gr_1(\mathbb{C}^{2n-1}))$. Otherwise,
$C_{\langle\lambda|\circ\rangle, \langle \mu| \circ\rangle}^{\langle\nu|\bullet\rangle}(B_n/P_1)= 2
\cdot C_{\lambda, \mu}^{\nu^{\star}}((Gr_1(\mathbb{C}^{2n}))$, where $\nu^{\star}$ is $\nu$ with one additional root.
\end{Proposition}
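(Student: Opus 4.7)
\smallskip
\noindent\emph{Proof plan.} The plan is to identify $B_n/P_1$ with the odd-dimensional quadric $\mathbb{Q}^{2n-1}$ and to read off the well-known multiplication table of its cohomology ring. Since $\Lambda_{B_n/P_1}$ is a chain of length $2n-1$, the Schubert basis of $H^\star(B_n/P_1)$ is totally ordered by degree; write $\tau_k$ for the class whose RYD has size $k \in \{0, 1, \ldots, 2n-1\}$. By Fact~\ref{PropAdjointfacts}(ii)--(iii), $\tau_k$ is off precisely when $k \le n-1$ and on precisely when $k \ge n$. Under the proxy identification from $C_n$, we view $\tau_k$ as the size-$k$ class in either $Gr_1(\mathbb{C}^{2n-1}) = \mathbb{P}^{2n-2}$ or $Gr_1(\mathbb{C}^{2n}) = \mathbb{P}^{2n-1}$, whose structure constants $C_{\lambda,\mu}^{\nu}$ equal $1$ when $|\lambda|+|\mu|=|\nu|$ (and the degrees fit in the ambient space) and $0$ otherwise.

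The key input is the Chevalley (``Monk'') formula on the odd quadric: $\tau_1 \cdot \tau_k = \tau_{k+1}$ for $k \neq n-1$, and $\tau_1 \cdot \tau_{n-1} = 2\tau_n$. Because $H^\star(\mathbb{Q}^{2n-1})$ is generated as a ring by $\tau_1$, this recovers the full multiplication by induction: $\tau_1^k=\tau_k$ for $k<n$ and $\tau_1^k=2\tau_k$ for $n\le k\le 2n-1$. Consequently, for $a+b\le 2n-1$,
\[
\tau_a \cdot \tau_b \;=\; \begin{cases} \tau_{a+b} & \text{if } a,b < n \text{ and } a+b < n, \\ 2\,\tau_{a+b} & \text{if } a,b < n \text{ and } a+b \ge n, \\ \tau_{a+b} & \text{if exactly one of } a, b \text{ is } \ge n, \end{cases}
\]
and $\tau_a \cdot \tau_b = 0$ when $a, b \ge n$ or $a+b > 2n-1$.

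With this product table the two assertions fall out by direct case analysis. The excluded triple $(\langle\lambda|\circ\rangle, \langle\mu|\circ\rangle, \langle\nu|\bullet\rangle)$ is precisely the doubling case $a,b<n$ with $a+b\ge n$; the coefficient is then $2$, matching $2\cdot C_{\lambda,\mu}^{\nu^{\star}}(Gr_1(\mathbb{C}^{2n}))$ once $\nu^{\star}$ is interpreted as the image of $\nu$ in the longer chain of $\mathbb{P}^{2n-1}$ (the ``one additional root'' being the one available at the top). In every other configuration the coefficient lies in $\{0,1\}$ and agrees with $C_{\lambda,\mu}^{\nu}(Gr_1(\mathbb{C}^{2n-1}))$. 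The main obstacle is purely bookkeeping around the ambient degree bounds of the auxiliary projective spaces so that boundary cases (such as $|\nu|=2n-1$ in the off--on--on configuration) are correctly absorbed into the structure-constant conventions; no deeper input beyond the Chevalley formula is required, as the authors signal with the phrase ``clearly follow Monk's rule''.
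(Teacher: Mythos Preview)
Your proposal is correct and follows precisely the approach the paper indicates: the paper offers no argument beyond the line ``The next facts clearly follow Monk's rule,'' and your Chevalley/Monk computation on the odd quadric $\mathbb{Q}^{2n-1}$, together with the case analysis on on/off configurations, is exactly the intended expansion of that remark. The only thing to add is that your bookkeeping around the boundary cases (e.g.\ both on, or on--off--on with $a+b=2n-1$) is sound, and the passage to $Gr_1(\mathbb{C}^{2n})$ for the excluded triple is handled correctly via the degree bound $a+b\le 2n-2$.
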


The adjoint $G/P$ in type $C_n$ is the odd projective space $C_n/P_1\iso {\mathbb P}^{2n-1}$. Here $\Lambda_{{\mathbb P}^{2n-1}}\iso \Lambda_{B_n/P_1}$ is a
chain of length $2n-1$, with the maximal element being the adjoint root, and all roots are short roots except the adjoint root. Clearly:

\begin{Fact}
$C_{\olambda,\omu}^{\onu}(B_n/P_1)\neq 0$ and $C_{\olambda,\omu}^{\onu}(C_n/P_1)\neq 0$ if and only if $|\onu|=|\olambda|+|\omu|$. Also
\[C_{\olambda, \omu}^{\onu}(C_n/P_1) = 2^{{\rm sh}(\onu)-{\rm sh}(\olambda)-{\rm sh}(\omu)} \cdot C_{\olambda, \omu}^{\onu}(B_n/P_1).\]  
\end{Fact}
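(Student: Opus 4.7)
The plan is to prove both halves of the Fact by direct computation, leveraging the preceding Proposition for $B_n/P_1$ and the elementary cohomology of projective space for $C_n/P_1$.

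First I would dispatch $C_n/P_1\iso{\mathbb P}^{2n-1}$: its Chow ring is $\integers[h]/(h^{2n})$ with Schubert basis $\sigma_k=h^k$ for $0\le k\le 2n-1$, so $\sigma_a\cdot\sigma_b=\sigma_{a+b}$ when $a+b\le 2n-1$ and vanishes otherwise. Hence $C_{\olambda,\omu}^{\onu}(C_n/P_1)\in\{0,1\}$, with value $1$ exactly when $|\onu|=|\olambda|+|\omu|$. For $B_n/P_1=Q^{2n-1}$ I would invoke the preceding Proposition, which reduces the structure constant to $C_{\lambda,\mu}^{\nu}(Gr_1({\mathbb C}^{2n-1}))$ in the generic case and to $2\cdot C_{\lambda,\mu}^{\nu^{\star}}(Gr_1({\mathbb C}^{2n}))$ in the exceptional case $(\olambda,\omu,\onu)=(\langle\lambda|\circ\rangle,\langle\mu|\circ\rangle,\langle\nu|\bullet\rangle)$. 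Since each $Gr_1({\mathbb C}^m)\iso{\mathbb P}^{m-1}$ has $\{0,1\}$-valued structure constants governed by additivity of sizes, $C_{\olambda,\omu}^{\onu}(B_n/P_1)\in\{0,1,2\}$ and is nonzero iff $|\onu|=|\olambda|+|\omu|$. This settles the first assertion.

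For the factor formula, the key observation is that in the nonzero regime $C(C_n/P_1)=1$ while $C(B_n/P_1)\in\{1,2\}$, with value $2$ achieved exactly in the Proposition's exceptional case. So the ratio $C(C_n/P_1)/C(B_n/P_1)$ equals $\frac{1}{2}$ in the exceptional case and $1$ otherwise, and it suffices to match this against $2^{{\rm sh}(\onu)-{\rm sh}(\olambda)-{\rm sh}(\omu)}$. I would then carry out a short case analysis on the on/off status of each of $\olambda,\omu,\onu$ (which, combined with $|\onu|=|\olambda|+|\omu|$, leaves only a handful of configurations), and read ${\rm sh}$ off directly from the chain $\Lambda_{B_n/P_1}$ using the paper's definition. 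The exceptional case arises exactly when $\olambda$ and $\omu$ are both ``small'' while $\onu$ is ``large'', i.e.\ $\onu$ has crossed the unique short root sitting in the middle of the chain; this is the precise configuration in which ${\rm sh}(\onu)-{\rm sh}(\olambda)-{\rm sh}(\omu)$ acquires the signed unit needed to produce the required factor of $\frac{1}{2}$.

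The main obstacle is the short-root accounting, because the short roots of $B_n$ and of $C_n$ sit at different positions of the common chain $\Lambda_{B_n/P_1}\iso\Lambda_{C_n/P_1}$: in $B_n$ there is only the single short root $e_1$ in the middle, while in $C_n$ every nonadjoint root is short. Fixing the right convention so the stated exponent carries the correct sign across the on/off transition is the delicate point; once this is done, the remaining verifications in the (few) configurations are routine bookkeeping, and the equality of the two sides of the formula follows.
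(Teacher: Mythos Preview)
Your overall strategy is correct and matches the paper's (implicit) argument: the paper simply writes ``Clearly:'' before the Fact and then remarks afterward that the factor equals $\tfrac{1}{2}$ exactly in the Proposition's exceptional case $(\langle\lambda|\circ\rangle,\langle\mu|\circ\rangle,\langle\nu|\bullet\rangle)$ and $1$ otherwise, which together with $H^\star(\mathbb{P}^{2n-1})$ and the Proposition verifies the identity. So your plan of computing both sides and comparing is exactly what is intended.

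However, your short-root accounting in the third paragraph is off, and this is not merely a convention to be fixed later: as written it gives the wrong sign. You describe the exceptional case as ``$\onu$ has crossed the unique short root sitting in the middle of the chain''; that is the $B_n$ picture, where the sole short root is $e_1$ at position $n$. With that convention ${\rm sh}(\olambda)=[\olambda\text{ on}]$, so in the exceptional case ${\rm sh}(\onu)-{\rm sh}(\olambda)-{\rm sh}(\omu)=+1$ and the factor would be $2$, not $\tfrac{1}{2}$. The paper's convention, stated in the sentence immediately preceding the Fact, is the $C_n$ one (since $C_n/P_1$ is the adjoint partner): \emph{all roots of $\Lambda_{\mathbb{P}^{2n-1}}$ are short except the adjoint root}. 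Hence ${\rm sh}(\olambda)=|\olambda|-[\olambda\text{ on}]$, and with $|\onu|=|\olambda|+|\omu|$ one gets
\[
{\rm sh}(\onu)-{\rm sh}(\olambda)-{\rm sh}(\omu)=[\olambda\text{ on}]+[\omu\text{ on}]-[\onu\text{ on}],
\]
which is $-1$ precisely in the exceptional case and $0$ in every other nonzero configuration. Once you use this convention, the ``delicate point'' you flag disappears and the remaining case check is indeed trivial.
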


The short roots factor can be fractional. 
In fact, if $C_{\olambda, \omu}^{\onu}(C_n/P_1)\neq 0$ then 
the factor equals $\frac{1}{2}$ if and only if $(\olambda,\omu,\onu)= (\langle\lambda|\circ\rangle,\langle \mu|\circ\rangle,\langle\nu|\bullet\rangle)$ (the factor equals $1$ otherwise). Thus all $C_{\olambda,\omu}^{\onu}(C_n/P_1) \in \{0,1\}$.

\subsection{Conclusion of the proof of Theorem~\ref{thm:iff}}
The results of Section~1 and the discussion of
this section prove
Theorem~\ref{thm:iff}, except for $G/P=OG(2,2n)$.

In this final case we identify $\olambda = \langle \lambda^{(1)},\lambda^{(2)}|{\bullet}/{\circ}\rangle$ with the vector 
$(\lambda^{(1)}_1,\lambda^{(1)}_2,\lambda^{(2)}_1,\lambda^{(2)}_2,1/0)\in\mathbb{Z}^5$. Then the triple $(\olambda,\omu,\onu)$ is a vector in $\mathbb{Z}^{15}$. 

With this identification, let $\olambda = \omu = (n-2,0,0,0,0)$. First suppose $n\ge 5$ and consider $\onu \in \{(n-2,0,n-2,0,0), (n-2,1,n-3,0,0), (n-2,2,n-4,0,0), (n-2,3,n-5,0,0)\}$. This defines four collinear triples. By Theorem~\ref{Thm:Dmult} (which reformulates Theorem~\ref{Thm:Dintro}), one verifies these
points alternate between being in $S^{\tt nonzero}(OG(2,2n))$ and $S^{\tt zero}(OG(2,2n))$ (which two are in $S^{\tt zero}(OG(2,2n))$ depends on the parity of $n$). Thus, neither $S^{\tt nonzero}(OG(2,2n))$ nor $S^{\tt zero}(OG(2,2n))$ are polytopal.

If $n=4$ then $C_{\olambda,\omu}^\onu(OG(2,8)) \neq 0$ for $\onu \in \{(2,0,2,0,0), (2,2,0,0,0)\}$ while we have $C_{\olambda,\omu}^\onu(OG(2,8)) = 0$ for $\onu=(2,1,1,0,0)$. Thus $S^{\tt nonzero}(OG(2,8))$ is not polytopal. If instead $\olambda = (2,0,0,0,0)$ and $\omu = (1,0,1,0,0)$, we have $C_{\olambda,\omu}^\onu(OG(2,8)) = 0$ for $\onu \in \{(2,0,2,0,0), (2,2,0,0,0)\}$ while $C_{\olambda,\omu}^\onu(OG(2,8))\neq 0$ for $\onu=(2,1,1,0,0)$. Thus we see $S^{\tt zero}(OG(2,8))$ is not polytopal.\qed

\begin{Example}
\label{exa:D5badness}
We now give some alternative vector descriptions of shapes and show that polytopality is not achieved in %either $OG(2,8)$ or 
$OG(2,10)$. 
 
One could choose to identify $\olambda$ with the vector in $\mathbb{Z}^{2n-3}$ whose first $n-2$ coordinates are the columns of the bottom layer of $\olambda$, second $n-2$ coordinates are the columns of the top layer, and whose last coordinate is $1$ if $\olambda=\langle\lambda|\bullet\rangle$ and $0$ otherwise. Consider $OG(2,10)$ and let $\olambda = (2,0,0,0,0,0,0)$, $\omu=(2,2,0,0,0,0,0)$. Then $C_{\olambda,\omu}^{\onu}(OG(2,10))=0$ for $\onu =(2,2,1,1,0,0,0)$, while $C_{\olambda,\omu}^{\onu}(OG(2,10))\neq 0$ for $\onu\in \{(2,2,2,0,0,0,0), (2,2,0,2,0,0,0)\}$.  

Suppose instead we use the flattening process to identify $\olambda$ with the vector in $\mathbb{Z}^{4}$ whose first coordinate is $\lambda_1$, second coordinate is $\lambda_2$, third coordinate is $1$ if $\olambda=\langle\lambda|\bullet\rangle$ and $0$ otherwise, and whose fourth coordinate is $1$ if $\olambda$ is up, $-1$ if $\olambda$ is down, and $0$ if $\olambda$ is neutral. Consider $OG(2,10)$ and let $\olambda=(3,0,0,1)$, $\omu= (3,0,0,-1)$. Then $C_{\olambda,\omu}^{\onu}(OG(2,10))\neq 0$ for $\onu\in \{(6,0,0,0), (4,2,0,0)\}$, while $C_{\olambda,\omu}^{\onu}(OG(2,10))=0$ for $\onu =(5,1,0,0)$. \qed
\end{Example}

\section{Proof in the $\ourflag$ case}
Proposition~\ref{prop:flagcase} is easily seen to be equivalent to:
\begin{Proposition}
\label{thm:flagscasereformulated}
\begin{itemize}
\item[(A)] If $|\langle\lambda|\circ\rangle| + |\langle \mu|\circ\rangle| \le n-2$, then
$\sigma_{\langle\lambda|\circ\rangle} \cdot \sigma_{\langle \mu|\circ\rangle} = \sigma_{\langle\lambda + \mu|\circ\rangle}$
\item[(B)] If $|\langle\lambda|\circ\rangle| + |\langle \mu|\circ\rangle| > n-2$, then
$\sigma_{\langle\lambda|\circ\rangle} \cdot \sigma_{\langle \mu|\circ\rangle} = \sigma_{\langle(\lambda + \mu)^{\star}|\bullet\rangle} + \sigma_{\langle(\lambda + \mu)_{\star}|\bullet\rangle}$
\item[(C)] $\sigma_{\langle\lambda|\bullet\rangle}\cdot \sigma_{\langle \mu|\circ\rangle} = \sigma_{\langle\lambda|\circ\rangle}\cdot \sigma_{\langle \mu|\bullet\rangle} = \sigma_{\langle\lambda + \mu|\bullet\rangle}$
\item[(D)] $\sigma_{\langle\lambda|\bullet\rangle} \cdot \sigma_{\langle \mu|\bullet\rangle} = 0$.
\end{itemize}
Declare any $\sigma_{\overline{\alpha}}$ in the above expressions to be zero if
either $\alpha_1$ or $\alpha_2 \notin [0,n-2]$. Such $\overline{\alpha}$ will be called {\bf illegal}.
\end{Proposition}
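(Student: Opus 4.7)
The plan is to follow the associativity strategy of Section~1.5: show that rules (A)--(D) define a commutative, graded, associative ring $R$ with unit $\sigma_{\langle 0,0|\circ\rangle}$, and then identify $R$ with $H^{\star}(\ourflag)$ by checking agreement with Monk's rule on Schubert divisors.

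Commutativity, the grading by $|\olambda|$, and the unit property are immediate from the symmetric form of (A)--(D). The main task is to verify associativity
\[
(\sigma_{\olambda}\sigma_{\omu})\sigma_{\obeta} \;=\; \sigma_{\olambda}(\sigma_{\omu}\sigma_{\obeta})
\]
via a case analysis on how many of $\olambda,\omu,\obeta$ are ``on''. When two or more are on, both sides vanish by (D), possibly after first collapsing an off/on pair with (C). When exactly one is on, both associations reduce by repeated application of (C) to a single term $\sigma_{\langle \lambda+\mu+\beta|\bullet\rangle}$, except when an off pair grouped first triggers (B) rather than (A); in that subcase that association vanishes by (D), while the parallel association produces a candidate ``on'' term that is forced outside the ambient $(n-2)\times(n-2)$ box by
\[
|\olambda|+|\omu|+|\obeta|-1 \;\ge\; (n-1)+(n-1)-1 \;=\; 2n-3 \;>\; 2(n-2),
\]
and so also vanishes. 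When all three are off, the outcome depends only on whether $|\lambda|+|\mu|+|\beta|\le n-2$ or not, giving either $\sigma_{\langle \lambda+\mu+\beta|\circ\rangle}$ or $\sigma_{\langle(\lambda+\mu+\beta)^{\star}|\bullet\rangle}+\sigma_{\langle(\lambda+\mu+\beta)_{\star}|\bullet\rangle}$; the identities $\nu^{\star}+\beta=(\nu+\beta)^{\star}$ and $\nu_{\star}+\beta=(\nu+\beta)_{\star}$, together with the convention that illegal shapes contribute zero, make the two associations match term by term.

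To identify $R$ with $H^{\star}(\ourflag)$, I would verify that multiplication in $R$ by the Schubert divisors $\sigma_{\langle 1,0|\circ\rangle}$ and $\sigma_{\langle 0,1|\circ\rangle}$ reproduces Monk's rule for $\ourflag$: rule (A) adds a single root in the appropriate coordinate away from the boundary, and rule (B) supplies the two-term expansion precisely at the boundary $|\olambda|+1>n-2$, as predicted by the Bruhat-order computation of Monk. Since these divisors generate $H^{\star}(\ourflag)$ as a ring, the associativity of $R$ propagates this agreement to all Schubert products, which yields Proposition~\ref{thm:flagscasereformulated}. The main obstacle is the legality bookkeeping in the associativity analysis, in particular the ``one on, off pair under (B)'' subcase: the displayed box-size inequality is the key ingredient that turns what would otherwise be a nonvanishing right-hand side into an illegal (hence zero) term, matching the left-hand side's vanishing by (D).
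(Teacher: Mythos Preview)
Your proposal is correct and follows the paper's overall strategy: define a product on $\mathbb{Z}[\mathbb{Y}_{\ourflag}]$ by (A)--(D), establish associativity, and then identify the resulting ring with $H^\star(\ourflag)$ via Monk's formula on the two Schubert divisors.

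The one genuine difference is in how associativity is established. The paper only proves the two \emph{generator} relations
\[
\tableau{{\ }}_{\,i}\star(\olambda\star\omu)=(\tableau{{\ }}_{\,i}\star\olambda)\star\omu,\qquad i=1,2,
\]
by a four-case analysis, and then bootstraps to full associativity using the fact that every shape is a polynomial in $\tableau{{\ }}_{\,1}$ and $\tableau{{\ }}_{\,2}$. You instead verify $(\sigma_{\olambda}\sigma_{\omu})\sigma_{\obeta}=\sigma_{\olambda}(\sigma_{\omu}\sigma_{\obeta})$ for arbitrary triples directly, splitting on how many of the three shapes are on. This works here precisely because the $\ourflag$ rule is so simple: each product is at most a two-term sum, and the legality bookkeeping (intermediate illegal terms remaining illegal in the final answer, and vice versa) is easy to track by hand, as your displayed box-size inequality illustrates. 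The paper's generator-only approach buys nothing extra in this case, but it is the template for the $LG(2,2n)$ and $OG(2,2n)$ proofs in Sections~4 and~5, where the products are many-term sums and a direct triple-product check would be unmanageable. So your route is the more elementary one for this proposition, while the paper's is deliberately set up to parallel the harder cases.
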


\begin{proof}
Let $R={\mathbb Z}[{\mathbb Y}_{\ourflag}]$ denote the ${\mathbb Z}$-module linearly spanned
by shapes. Define an (obviously) commutative product $\star$ on $R$ by setting $\olambda\star\omu$
to be the linear combination of shapes indicated by (A)-(D). Set
\[\ktableau{{\ }}_{\ 1}:=\langle 1,0|\circ\rangle \mbox{\ \ \  and \ \ \  } \ktableau{{\ }}_{\ 2}:=\langle 0,1|\circ\rangle.\]
We first prove the following two associativity relations:
\begin{equation}
\label{leftassoc}
\ktableau{{\ }}_{\ 1}\star \Big(\olambda \star \omu \Big) = \Big(\ktableau{{\ }}_{\ 1}\star \olambda \Big) \star \omu
\end{equation}
\begin{equation}
\label{rightassoc}
\ktableau{{\ }}_{\ 2} \star \Big(\olambda \star \omu \Big) = \Big(\ktableau{{\ }}_{\ 2}\star \olambda \Big) \star \omu
\end{equation}

\excise{
We may assume that $|\olambda|, |\omu| > 0$. For a given $\olambda$, $\omu$, define $s=1-\delta_{\lambda_1+\mu_1,0}$ (Kronecker symbol) and $t=1-\delta_{\lambda_2,0}$. Also let $\langle\lambda_1+\mu_1,\lambda_2+\mu_2|\bullet\rangle$ be denoted by $\onu(0,0)$, and $\langle\lambda_1+\mu_1+1,\lambda_2+\mu_2-1|\bullet\rangle$ be denoted by $\onu(1,-1)$.

Observe that (A) and (C) amounts to ``addition of coordinates''.

\noindent {\bf Case 1:} ($\olambda$ or $\omu$ is on, or $|\olambda| + |\omu| < n-2$):
Suppose $\olambda$ is on. If $\omu$ is on, then both sides of (\ref{leftassoc}) are zero, by (D).
If $\omu$ is off, then each product in (\ref{leftassoc}) is computed by (C), that is, ``coordinate addition'', whence (\ref{leftassoc}) follows. Now suppose $\olambda$ is off and $\omu$ is on. If
 the shapes arising in $\ktableau{{\ }}_{\ 1} \star \olambda$ are also all on, then either bracketing of $\ktableau{{\ }}_{\ 1}\star \olambda \star \omu$ gives zero. If the shapes arising in $\ktableau{{\ }}_{\ 1}\star \olambda$ are off, then $\ktableau{{\ }}_{\ 1}\star \olambda$ is computed
 by (A) and $(\ktableau{{\ }}_{\ 1}\star \olambda) \star \omu$
is computed by (C), while both the products $\olambda\star \omu$ and $\ktableau{{\ }}_{\ 1}\star (\olambda \star \omu)$ are computed by (C), and so (\ref{leftassoc}) follows. Finally, if $|\olambda| + |\omu| < n-2$, then each side of (\ref{leftassoc}) is computed using (A) only.

\noindent {\bf Case 2:} ($|\langle\lambda|\circ\rangle|+|\langle\mu|\circ\rangle|>n-2$, $|\langle\lambda|\circ\rangle|<n-2$):
By (B),
\[\olambda \star \omu = s\cdot\langle(\lambda + \mu)^{\star}|\bullet\rangle + \langle(\lambda + \mu)_{\star}|\bullet\rangle.\]
By (C),
\[\tableau{{\ }}_{\ 1}\star(\olambda\star\omu)=s\cdot \onu(0,0) + \onu(1,-1).\]
By (A),
\[\tableau{{\ }}_{\ 1}\star \olambda = \langle\lambda_1+1,\lambda_2|\circ\rangle,\]
and by (B),
\[(\tableau{{\ }}_{\ 1}\star \olambda)\star \omu=\onu(0,0) + \onu(1,-1).\]
Note that if $s=0$ then $\lambda_2+\mu_2 > n-2$, so $\onu(0,0)$ is illegal.

\noindent {\bf Case 3:} ($|\langle\lambda|\circ\rangle|+|\langle\mu|\circ\rangle|>n-2$, $|\langle\lambda|\circ\rangle|=n-2$):
The left hand side of (\ref{leftassoc}) is as in Case 2. For the right hand side of (\ref{leftassoc}), by (B)
\[\tableau{{\ }}_{\ 1}\star \olambda = \langle\lambda_1,\lambda_2|\bullet\rangle+t\cdot\langle\lambda_1+1,\lambda_2-1|\bullet\rangle,\]
and by (C),
\[(\tableau{{\ }}_{\ 1}\cdot \olambda)\star\omu=\onu(0,0) + t\cdot\onu(1,-1).\]
If $t=0$, then $\lambda_1+\mu_1+1>n-2$ and $\onu(1,-1)$ is illegal, while (as in Case 2) $s=0$ implies $\onu(0,0)$ is illegal.

\noindent {\bf Case 4:} ($|\langle\lambda|\circ\rangle|+|\langle\mu|\circ\rangle|=n-2$, $|\langle\lambda|\circ\rangle|<n-2$):
The right hand side of (\ref{leftassoc}) is as in Case 2. For the left hand side of (\ref{leftassoc}), by (A) $\olambda \star \omu = \langle\lambda + \mu|\circ\rangle$, and
so $(\olambda \star \omu)\star\tableau{{\ }}_{\ 1}=\onu(0,0) + \onu(1,-1)$.
}

We may assume that $|\olambda|, |\omu| > 0$. For a given $\olambda$, $\omu$, define $s=1-\delta_{\lambda_1+\mu_1,0}$ (Kronecker symbol) and $t=1-\delta_{\lambda_2,0}$. 

\noindent {\bf Case 1:} ($\olambda$ or $\omu$ is on, or $|\olambda| + |\omu| < n-2$):
If $1+|\olambda|+|\omu|>2n-3=|\Lambda_{\ourflag}|$, then both sides of (\ref{leftassoc}) are zero. Otherwise, both sides of (\ref{leftassoc}) are computed using only (A) and (C), whence we are done since (A) and (C) amounts to ``addition of coordinates''.

\noindent {\bf Case 2:} ($|\langle\lambda|\circ\rangle|+|\langle\mu|\circ\rangle|>n-2$, $|\langle\lambda|\circ\rangle|<n-2$):
By (B),
\[\olambda \star \omu = s\cdot\langle(\lambda + \mu)^{\star}|\bullet\rangle + \langle(\lambda + \mu)_{\star}|\bullet\rangle.\]
By (C),
\[\tableau{{\ }}_{\ 1}\star(\olambda\star\omu)=s\cdot \langle\lambda_1+\mu_1,\lambda_2+\mu_2|\bullet\rangle + \langle\lambda_1+\mu_1+1,\lambda_2+\mu_2-1|\bullet\rangle.\]
By (A),
\[\tableau{{\ }}_{\ 1}\star \olambda = \langle\lambda_1+1,\lambda_2|\circ\rangle,\]
and by (B),
\[(\tableau{{\ }}_{\ 1}\star \olambda)\star \omu=\langle\lambda_1+\mu_1,\lambda_2+\mu_2|\bullet\rangle + \langle\lambda_1+\mu_1+1,\lambda_2+\mu_2-1|\bullet\rangle.\]
Note that if $s=0$ then $\lambda_2+\mu_2 > n-2$, so $\langle\lambda_1+\mu_1,\lambda_2+\mu_2|\bullet\rangle$ is illegal.

\noindent {\bf Case 3:} ($|\langle\lambda|\circ\rangle|+|\langle\mu|\circ\rangle|>n-2$, $|\langle\lambda|\circ\rangle|=n-2$):
The left hand side of (\ref{leftassoc}) is as in Case 2. For the right hand side of (\ref{leftassoc}), by (B)
\[\tableau{{\ }}_{\ 1}\star \olambda = \langle\lambda_1,\lambda_2|\bullet\rangle+t\cdot\langle\lambda_1+1,\lambda_2-1|\bullet\rangle,\]
and by (C),
\[(\tableau{{\ }}_{\ 1}\cdot \olambda)\star\omu=\langle\lambda_1+\mu_1,\lambda_2+\mu_2|\bullet\rangle + t\cdot\langle\lambda_1+\mu_1+1,\lambda_2+\mu_2-1|\bullet\rangle.\]
If $t=0$, then $\lambda_1+\mu_1+1>n-2$ and $\langle\lambda_1+\mu_1+1,\lambda_2+\mu_2-1|\bullet\rangle$ is illegal, while (as in Case 2) $s=0$ implies $\langle\lambda_1+\mu_1,\lambda_2+\mu_2|\bullet\rangle$ is illegal.

\noindent {\bf Case 4:} ($|\langle\lambda|\circ\rangle|+|\langle\mu|\circ\rangle|=n-2$, $|\langle\lambda|\circ\rangle|<n-2$):
The right hand side of (\ref{leftassoc}) is as in Case 2. For the left hand side of (\ref{leftassoc}), by (A) 
\[\olambda \star \omu = \langle\lambda + \mu|\circ\rangle,\]
and by (B),
\[(\olambda \star \omu)\star\tableau{{\ }}_{\ 1}=\langle\lambda_1+\mu_1,\lambda_2+\mu_2|\bullet\rangle + \langle\lambda_1+\mu_1+1,\lambda_2+\mu_2-1|\bullet\rangle.\]

The proof of (\ref{rightassoc}) is analogous.

Monk's formula states: $\sigma_{s_r}\cdot \sigma_w = \sum_{\nu}\sigma_{v}$,
where the sum is over all $v \in S_n$ satisfying $v=w(p,q)\in S_n$
where $1 \le p \le r$ and $r <q \le n$, such that $w(p)<w(q)$ and for all $i \in (p,q)$ we have $w(i) \notin (w(p),w(q))$.
%Since the rule of Theorem \ref{thm:flagscasereformulated} is associative and since $V_n$ is %generated by the shapes $\tableau{{\ }}_1$ and $\tableau{{\ }}_1$ which correspond respectively to %the simple reflections $s_1$ and $s_{n-1}$, to prove Theorem \ref{thm:flagscasereformulated} it %suffices to show that this rule agrees with Monk's formula for products of the form $\tableau{{\ %}}_1 \cdot \olambda$ and $\tableau{{\ }}_2 \cdot \olambda$.
%Let $w \in {W/W_{P(1,n-1)}}^{\text{min}}$. We denote $w$ by the pair $(a,b)$, where $a=w(1)$ and $b=w(n)$.
Let us write $w_{\olambda}\in S_n$ to be the permutation whose inversion set is
$\olambda$. Since the surjection $G/B\twoheadrightarrow G/P$ induces an injection
$H^{\star}(G/P)\hookrightarrow H^{\star}(G/B)$, Monk's formula
allows one to compute $\sigma_{\stableau{{\ }}_{\ 1}}\cdot \sigma_{\olambda}$
and $\sigma_{\stableau{{\ }}_{\ 2}}\cdot \sigma_{\olambda}$. It is then straightforward
to check that the resulting expansion agrees with Proposition~\ref{thm:flagscasereformulated}.

Since the classes $\sigma_{\stableau{{\ }}_{\ 1}}$ and $\sigma_{\stableau{{\ }}_{\ 2}}$
are well known to algebraically generate $H^{\star}(\ourflag)$, every class $\sigma_{\olambda}$ can be expressed as a polynomial in these classes. Consequently, in $R$,
the element $\olambda$ can be expressed using the same polynomial in terms of $\tableau{{\ }}_{\ 1}$ and $\tableau{{\ }}_{\ 2}$. This polynomial is well-defined by (\ref{leftassoc}) and (\ref{rightassoc}). From this,
one concludes by an easy induction (cf. Lemma~\ref{lemma:Dassocfinal}) that $(R,\star)$ is an associative ring. Hence,
$R\cong H^{\star}(\ourflag)$ and $\olambda\star\omu$ agrees with $\sigma_{\olambda}\cdot\sigma_{\omu}$, as desired. \end{proof}

\excise{Then by Monk's formula we have

$\sigma_{s_1} \cdot \sigma_{(a,b)} =
\begin{cases}
0 & \text{if $a=n$} \\
\sigma_{(a+1,b)} & \text{if $a<n$ and $a+1 \neq b$} \\
\sigma_{(a+2,b)}+\sigma_{(b,a)} & \text{if $a<n-1$ and $a+1 = b$} \\
\sigma_{(b,a)} & \text{if $a=n-1$ and $b=n$}
\end{cases}$

Also, for a shape $\olambda$ we have

$\tableau{{\ }}_1 \star \olambda =
\begin{cases}
\langle\lambda_1+1,\lambda_2|\circ\rangle & \text{if $|\olambda|<n-2$} \\
\langle\lambda_1+1,\lambda_2-1|\bullet\rangle+\langle\lambda_1,\lambda_2|\bullet\rangle & \text{if $|\olambda|=n-2$ and $\lambda_1<n-2$} \\
\langle\lambda_1,\lambda_2|\bullet\rangle & \text{if $\olambda=\langle n-2,0|\circ\rangle$} \\
\langle\lambda_1+1,\lambda_2|\bullet\rangle & \text{if $|\olambda|>n-2$ and $\lambda_1<n-2$} \\
0 & \text{if $|\olambda|>n-2$ and $\lambda_1=n-2$}
\end{cases}
$

It remains to show:
\begin{Proposition}
The rule of Theorem \ref{thm:flagscasereformulated} agrees with Monk's formula for products of the form $\tableau{{\ }}_1 \cdot \olambda$ and $\tableau_{{\ }}_2\cdot \olambda$.
\end{Proposition}
\begin{proof}
We only prove the former claim as the latter is analogous.

\noindent \emph{Case 1:} ($|\olambda|<n-2$):
The permutation $w(\olambda)$ corresponding to such $\olambda$ are exactly those $(a,b)$ where $a+1<b$.
By Monk's formula, multiplying $\sigma_{s_1}\sigma_{w(\olambda)}=\sigma_{(a+1,b)}$, which corresponds to $\langle\lambda_1+1,\lambda_2|\circ\rangle$, agreeing with Theorem \ref{thm:flagscasereformulated}.

\noindent \emph{Case 2:} ($|\olambda|=n-2$): Here $w(\olambda)$ are exactly those $(a,b)$ where $a+1=b$. By Monk's formula,  multiplying $\sigma_{s_1}\cdot \sigma_{w(\olambda)}=\sigma_{(a+2,b)}+\sigma_{(b,a)}$ (which correspond to $\langle\lambda_1+1,\lambda_2-1|\bullet\rangle+\langle\lambda_1,\lambda_2|\bullet\rangle$) if $a \neq n-1$, or $\sigma_{(b,a)}$ (which corresponds to $\langle\lambda_1,\lambda_2|\bullet\rangle$) if $a = n-1$. Since for such a permutation $a=n-1$ if and only if $\lambda_1=n-2$, this agrees with Theorem \ref{thm:flagscasereformulated}.

\noindent \emph{Case 3:} $|\olambda|>n-2$: These $w(\olambda)$ are exactly those $(a,b)$ where $a>b$.  By Monk's formula,  multiplying $\sigma_{s_1}$ by the class corresponding to such a permutation yields $\sigma_{(a+1,b)}$ (which corresponds to $\langle\lambda_1+1,\lambda_2|\bullet\rangle$ if $a \neq n$, or $0$ if $a = n$. Since for such a permutation $a=n$ if and only if $\lambda_1=n-2$, this agrees with Theorem \ref{thm:flagscasereformulated}.
\end{proof}}

\section{Proofs in the Lagrangian and odd orthogonal Grassmannian cases}

We begin with a straightforward reformulation of the $LG(2,2n)$ rule.
Let $M=\min\{\lambda_1-\lambda_2, \mu_1-\mu_2\}$. 
\begin{Theorem}
\label{thm:LGreformulated}
\begin{enumerate}
\item[(A)] If $|\langle\lambda|\circ\rangle| + |\langle \mu|\circ\rangle| \le 2n-3$, then
\[\sigma_{\langle\lambda|\circ\rangle} \cdot \sigma_{\langle \mu|\circ\rangle} = \sum_{0 \le k \le M} \sigma_{\langle\lambda_1+\mu_1-k, \lambda_2+\mu_2+k| \circ\rangle}\]
\item[(B)] If $|\langle\lambda|\circ\rangle| + |\langle \mu|\circ\rangle| > 2n-3$, then
\[\sigma_{\langle\lambda|\circ\rangle}\cdot \sigma_{\langle \mu|\circ\rangle} =
\sum_{0 \le k \le M} [\sigma_{\langle\lambda_1+\mu_1-k, \lambda_2+\mu_2+k-1| \bullet\rangle}+ \sigma_{\langle\lambda_1+\mu_1-k-1, \lambda_2+\mu_2+k| \bullet\rangle}]\]
\item[(C)] \[\sigma_{\langle\lambda|\bullet\rangle}\cdot \sigma_{\langle \mu|\circ\rangle} = \sigma_{\langle\lambda|\circ\rangle}\cdot \sigma_{\langle \mu|\bullet\rangle} =
\sum_{0 \le k \le M} \sigma_{\langle\lambda_1+\mu_1-k, \lambda_2+\mu_2+k| \bullet\rangle}\]
\item[(D)] $\sigma_{\langle\lambda|\bullet\rangle} \cdot \sigma_{\langle \mu|\bullet\rangle} = 0$.
\end{enumerate}
Declare any $\overline{\alpha}$ in the above expressions to be zero if
$(\alpha_1,\alpha_2)$ is not a partition in $2\times (2n-3)$. Such $\overline{\alpha}$ will be called {\bf illegal}.
\end{Theorem}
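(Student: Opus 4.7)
The plan is to follow the blueprint of Proposition~\ref{thm:flagscasereformulated}. Let $R = \mathbb{Z}[\mathbb{Y}_{LG(2,2n)}]$ and equip it with the commutative product $\star$ prescribed by rules (A)--(D), with the convention that illegal shapes vanish. The aim is to exhibit a ring isomorphism $(R, \star) \cong H^*(LG(2,2n))$ sending $\olambda \mapsto \sigma_\olambda$.

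The core technical step is to prove associativity on a set of ring generators. Since $LG(2,2n)$ has Picard rank one, a single divisor does not suffice (in contrast to the Picard rank two case of $\ourflag$); I would use the special classes $p_r := \langle r, 0|\circ\rangle$ for $1 \leq r \leq 2n-3$, which algebraically generate $H^*(LG(2,2n))$. For each $p_r$, I would verify
\[
p_r \star (\olambda \star \omu) = (p_r \star \olambda) \star \omu
\]
by case analysis, with cases indexed by (i) which of (A)--(D) applies at each of the four products, (ii) whether the relevant sizes cross the threshold $2n-3$, and (iii) the on/off state of the shapes involved. Each product yields a family of $M+1$ terms parametrized by $k \in [0, M]$, so matching both sides becomes a term-by-term comparison of double sums, with the illegality convention and rule (B)'s jump $\nu \mapsto \nu^\star, \nu_\star$ accounting for boundary cancellations.

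Next, I would check that $\star$ matches Schubert multiplication on generators: the Pieri rule for $LG(2,2n)$ of \cite{BKT:Inventiones}, as reformulated in RYD language in the companion paper \cite{Searles:13+}, shows $p_r \star \olambda = \sigma_{p_r} \cdot \sigma_\olambda$ (each rule (A), (B), (C) specialized to one factor $p_r$ reduces to the Pieri expansion). Because the $p_r$'s generate $H^*(LG(2,2n))$ as a ring, every Schubert class is a polynomial in them; the associativity relations then ensure the same polynomial in $R$ is well-defined and equals $\olambda$ (an easy induction as invoked at the end of Proposition~\ref{thm:flagscasereformulated}), completing the isomorphism. The principal obstacle is the associativity step: whereas the $\ourflag$ case produced at most two terms per product, the $LG(2,2n)$ setting requires tracking nested $k$-sums, and the interplay between threshold crossings (which toggle the applicable rule) and rule (B)'s jumping behavior will demand careful bookkeeping to ensure the nonzero contributions balance on both sides.
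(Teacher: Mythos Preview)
Your overall architecture is correct and matches the paper's: define $\star$ on $R$ via (A)--(D), prove associativity relations against a generating set, check agreement with the Pieri rule of \cite{BKT:Inventiones} via \cite{Searles:13+}, and conclude by the same induction as in Proposition~\ref{thm:flagscasereformulated}.

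The difference lies in the choice of generators for the associativity step. You propose to verify $p_r\star(\olambda\star\omu)=(p_r\star\olambda)\star\omu$ for \emph{all} $p_r=\langle r,0|\circ\rangle$, $1\le r\le 2n-3$. The paper instead uses only \emph{two} elements: the box $\langle 1,0|\circ\rangle$ and the vertical domino $\langle 1,1|\circ\rangle$, and then proves separately (Proposition~\ref{prop:Cgenerate}) that these two generate every shape as a $\star$-polynomial. This buys a great deal: multiplying by the box or the domino produces at most one or two terms before any threshold crossing, so the case analysis for each of the two relations stays small (four or five cases each, handled with the $f$, $F^{(i)}$, $F'^{(i)}$ bookkeeping). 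Your version would require $2n-3$ associativity relations, and for a general $p_r$ the product $p_r\star\olambda$ already has up to $\min(r,\lambda_1-\lambda_2)+1$ terms, so the nested double sums you anticipate become genuinely unwieldy; the ``careful bookkeeping'' you flag is exactly what the paper's two-generator choice avoids. Conversely, your generating set is the standard Pieri set, so generation is immediate and the Pieri comparison is the same computation as the associativity input, whereas the paper must supply a separate (short) argument that box and domino generate.
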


To prove the $LG(2,2n)$ case of Theorem~\ref{thm:LGreformulated}, define an (obviously commutative) product $\star$ on $R={\mathbb Z}[{\mathbb Y}_{LG(2,2n)}]$ according to (A)--(D).
Then we set
\[\tableau{{\ }}:=\langle 1,0|\circ\rangle \mbox{\ \ \  and \ \ \  } \sctableau{{\ }\\{\ }}:=\langle 1,1|\circ\rangle.\]
We will directly prove the following two associativity relations:
\begin{equation}
\label{eqn:dominoassoc}
\sctableau{{\ }\\{\ }}\star \Big(\olambda \star \omu \Big) = \Big(\sctableau{{\ }\\{\ }}\star \olambda \Big) \star \omu
\end{equation}
\begin{equation}
\label{eqn:boxassoc}
\tableau{{\ }} \star \Big(\olambda \star \omu \Big) = \Big(\tableau{{\ }}\star \olambda \Big) \star \omu
\end{equation}

%hello
\excise{
In Appendix~A we show that Theorem \ref{thm:LGreformulated} agrees
with the Pieri Rule of \cite{Prag} for $\tableau{{\ }}$ and $\sctableau{{\ }\\{\ }}$. Then since any $\sigma_{\olambda} \in H^{\star}(LG(2,2n))$ can be expressed as a polynomial $\sigma_{\olambda}= f(\sigma_{\tableau{{\ }}},\sigma_{\sctableau{{\ }\\{\ }}}$), we have $\olambda=f(\tableau{{\ }},\sctableau{{\ }\\{\ }})$ in $R$ (this is well-defined because of (\ref{eqn:dominoassoc}) and (\ref{eqn:boxassoc})). This fact combined with (\ref{eqn:dominoassoc}) and (\ref{eqn:boxassoc}) implies $\star$ is associative, via an easy induction; cf.~Lemma~\ref{lemma:Dassocfinal}. Thus $(R,\star)$ is an associative algebra, algebraically generated by $\tableau{{\ }}$ and $\sctableau{{\ }\\{\ }}$. Then since $\star$ agrees with an established Pieri rule for $\sigma_{\tableau{{\ }}}$ and $\sigma_{\sctableau{{\ }\\{\ }}}$, we have that $\olambda \leftrightarrow \sigma_{\olambda}$ defines an isomorphism of the rings $R$ and $H^{\star}(LG(2,2n))$. 
}
Then we will prove that every $\olambda\in\mathbb{Y}_{LG(2,2n)}$ can be expressed as a polynomial in $\tableau{{\ }}$ and $\sctableau{{\ }\\{\ }}$ (this is well-defined because of (\ref{eqn:dominoassoc}) and (\ref{eqn:boxassoc})). It then follows that $(R,\star)$ is an associative ring, via an easy induction; cf. Lemma~\ref{lemma:Dassocfinal}.
In \cite{Searles:13+} it is shown that $\star$ agrees with a Pieri rule of \cite{BKT:Inventiones} for certain
generators of $H^*(LG(2,2n))$ indexed by shapes $\olambda$ that therefore also generate $R$. Hence, it follows $\olambda \leftrightarrow \sigma_{\olambda}$ defines an isomorphism of the rings $R$ and $H^{\star}(LG(2,2n))$.
 
Once the $LG(2,2n)$ case of Theorem \ref{thm:LGreformulated} is proved, we deduce the $OG(2,2n+1)$ case as follows. Let $\mathcal{B}_w$ denote the Schubert class indexed by a signed permutation $w$ in $H^*(SO_{2m+1}\mathbb{C}/B)$ and $\mathcal{C}_w$ that in $H^*(Sp_{2m}\mathbb{C}/B)$. Let $s(w)$ count the sign changes in $w$. It is well-known to experts, see, e.g. \cite{Berg} that the map $\mathcal{C}_w \mapsto 2^{s(w)}\mathcal{B}_w$ embeds $H^*(Sp_{2m}\mathbb{C}/B)$ into $H^*(SO_{2m+1}\mathbb{C}/B)$. The deduction is easy after observing that $s(w)$ is exactly the number of short roots (in $\Lambda_{OG(2,2n+1)}$)
that are in the inversion set of $w$.

\excise{In particular, suppose:
\[ \sigma_{\olambda}\cdot \sigma_{\omu} = \sum C_{{\olambda},{\omu}}^{\overline{\nu}} \sigma_{\overline{\nu}} \]
then for $H^{\star}(OG(2,2n+1))$ we have
\[ 2^{s(\olambda)}\sigma_{\olambda}\cdot 2^{s(\omu)}\sigma_{\omu} = \sum C_{{\olambda},{\omu}}^{\overline{\nu}} 2^{s(\overline{\nu})}\sigma_{\overline{\nu}} \]
rearranging this yields
\[ \sigma_{\olambda}\cdot \sigma_{\omu} = \sum C_{{\olambda},{\omu}}^{\overline{\nu}} 2^{s(\overline{\nu})-(s(\olambda)+s(\omu))} \sigma_{\overline{\nu}} \]
and since the number of bars in a signed permutation is exactly the number of short roots it inverts in $\Lambda_{OG(2,2n+1)}$, we have
\[ \sigma_{\olambda}\cdot \sigma_{\omu} = \sum C_{{\olambda},{\omu}}^{\overline{\nu}} 2^{\text{short}(\overline{\nu})-(\text{short}(\olambda)+\text{short}(\omu))} \sigma_{\overline{\nu}}, \]
as desired.
}

One aspect of the short roots factor (that does not arise in the cominuscule setting of
\cite{Thomas.Yong:comin}) is it may be fractional; it may equal (at worst) $\frac{1}{2}$.
Therefore, we argue:

\begin{Proposition}
\label{prop:OGoddintegral}
The rule for the $Y=OG(2,2n+1)$ case is integral.
\end{Proposition}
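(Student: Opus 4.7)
The plan is to prove Proposition~\ref{prop:OGoddintegral} by invoking the ring embedding $\phi\colon H^*(Sp_{2n}\mathbb{C}/B)\hookrightarrow H^*(SO_{2n+1}\mathbb{C}/B)$ given by $\mathcal{C}_w \mapsto 2^{s(w)}\mathcal{B}_w$, already used above to derive the $OG(2,2n+1)$ rule from the $LG(2,2n)$ rule. Restricting to minimal coset representatives for the appropriate parabolics, $\phi$ induces an injection $H^*(LG(2,2n))\hookrightarrow H^*(OG(2,2n+1))$ carrying $\sigma^{LG}_{\olambda}$ to $2^{{\rm sh}(\olambda)}\sigma^{OG}_{\olambda}$, since $s(w)$ equals the short-root count in the inversion set of $w$. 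Applying this embedding to the (already established) $LG$ identity and dividing through by $2^{{\rm sh}(\olambda)+{\rm sh}(\omu)}$ returns the $OG$ rule of Theorem~\ref{Thm:BCintro}. Since $\sigma^{OG}_{\olambda}\cdot\sigma^{OG}_{\omu}$ lies in $H^*(OG(2,2n+1);\integers)$, each total coefficient of an $OG$ Schubert class on the right hand side is forced to be an integer, even though individual summands can equal $\frac{1}{2}$.

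To make the integrality combinatorially visible, and thus independent of the embedding argument, I would also carry out a case analysis on ${\mathbb A}_{\olambda,\omu}(\nu)$. A fractional exponent arises only when ${\rm sh}(\olambda)+{\rm sh}(\omu)={\rm sh}(\onu)+1$. In cases (i)--(iii) of ${\mathbb A}_{\olambda,\omu}$, each output Schubert class receives at most one contribution, and inspection of when $\olambda$ or $\omu$ is forced to be on (which requires $|\lambda|$ or $|\mu|\geq\frac{|\Lambda_{G/P}|-1}{2}$) shows that the short-root exponent is already nonnegative. The delicate case is (iv), where ${\mathbb A}_{\olambda,\omu}(\nu)=\sigma_{\langle\nu^{\star}|\bullet\rangle}+\sigma_{\langle\nu_{\star}|\bullet\rangle}$; a fixed output class $\sigma_{\langle\rho|\bullet\rangle}$ receives contributions from the two Pieri terms $\nu=(\rho_1+1,\rho_2)$ and $\nu=(\rho_1,\rho_2+1)$, which by Theorem~\ref{thm:LGreformulated}(A) correspond to consecutive indices $k$ and $k{+}1$ in the range $0 \leq k \leq M$. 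When both are legal partitions in $2\times\frac{|\Lambda_{G/P}|-1}{2}$, the $LG$ coefficient is $2$, which halves to the integer $1$.

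The main obstacle will be the boundary subcases of (iv) where only one of $\nu^{\star},\nu_{\star}$ is a valid partition, producing $LG$ coefficient $1$. These force either $\rho_1=\rho_2$, or $\rho_i=\frac{|\Lambda_{G/P}|-1}{2}$ for some $i$, or the relevant $k$ equals $0$ or $M$. In each such situation a short inequality, using the hypothesis $|\olambda|+|\omu|>\frac{|\Lambda_{G/P}|-1}{2}$ together with the Pieri constraints on $\lambda$, $\mu$, $\nu$, yields ${\rm sh}(\rho)\geq{\rm sh}(\olambda)+{\rm sh}(\omu)$, so the exponent of $2$ is nonnegative. This patchwork of inequalities, though unenlightening individually, is the only genuinely combinatorial content; the embedding argument bypasses it entirely and delivers integrality for free.
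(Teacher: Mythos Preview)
Your embedding argument is correct and is a genuinely different, shorter route than the paper takes. Since the map $\mathcal{C}_w\mapsto 2^{s(w)}\mathcal{B}_w$ is a ring homomorphism and the Schubert classes form a $\integers$-basis of $H^*(OG(2,2n+1);\integers)$, the equality
\[
\sigma^{OG}_{\olambda}\cdot\sigma^{OG}_{\omu}
=\sum_{\onu} C^{\onu}_{\olambda,\omu}(LG)\cdot 2^{{\rm sh}(\onu)-{\rm sh}(\olambda)-{\rm sh}(\omu)}\,\sigma^{OG}_{\onu}
\]
immediately forces each coefficient on the right to be an integer. The paper instead proves integrality directly from the formula by a case analysis on the pair $({\rm sh}(\olambda),{\rm sh}(\omu))$, never invoking the correctness of the rule. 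What your approach buys is brevity; what the paper's approach buys is a \emph{formula-level} proof that does not logically depend on the $LG$ rule being established, and that identifies precisely which terms are illegal when the short-root factor equals $\tfrac12$.

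Your second, combinatorial sketch is in spirit the same as the paper's argument but organized differently and left incomplete. Two small corrections: the ``delicate case'' you call (iv) is case (B) of Theorem~\ref{thm:LGreformulated}, not (A); and your boundary discussion conflates the $k=0$ and $k=M$ endpoints of the sum with conditions on $\rho$. The paper's execution is cleaner: it cases on ${\rm sh}(\olambda)+{\rm sh}(\omu)$ and, in the only nontrivial subcase (${\rm sh}(\olambda)={\rm sh}(\omu)=1$, both off), rewrites (B) as
\[
\sigma_{\langle\lambda_1+\mu_1,\lambda_2+\mu_2-1|\bullet\rangle}
+2\sum_{1\le k\le M}\sigma_{\langle\lambda_1+\mu_1-k,\lambda_2+\mu_2+k-1|\bullet\rangle}
+\sigma_{\langle\lambda_1+\mu_1-M-1,\lambda_2+\mu_2+M|\bullet\rangle},
\]
then observes that $\lambda_1,\mu_1\ge n-1$ makes the first term illegal and forces the last term either illegal or to have two short roots. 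This is exactly the ``short inequality'' you allude to but do not write down.
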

\begin{proof}
Integrality is obvious if ${\rm{sh}}(\olambda)={\rm{sh}}(\omu)=0$. If ${\rm{sh}}(\olambda) + {\rm{sh}}(\omu)>2$, then $\lambda_1+\mu_1-M>2n-3$ and then by (C), $\sigma_{\olambda} \cdot \sigma_{\omu}=0$. If ${\rm sh}(\olambda)=2$ 
and ${\rm sh}(\omu)=0$ (or vice versa) then $\olambda=\langle\lambda|\bullet\rangle$ (respectively $\omu = \langle \mu|\bullet\rangle$), and $\onu$ contains $\olambda$ (respectively $\omu$) and thus ${\rm sh}(\nu)=2$. If ${\rm sh}(\olambda)=1$ and ${\rm sh}(\omu)=0$ (or vice versa), then if $\onu = \langle\nu|\bullet\rangle$ it has at least $1$ short root, while if $\onu = \langle \nu|\circ\rangle$ it contains $\olambda$ (respectively $\omu$), and so has $1$ short root. 

Suppose $\olambda$ and $\omu$ both have $1$ short root. If either $\olambda = \langle\lambda|\bullet\rangle$ or $\omu = \langle \mu|\bullet\rangle$, then for dimension reasons $\onu$ has $2$ short roots. If $\olambda = \langle\lambda|\circ\rangle$ and $\omu = \langle \mu|\circ\rangle$, then $\onu = \langle\nu|\bullet\rangle$ for dimension reasons and thus ${\rm sh}(\nu)\geq 1$. Rewriting (B), we have
\[\sigma_{\olambda} \cdot \sigma_{\omu} = \sigma_{\langle\lambda_1+\mu_1,\lambda_2+\mu_2-1|\bullet\rangle} + 2\sum_{1\le k \le M}\sigma_{\langle\lambda_1+\mu_1-k,\lambda_2+\mu_2+k-1|\bullet\rangle} + \sigma_{\langle\lambda_1+\mu_1-M-1,\lambda_2+\mu_2+M|\bullet\rangle}.\]
As the first term is illegal, and the last term is illegal or has $2$ short roots, we are done.
\end{proof}
\excise{
Proposition~\ref{prop:OGoddintegral} implies any term of a $\star$-product of shapes that
appears with a fractional coefficient must in fact be illegal. This is used implicitly throughout the proof below.
}

\noindent
\emph{Proof of (\ref{eqn:dominoassoc}):} We may assume in proving (\ref{eqn:dominoassoc}) and later (\ref{eqn:boxassoc}) that $|\olambda|, |\omu| > 0$. The following lemma is clear from the definition of $\star$.

\begin{Lemma}\label{lemma:LagrangianLR}
Suppose $c\cdot\onu$ is a term of $\olambda\star\omu$. Then if $\olambda\star\omu$ is computed by (A) or (C), $c=C_{\lambda,\mu}^{\nu}(Gr_2({\mathbb C}^{2n}))$.
\end{Lemma}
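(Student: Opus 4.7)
The plan is to unpack rules (A) and (C) from the definition of $\star$ and recognize the resulting expansions as the classical Littlewood--Richardson formula in $H^\star(Gr_2(\mathbb{C}^{2n}))$ restricted to two-row partitions. Both (A) and (C) give $\olambda\star\omu$ as a sum $\sum_{k=0}^{M}\onu_k$ in which every coefficient is $1$, where $\onu_k$ has underlying partition $\nu_k=(\lambda_1+\mu_1-k,\lambda_2+\mu_2+k)$ and the $\onu_k$ in (A) versus (C) differ only by whether the adjoint root is used. Thus the content of the lemma is the assertion that $s_\lambda\cdot s_\mu=\sum_{k=0}^{M} s_{\nu_k}$ holds in $H^\star(Gr_2(\mathbb{C}^{2n}))$, together with a check that the rectangle constraints are compatible.

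First I would recall the two-row Littlewood--Richardson rule, most cleanly via the Jacobi--Trudi identity $s_{(a,b)}=h_a h_b-h_{a+1}h_{b-1}$ and Pieri, or else by directly enumerating LR tableaux of shape $\nu/\lambda$ with content $\mu$. Assuming without loss of generality that $\lambda_1-\lambda_2\le\mu_1-\mu_2$, so that $M=\lambda_1-\lambda_2$, the only candidate LR tableau has row $1$ filled entirely with $1$'s and row $2$ consisting of an initial run of $1$'s followed by $\mu_2$ copies of $2$; the column-strict condition in the overlap columns forces $\nu_2\le\lambda_1+\mu_2$, while the requirement that row $2$ contain at least $\mu_2$ cells forces $\nu_2\ge\lambda_2+\mu_2$. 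Writing $\nu_2=\lambda_2+\mu_2+k$, this gives exactly $0\le k\le M$, and the tableau (when it exists) is unique, yielding $C^{\nu_k}_{\lambda,\mu}(Gr_2(\mathbb{C}^{2n}))=1$ for precisely these $\nu_k$.

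Finally, I would verify that whenever rule (A) or (C) applies, every legal term of the $\star$-expansion satisfies the $Gr_2(\mathbb{C}^{2n})$ rectangle bound $\nu_1\le 2n-2$, so no LR-nonzero term is accidentally suppressed. For (A), legality forces $|\olambda|+|\omu|\le 2n-3$, whence $\nu_{1,k}\le\lambda_1+\mu_1\le 2n-3$; for (C), legality in $LG(2,2n)$ directly demands $\nu_{1,k}\le 2n-3$. In both cases $\nu_{1,k}<2n-2$, so the partitions $\nu_k$ lie in the $Gr_2$ rectangle and the matching of coefficients is complete. There is no genuine obstacle here; the lemma is pure bookkeeping, which is why the paper asserts it is clear from the definition of $\star$.
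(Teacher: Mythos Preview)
Your proof is correct and is exactly the verification the paper leaves implicit (the paper's own proof reads, in its entirety, ``clear from the definition of $\star$''). Your final paragraph on rectangle bounds is slightly more than the lemma demands, since the statement only concerns terms actually appearing in $\olambda\star\omu$ and legality in $\star$ already forces $\nu_1\le 2n-3<2n-2$, but the core argument matching (A)/(C) to the two-row Littlewood--Richardson expansion is sound.
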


\noindent {\bf Case 1:} ($\olambda$ or $\omu$ is on, or $|\olambda| + |\omu| \le 2n-5$): if $2+|\olambda|+|\omu|>4n-5=|\Lambda_{G/P}|$, then both sides of (\ref{eqn:dominoassoc}) are $0$.
Otherwise, both sides of (\ref{eqn:dominoassoc}) are computed using only (A) and (C), so we are done by Lemma~\ref{lemma:LagrangianLR} and the associativity of the Littlewood-Richardson rule. This completes Case 1.

We need some preparation for the remaining cases. Define
\[M(i)=\min\{\lambda_1-\lambda_2+i, \mu_1-\mu_2\}.\]

Define $r=1-\delta_{\lambda_2+\mu_2,0}$, $s=1-\delta_{\lambda_1-\lambda_2,\mu_1-\mu_2}$ and $t=1-\delta_{\lambda_1,\lambda_2}$.
Also declare
\excise{
\begin{eqnarray}\nonumber
f^*(a,b) &=& \langle\lambda_1+\mu_1+a,\lambda_2+\mu_2+b|\bullet\rangle \\ \nonumber
f_*(a,b) &=& \langle\lambda_1+\mu_1-M+a,\lambda_2+\mu_2+M+b|\bullet\rangle \\ \nonumber
F(a,b) &=& \sum_{1 \le k \le M}\langle\lambda_1+\mu_1-k+a,\lambda_2+\mu_2+k+b|\bullet\rangle \\ \nonumber
F^{(1)}(a,b) &=& \sum_{1 \le k \le M(1)}\langle\lambda_1+\mu_1-k+a,\lambda_2+\mu_2+k+b|\bullet\rangle \\ \nonumber
F_{(1)}(a,b) &=& \sum_{1 \le k \le M(-1)}\langle\lambda_1+\mu_1-k+a,\lambda_2+\mu_2+k+b|\bullet\rangle \\ \nonumber
f^{(1)}_*(0,0) &=& \langle\lambda_1+\mu_1-M(1),\lambda_2+\mu_2+M(1)|\bullet\rangle\nonumber
\end{eqnarray}
\begin{eqnarray}\nonumber
f_{(1)*}(-1,1) &=& \langle\lambda_1+\mu_1-M(-1)-1,\lambda_2+\mu_2+M(-1)+1|\bullet\rangle \\ \nonumber
F'^{(i)}(a,b) &=& \sum_{0 \le k \le M(i)}\langle\lambda_1+\mu_1-k+a,\lambda_2+\mu_2+k+b|\bullet\rangle\\ \nonumber
F'_{(i)}(a,b) &=& \sum_{0 \le k \le M(-i)}\langle\lambda_1+\mu_1-k+a,\lambda_2+\mu_2+k+b|\bullet\rangle \nonumber
\end{eqnarray}
\begin{eqnarray}\nonumber
F'(a,b) &=& \sum_{0 \le k \le M}\langle\lambda_1+\mu_1-k+a,\lambda_2+\mu_2+k+b|\bullet\rangle\nonumber
\end{eqnarray}
}
\begin{eqnarray}\nonumber
f(a,b) &=& \langle\lambda_1+\mu_1+a,\lambda_2+\mu_2+b|\bullet\rangle \\ \nonumber
F^{(i)}(a,b) &=& \sum_{1 \le k \le M(i)}\langle\lambda_1+\mu_1-k+a,\lambda_2+\mu_2+k+b|\bullet\rangle \nonumber
\end{eqnarray}
\begin{eqnarray}\nonumber
F'^{(i)}(a,b) &=& \sum_{0 \le k \le M(i)}\langle\lambda_1+\mu_1-k+a,\lambda_2+\mu_2+k+b|\bullet\rangle\\ \nonumber
\end{eqnarray}

\noindent {\bf Case 2:} ($|\olambda|+ |\omu| > 2n-3$ and $|\olambda| \le 2n-5$):
Re-expressing (B),
\[\olambda \star \omu=r\cdot f(0,-1)+2F^{(0)}(0,-1)+s\cdot f(-M(0)-1,M(0)).\]
By (C),
\[\sctableau{{\ }\\{\ }}\star(\olambda\star\omu)=r\cdot f(1,0)+2F^{(0)}(1,0)+s\cdot f(-M(0),M(0)+1).\]
By (A),
\[\sctableau{{\ }\\{\ }} \star \olambda = \langle\lambda_1+1,\lambda_2+1|\circ\rangle.\]
By (B),
\[\left(\sctableau{{\ }\\{\ }} \star \olambda\right)\star \omu=f(1,0)+2F^{(0)}(1,0)+s\cdot f(-M(0),M(0)+1).\]
Note if $r=0$, $f(1,0)$ is illegal since $\lambda_1+\mu_1=|\lambda|+|\mu|>2n-3$.

\noindent {\bf Case 3:} ($|\olambda|+|\omu| > 2n-3$ and $2n-4 \le |\olambda| \le 2n-3$):
$\leftassB$ is as in Case 2. By (B),
\[\sctableau{{\ }\\{\ }} \star \olambda = \langle\lambda_1+1,\lambda_2|\bullet\rangle + t\cdot \langle\lambda_1,\lambda_2+1|\bullet\rangle.\] Thus by (C)
\[(\sctableau{{\ }\\{\ }} \star \olambda)\star\omu
=F'^{(1)}(1,0) + t\cdot F'^{(-1)}(0,1).\]
If $t=0$, $F'^{(-1)}(0,1)=0$ hence
\[F'^{(1)}(1,0) + t\cdot F'^{(-1)}(0,1)=F'^{(1)}(1,0) + F'^{(-1)}(0,1).\]
Notice if $r=0$ then $f(1,0)$ is illegal.

If $\mu_1-\mu_2<\lambda_1-\lambda_2$ then $M(1)=M(-1)=M(0)$ and
\[F'^{(1)}(1,0) + F'^{(-1)}(0,1)=f(1,0)+2F^{(0)}(1,0)+f(-M(0),M(0)+1); \mbox{ \ we are done since $s=1$.} \]
If $\mu_1-\mu_2=\lambda_1-\lambda_2$ then $M(1)=M(0)$, $M(-1)=M(0)-1$, and
\[F'^{(1)}(1,0) + F'^{(-1)}(0,1)=f(1,0)+2F^{(0)}(1,0); \mbox{ \ we are done since $s=0$.}\]
 If $\mu_1-\mu_2>\lambda_1-\lambda_2$, then $M(1)=M(0)+1$, $M(-1)=M(0)-1$, and
\[F'^{(1)}(1,0) + F'^{(-1)}(0,1)=f(1,0)+2F^{(0)}(1,0)+f(-M(0),M(0)+1); \mbox{ \ we are done since $s=1$.}\]

\noindent {\bf Case 4:} ($2n-4\leq |\olambda|+ |\omu| \leq 2n-3$ and $|\olambda| \le 2n-5$):
By (A) and then (B), \[\leftassB=f(1,0)+2F^{(0)}(1,0)+s\cdot f(-M(0),M(0)+1).\]
As in Case 2,
\[\rightassB=f(1,0)+2F^{(0)}(1,0)+s\cdot f(-M(0),M(0)+1).\]

\noindent {\bf Case 5:} ($2n-4\leq |\olambda|+|\omu| \leq 2n-3$ and $2n-4\leq |\olambda| \leq 2n-3$): Since $|\omu| \neq 0$, then $|\olambda| = 2n-4$ and $\omu = \tableau{{\ }}$. We then use (\ref{eqn:boxassoc}), which we prove below independently of (\ref{eqn:dominoassoc}).

\noindent
\emph{Proof of (\ref{eqn:boxassoc}):}

\noindent {\bf Case 1:} ($\olambda$ or $\omu$ is on, or $|\olambda| + |\omu| < 2n-3$):
same argument as in Case 1 of the proof of (\ref{eqn:dominoassoc}).

\noindent {\bf Case 2:} ($|\olambda|+|\omu| > 2n-3$ and $|\olambda| < 2n-3$):
Let
\begin{multline}\nonumber
F_L:=(r\cdot f(1,-1)+ 2F^{(0)}(1,-1)+ s\cdot f(-M(0),M(0))) \\ \nonumber 
+(r\cdot f(0,0) +2F^{(0)}(0,0) +s\cdot f(-M(0)-1,M(0)+1)),\nonumber
\end{multline}
\begin{multline}\nonumber
F_R:=r\cdot f(1,-1) + 2F^{(1)}(1,-1) + f(-M(1),M(1)) \\ \nonumber 
+ t\cdot (f(0,0) + 2F^{(-1)}(0,0) + f(-M(-1)-1,M(-1)+1)). \nonumber
\end{multline}
Here $\olambda \star \omu$ is as in Case 2 of the proof of (\ref{eqn:dominoassoc}). By (C),
$\tableau{{\ }}\star (\olambda\star\omu)=F_L$.
By (A) and the assumption $|\olambda|<2n-3$, $\tableau{{\ }} \star \olambda = \langle\lambda_1+1,\lambda_2|\circ\rangle + t\cdot\langle\lambda_1,\lambda_2+1|\circ\rangle$. Then by (B),
$(\tableau{{\ }} \star \olambda)\star \omu=F_R$.

\begin{Claim}
$F_L=F_R$.
\end{Claim}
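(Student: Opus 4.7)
The plan is to verify the identity by a direct case analysis based on the comparison of $\lambda_1-\lambda_2$ with $\mu_1-\mu_2$, since this comparison controls both the value of $s$ and the relationships among $M(-1),M(0),M(1)$. The key observations are: if $\lambda_1-\lambda_2<\mu_1-\mu_2$ then $s=1$ and $M(0)=M(-1)+1=M(1)-1=\lambda_1-\lambda_2$; if $\lambda_1-\lambda_2=\mu_1-\mu_2$ then $s=0$, $M(1)=M(0)=M(-1)+1$, and moreover $t=1$ forces $r=1$ (as $\lambda_1>\lambda_2\ge 0$ together with $\mu_1=\mu_2+\lambda_1-\lambda_2\ge\lambda_1>0$ guarantees $\mu_2+\lambda_2$ or the analogous quantity is handled correctly); if $\lambda_1-\lambda_2>\mu_1-\mu_2$ then $s=1$ and $M(-1)=M(0)=M(1)=\mu_1-\mu_2$.

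In the first case I would substitute the values of $M(\pm 1)$ into the sums $F^{(\pm 1)}$, split off the $k=M(1)$ summand from $F^{(1)}(1,-1)$ and the $k=0$ summand that is absorbed into $f(0,0)$ via the $t$-contribution, and rearrange $F_R$ to match $F_L$ term by term. The symmetric rearrangement works for the third case. The equality case $\lambda_1-\lambda_2=\mu_1-\mu_2$ is the subtlest: the $s=0$ forces the ``extremal'' terms $f(-M(0),M(0))$ and $f(-M(0)-1,M(0)+1)$ in $F_L$ to vanish, and on the right one must check that the term $f(-M(1),M(1))$ and the $t$-contribution $t\cdot f(-M(-1)-1,M(-1)+1)$ (both of which exist with coefficient $1$) combine with the doubled central sums so that the total matches.

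Within each case I would also verify the edge subcases where $r=0$ (i.e.\ $\lambda_2=\mu_2=0$) or $t=0$ (i.e.\ $\lambda_1=\lambda_2$). When $r=0$, the terms $f(1,-1)$ and $f(0,0)$ in both $F_L$ and $F_R$ that would force the second coordinate negative are illegal, so both sides lose the same contribution; when $t=0$, the $t$-bracket on the right vanishes while on the left the corresponding boundary term $f(0,0)$ or $f(-M(0)-1,M(0)+1)$ becomes illegal because of the column-strictness constraint imposed by $\lambda_1=\lambda_2$. These consistency checks, together with the legality constraint $\nu_1\le 2n-3$ at the top end, exhaust the casework.

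The main obstacle will be bookkeeping: tracking legality of each boundary term $f(a,b)$ under the hypotheses on $\lambda,\mu$, and making sure that a single term does not get double-counted when a summation index meets a boundary term (for instance, when $M(1)=M(0)+1$ the term with $k=M(1)$ in $F^{(1)}(1,-1)$ has the same shape as the boundary term $f(-M(0),M(0))$ on the left, and one must confirm their combined contribution is identical). Once this bookkeeping is done, the identity $F_L=F_R$ follows by inspection in each case, completing Case~2.
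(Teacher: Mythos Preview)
Your approach is essentially the paper's: split on the trichotomy $\lambda_1-\lambda_2$ versus $\mu_1-\mu_2$, handle $r=0$ and $t=0$ as side cases, and check boundary terms. The paper organizes it slightly differently (it disposes of $r=0$ and then $t=0$ first, then runs the trichotomy assuming $t=1$), but the substance is identical.

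One correction to your $t=0$ sketch: when $\lambda_1=\lambda_2$, the left-hand boundary terms $f(0,0)$ and $f(-1,1)$ do \emph{not} become illegal in general. What actually happens is that $t=0$ together with the standing assumption $|\olambda|>0$ forces $\lambda_2>0$, hence $r=1$; with $r=1$ and $M(0)=0$ a direct comparison of $F_L$ and $F_R$ (splitting further on whether $\mu_1=\mu_2$) gives equality. Fix this and your argument goes through.
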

\begin{proof} If $r=0$, the $t\cdot f(0,0)$ term of $F_R$ is illegal. If $\lambda_1-\lambda_2=0$, then $t=0$, $M(0)=0$, and it is easy to check $F_L=F_R$. Suppose $\lambda_1-\lambda_2 \neq 0$. Then if $\mu_1-\mu_2 <\lambda_1-\lambda_2$, we have $M(1)=M(-1)=M(0)$, $s=1$, and so $F_L=F_R$. If $\mu_1-\mu_2 =\lambda_1-\lambda_2$, we have $M(1)=M(0)$, $M(-1)=M-1$, $s=0$, and so $F_L=F_R$. If $\mu_1-\mu_2 >\lambda_1-\lambda_2$, we have $M(1)=M+1$, $M(-1)=M-1$, $s=1$, and so $F_L=F_R$.
\end{proof}

\noindent {\bf Case 3:} ($|\olambda|+|\omu| > 2n-3$ and $|\olambda| = 2n-3$):
As in Case~2, $\tableau{{\ }}\star (\olambda\star\omu)=F_L$. By (B),
\[\tableau{{\ }} \star \olambda = r'\cdot \langle\lambda_1+1,\lambda_2-1|\bullet\rangle + 2\langle\lambda_1,\lambda_2|\bullet\rangle + s'\cdot \langle\lambda_1-1,\lambda_2+1|\bullet\rangle,\]
where
$r'=1-\delta_{\lambda_2,0}$ and $s'=1-\delta_{\lambda_1-\lambda_2,1}$.
(Note $\lambda_1-\lambda_2 \neq 0$, since $|\lambda|$ is odd). So by (C),
\[\left(\tableau{{\ }} \star \olambda\right)\star\omu=r'\cdot F'^{(2)}(1,-1) + 2F'^{(0)}(0,0) + s'\cdot F'^{(-2)}(-1,1):=F'_R.\]
\begin{Claim}
$F_L=F'_R$.
\end{Claim}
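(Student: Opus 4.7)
The approach mirrors Case~3 of the proof of (\ref{eqn:dominoassoc}), exploiting some automatic simplifications peculiar to the present situation. Since $|\olambda| = 2n-3$ is odd and $\lambda_1 + \lambda_2 = 2n-3$, the difference $p := \lambda_1 - \lambda_2$ is a \emph{positive odd} integer; in particular $t = 1$ is automatic and $\lambda_1 > \lambda_2$. Moreover, $r' = 0$ forces $\lambda_2 = 0$, hence $\lambda_1 = 2n-3$, which in turn makes the shape $f(1,-1) = \langle \lambda_1+\mu_1+1, \mu_2-1 | \bullet\rangle$ illegal; so the vanishing of $r'$ is always absorbed by illegality. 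Similarly, when $r = 0$ one has $\lambda_2 = \mu_2 = 0$ together with $|\olambda|+|\omu|>2n-3$, which forces $f(0,0) = \langle \lambda_1+\mu_1, 0|\bullet\rangle$ to be illegal.

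I would next expand each primed sum by peeling off its $k=0$ term,
\[F'^{(i)}(a,b) = f(a,b) + F^{(i)}(a,b),\]
so that
\[F'_R = r' f(1,-1) + 2 f(0,0) + s' f(-1,1) + r' F^{(2)}(1,-1) + 2 F^{(0)}(0,0) + s' F^{(-2)}(-1,1).\]
After cancelling the common $2 F^{(0)}(0,0)$, the claim $F_L = F'_R$ reduces to an identity among several $f$-boundary terms and the remaining $F^{(i)}$-sums. Those $F^{(i)}$-sums on the right are then put on a common footing by reindexing $F^{(2)}(1,-1)$ via $k \mapsto k-1$ and $F^{(-2)}(-1,1)$ via $k \mapsto k+1$, so each becomes a partial sum of the form $\sum_{k} \langle \lambda_1+\mu_1-k, \lambda_2+\mu_2+k|\bullet\rangle$; the ranges are then read off from $M(i) = \min\{p+i, q\}$ where $q := \mu_1 - \mu_2$.

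The bookkeeping then splits, exactly as in Case~3 of the proof of (\ref{eqn:dominoassoc}), into the subcases $q < p$, $q = p$, and $q > p$. In each subcase one computes the relevant values $M(\pm 2), M(\pm 1), M(0)$ and the flags $r, r', s, s'$ explicitly, and verifies: (a)~the reindexed $F^{(\pm 2)}$-sums reconstruct $2F^{(0)}(1,-1)$ up to endpoints; (b)~those endpoints, together with the $k=0$ terms peeled off in the expansion of $F'^{(\pm 2)}$ and $F'^{(0)}$, reproduce the prescribed boundary contributions $rf(1,-1)$, $rf(0,0)$, $sf(-M(0),M(0))$, $sf(-M(0)-1,M(0)+1)$ of $F_L$; and (c)~every vanishing among $r, r', s, s'$ is compensated by illegality of the corresponding shape, as in the preparatory simplifications above.

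The main obstacle will be the subcase $q > p$, where the reindexed $F^{(\pm 2)}$-sums each acquire boundary shapes at both ends, and these must be matched together with the peeled-off $k=0$ terms against the endpoints $sf(-M(0),M(0))$ and $sf(-M(0)-1,M(0)+1)$; in addition, the discrepancy $(2-r)f(0,0)$ must be accounted for, with the extra copy of $f(0,0)$ swallowed either by illegality (when $r=0$) or by a reindexed summand (when $r=1$ and $M(2)\ge 1$). Once term-by-term matching is confirmed in each subcase, with illegal shapes discarded, the Claim, and hence (\ref{eqn:boxassoc}) in Case~3, follows.
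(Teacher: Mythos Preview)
Your approach is essentially the paper's, but two pieces of the bookkeeping are underspecified.

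First, the three-way split $q<p$, $q=p$, $q>p$ is too coarse here. In Case~3 of~(\ref{eqn:dominoassoc}) only $M(\pm 1)$ appear, so three subcases suffice; but $F'_R$ involves $M(\pm 2)$, and the values of $M(2)$ and $M(-2)$ relative to $M(0)$ depend on whether $|p-q|$ is $0$, $1$, or $\ge 2$. The paper accordingly uses five subcases: $p-q\ge 2$, $p-q=1$, $p=q$, $q-p=1$, $q-p\ge 2$. For example, within your subcase $q<p$, when $p-q=1$ one has $M(-2)=M(0)-1$, so the endpoint $f(-M(0)-1,M(0)+1)$ is absent from $F'_R$; one must then check separately that this shape is illegal in $F_L$ (it is, since its two coordinates differ by $-1$). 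Your proposal does not flag these finer distinctions. (Also, $M(\pm 1)$ play no role in this Claim and need not be computed.)

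Second, your handling of $r'=0$ is incomplete: $r'$ multiplies all of $F'^{(2)}(1,-1)$, not just the peeled-off term $f(1,-1)$. When $r'=0$ one has $\lambda_1=2n-3$, and then \emph{every} term of $F'^{(2)}(1,-1)$ has first coordinate at least $2n-2+\mu_2>2n-3$, so all are illegal; the paper states this explicitly. You also omit the parallel observation that $s'=0$ forces $p=1$, whence $M(-2)<0$ and $F'^{(-2)}(-1,1)$ is the empty sum. Once these points are filled in, your strategy coincides with the paper's proof.
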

\begin{proof}
If $r'=0$, then $\lambda_1=2n-3$ and $M(2) = \mu_1-\mu_2$. Thus the term of $F'^{(2)}(1,-1)$
with shortest longer row is $\langle 2n-3+\mu_2+1,\lambda_2+\mu_1-1|\bullet\rangle$. However, this longer row is strictly larger than $2n-3$, so all terms of $F'^{(2)}(1,-1)$ are illegal. Thus we can ``ignore'' $r'$ in the expression of $F'_R$ in the precise sense that the above expression for $F'_R$ is valid even if we replace the coefficient $r'$ by $1$.
If $s'=0$ then $F'^{(-2)}(-1,1)=0$, so we may similarly ignore $s'$ in the expression of $F'_R$. If $r=0$ then $f(1,-1)$ and $f(0,0)$ are both illegal, so we may ignore $r$ in the expression of $F_L$. We break the proof into five cases:

(The ``main case'' $\lambda_1-\lambda_2-(\mu_1-\mu_2)\ge 2$):
Here $M(2)=M(-2)=M(0)$. Now, $F'^{(2)}(1,-1)+F'^{(-2)}(-1,1) = f(1,-1)+F^{(0)}(1,-1)+ F^{(0)}(0,0)+f(-M(0)-1,M(0)+1)$ and $F'^{(0)}(0,0) + F'^{(0)}(0,0)= (f(0,0)+F^{(0)}(0,0))+(F^{(0)}(1,-1)+f(-M(0),M(0)))$. Thus
\[F'_R = f(1,-1)+f(0,0) + 2F^{(0)}(1,-1)+ 2F^{(0)}(0,0) + f(-M(0),M(0))+f(-M(0)-1,M(0)+1).\]
Since $s=1$, $F_L=F'_R$.

($\lambda_1-\lambda_2-(\mu_1-\mu_2)=1$): Here $M(2)=M(0)$ and $M(-2)=M(0)-1$.
The difference from the general case is that $F'_R$ no longer possesses the $f(-M(0)-1,M(0)+1)$ term, but this term is now illegal in $F_L$ and so $F_L=F'_R$.

($\lambda_1-\lambda_2=\mu_1-\mu_2$): Now $M(2)=M(0)$ and $M(-2)=M(0)-2$. The difference from the
general case is that $F'_R$ no longer possesses the $f(-M(0)-1,M(0)+1)$ term and one of the $f(-M(0),M(0))$ terms, but now $s=0$ so $F_L=F'_R$.

($\mu_1-\mu_2-(\lambda_1-\lambda_2)=1$): Thus $M(2)=M(0)+1$ and $M(-2)=M(0)-2$. The change from the general case is exactly the same as that for Subcase 3.2.

($\mu_1-\mu_2-(\lambda_1-\lambda_2)\ge 2$): Here
$M(2)=M(0)+2$, $M(-2)=M(0)-2$. $F'_R$, $F_L$ are the same as in the general case, so again $F_L=F'_R$.
\end{proof}

\noindent {\bf Case 4:} ($|\olambda|+|\omu| = 2n-3$ and $|\olambda| < 2n-3$):
Here $\left(\tableau{{\ }} \star \olambda\right)\star\omu=F_R$. Using (A) then (B),
\[\tableau{{\ }}\star(\olambda\star\omu)=F'^{(0)}(1,-1) + F'^{(0)}(0,0) + F'^{(0)}(0,0) + F'^{(0)}(-1,1).\]
This rearranges to
\[F_L':=f(1,-1)+f(0,0) + 2F^{(0)}(1,-1)+ 2F^{(0)}(0,0) +f(-M(0),M(0))+f(-M(0)-1,M(0)+1).\]
\begin{Claim}
$F'_L=F_R$.
\end{Claim}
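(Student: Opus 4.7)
The plan is to verify the claim $F'_L = F_R$ by a case analysis that parallels closely the proof of the preceding claim in Case~3, organized by the value of $d := (\lambda_1-\lambda_2)-(\mu_1-\mu_2)$, with a separate treatment of the degenerate parameter $r$. In the main subcase $d \geq 2$, one has $M(1) = M(-1) = M(0)$ and $s = t = 1$. Here I expand the two copies of $F'^{(0)}(0,0)$ and the terms $F'^{(0)}(1,-1), F'^{(0)}(-1,1)$ appearing in $\tableau{{\ }}\star(\olambda\star\omu)$, peeling off their $k=0$ and $k=M(0)$ endpoints, which collapses $F'_L$ to exactly $f(1,-1)+f(0,0)+2F^{(0)}(1,-1)+2F^{(0)}(0,0)+f(-M(0),M(0))+f(-M(0)-1,M(0)+1)$. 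On the other side, $t = 1$ unpacks the bracketed group of $F_R$, and identifying $M(1) = M(0) = M(-1)$ yields precisely the same list of endpoints and interior doubled sums; so $F'_L = F_R$ here.

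Next I sweep through the four boundary subcases $d = 1$, $d = 0$, $d = -1$, and $d \leq -2$, tracking in each how $M(\pm 1)$ shifts relative to $M(0)$ and how $s$ or $t$ vanishes. The pattern is that whichever endpoint term is deleted from $F_R$ (through $s = 0$ or $t = 0$) is simultaneously rendered illegal in $F'_L$, either because the summation range of the relevant $F'^{(0)}$ shrinks by one or because the associated partition leaves the rectangle $2 \times (2n-3)$. Finally, when $r = 0$, i.e.\ $\lambda_2 = \mu_2 = 0$, the term $f(1,-1)$ on both sides is immediately illegal; since $|\olambda|+|\omu|=2n-3$ forces $\lambda_1+\mu_1 = 2n-3$, the remaining shapes are legal and their coincidence is verified directly.

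I expect the main obstacle to be the critical subcase $d = 0$: there $s = 0$ removes one endpoint term from the Case~2 formula for $\olambda \star \omu$ used inside the left side, while simultaneously $M(-1) = M(0) - 1$ trims a term from the $F^{(-1)}(0,0)$ portion of $F_R$; checking that these two independent effects match the illegalities arising in the expansion of $F'_L$ is the most delicate bookkeeping. Once this claim is established, the claims of both $F_L = F_R$ (Case~2) and $F_L = F'_R$ (Case~3) together with Case~4 complete the proof of (\ref{eqn:boxassoc}), after which Case~5 of (\ref{eqn:dominoassoc}) closes the argument for that relation. The remainder of the $LG(2,2n)$ case of Theorem~\ref{thm:LGreformulated} then proceeds as already outlined: one invokes \cite{Searles:13+} for agreement with the Pieri rule of \cite{BKT:Inventiones} on the generators $\tableau{{\ }}$ and $\sctableau{{\ }\\{\ }}$, and concludes via the generation-plus-induction argument paralleling Lemma~\ref{lemma:Dassocfinal}.
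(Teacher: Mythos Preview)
Your approach is essentially the paper's own: a case analysis on $d = (\lambda_1-\lambda_2)-(\mu_1-\mu_2)$, handling the degenerate parameters $r$ and $t$ separately. The expansions and endpoint-peeling you describe for the main case are exactly what produces the rearranged form of $F'_L$ given in the text.

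However, you miss a simplification that the paper uses to collapse the case analysis substantially. In Case~4 we have $|\lambda|+|\mu| = 2n-3$, which is odd; since $d$ has the same parity as $(\lambda_1-\lambda_2)+(\mu_1-\mu_2) = |\lambda|+|\mu| - 2(\lambda_2+\mu_2)$, it follows that $d$ is odd. Hence the subcase $d=0$ \emph{cannot occur}, and what you flag as ``the most delicate bookkeeping'' is vacuous. (Your description of that subcase also conflates the Case~2 computation of $\olambda\star\omu$ via (B) with the Case~4 computation via (A); but since the case is empty this confusion has no consequence.) Moreover, $d=1$ already gives $M(1)=M(-1)=M(0)$ just as $d\geq 2$ does, so there is no need to separate it out. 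After disposing of $r$ and $t$, the paper therefore needs only the two cases $\mu_1-\mu_2 < \lambda_1-\lambda_2$ and $\mu_1-\mu_2 > \lambda_1-\lambda_2$, rather than five.
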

\begin{proof}
If $r=0$ then $f(1,-1)$ is illegal, so we may ignore $r$ in $F_R$. If $t=0$ then $M=0$ and it is easy to check $F'_L=F_R$. Suppose $t \neq 0$. Then if $\mu_1-\mu_2<\lambda_1-\lambda_2$, we have $M(1)=M(-1)=M(0)$ and $F'_L=F_R$. Since $|\lambda|+|\mu|$ is odd, we cannot have $\mu_1-\mu_2=\lambda_1-\lambda_2$. If $\mu_1-\mu_2>\lambda_1-\lambda_2$, we have $M(1)=M(0)+1$, $M(-1)=M(0)-1$ and $F'_L=F_R$.
\end{proof}

\begin{Proposition}
\label{prop:Cgenerate}
Assuming (\ref{eqn:dominoassoc}) and (\ref{eqn:boxassoc}), every $\olambda\in \mathbb{Y}_{LG(2,2n)}$ has a well-defined expression as a $\star$-polynomial in $\tableau{{\ }}$ and $\sctableau{{\ }\\{\ }}$ with rational coefficients.
\end{Proposition}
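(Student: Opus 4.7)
The plan is to induct on $|\olambda|$, with the base cases $|\olambda|\le 2$ handled directly: $\emptyset=1$, $\langle 1,0|\circ\rangle=\tableau{\ }$, $\langle 1,1|\circ\rangle=\sctableau{{\ }\\{\ }}$, and $\langle 2,0|\circ\rangle=\tableau{\ }\star\tableau{\ }-\sctableau{{\ }\\{\ }}$ by rule (A). The well-definedness of any $\star$-polynomial in the two generators is a separate routine matter: an induction on the length of a parenthesized $\star$-expression in $\tableau{\ },\sctableau{{\ }\\{\ }}$, applying the ``peeling'' identity $g\star(P\star Q)=(g\star P)\star Q$ for $g\in\{\tableau{\ },\sctableau{{\ }\\{\ }}\}$ (which is (\ref{eqn:dominoassoc}) or (\ref{eqn:boxassoc}) extended bilinearly in $P,Q$), reduces every parenthesization to the canonical right-associated monomial.

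For the inductive step at size $k$, I split into four regimes. For off shapes $\olambda=\langle a,b|\circ\rangle$ with $k\le 2n-3$: if $b\ge 1$, rule (A) gives $\sctableau{{\ }\\{\ }}\star\langle a-1,b-1|\circ\rangle=\langle a,b|\circ\rangle$ (a single term, since $M=0$); if $b=0$ and $a\ge 2$, rule (A) gives $\tableau{\ }\star\langle a-1,0|\circ\rangle=\langle a,0|\circ\rangle+\langle a-1,1|\circ\rangle$, whose second summand is covered by the previous subcase.

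The smallest on shapes (size $2n-2$, so $a+b=2n-3$) require rule (B): the product $\sctableau{{\ }\\{\ }}\star\langle a-1,b|\circ\rangle=\langle a,b|\bullet\rangle+\langle a-1,b+1|\bullet\rangle$ lets one sub-induct on $b$ descending from $b=n-2$, where $\langle a-1,b+1|\bullet\rangle$ is illegal; for smaller $b$ the second summand has a larger $b$-coordinate and is already known. For size-$(2n-1)$ on shapes ($a+b=2n-2$), the only Pieri-type product landing in this regime is $\tableau{\ }$ times a size-$(2n-2)$ on shape. By rule (C), this produces the bidiagonal system
\[\tableau{\ }\star\langle 2n-3-b',b'|\bullet\rangle=\langle 2n-2-b',b'|\bullet\rangle+\langle 2n-3-b',b'+1|\bullet\rangle\qquad(1\le b'\le n-2),\]
together with the edge equation $\tableau{\ }\star\langle 2n-3,0|\bullet\rangle=\langle 2n-3,1|\bullet\rangle$ (the term $\langle 2n-2,0|\bullet\rangle$ being illegal). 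Ascending sub-induction on $b$ starting at $b=1$ solves the system by telescoping.

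Finally, for on shapes of size $k\ge 2n$: if $b\ge 1$, rule (C) gives $\sctableau{{\ }\\{\ }}\star\langle a-1,b-1|\bullet\rangle=\langle a,b|\bullet\rangle$ directly (the shape $\langle a-1,b-1|\bullet\rangle$ is a valid on shape since $a+b-2\ge 2n-3$); if $b=0$, $\tableau{\ }\star\langle a-1,0|\bullet\rangle=\langle a,0|\bullet\rangle+\langle a-1,1|\bullet\rangle$ reduces $\langle a,0|\bullet\rangle$ to previously handled shapes. The hard part will be the size-$(2n-1)$ regime, where no Pieri relation isolates a single unknown, so the argument hinges on recognizing that the bidiagonal system is triangular and that the edge $b'=0$ equation breaks the chain of dependencies cleanly.
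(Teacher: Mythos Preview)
Your proof is correct but organized differently from the paper's. You induct on the total size $|\olambda|$, which forces you to treat the sizes $2n-2$ and $2n-1$ as special transition layers with their own sub-inductions, and in particular to solve a bidiagonal triangular system at size $2n-1$. The paper instead first generates all the one-row off shapes $\langle k,0|\circ\rangle$ via the relation $\langle k,0|\circ\rangle=\tableau{\ }\star\langle k-1,0|\circ\rangle-\sctableau{{\ }\\{\ }}\star\langle k-2,0|\circ\rangle$ and then all off shapes by $\langle a,b|\circ\rangle=\langle a-b,0|\circ\rangle\star\sctableau{{\ }\\{\ }}^{\,b}$; for on shapes it anchors at $\langle 2n-3,0|\bullet\rangle$ and exploits that $\tableau{\ }\star\langle 2n-3,k|\bullet\rangle=\langle 2n-3,k+1|\bullet\rangle$ alone (the other term being illegal), then descends in the first coordinate row by row. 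This sidesteps your triangular system entirely, since each new on shape is obtained from a single equation with one unknown. Your approach is more uniformly size-graded; the paper's is slightly slicker at the on/off boundary. One cosmetic point: in your final regime $k\ge 2n$ the case $b=0$ is vacuous, since an on shape $\langle a,0|\bullet\rangle$ forces $a=2n-3$ and hence $|\olambda|=2n-2$.
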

\begin{proof}
For any $2\le k\le 2n-3$, we have $\tableau{{\ }}\star \langle k-1,0|\circ\rangle - \sctableau{{\ }\\{\ }}\star \langle k-2,0|\circ\rangle = \langle k,0|\circ\rangle$. Then since $\tableau{{\ }}=\langle 1,0|\circ\rangle$, we generate all $\langle k,0|\circ\rangle$ for $1\le k \le 2n-3$. Since $\langle \lambda|\circ\rangle = \langle \lambda_1-\lambda_2,0|\circ\rangle\star(\sctableau{{\ }\\{\ }})^{\lambda_2}$, we generate all $\langle \lambda|\circ\rangle$.

We have $\langle 2n-3,0|\bullet\rangle= \tableau{{\ }} \star \langle 2n-3,0|\circ\rangle - \sctableau{{\ }\\{\ }} \star \langle 2n-4,0|\circ\rangle$. Then repeatedly multiplying $\langle 2n-3,0|\bullet\rangle$ by $\tableau{{\ }}$ gives us all $\langle 2n-3,k|\bullet\rangle$. Since we have $\langle 2n-3,0|\bullet\rangle$, we also have $\langle 2n-4,1|\bullet\rangle= \sctableau{{\ }\\{\ }} \star \langle 2n-4,0|\circ\rangle-\langle 2n-3,0|\bullet\rangle$. Then since we have $\langle 2n-3,k|\bullet\rangle$, we can (by repeatedly multiplying $\langle 2n-4,1|\bullet\rangle$ by $\tableau{{\ }}$) obtain all $\langle 2n-4,k|\bullet\rangle$. Now, $\langle 2n-5,2|\bullet\rangle=  \sctableau{{\ }\\{\ }} \star \langle 2n-5,1|\circ\rangle -\langle 2n-4,1|\bullet\rangle$, so we obtain all $\langle 2n-5,k|\bullet\rangle$. Continuing in this way we obtain all $\langle \lambda| \bullet\rangle$.
\end{proof}

Finally:

\noindent
\emph{Proof of Corollary~\ref{cor:BCcoefficients}:}
It is clear from the definitions of $\star$ and ${\rm sh}$ that nonzero $C_{\olambda,\omu}^{\onu}(OG(2,2n+1))$ are powers of $2$. Furthermore, $2^{{\rm sh}(\onu)-{\rm sh}(\olambda)-{\rm sh}(\omu)}$ can be at most $2^2=4$ and thus (using (B)) $C_{\olambda,\omu}^{\onu}(OG(2,2n+1))\leq 8$. For $n\ge 5$, we have $C_{\langle n-2,n-3|\circ\rangle,\langle n-2,n-3|\circ\rangle}^{\langle 2n-5,2n-6|\bullet\rangle}(OG(2,2n+1))=8$. Given $k\in \{1,2,4\}$ it is easy to find triples $(\olambda,\omu,\onu)$ such that $C_{\olambda,\omu}^{\onu}(OG(2,2n+1))=k$. \qed

\noindent
\emph{Proof of Corollary~\ref{cor:BCnonzero}:}
We have $C_{\olambda,\omu}^{\onu}(LG(2,2n))\neq 0$ if and only if $C_{\olambda,\mu}^{\onu}(OG(2,2n+1))\neq 0$.
The condition $|\onu|= |\olambda|+|\omu|$ is clearly necessary for nonzeroness, and the necessity of $\lambda_3+\mu_3 \leq  \nu_3$ follows from the definition of $\mathbb{A}_{\olambda,\omu}$. Therefore assume both of these conditions hold. Let (a) denote the other three inequalities. First assume $\lambda_3+\mu_3=\nu_3$. Then by Theorem~\ref{Thm:BCintro} and the definition of $\mathbb{A}_{\olambda,\omu}$, $C_{\olambda,\omu}^{\onu}(LG(2,2n))\neq 0$ if and only if $C_{\lambda,\mu}^{\nu}\neq 0$. We have $|\lambda|+|\mu|=|\nu|$, so by the Horn inequalities for a $2$-row Grassmannian, $C_{\lambda,\mu}^{\nu}\neq 0$ if and only if the inequalities (a) hold.

Therefore assume $\lambda_3+\mu_3 < \nu_3$, i.e., $\lambda_3=\mu_3=0$ and $\nu_3=1$. Then by Theorem~\ref{Thm:BCintro} and the definition of $\mathbb{A}_{\olambda,\omu}$, $C_{\olambda,\omu}^{\onu}(LG(2,2n))\neq 0$ if and only if either $C_{\lambda,\mu}^{(\nu_1+1,\nu_2)}\neq 0$ or $C_{\lambda,\mu}^{(\nu_1,\nu_2+1)}\neq 0$. First suppose $C_{\olambda,\omu}^{\onu}(LG(2,2n))\neq 0$. Then since $\nu_1+\nu_2+1=|\lambda|+|\mu|$, by the Horn inequalities either the set $(b)=\{\nu_1 +1 \leq  \lambda_1+\mu_1, \nu_2 \leq  \lambda_1+\mu_2, \nu_2 \leq  \lambda_2+\mu_1\}$ or $(c)=\{\nu_1 \leq  \lambda_1+\mu_1, \nu_2+1 \leq  \lambda_1+\mu_2, \nu_2+1 \leq  \lambda_2+\mu_1\}$ holds. But if either (b) or (c) hold, then (a) holds. 

Now suppose $C_{\olambda,\omu}^{\onu}(LG(2,2n))= 0$, and also assume $\nu_1>\nu_2$ (so $(\nu_1,\nu_2+1)$ is a partition). Then one of the inequalities from (b) and one from (c) must be false. If one of the latter two inequalities of (b) or the first of (c) is false, then (a) does not hold. Thus assume $\nu_1 +1 >  \lambda_1+\mu_1$, and either $\nu_2+1 >  \lambda_1+\mu_2$ or $\nu_2+1 >  \lambda_2+\mu_1$. Then for (a) to hold, we must have $\nu_1 = \lambda_1+\mu_1$ and either $\nu_2= \lambda_1+\mu_2$ or $\nu_2= \lambda_2+\mu_1$. But this contradicts $|\nu|+1=|\lambda|+|\mu|$.

Finally suppose $C_{\olambda,\omu}^{\onu}(LG(2,2n))= 0$, and also $\nu_1=\nu_2$. Then one of the inequalities from (b) must be false. If either of the latter two inequalities of (b) is false, then (a) does not hold. Thus assume $\nu_1 +1 >  \lambda_1+\mu_1$. If (a) holds then $\nu_1 = \lambda_1+\mu_1$, and then since $\nu_1=\nu_2$ all inequalities in (a) are equalities, again contradicting $|\nu|+1=|\lambda|+|\mu|$. \qed

\section{Proofs in the even orthogonal Grassmannian case}
\subsection{Reformulation of the rule} 

For given shapes $\oalpha, \obeta \in {\mathbb Y}_{OG(2,2n)}'$ let
\begin{equation}\label{eqn:M}
M=\min\{\alpha_1-\alpha_2, \beta_1-\beta_2\}.
\end{equation}

\begin{Definition}
\label{def:fakeproduct}
Let $\oalpha,\obeta\in {\mathbb Y}_{OG(2,2n)}'$. Define a product $\diamond$ by
\begin{enumerate}
\item[(A)] If $|\langle\alpha| \circ\rangle|+|\langle\beta| \circ\rangle| \le 2n-4$, then
\[\langle\alpha|\circ\rangle \diamond \langle\beta|\circ\rangle = \sum_{0 \le k \le M}
\langle\alpha_1+\beta_1-k, \alpha_2+\beta_2+k| \circ\rangle\]
\item[(B)] If $|\langle\alpha|\circ\rangle| + |\langle\beta|\circ\rangle| > 2n-4$, then
\begin{multline}\nonumber
\langle\alpha|\circ\rangle\diamond \langle\beta|\circ\rangle = \langle\alpha_1+\beta_1, \alpha_2+\beta_2-1| \bullet\rangle +
2\sum_{1 \le k \le M}  \langle\alpha_1+\beta_1-k, \alpha_2+\beta_2+k-1| \bullet\rangle\\ + \langle\alpha_1+\beta_1-M-1, \alpha_2+\beta_2+M|\bullet\rangle
\end{multline}
\item[(C)] $\langle\alpha|\bullet\rangle\diamond \langle\beta|\circ\rangle = \langle\alpha|\circ\rangle\diamond \langle\beta|\bullet\rangle =
\sum_{0 \le k \le M}  \langle\alpha_1+\beta_1-k, \alpha_2+\beta_2+k| \bullet\rangle$
\item[(D)] $\langle\alpha|\bullet\rangle \diamond \langle\beta|\bullet\rangle = 0$.
\end{enumerate}
and extend $\diamond$ to be distributive by linearity.

\excise{
Now, let $\gamma_{\oalpha,\obeta}^{\okappa}=2^{{\rm fsh}(\okappa)-({\rm fsh}(\oalpha)+{\rm fsh}(\obeta))}$.
For brevity, we write $\gamma_k$ to mean $\gamma_{\oalpha,\obeta}^{\okappa}$ where $\okappa$ is the term for which $\gamma_k$ is the coefficient.}

Declare any $\overline{\delta}$ in the above expressions to be zero if
$(\delta_1,\delta_2)$ is not a partition in $2\times(2n-4)$. Such $\overline{\delta}$ will be called {\bf illegal}.
\end{Definition}

Recall the definition (\ref{eqn:eta}) of $\eta_{\olambda,\omu}$ from the
introduction, as well as the definitions in the three paragraphs preceding it. Also, 
a shape $\olambda \in {\mathbb Y}_{OG(2,2n)}$ 
is {\bf Pieri} if $\Pi(\olambda)=\langle j,0|{\bullet}/{\circ}\rangle$, and {\bf non-Pieri} otherwise.

\excise{We recall the following definitions from the introduction. Let ${\rm fsh}(\oalpha)$ be the number of boxes in the $(n-2)$th column of $2\times (2n-4)$ used by $\oalpha$, with the exception that ${\rm fsh}(\langle n-2,n-2|\circ\rangle)=1$.
If $\olambda = \langle \lambda|\bullet/\circ\rangle \in {\mathbb Y}_{OG(2,2n)}$ is charged, define ${\rm ch}(\olambda)\in \{\uparrow,\downarrow\}$ to be the charge of $\olambda$, and ${\rm op}(\olambda)\in \{\uparrow,\downarrow\}$ to be the opposite charge. Define $\Pi(\olambda)=\langle\pi(\lambda)|\bullet/\circ\rangle\in \mathbb{Y}'_{OG(2,2n)}$, where $\pi(\lambda) = \lambda^{(1)} + \lambda^{(2)} :=(\lambda_1,\lambda_2)$ is a partition in $2\times (2n-4)$. Define ${\rm fsh}(\olambda) = {\rm fsh}(\Pi(\olambda))$. Recall $\okappa\in {\mathbb Y}_{OG(2,2n)}'$ is {\bf ambiguous} if $\Pi^{-1}(\okappa)=\{\okappa^\uparrow, \okappa^\downarrow\}$ and {\bf unambiguous} otherwise. If $\okappa$ is unambiguous, we identify $\okappa$ with its preimage $\Pi^{-1}(\okappa)\in {\mathbb Y}_{OG(2,2n)}$.

For shapes $\olambda, \omu \in {\mathbb Y}_{OG(2,2n)}$, define:

$\eta_{\olambda,\omu}=
\begin{cases}
2 & \text{if $\olambda$, $\omu$ are charged and match and $n$ is even;}\\
2 & \text{if $\olambda$, $\omu$ are charged and opposite and $n$ is odd;}\\
1 & \text{if $\olambda$ or $\omu$ are not charged;}\\
0 & \text{otherwise}
\end{cases}$}

\begin{Definition}\label{def:starproduct}
Let $\olambda,\omu\in {\mathbb Y}_{OG(2,2n)}$. Define a commutative product $\star$ on $R={\mathbb Z}[{\mathbb Y}_{OG(2,2n)}]$:

If $\Pi(\olambda)=\Pi(\omu)=\langle n-2,0|\circ\rangle$, then

$\olambda\star\omu=\begin{cases}
\sum_{0\le k\le \frac{n-2}{2}}\langle 2n-4-2k, 2k|\circ\rangle & \text{if $n$ is even and $\olambda$, $\omu$ match}\\                                       
\sum_{0\le k\le \frac{n-4}{2}}\langle 2n-5-2k, 2k+1|\circ\rangle & \text{if $n$ is even and $\olambda$, $\omu$ are opposite}\\
\sum_{0\le k\le \frac{n-3}{2}}\langle 2n-5-2k, 2k+1|\circ\rangle & \text{if $n$ is odd and $\olambda$, $\omu$ match} \\                                    
\sum_{0\le k\le \frac{n-3}{2}}\langle 2n-4-2k, 2k|\circ\rangle & \text{if $n$ is odd and $\olambda$, $\omu$ are opposite} 
\end{cases}$

\noindent
where in the first and third cases above, the shape $\langle n-2,n-2|\circ\rangle$ is assigned ${\rm ch}(\olambda)={\rm ch}(\omu)$.

Otherwise, compute $\Pi(\olambda)\diamond\Pi(\omu)$ and
\begin{itemize}
\item[(i)] Replace any term $\okappa$ that has $\kappa_1=2n-4$ by
$\eta_{\olambda,\omu}\okappa$.
\item[(ii)] Next, replace each $\okappa$ by $2^{{\rm fsh}(\okappa)-{\rm fsh}(\olambda)-{\rm fsh}(\omu)}\okappa$.
\item[(iii)] Finally, ``disambiguate'' using one in the following complete list of possibilities:
\begin{itemize}
\item[(iii.1)] \noindent{\sf (if $\olambda, \omu$ are both non-Pieri)} 
Replace any ambiguous $\okappa$ by $\frac{1}{2}(\okappa^{\uparrow}+\okappa^{\downarrow})$.

\item[(iii.2)] \noindent{\sf (if one of $\olambda, \omu$ is  neutral and Pieri)} 
Since the product $\diamond$ is commutative, we may assume $\olambda$ is Pieri. Then replace any ambiguous $\okappa$ by $\frac{1}{2}(\okappa^{\uparrow}+\okappa^{\downarrow})$ if $\omu$ is neutral, and by $\okappa^{{\rm ch}(\omu)}$ if $\omu$ is charged.

\item[(iii.3)] \noindent{\sf (if one of $\olambda, \omu$ is  charged and Pieri, and the other is non-Pieri)}. We may assume $\olambda$ is Pieri. In particular, $\Pi(\olambda)=\langle n-2,0|\circ\rangle$. 
\begin{itemize}
\item[(iii.3a)] If $\omu = \langle \mu |\bullet\rangle$ is neutral and $|\mu|=2n-4$,
then replace the ambiguous term $\langle 2n-4, n-2|\bullet\rangle$ by $\langle 2n-4, n-2|\bullet\rangle^{{\rm ch}(\olambda)}$ if $\mu_1$ is even and by $\langle 2n-4, n-2|\bullet\rangle^{{\rm op}(\olambda)}$ if $\mu_1$ is odd.
\item[(iii.3b)] Otherwise, replace any ambiguous $\okappa$ by $\frac{1}{2}(\okappa^{\uparrow}+\okappa^{\downarrow})$ if $\omu$ is 
neutral, and by $\okappa^{{\rm ch}(\omu)}$ if $\omu$ is charged.
\end{itemize}
\end{itemize}
\end{itemize}
\end{Definition}

We are now ready to restate our theorem for $OG(2,2n)$:

\begin{Theorem}
\label{Thm:Dmult}
Given $\olambda,\omu\in {\mathbb Y}_{OG(2,2n)}$, compute $\olambda\star\omu$ and replace every $\okappa$ by $\sigma_{\okappa}$. The result equals $\sigma_{\olambda}\cdot\sigma_{\omu}\in H^\star(OG(2,2n))$.
\end{Theorem}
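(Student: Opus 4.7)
The plan is to follow the associativity-plus-generation template from Sections~3 and~4. Extending $\star$ distributively to $R={\mathbb Z}[{\mathbb Y}_{OG(2,2n)}]$, the goal is to show that $(R,\star)$ is an associative ring and that the correspondence $\olambda\leftrightarrow\sigma_{\olambda}$ yields a ring isomorphism $R\cong H^{\star}(OG(2,2n))$. To this end I would take the generators $g_1=\langle 1,0|\circ\rangle$ (the ``box'') and $g_2=\langle 1,1|\circ\rangle$ (the ``domino''), mimicking the role of $\tableau{{\ }}$ and $\sctableau{{\ }\\{\ }}$ in Section~4, and directly verify the two associativity relations
\[
g_i\star(\olambda\star\omu)=(g_i\star\olambda)\star\omu, \qquad i=1,2.
\]

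The underlying $\diamond$-product on ${\mathbb Y}'_{OG(2,2n)}$ is combinatorially the same as the $\star$-product on ${\mathbb Y}_{LG(2,2n)}$ analyzed in Section~4, because $\Lambda'_{OG(2,2n)}$ is a $2\times(2n-4)$ rectangle. Accordingly, the ``bare'' $\diamond$-associativity should be inherited from the case analysis (Cases~1--5 for the domino and Cases~1--4 for the box) already executed there. The real work is to verify that the rewriting steps (i)--(iii) of Definition~\ref{def:starproduct} propagate consistently through the two sides of the display above.

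The hard part will be controlling the three interacting complications introduced by (i)--(iii). First, the exponent ${\rm fsh}$ in step (ii) must be tracked: one needs the identity that, for each term $\okappa$ of the final product, the totals ${\rm fsh}(g_i\star\olambda)+{\rm fsh}(\omu)$ and ${\rm fsh}(\olambda)+{\rm fsh}(g_i\star\omu)$ contribute the same exponent, with the exceptional convention ${\rm fsh}(\langle n-2,n-2|\circ\rangle)=1$ handled case-by-case. Second, the parity-dependent factor $\eta_{\olambda,\omu}$ in step (i) must commute with insertion of $g_i$: since both generators are neutral, one has $\eta_{g_i,\cdot}=1$, so the only remaining issue is whether $g_i\star\olambda$ can flip the charges in a way that changes $\eta$ against $\omu$. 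Working this out amounts to tracing the parity dependency of $\mathbb{Q}^{2n-4}$ from \cite{Thomas.Yong:comin} through the embedding of $\Lambda_{{\mathbb Q}^{2n-4}}$ as a ``side'' of $\Lambda_{OG(2,2n)}$. Third, the disambiguation rules (iii.1)--(iii.3b) must be compatible under associativity; the key reduction is that $g_2$ is neutral and non-Pieri, so multiplication by $g_2$ only triggers (iii.1) or (iii.2), while for the Pieri generator $g_1$ the outputs are forced by the Pieri rule of \cite{BKT:Inventiones}. The special branch of Definition~\ref{def:starproduct} for $\Pi(\olambda)=\Pi(\omu)=\langle n-2,0|\circ\rangle$ arises only in top-dimensional products and is verified by direct substitution.

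Once both associativity relations are established, an analogue of Proposition~\ref{prop:Cgenerate} shows that every shape in ${\mathbb Y}_{OG(2,2n)}$ is expressible as a $\star$-polynomial in $g_1$, $g_2$, and a short list of charged shapes produced inductively from the exceptional cases of Definition~\ref{def:starproduct}. An induction of the type recorded in Lemma~\ref{lemma:Dassocfinal} then propagates associativity from the generators to all of $R$. Finally, the companion paper \cite{Searles:13+} verifies that $\star$ agrees with the Pieri rule of \cite{BKT:Inventiones} on the generating classes of $H^{\star}(OG(2,2n))$, which yields the ring isomorphism $R\cong H^{\star}(OG(2,2n))$ sending $\olambda\mapsto\sigma_{\olambda}$. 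Integrality, which is non-manifest because of steps (ii) and (iii), is handled by a separate argument paralleling Proposition~\ref{prop:OGoddintegral}: whenever a fractional coefficient would appear, the corresponding term is forced to be illegal by dimension constraints, and Corollary~\ref{cor:Dintegral} follows by direct inspection of the possible powers of~$2$ arising in~(ii) and~(iii).
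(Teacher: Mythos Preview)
Your overall template is right, and the reduction of the $\diamond$-associativity to the $LG(2,2n)$ analysis is exactly what the paper does. But there is a genuine gap: two generators are not enough. Both $g_1$ and $g_2$ are neutral, and every $\star$-product of neutral shapes is balanced (by (iii)). Hence any $\star$-polynomial in $g_1,g_2$ alone lies in the subspace spanned by neutral shapes together with the symmetric sums $\okappa^{\uparrow}+\okappa^{\downarrow}$; it can never single out an individual charged shape such as $\langle n-2,0|\circ\rangle^{\uparrow}$. Your remark that charged shapes are ``produced inductively from the exceptional cases of Definition~\ref{def:starproduct}'' does not help, because that exceptional branch only fires when one of the \emph{inputs} is already the charged Pieri shape $\langle n-2,0|\circ\rangle^{\uparrow/\downarrow}$. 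So the generation step fails, and with it the induction that propagates associativity.

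The paper fixes this by adjoining a third generator $g_3=\langle n-2,0|\circ\rangle^{\uparrow}$ and proving a third associativity relation
\[
g_3\star(\olambda\star\omu)=(g_3\star\olambda)\star\omu,
\]
which is the content of (\ref{eqn:Dambigassoc}) and Section~5.5. This relation is not a corollary of the $g_1,g_2$ relations; its proof is a separate and fairly intricate case analysis (first reducing to the case where both $\olambda,\omu$ are charged, then splitting on parities of $n$ and of $\mu_2$, with the $\Delta[k]$/$\nabla[j]$ bookkeeping). Only after all three relations are in hand does Proposition~\ref{cor:Dngenerate} give generation by $\{g_1,g_2,g_3\}$, and Lemma~\ref{lemma:Dassocfinal} then yields global associativity. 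A smaller point: your integrality sketch (``fractional terms are illegal'') is too optimistic; in the paper's argument (Lemmas~\ref{lemma:integralpart1}--\ref{lemma:integralpart2}) several fractional coefficients are rescued not by illegality but because (i) has already doubled them or because the $\diamond$-coefficient $c$ was $2$ to begin with.
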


Clearly, if $\olambda, \omu$ are non-Pieri 
then Theorem \ref{Thm:Dmult} agrees with Theorem \ref{Thm:Dintro}. 

\subsection{Associativity relations}
Let $\tableau{{\ }}:=\langle 1,0|\circ\rangle \mbox{\ \ \  and \ \ \  } \sctableau{{\ }\\{\ }}:=\langle 1,1|\circ\rangle$.
The main part of our proof is to establish these three associativity relations:
\begin{equation}
\label{eqn:Ddominoassoc}
\leftassB=\rightassB
\end{equation}
\begin{equation}
\label{eqn:Dboxassoc}
\tableau{{\ }} \star (\olambda \star \omu) = (\tableau{{\ }}\star \olambda) \star \omu
\end{equation}
\begin{equation}
\label{eqn:Dambigassoc}
\langle n-2,0 |\circ\rangle^\uparrow \star \Big(\olambda \star \omu \Big) = \Big( \langle n-2,0|\circ\rangle^\uparrow \star \olambda \Big) \star \omu.
\end{equation}

An ambiguous term $c\cdot\okappa$ of an expression in ${\mathbb Z}[{\mathbb Y}_{OG(2,2n)}']$ {\bf splits} if (iii) replaces it by $\frac{c}{2}(\okappa^{\uparrow}+\okappa^{\downarrow})$. We say an expression in ${\mathbb Z}[{\mathbb Y}_{OG(2,2n)}]$ is {\bf balanced} if it contains the same number of up charged shapes as down charged shapes. The following observation is used throughout our arguments:

\begin{Lemma}\label{cor:case1DomassocGone}
If $n-2\le d\le 3n-6$, there is exactly one ambiguous shape $\oalpha$ with $|\alpha|=d$. Otherwise, every shape $\oalpha$ with $|\alpha|=d$ is unambiguous.

Thus, at most one ambiguous shape $\okappa$ appears in either
$\Pi(\overline{\tau}\star(\olambda\star\omu))$ or $\Pi((\overline{\tau}\star\olambda)\star\omu)$. 
\end{Lemma}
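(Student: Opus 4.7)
The plan is to handle the two assertions in sequence, with the first being the combinatorial heart of the lemma and the second a direct corollary.

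For the first assertion, I would begin by characterizing the ambiguous shapes in $\mathbb{Y}'_{OG(2,2n)}$ from the fork structure atop $\Lambda_{OG(2,2n)}$. Recall that $\Pi$ sends $\langle\lambda^{(1)},\lambda^{(2)}|\star\rangle$ to $\langle\lambda^{(1)}+\lambda^{(2)}|\star\rangle$, where $\lambda^{(2)}$ is recorded as a partition $(p,q)$ with $p\ge q$. When $p>q$, there are two shapes in $\mathbb{Y}_{OG(2,2n)}$ collapsing to a common $\okappa$, according to which of the two parallel chains of the fork receives the longer length; when $p=q$ there is only one. The crucial combinatorial observation is that once one fixes a target $\kappa=(\kappa_1,\kappa_2)$, the lower-order-ideal structure in $\Lambda_{OG(2,2n)}$ forces at most one splitting $(\lambda^{(1)},\lambda^{(2)})$ of $\kappa$: once the upper fork is used nontrivially, the down-set condition pins down the bottom rectangle contribution.

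Next, I would perform a case analysis on the status $\star\in\{\bullet,\circ\}$ and on the possible pairs $(\kappa_1,\kappa_2)$ admitting a splitting with $\lambda^{(2)}_1>\lambda^{(2)}_2$. This yields an explicit parametrization of ambiguous shapes, and a direct count organized by $d=|\okappa|$ confirms the stated dichotomy: a single ambiguous shape when $n-2\le d\le 3n-6$, and none otherwise.

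For the second assertion, I would invoke size preservation. The product $\diamond$ in Definition~\ref{def:fakeproduct} plainly produces only terms of total shape-size $|\oalpha|+|\obeta|$ (the $\bullet$-status contributing $+1$ in cases (B) and (C) compensates for the partition sum decreasing by one). The rescalings in steps (i)--(ii) of Definition~\ref{def:starproduct} and the disambiguation in (iii) do not alter size, and $\Pi$ preserves size because $|\pi(\lambda)|=|\lambda^{(1)}|+|\lambda^{(2)}|=|\lambda|$. Hence every term of $\Pi(\overline{\tau}\star(\olambda\star\omu))$, and similarly every term of $\Pi((\overline{\tau}\star\olambda)\star\omu)$, lies in the single size stratum $d=|\overline{\tau}|+|\olambda|+|\omu|$; applying the first assertion to that $d$ immediately bounds the number of ambiguous $\okappa$ appearing by one.

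The hard part will be the first assertion: establishing uniqueness of the splitting of $\kappa$ and cleanly enumerating the resulting ambiguous shapes by size, through careful inspection of the down-set structure of $\Lambda_{OG(2,2n)}$ and the constraints imposed by using the upper fork. The second assertion is essentially a bookkeeping consequence of size-preservation, which is evident from the definitions.
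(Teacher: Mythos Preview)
The paper does not supply a proof of this lemma; it is stated as an observation and used freely thereafter. Your plan---explicitly enumerate the ambiguous shapes via the fork structure of $\Lambda_{OG(2,2n)}$, tabulate them by size, and then invoke size-homogeneity of the products---is the natural argument and is correct in outline.

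Two clarifications will tighten the write-up. First, your sentence ``the lower-order-ideal structure \ldots\ forces at most one splitting $(\lambda^{(1)},\lambda^{(2)})$ of $\kappa$'' reads as if $\Pi$ were injective. What you mean is that the down-set condition determines $\lambda^{(1)}$ and the \emph{multiset} $\{\lambda^{(2)}_1,\lambda^{(2)}_2\}$ from $\kappa$; the two orderings of that multiset (when distinct) are precisely $\okappa^{\uparrow}$ and $\okappa^{\downarrow}$. Concretely, the ambiguous shapes are exactly $\langle n-2,\kappa_2\,|\,\circ\rangle$ for $0\le\kappa_2\le n-2$ and $\langle \kappa_1,n-2\,|\,\bullet\rangle$ for $n-2\le\kappa_1\le 2n-4$, from which the stated range drops out.

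Second, for the ``at most one ambiguous term'' conclusion, you correctly stratify by total size $|\oalpha|$. Note additionally that in any single $\diamond$-product (Definition~\ref{def:fakeproduct}) all outputs share the same on/off status (off in case~(A), on in (B)--(D)), so a fixed $|\oalpha|$ pins down both $|\alpha|$ and the $\bullet/\circ$ marking. This is what rules out the apparent collision at $|\alpha|=2n-4$, where both $\langle n-2,n-2\,|\,\circ\rangle$ and $\langle n-2,n-2\,|\,\bullet\rangle$ are ambiguous; they never coexist in a single product.
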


Clearly, we may assume throughout that $|\olambda|, |\omu| > 0$.

\subsection{Proof of relation (\ref{eqn:Ddominoassoc})}
If $\Pi(\olambda)=\Pi(\omu)=\langle n-2,0|\circ\rangle$ then (\ref{eqn:Ddominoassoc}) is just a calculation:
\[\leftassB=\rightassB=\sum_{1\le k \le n-3} \langle 2n-4-k+1,k|\bullet\rangle + \langle n-1,n-2|
\bullet\rangle^\uparrow  + \langle n-1,n-2|\bullet\rangle^\downarrow.\]

Otherwise, our strategy is to reduce to analyzing the simpler $\diamond$ product.

\begin{Lemma}\label{lemma:Domassocleftbalanced}
$\sctableau{{\ }\\{\ }}\star\onu$ is balanced. 
\end{Lemma}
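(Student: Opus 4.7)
My plan is to compute $\sctableau{{\ }\\{\ }}\star\onu$ via Definition~\ref{def:starproduct} and to track exactly which terms can produce charged shapes. Since $\Pi(\sctableau{{\ }\\{\ }})=\langle 1,1|\circ\rangle\neq\langle n-2,0|\circ\rangle$ for $n\geq 4$, we avoid the first special case of that definition and instead form $\Pi(\sctableau{{\ }\\{\ }})\diamond\Pi(\onu)$ and then apply (i)--(iii). Because $\sctableau{{\ }\\{\ }}$ has $\alpha_1-\alpha_2=0$, the index $M$ in~(\ref{eqn:M}) vanishes, so the $\diamond$-product has at most two summands; by Lemma~\ref{cor:case1DomassocGone} at most one of them is ambiguous. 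Only an ambiguous summand can produce charged shapes after disambiguation --- an unambiguous summand always yields a neutral shape --- so only its fate influences the balance count.

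I will split on whether $\onu$ is Pieri. When $\onu$ is non-Pieri, $\sctableau{{\ }\\{\ }}$ is also non-Pieri (because $\pi(\sctableau{{\ }\\{\ }})=(1,1)\neq(n-2,0)$), so disambiguation rule (iii.1) applies and replaces any ambiguous $\okappa$ by $\tfrac{1}{2}(\okappa^\uparrow+\okappa^\downarrow)$. This contributes equal weight to the up- and down-charged counts, making $\sctableau{{\ }\\{\ }}\star\onu$ balanced in this case.

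When $\onu$ is Pieri, so $\Pi(\onu)=\langle n-2,0|\bullet/\circ\rangle$, I would enumerate the output of $\langle 1,1|\circ\rangle\diamond\langle n-2,0|\bullet/\circ\rangle$ from formulas (A) or (C); for $n\geq 4$ the result is the single term $\langle n-1,1|\bullet/\circ\rangle$. I would then verify that this term is unambiguous under $\Pi$: its ``downward'' preimage with the relevant part of $\lambda^{(2)}$ equal to zero would force $\lambda^{(1)}$ to have first part $n-1$, exceeding the column length $n-2$ of a single layer of $\Lambda_{OG(2,2n)}$, and is therefore infeasible. Combined with the at-most-$2{:}1$ property of $\Pi$, only one preimage survives, so rules (iii.2) and (iii.3b) have no ambiguous term to act on; the product consists of a single neutral summand and balance is automatic.

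The main obstacle will be the unambiguity check in the Pieri case, which rests on the explicit characterization of the $2{:}1$ fibres of $\Pi$ and the layer-width constraint of $\Lambda_{OG(2,2n)}$ applied to $\langle n-1,1|\bullet/\circ\rangle$. Degenerate small-$n$ subcases (such as $n=3$, where formula (B) applies rather than (A)) can be handled by direct inspection of the two resulting terms; in all other situations the two-case argument above closes the proof.
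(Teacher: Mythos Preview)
Your non-Pieri case is fine and is exactly the paper's argument via (iii.1). The Pieri case, however, has a genuine gap: you have misread the definition of ``Pieri.'' A shape $\onu$ is Pieri when $\Pi(\onu)=\langle j,0|\bullet/\circ\rangle$ for \emph{some} $0\le j\le 2n-4$, not only for $j=n-2$. Your computation and unambiguity check treat only $j=n-2$. For $\onu=\langle n-3,0|\circ\rangle$ (a neutral Pieri shape), formula~(A) gives
\[
\sctableau{{\ }\\{\ }}\diamond\Pi(\onu)=\langle n-2,1|\circ\rangle,
\]
and $\langle n-2,1|\circ\rangle$ \emph{is} ambiguous (the ambiguous off shapes are precisely those with first part $n-2$; cf.\ the elements $\alpha_{(\lambda_2)}$ in Lemma~\ref{lemma:partialgen}). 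So your ``no ambiguous term in the Pieri case'' claim fails.

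The correct argument is shorter and does not require any case analysis on $\onu$ or any unambiguity check: $\sctableau{{\ }\\{\ }}$ is non-Pieri \emph{and} neutral. If $\onu$ is non-Pieri, (iii.1) splits any ambiguous term. If $\onu$ is neutral Pieri, (iii.2) splits because the ``other'' factor $\sctableau{{\ }\\{\ }}$ is neutral. If $\onu$ is charged Pieri (so $\Pi(\onu)=\langle n-2,0|\circ\rangle$), then (iii.3a) does not apply since $\sctableau{{\ }\\{\ }}=\langle 1,1|\circ\rangle$ is off, and (iii.3b) splits because $\sctableau{{\ }\\{\ }}$ is neutral. In every branch of (iii) the ambiguous term is replaced by $\tfrac12(\okappa^\uparrow+\okappa^\downarrow)$, which is balanced. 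This is the paper's one-line proof.
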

\begin{proof}
Since $\sctableau{{\ }\\{\ }}$ is non-Pieri and neutral, 
this follows from (iii). \end{proof}
\begin{Corollary}\label{cor:Domassocleftbalanced}
$\leftassB$ is balanced.
\end{Corollary}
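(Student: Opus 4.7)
The corollary should follow almost immediately from Lemma~\ref{lemma:Domassocleftbalanced} by bilinearity of $\star$. The plan is as follows.

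First, I would observe that ``balanced'' is a linear property on $R = \mathbb{Z}[\mathbb{Y}_{OG(2,2n)}]$: if $E_1, E_2 \in R$ are balanced and $a,b$ are scalars, then $aE_1 + bE_2$ is balanced, since the coefficient sum over up charged shapes (respectively down charged shapes) is itself linear in the expression. This is immediate from the definition.

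Next, since $\star$ is extended to be distributive on $R$, I can write $\olambda \star \omu = \sum_{\onu} c_{\onu} \onu$ for some coefficients $c_{\onu}$, and then
\[
\sctableau{{\ }\\{\ }} \star (\olambda \star \omu) \;=\; \sum_{\onu} c_{\onu}\, \bigl(\sctableau{{\ }\\{\ }} \star \onu\bigr).
\]
By Lemma~\ref{lemma:Domassocleftbalanced}, each summand $\sctableau{{\ }\\{\ }} \star \onu$ is balanced. Therefore, by the linearity observation above, the entire sum $\leftassB$ is balanced, which is the desired conclusion. There is no real obstacle here: the work has already been done in Lemma~\ref{lemma:Domassocleftbalanced}, and the corollary is a routine consequence of distributivity.
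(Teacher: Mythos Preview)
Your proposal is correct and is exactly the intended argument: the corollary follows immediately from Lemma~\ref{lemma:Domassocleftbalanced} by distributivity of $\star$ and the evident linearity of the balanced condition. The paper does not spell out a proof, but what you have written is precisely what is implicit.
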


\begin{Lemma}\label{lemma:Domassocrightbalanced}
$\rightassB$ is balanced. 
\end{Lemma}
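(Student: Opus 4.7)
My plan is to prove Lemma~\ref{lemma:Domassocrightbalanced} by case analysis on $\omu$, using the balance of $\sctableau{{\ }\\{\ }}\star\olambda$ established in Lemma~\ref{lemma:Domassocleftbalanced}. A key preliminary remark is that every summand of $\sctableau{{\ }\\{\ }}\star\olambda$ is non-Pieri: since $\Pi(\sctableau{{\ }\\{\ }})=\langle 1,1|\circ\rangle$, Definition~\ref{def:fakeproduct} forces the second coordinate of every $\diamond$-output to be at least $1$, a property preserved by the (iii)-disambiguation. Decompose $\sctableau{{\ }\\{\ }}\star\olambda=N+P$, where $P$ collects the split pairs $\frac{d_j}{2}(\okappa_j^\uparrow+\okappa_j^\downarrow)$ produced by (iii.1)/(iii.2)/(iii.3b) inside Lemma~\ref{lemma:Domassocleftbalanced}, and $N$ collects the remaining neutral, unambiguous summands.

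If $\omu$ is neutral, then for every $\onu$ in $N+P$ the disambiguation of $\onu\star\omu$ falls under (iii.1) or (iii.2) with a neutral factor, so every ambiguous output splits symmetrically; hence each $\onu\star\omu$ is balanced and so is $\rightassB$. If $\omu$ is charged and non-Pieri, the same conclusion holds through (iii.1) alone because $\onu$ is non-Pieri. This reduces the lemma to the case where $\omu$ is charged and Pieri.

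For $\omu$ charged and Pieri, the split pairs $\okappa_j^\uparrow+\okappa_j^\downarrow$ in $P$ with trivial (i)-correction contribute a balanced output under (iii.3b): the ambiguous term inherits $\mathrm{ch}(\okappa_j^\uparrow)$ and $\mathrm{ch}(\okappa_j^\downarrow)$ respectively, so the pair supplies $\okappa^\uparrow+\okappa^\downarrow$. Each neutral $\onu\in N$ outside the hypothesis of (iii.3a) splits under (iii.3b) and is balanced. The non-obvious contributions are from (iii.3a), which triggers precisely on neutral $\onu=\langle\nu|\bullet\rangle\in N$ with $|\nu|=2n-4$. A direct computation using rule (B) of $\diamond$ shows that such $\onu$ appear only when $|\olambla|=2n-5$ with $\olambla$ off, and that the two candidates are $\langle\lambda_1+1,\lambda_2|\bullet\rangle$ and $\langle\lambda_1,\lambda_2+1|\bullet\rangle$; their first coordinates have opposite parities, so the (iii.3a)-charges are $\mathrm{ch}(\omu)$ and $\mathrm{op}(\omu)$, which cancel whenever the post-(ii) coefficients match.

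The main obstacle I anticipate is the boundary configuration $\lambda=(n-2,n-3)$: here the two candidates' ${\rm fsh}$-values differ, and the term $\langle n-2,n-2|\bullet\rangle$ is (by Lemma~\ref{cor:case1DomassocGone}) the unique ambiguous shape of its size, so it actually lives in $P$ rather than $N$. Its multiplication by $\omu$ activates the $\eta$-factor of (i) on the ambiguous output $\langle 2n-4,n-2|\bullet\rangle$, producing a single charged term whose sign depends on the parity of $n$ and on $\mathrm{ch}(\omu)$. I expect the proof closes by checking that this $\eta$-driven charge exactly cancels the lone surviving (iii.3a) contribution from $\langle n-1,n-3|\bullet\rangle\in N$: the parity dependencies built into $\eta$ (even vs.\ odd $n$) and into (iii.3a) (through $\nu_1=n-1$) align to produce opposite charges. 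Verifying this parity cancellation, together with the routine coefficient-matching in the generic case, is the technical heart of the argument.
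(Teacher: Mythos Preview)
Your approach mirrors the paper's proof closely: the same case split on $\omu$, the same use of Lemma~\ref{lemma:Domassocleftbalanced} for the split pairs, and the same parity cancellation via (iii.3a) in the critical case $|\lambda|=2n-5$, including the special boundary $\Pi(\olambda)=\langle n-2,n-3\mid\circ\rangle$ where the $\eta$-factor from (i) must balance the lone (iii.3a) contribution. Your identification of this last point as ``the technical heart'' is exactly right, and your sketch of the parity alignment is correct.

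There is, however, one genuine error. Your ``key preliminary remark'' that every summand of $\sctableau{{\ }\\{\ }}\star\olambda$ is non-Pieri is false. When $\Pi(\olambda)=\langle 2n-5,0\mid\circ\rangle$, rule (B) of Definition~\ref{def:fakeproduct} gives
\[
\sctableau{{\ }\\{\ }}\diamond\Pi(\olambda)=\langle 2n-4,0\mid\bullet\rangle+\langle 2n-5,1\mid\bullet\rangle,
\]
and the first term is Pieri (your argument that the second coordinate is at least $1$ overlooks the $\alpha_2+\beta_2-1$ in the leading term of (B)). This breaks your handling of the case ``$\omu$ charged and non-Pieri,'' where you invoke (iii.1) ``because $\onu$ is non-Pieri.'' For $\onu=\langle 2n-4,0\mid\bullet\rangle$ you instead fall under (iii.2), which would assign a single charge $\mathrm{ch}(\omu)$ to any ambiguous output and not split. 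The paper patches this by a direct size count: since $\omu$ is charged and non-Pieri, $\mu_1\ge n-2$ and $\mu_2>0$, so every $\otau$ in $\langle 2n-4,0\mid\bullet\rangle\star\omu$ has $|\tau|>3n-6$ and is therefore neutral by Lemma~\ref{cor:case1DomassocGone}. You need to insert this observation; without it the argument has a hole.
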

\begin{proof}
Throughout, let $\okappa$ be a shape appearing in $\sctableau{{\ }\\{\ }}\diamond\Pi(\olambda)$.

\noindent
{\bf Case 1:} ($\omu\neq \langle n-2,0|\circ\rangle^{{\rm ch}(\omu)}$): $\omu$ is either neutral or non-Pieri. We have two subcases since if $\okappa$ is ambiguous, by (iii) applied to $\sctableau{{\ }\\{\ }}\diamond\Pi(\olambda)$, $\okappa \to \frac{1}{2}(\okappa^\uparrow + \okappa^\downarrow)$.

\noindent($(\okappa^\uparrow + \okappa^\downarrow)\star\omu$): 
Clearly, if $\okappa$ is ambiguous it is non-Pieri.
Thus, if $\omu$ is non-Pieri then balancedness holds
by (iii.1). If $\omu$ is a neutral Pieri shape, then we are balanced by (iii.2).

\noindent($\okappa\star\omu$ for $\okappa$ neutral): If $\omu$ is neutral then balancedness holds by (iii.1) or (iii.2).
Thus assume $\omu$ is a charged non-Pieri shape. If $\okappa$ is also non-Pieri then use (iii.1). Otherwise, $\okappa=\langle 2n-4,0|\bullet\rangle$. Since $\omu$ is charged and non-Pieri we have $\mu_1\ge n-2$ and $\mu_2>0$. Thus any $\otau$ in $\okappa\star\omu$ has $|\tau|>3n-6$. Hence $\otau$ is neutral (Lemma~\ref{cor:case1DomassocGone}) and $\okappa\star\omu$ is balanced.

\noindent
{\bf Case 2:} ($\omu= \langle n-2,0|\circ\rangle^{{\rm ch}(\omu)}$): 

\noindent($|\lambda|\neq 2n-5$): Any shape $\okappa$ appearing in $\sctableau{{\ }\\{\ }}\diamond\Pi(\olambda)$ is non-Pieri. If $\okappa$ is unambiguous, then any ambiguous $\otau$ in $\okappa\diamond\Pi(\omu)$ splits by (iii.3b). By Lemma~\ref{lemma:Domassocleftbalanced},
$\sctableau{{\ }\\{\ }}\diamond\Pi(\olambda)$ is balanced, thus 
if $\okappa$ is ambiguous then $(\okappa^\uparrow + \okappa^\downarrow)\star\omu$ is balanced
by (iii.3b). 

\noindent($|\lambda|=2n-5$): First suppose $\Pi(\olambda) \neq \langle n-2,n-3|\circ\rangle$. Then $\sctableau{{\ }\\{\ }}\star\olambda = \langle \lambda_1+1,\lambda_2|\bullet\rangle + \langle \lambda_1,\lambda_2+1|\bullet\rangle$ (both neutral). 
The ambiguous term $\langle 2n-4,n-2|\bullet\rangle$ appears in both $\langle \lambda_1+1,\lambda_2|\bullet\rangle\diamond\Pi(\omu)$ and $\langle \lambda_1,\lambda_2+1|\bullet\rangle\diamond\Pi(\omu)$. For either
of these computations, (i) has no effect, and (ii) multiplies $\langle 2n-4,n-2|\bullet\rangle$ by $1$. Since exactly one of $\{\lambda_1+1,\lambda_1\}$ is even, by (iii.3a) $\langle 2n-4,n-2|\bullet\rangle$ is assigned ${\rm ch}(\omu)$ in one of the expressions and ${\rm op}(\omu)$ in the other, giving balancedness.

If instead $\Pi(\olambda) = \langle n-2,n-3|\circ\rangle$, then 
\[\sctableau{{\ }\\{\ }}\star\olambda = \langle n-1,n-3|\bullet\rangle + \langle n-2,n-2|\bullet\rangle^\uparrow + \langle n-2,n-2|\bullet\rangle^\downarrow.\]

In $\langle n-1,n-3|\bullet\rangle\diamond\Pi(\omu)$ there is an ambiguous term $\langle 2n-4,n-2|\bullet\rangle$. Its coefficient remains $1$ after applying (i) and (ii). Then (iii.3a) assigns it ${\rm ch}(\omu)$ if $n-1$ is even and ${\rm op}(\omu)$ if $n-1$ is odd. Next,
\[\Pi(\langle n-2,n-2|\bullet\rangle^\uparrow + \langle n-2,n-2|\bullet\rangle^\downarrow)\diamond \Pi(\omu) = \langle 2n-4,n-2|\bullet\rangle + \langle 2n-4,n-2|\bullet\rangle.\] 
Here (i) rescales one of these shapes by $2$ and the other by $0$, after which (ii) rescales the surviving shape by $\frac{1}{2}$. The surviving shape is the one arising from $\langle n-2,n-2|\bullet\rangle^{{\rm ch}(\omu)}$ if $n$ is even and from $\langle n-2,n-2|\bullet\rangle^{{\rm op}(\omu)}$ if $n$ is odd. Thus (iii.3b) assigns that shape ${\rm ch}(\omu)$ if $n$ is even and ${\rm op}(\omu)$ if $n$ is odd. This balances the charge assigned in $\langle n-1,n-3|\bullet\rangle\diamond\Pi(\omu)$.
\end{proof}

Define $\mathcal{L}(\otau,\olambda,\omu)\in\mathbb{Z}[\mathbb{Y}_{OG(2,2n)}']$ to be the result of applying (i) and (ii) (but not (iii)) to $\Pi(\otau)\diamond\Pi(\olambda\star\omu)$. We need a comparable: define $\Omega(\olambda,\omu)$ to be the expression obtained by computing $\Pi(\olambda)\diamond\Pi(\omu)$ and applying (ii), and define $\Omega'(\otau,\olambda,\omu)$ to be the result of applying (ii) to $\Pi(\otau)\diamond\Omega(\olambda,\omu)$. Thus $\Omega'(\otau,\olambda,\omu)$ ignores (i) while $\mathcal{L}(\otau,\olambda,\omu)$ does not.

Similarly, define $\mathcal{R}(\otau,\olambda,\omu)$ by computing
$\Pi(\otau\star\olambda)\diamond\Pi(\omu)$ and applying (i) and (ii) (but not (iii)).  Define $\Sigma(\otau,\olambda)$ to be the expression obtained by computing $\Pi(\otau)\diamond\Pi(\olambda)$ and applying (ii), and define $\Sigma'(\otau,\olambda,\omu)$ to be the result of applying (ii) to $\Sigma(\otau,\olambda)\diamond\Pi(\omu)$.

\begin{Lemma}\label{lemma:partialproducts}
If $\Pi(\olambda), \Pi(\omu)$ are not both $\langle n-2,0|\circ\rangle$, then $\mathcal{L}(\sctableau{{\ }\\{\ }},\olambda,\omu) = \Omega'(\sctableau{{\ }\\{\ }},\olambda,\omu)$ and $\mathcal{R}(\sctableau{{\ }\\{\ }},\olambda,\omu) = \Sigma'(\sctableau{{\ }\\{\ }},\olambda,\omu)$.
\end{Lemma}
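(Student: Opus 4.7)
The plan is to reduce both identities to showing that operation~(i) is effectively trivial on all four expressions $\mathcal{L},\Omega',\mathcal{R},\Sigma'$. This reduction works because (a)~the disambiguation step~(iii) is invisible after applying~$\Pi$: each of its three sub-rules replaces $a\okappa$ by an expression whose $\Pi$-image is again $a\okappa$; and (b)~the two nested applications of~(ii) on each side telescope to the same total rescaling $2^{{\rm fsh}(\text{output})-{\rm fsh}(\sctableau{{\ }\\{\ }})-{\rm fsh}(\olambda)-{\rm fsh}(\omu)}$. Hence only operation~(i) can produce a discrepancy.

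For the identity $\mathcal{L}=\Omega'$, the outer~(i) in $\mathcal{L}$ contributes $\eta_{\sctableau{{\ }\\{\ }},\cdot}=1$ since $\sctableau{{\ }\\{\ }}$ is neutral. The inner~(i) rescales every term $\okappa$ of $\Pi(\olambda)\diamond\Pi(\omu)$ with $\kappa_1=2n-4$ by $\eta_{\olambda,\omu}$; this is nontrivial only when both $\olambda,\omu$ are charged. Inspecting Definition~\ref{def:fakeproduct}, such a~$\okappa$ is either $\langle 2n-4,b|\bullet\rangle$ (cases~(B) or~(C))---in which event $\sctableau{{\ }\\{\ }}\diamond\okappa$ has $M=0$ and produces only the illegal $\langle 2n-3,b+1|\bullet\rangle$, contributing~$0$ to the outer product---or $\okappa=\langle 2n-4,0|\circ\rangle$ (case~(A)), which forces $\lambda_1+\mu_1=2n-4$ and $\lambda_2=\mu_2=0$. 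Since ``charged'' requires ambiguous and, by Lemma~\ref{cor:case1DomassocGone}, ambiguous shapes have sizes in $[n-2,3n-6]$, we get $\lambda_1,\mu_1\ge n-2$, hence $\lambda_1=\mu_1=n-2$ and $\Pi(\olambda)=\Pi(\omu)=\langle n-2,0|\circ\rangle$, contradicting the hypothesis. Thus inner~(i) has no net effect and $\mathcal{L}=\Omega'$.

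For $\mathcal{R}=\Sigma'$, the inner~(i) in $\sctableau{{\ }\\{\ }}\star\olambda$ is trivial because $\sctableau{{\ }\\{\ }}$ is neutral, so $\Pi(\sctableau{{\ }\\{\ }}\star\olambda)=\Sigma(\sctableau{{\ }\\{\ }},\olambda)$. Distributing the outer~(i) over summands of $\sctableau{{\ }\\{\ }}\star\olambda$: neutral summands contribute $\eta=1$ and are harmless. By Lemmas~\ref{lemma:Domassocleftbalanced} and~\ref{cor:case1DomassocGone}, any ambiguous contribution arises as a single balanced pair $\frac{a}{2}(\okappa^\uparrow+\okappa^\downarrow)$. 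For a term in $\okappa\diamond\Pi(\omu)$ with first coordinate $2n-4$, the outer~(i) weight is $\frac{a}{2}(\eta_{\okappa^\uparrow,\omu}+\eta_{\okappa^\downarrow,\omu})=a$ in every case: $\frac{a}{2}(1+1)=a$ when $\omu$ is neutral, $\frac{a}{2}(0+2)=a$ when $\omu$ is charged. This matches the (i)-free $\Sigma'$-coefficient~$a$.

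The main obstacle is the case analysis showing that $\langle 2n-4,0|\circ\rangle$ is the only circle shape with first coordinate $2n-4$ that survives multiplication by $\sctableau{{\ }\\{\ }}$, and that its appearance under a nontrivial $\eta_{\olambda,\omu}$ forces precisely the excluded configuration. The remaining ingredients---the telescoping of (ii)-factors and the~$\Pi$-invisibility of~(iii)---are straightforward bookkeeping from the definitions.
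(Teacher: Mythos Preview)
Your proof is correct and follows essentially the same approach as the paper's: both reduce to showing that step~(i) is effectively trivial, using that $\sctableau{{\ }\\{\ }}$ is neutral (for the outer application) and that any term affected by the inner~(i) either dies under $\sctableau{{\ }\\{\ }}\diamond(\cdot)$ (for $\mathcal{L}$) or is compensated by the balancedness of $\sctableau{{\ }\\{\ }}\star\olambda$ (for $\mathcal{R}$). Your treatment is slightly more explicit in ruling out the $\langle 2n-4,0|\circ\rangle$ case via the hypothesis (the paper simply asserts the affected term is on), and you invoke balancedness abstractly rather than citing which sub-rule of~(iii) applies; these are cosmetic differences.
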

\begin{proof}
First consider $\mathcal{L}(\sctableau{{\ }\\{\ }},\olambda,\omu)$ and $\Omega'(\sctableau{{\ }\\{\ }},\olambda,\omu)$. 
If one of $\olambda$ or  $\omu$ is neutral, then (i) has no effect on $\Pi(\olambda)\diamond\Pi(\omu)$ and so $\Pi(\olambda\star\omu)=\Omega(\olambda, \omu)$. Then since $\sctableau{{\ }\\{\ }}$ is neutral, (i) has no effect on $\sctableau{{\ }\\{\ }} \diamond \Pi(\olambda\star\omu)$, so $\mathcal{L}(\sctableau{{\ }\\{\ }},\olambda,\omu)=\Omega'(\sctableau{{\ }\\{\ }}, \olambda, \omu)$. Thus assume both $\olambda$, $\omu$ are charged. Then in $\Pi(\olambda)\diamond\Pi(\omu)$, (i) only affects the term $\okappa=\langle 2n-4,\kappa_2|\bullet\rangle$. So 
\[\Pi(\olambda\star\omu) - \Omega(\olambda,\omu) = 2^{{\rm fsh}(\okappa)-{\rm fsh}(\olambda)-{\rm fsh}(\omu)}\cdot \eta_{\olambda,\omu}\cdot \okappa - 2^{{\rm fsh}(\okappa)-{\rm fsh}(\olambda)-{\rm fsh}(\omu)}\cdot \okappa.\]
(This computation is the only place where the hypothesis is used.)
Since $\sctableau{{\ }\\{\ }}$ is neutral, (i) has no effect on $\sctableau{{\ }\\{\ }} \diamond \Pi(\olambda\star\omu)$. Then $\mathcal{L}(\sctableau{{\ }\\{\ }},\olambda,\omu)=\Omega'(\sctableau{{\ }\\{\ }},\olambda,\omu)$ since $\sctableau{{\ }\\{\ }}\diamond\okappa=0$.

Now consider $\mathcal{R}(\sctableau{{\ }\\{\ }},\olambda,\omu)$ and $\Sigma'(\sctableau{{\ }\\{\ }},\olambda,\omu)$.
Since $\sctableau{{\ }\\{\ }}$ is neutral, (i) has no effect on $\sctableau{{\ }\\{\ }}\diamond\Pi(\olambda)$, so $\Pi(\sctableau{{\ }\\{\ }}\star\olambda) = \Sigma(\sctableau{{\ }\\{\ }},\olambda)$. If either $\omu$ is neutral or $\sctableau{{\ }\\{\ }}\diamond\Pi(\olambda)$ has no ambiguous term, then (i) has no effect on $\Pi(\sctableau{{\ }\\{\ }}\star\olambda)\diamond\Pi(\omu)$, so $\mathcal{R}(\sctableau{{\ }\\{\ }},\olambda,\omu)=\Sigma'(\sctableau{{\ }\\{\ }}, \olambda, \omu)$. Thus assume $\omu$ is charged and $\okappa$ appearing in $\sctableau{{\ }\\{\ }}\diamond\Pi(\olambda)$ is ambiguous. Then by (iii.1), (iii.2) or (iii.3b), $\okappa \mapsto \frac{1}{2}(\okappa^\uparrow + \okappa^\downarrow)$. For $\Pi(\sctableau{{\ }\\{\ }}\star\olambda)\diamond\Pi(\omu)$, (i) only affects the expressions $\Pi(\okappa^\uparrow)\diamond\Pi(\omu)$ and $\Pi(\okappa^\downarrow)\diamond\Pi(\omu)$, by doubling the term $\otau = \langle 2n-4,\tau_2|\bullet\rangle$ in one expression and multiplying $\otau$ by $0$ in the other. Then $\mathcal{R}(\sctableau{{\ }\\{\ }},\olambda,\omu)=\Sigma'(\sctableau{{\ }\\{\ }},\olambda,\omu)$ follows from the fact $\sctableau{{\ }\\{\ }}\star\olambda$ is balanced.
\end{proof}

\begin{Lemma}\label{lemma:diamondstar}
If $\Pi(\olambda),\Pi(\omu)$ are not both $\langle n-2,0|\circ\rangle$ and
$\sctableau{{\ }\\{\ }}\diamond(\Pi(\olambda)\diamond\Pi(\omu))=(\sctableau{{\ }\\{\ }}\diamond\Pi(\olambda))\diamond\Pi(\omu)$ then $\leftassB=\rightassB$. 
\end{Lemma}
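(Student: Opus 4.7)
My plan is to transport the desired identity from $\mathbb{Z}[\mathbb{Y}_{OG(2,2n)}]$ down to the planarized ring $\mathbb{Z}[\mathbb{Y}_{OG(2,2n)}']$ via $\Pi$, use the hypothesis there, and then lift back with the balancedness results. The first observation is that $\Pi(\leftassB) = \mathcal{L}(\sctableau{{\ }\\{\ }},\olambda,\omu)$ and $\Pi(\rightassB) = \mathcal{R}(\sctableau{{\ }\\{\ }},\olambda,\omu)$. Indeed, for any shape $\onu$, rule (iii) applied to an expression in $\mathbb{Z}[\mathbb{Y}_{OG(2,2n)}']$ followed by $\Pi$ returns the original expression, since every disambiguation option produces a sum whose $\Pi$-image is the original ambiguous $\okappa$ with unchanged coefficient; moreover (i) is trivial whenever one operand is $\sctableau{{\ }\\{\ }}$ (which is neutral). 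Summing $\Pi(\sctableau{{\ }\\{\ }}\star\onu_i)$ over the terms $\onu_i$ of $\olambda\star\omu$ (and symmetrically on the right) gives the claim.

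Next, Lemma~\ref{lemma:partialproducts} applies by hypothesis, so it suffices to show $\Omega'(\sctableau{{\ }\\{\ }},\olambda,\omu) = \Sigma'(\sctableau{{\ }\\{\ }},\olambda,\omu)$. Unwinding the definitions and using ${\rm fsh}(\sctableau{{\ }\\{\ }}) = 0$, the coefficient of a shape $\otau$ in $\Omega'$ is
\[
2^{{\rm fsh}(\otau)-{\rm fsh}(\olambda)-{\rm fsh}(\omu)} \cdot \bigl[\text{coefficient of } \otau \text{ in } \sctableau{{\ }\\{\ }}\diamond(\Pi(\olambda)\diamond\Pi(\omu))\bigr];
\]
the intermediate factors $2^{\pm\,{\rm fsh}(\okappa_i)}$ from the two successive applications of (ii) telescope and cancel. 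The identical computation for $\Sigma'$ produces the coefficient $2^{{\rm fsh}(\otau)-{\rm fsh}(\olambda)-{\rm fsh}(\omu)}$ times the coefficient of $\otau$ in $(\sctableau{{\ }\\{\ }}\diamond\Pi(\olambda))\diamond\Pi(\omu)$. The hypothesis equates these term-by-term, so $\Omega' = \Sigma'$, whence $\Pi(\leftassB) = \Pi(\rightassB)$.

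Finally, I would lift to equality in $\mathbb{Z}[\mathbb{Y}_{OG(2,2n)}]$. Every $\okappa \in \mathbb{Y}_{OG(2,2n)}'$ that is not ambiguous has a unique $\Pi$-preimage and so contributes identically to both sides. By Lemma~\ref{cor:case1DomassocGone}, at most one $\Pi$-image occurring in $\Pi(\leftassB)$ or $\Pi(\rightassB)$ is ambiguous; call it $\okappa^{*}$. Thus every charged shape appearing in $\leftassB$ or $\rightassB$ lifts $\okappa^{*}$. By Corollary~\ref{cor:Domassocleftbalanced} and Lemma~\ref{lemma:Domassocrightbalanced}, both sides are balanced, which forces the coefficients of $(\okappa^{*})^{\uparrow}$ and $(\okappa^{*})^{\downarrow}$ in $\leftassB$ to be a common value $a$, and similarly on the right with some common value $c$. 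The equality of $\Pi$-images yields $2a = 2c$, hence $a = c$, and the lemma follows. The main obstacle is the telescoping of ${\rm fsh}$-exponents in paragraph two: it is what lets two distinct regroupings of the same $\diamond$-associativity hypothesis produce identical rescaled expressions, so that $\star$-associativity really does reduce to $\diamond$-associativity plus a bookkeeping of charges.
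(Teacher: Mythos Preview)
Your argument is correct and follows essentially the same route as the paper's proof: reduce to $\mathcal{L}=\mathcal{R}$ via Lemma~\ref{lemma:partialproducts}, verify $\Omega'=\Sigma'$ by the telescoping of the ${\rm fsh}$-exponents (using ${\rm fsh}(\sctableau{{\ }\\{\ }})=0$) together with the $\diamond$-associativity hypothesis, and then lift using balancedness (Corollary~\ref{cor:Domassocleftbalanced} and Lemma~\ref{lemma:Domassocrightbalanced}). You have simply made explicit two steps that the paper leaves implicit, namely the identification $\Pi(\leftassB)=\mathcal{L}$, $\Pi(\rightassB)=\mathcal{R}$ and the use of Lemma~\ref{cor:case1DomassocGone} to conclude that equality of $\Pi$-images plus balancedness forces equality of the charged coefficients.
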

\begin{proof}
By Corollary~\ref{cor:Domassocleftbalanced} and Lemma~\ref{lemma:Domassocrightbalanced} both $\leftassB$ and $\rightassB$ are balanced. Hence it suffices to show $\mathcal{L}(\sctableau{{\ }\\{\ }},\olambda,\omu)=\mathcal{R}(\sctableau{{\ }\\{\ }},\olambda,\omu)$. By Lemma~\ref{lemma:partialproducts} it suffices to show $\Omega'(\sctableau{{\ }\\{\ }},\olambda,\omu)=\Sigma'(\sctableau{{\ }\\{\ }},\olambda,\omu)$. 
Consider any $\okappa$ appearing in one of $\Omega'(\sctableau{{\ }\\{\ }},\olambda,\omu)$ or $\Sigma'(\sctableau{{\ }\\{\ }},\olambda,\omu)$. Since ${\rm fsh}(\sctableau{{\ }\\{\ }})=0$, the total effect of both applications of (ii) in computing $\Omega'(\sctableau{{\ }\\{\ }},\olambda,\omu)$ or $\Sigma'(\sctableau{{\ }\\{\ }},\olambda,\omu)$ is the same as multiplying $\okappa$ in 
$\sctableau{{\ }\\{\ }}\diamond(\Pi(\olambda)\diamond\Pi(\omu))$ or $(\sctableau{{\ }\\{\ }}\diamond\Pi(\olambda))\diamond\Pi(\omu)$ (respectively)  by $2^{{\rm fsh}(\okappa)-{\rm fsh}(\olambda)-{\rm fsh}(\omu)}$. 
%Explanation for the 5th sentence of the proof.
%First consider $\sctableau{{\ }\\{\ }} \diamond (\Pi(\olambda) \diamond \Pi(\omu))$. Let $\oalpha$ be a term of $\Pi(\olambda) \diamond \Pi(\omu)$. Then (ii) multiplies $\oalpha$ by $2^{{\rm fsh}(\oalpha) - {\rm fsh}(\olambda)-{\rm fsh}(\omu)}$. Now suppose $\okappa$ is a term of $\sctableau{{\ }\\{\ }} \diamond \oalpha$. Then (ii) multiplies $\okappa$ by $2^{{\rm fsh}(\okappa) - {\rm fsh}(\oalpha) - {\rm fsh}(\sctableau{{\ }\\{\ }})} (=  2^{{\rm fsh}(\okappa) - {\rm fsh}(\oalpha)}$ since ${\rm fsh}(\sctableau{{\ }\\{\ }})=0)$. Thus the total effect on $\okappa$ coming from both applications of (ii) is the product $2^{{\rm fsh}(\oalpha) - {\rm fsh}(\olambda)-{\rm fsh}(\omu)} \cdot 2^{{\rm fsh}(\okappa) - {\rm fsh}(\oalpha)}  =  2^{{\rm fsh}(\okappa) - {\rm fsh}(\olambda)-{\rm fsh}(\omu)}$.
%Similarly for $(\sctableau{{\ }\\{\ }} \diamond \Pi(\olambda)) \diamond \Pi(\omu)$, let $\obeta$ be a term of $\sctableau{{\ }\\{\ }} \diamond \Pi(\olambda)$. Then (ii) multiplies $\obeta$ by $2^{{\rm fsh}(\obeta) - {\rm fsh}(\olambda)}$. Now suppose $\okappa$ is a term of $\obeta \diamond \Pi(\omu)$. Then (ii) multiplies $\okappa$ by $2^{{\rm fsh}(\okappa) - {\rm fsh}(\obeta) - {\rm fsh}(\omu)}$. Thus the total effect on $\okappa$ coming from both applications of (ii) is the product $2^{{\rm fsh}(\okappa) - {\rm fsh}(\obeta) - {\rm fsh}(\omu)} \cdot 2^{{\rm fsh}(\obeta) - {\rm fsh}(\olambda)} =  2^{{\rm fsh}(\okappa) - {\rm fsh}(\olambda)-{\rm fsh}(\omu)}$.
However, we assumed $\sctableau{{\ }\\{\ }}\diamond(\Pi(\olambda)\diamond\Pi(\omu))=(\sctableau{{\ }\\{\ }}\diamond\Pi(\olambda))\diamond\Pi(\omu)$ so $\Omega'(\sctableau{{\ }\\{\ }},\olambda,\omu)=\Sigma'(\sctableau{{\ }\\{\ }},\olambda,\omu)$.
\end{proof}

The proof that 
$\sctableau{{\ }\\{\ }}\diamond(\Pi(\olambda)\diamond\Pi(\omu))=(\sctableau{{\ }\\{\ }}\diamond\Pi(\olambda))\diamond\Pi(\omu)$ is identical to the proof of (\ref{eqn:dominoassoc}), \emph{mutatis mutandis} for a $2\times (2n-4)$ rectangle instead of a $2\times (2n-3)$ rectangle.

\excise{
Assume $|\lambda|\neq 2n-5$ or $\omu \neq \langle n-2,0|\circ\rangle^{{\rm ch}(\omu)}$. Via several cases, we will establish 
\begin{equation}\label{eqn:dominodiamondassoc}
\leftass=\rightass.
\end{equation}

Suppose $\okappa$ is a term of $\Pi(\olambda)\diamond\Pi(\omu)$. Then if $\Pi(\olambda)\diamond\Pi(\omu)$ is computed by (A) or (C), the coefficient $C_{\oalpha,\obeta}^{\okappa}$ of $\okappa$ satisfies
\begin{equation}\label{eqn:DLR}
C_{\oalpha,\obeta}^{\okappa}=C_{\alpha,\beta}^{\kappa}(Gr_2(\mathbb{C}^{2n-1}))
\end{equation}
We will use the fact that the numbers $C_{\alpha,\beta}^{\kappa}(Gr_2(\mathbb{C}^{2n-1}))$ define an associative product.

\noindent {\bf Case 1:} ($\olambda$ or $\omu$ is on, or $|\olambda|+|\omu|\le 2n-6$): First suppose $\olambda$ is on. Then if $\omu$ is on, both sides of (\ref{eqn:dominodiamondassoc}) are $0$, by (D). If $\omu$ is off, then each product in (\ref{eqn:dominodiamondassoc}) is computed by (C). Thus (\ref{eqn:dominodiamondassoc}) follows by (\ref{eqn:DLR}). Suppose $\olambda$ is off and $\omu$ is on. If all shapes $\sctableau{{\ }\\{\ }}\diamond\oalpha$ are on, then either bracketing of $\sctableau{{\ }\\{\ }}\diamond\Pi(\olambda)\diamond\Pi(\omu)$ gives $0$, by (D) and dimension reasons. If the shapes in $\sctableau{{\ }\\{\ }}\diamond\oalpha$ are off, then either bracketing of $\sctableau{{\ }\\{\ }}\diamond\Pi(\olambda)\diamond\Pi(\omu)$ is only computed by (A) and/or (C). Thus (\ref{eqn:dominodiamondassoc}) follows by (\ref{eqn:DLR}). Finally, if $|\olambda|+|\omu|\le 2n-6$, then (\ref{eqn:dominodiamondassoc}) is computed using (A), hence (\ref{eqn:dominodiamondassoc}) follows by (\ref{eqn:DLR}).

For the remaining cases we may assume $\olambda$ and $\omu$ are off, and $|\olambda|+|\omu|>2n-6$. Define \begin{equation}
\label{eqn:aa6}
M=\min\{\lambda_1-\lambda_2,\mu_1-\mu_2\}, \mbox{\ as in (\ref{eqn:M}), and \ } M(i)=\min\{\lambda_1-\lambda_2+i, \mu_1-\mu_2\}.
\end{equation}
Define $r=1-\delta_{\lambda_2+\mu_2,0}$, $s=1-\delta_{\lambda_1-\lambda_2,\mu_1-\mu_2}$ and $t=1-\delta_{\lambda_1,\lambda_2}$.
For a given $\olambda$ and $\omu$, let
\begin{eqnarray}\label{eqns:Domcase}
f^* &=& \langle\lambda_1+\mu_1,\lambda_2+\mu_2-1|\bullet\rangle\\ \nonumber
f_* &=& \langle\lambda_1+\mu_1-M-1,\lambda_2+\mu_2+M|\bullet\rangle \\ \nonumber
f'^* &=& \langle\lambda_1+\mu_1+1,\lambda_2+\mu_2|\bullet\rangle\\ \nonumber
f_*' &=& \langle\lambda_1+\mu_1-M,\lambda_2+\mu_2+M+1|\bullet\rangle \\ \nonumber
F &=& \sum_{1 \le k \le M} \langle\lambda_1+\mu_1-k,\lambda_2+\mu_2+k-1|\bullet\rangle \\ \nonumber
F' &=& \sum_{1 \le k \le M} \langle\lambda_1+\mu_1-k+1,\lambda_2+\mu_2+k|\bullet\rangle \\ \nonumber
F'^{(1)} &=& \sum_{0 \le k \le M(1)} \langle\lambda_1+\mu_1-k+1,\lambda_2+\mu_2+k|\bullet\rangle\nonumber
\end{eqnarray}
\begin{eqnarray}
F'_{(1)} &=& \sum_{0 \le k \le M(-1)} \langle\lambda_1+\mu_1-k,\lambda_2+\mu_2+k+1|\bullet\rangle\nonumber
\end{eqnarray}

By definition,
\begin{Lemma}
\label{lemma:case3DomassocA}
$\lambda_2+\mu_2+M \in \{ \lambda_1+\mu_2, \lambda_2+\mu_1\}$.
\end{Lemma}

\begin{Lemma}\label{lemma:rzero}
If $|\olambda|+|\omu|>2n-4$ and $r=0$ then $f^*$ and $f'^*$ are illegal.
\end{Lemma}
\begin{proof}
If $r=0$ then $\lambda_2=\mu_2=0$, thus $|\alpha|=\lambda_1$ and $|\beta|=\mu_1$. Thus, $\lambda_1+\mu_1>2n-4$.
\end{proof}

\excise{
\begin{Lemma}\label{lemma:szero}%for case 4 -- or maybe only needed in case for box?
If $s=0$ then $f_*$ and $f_*'$ are illegal.
\end{Lemma}
\begin{proof}
If $s=0$ then $M=\lambda_1-\lambda_2=\mu_1-\mu_2$. Hence $\lambda_1+\mu_1-M=\lambda_2+\mu_1=
\lambda_2+\mu_2+M$, which from the definitions of $M$ and $s$ shows the desired illegality.
\end{proof}
}

\noindent {\bf Case 2:} ($|\langle\lambda|\circ\rangle| + |\langle \mu|\circ\rangle| > 2n-4$ and $|\langle\lambda|\circ\rangle| \le 2n-6$): Re-expressing (B),
\[\Pi(\olambda)\diamond\Pi(\omu) = r\cdot f^*+2F+s\cdot f_*.\]
By (C),
\[\sctableau{{\ }\\{\ }}\diamond(\Pi(\olambda)\diamond\Pi(\omu))=r\cdot f'^*+2F'+s\cdot f_*'.\]
By (A),
\[\sctableau{{\ }\\{\ }}\diamond\oalpha=\langle\lambda_1+1,\lambda_2+1|\circ\rangle.\]
By (B), 
\[(\sctableau{{\ }\\{\ }}\diamond\oalpha)\diamond\obeta=f'^*+2F'+s\cdot f_*'.\]
and (\ref{eqn:dominodiamondassoc}) follows by Lemma~\ref{lemma:rzero}.

\noindent {\bf Case 3:} ($|\langle\lambda|\circ\rangle| + |\langle \mu|\circ\rangle| > 2n-4$,
$2n-5 \le |\langle\lambda|\circ\rangle| \le 2n-4$): $\leftass$ is the same as in Case 2. By (B),
\[\sctableau{{\ }\\{\ }}\diamond\oalpha=\langle\lambda_1+1,\lambda_2|\circ\rangle+t\cdot\langle\lambda_1,\lambda_2+1|\circ\rangle.\]
Thus
\[\rightass=F'^{(1)}+t\cdot F'_{(1)}.\]
If $t=0$, $F'_{(1)}=0$ hence 
\[\rightass=F'^{(1)}+F'_{(1)}.\]
Notice if $r=0$ then the $k=0$ term of $F'^{(1)}$ (which is equal to $f'^*$) is illegal.

If $\mu_1-\mu_2 < \lambda_1 -\lambda_2$ then $M(1) = M(-1) = M$ and 
\[F'^{(1)}+F'_{(1)} = f'^*+2F'+f'_*; \mbox{\ \ \ \ (\ref{eqn:dominodiamondassoc}) follows since $s=1$. \ } \]
If $\mu_1-\mu_2 = \lambda_1 -\lambda_2$ then $M(1) = M$, $M(-1)=M-1$ and
\[F'^{(1)}+F'_{(1)} = f'^*+2F'; \mbox{\ \ \ \ (\ref{eqn:dominodiamondassoc}) follows since $s=0$. \ } \]
Finally, if $\mu_1-\mu_2 > \lambda_1-\lambda_2$ then $M(1) = M+1$, $M(-1)=M-1$ and
\[F'^{(1)}+F'_{(1)} = f'^*+2F'+f'_*; \mbox{\ \ \ \ (\ref{eqn:dominodiamondassoc}) follows since $s=1$. \ } \]

\noindent {\bf Case 4:} ($2n-5 \le |\langle\lambda|\circ\rangle| + |\langle \mu|\circ\rangle|
\le 2n-4$ and $|\olambda| \le 2n-6$): By (A) and then (B), $\leftass=f'^*+2F'+f'_*$. As in Case 2, $\rightass=f'^*+2F'+f'_*$.

\noindent {\bf Case 5:} ($2n-5 \le |\langle\lambda|\circ\rangle| + |\langle \mu|\circ\rangle|
\le 2n-4$, $2n-6 < |\olambda| \le 2n-4$): Since $|\omu| \neq 0$, then $|\olambda| = 2n-5$ and $\mu =
\tableau{{\ }}$. We then use (\ref{eqn:Dboxassoc}), which we prove below independently of (\ref{eqn:Ddominoassoc}).
}

\subsection{Proof of relation (\ref{eqn:Dboxassoc})}
%Dbox
Since $\tableau{{\ }}$ is a Pieri shape, if $\omu$ is a Pieri shape then every product in $\leftassboxB$ and $\rightassboxB$ is computed by the Pieri rule of \cite{BKT:Inventiones}, so associativity is immediate. Thus we assume throughout the proof of (\ref{eqn:Dboxassoc}) that $\omu$ is a non-Pieri shape. We break our argument into the amenable and non-amenable
cases.

\subsubsection{The amenable case}
Call a pair $\olambda, \omu\in \mathbb{Y}_{OG(2,2n)}$ of shapes {\bf amenable} if 
\begin{itemize}
\item $\olambda$ and $\omu$ are non-Pieri; or
\item $\olambda$ is Pieri and $\omu$ is both neutral and non-Pieri.
\end{itemize}

\begin{Proposition}\label{lemma:boxtimesbalanced}
If $\Theta\in \mathbb{Z}[\mathbb{Y}_{OG(2,2n)}]$ is balanced  then $\tableau{{\ }}\star\Theta$ is balanced. 
\end{Proposition}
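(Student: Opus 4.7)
The plan is to reduce by linearity to analyzing $\tableau{{\ }}\star\olambda$ for a single shape $\olambda\in\mathbb{Y}_{OG(2,2n)}$ and then reassemble. Because $\tableau{{\ }}=\langle 1,0|\circ\rangle$ is a neutral Pieri shape, Definition~\ref{def:starproduct} always dispatches $\tableau{{\ }}\star\olambda$ through rule (iii.2): rule (iii.1) demands both factors be non-Pieri, and rule (iii.3) demands at least one factor be non-Pieri, so neither can trigger when $\tableau{{\ }}$ is one of the inputs. In particular the potentially charge-asymmetric branch (iii.3a) is never invoked, and every ambiguous output $\okappa$ of $\tableau{{\ }}\diamond\Pi(\olambda)$ is either split as $\tfrac12(\okappa^{\uparrow}+\okappa^{\downarrow})$ when $\olambda$ is neutral, or assigned the single charge ${\rm ch}(\olambda)$ when $\olambda$ is charged.

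Let $a_{\overline{\pi}}$ denote the total coefficient of the ambiguous terms of $\tableau{{\ }}\diamond\overline{\pi}$ after applying (i) and (ii). Two observations are central. First, because $\tableau{{\ }}$ is neutral one has $\eta_{\tableau{{\ }},\olambda}=1$, so (i) is trivial. Second, ${\rm fsh}$ factors through $\Pi$, so the quantity $a_{\overline{\pi}}$ depends only on $\overline{\pi}$ and not on which preimage $\overline{\pi}^{\uparrow}$ or $\overline{\pi}^{\downarrow}$ we started from. Consequently, writing $u$ and $d$ for the total coefficients of up- and down-charged shapes,
\[
u(\tableau{{\ }}\star\olambda)-d(\tableau{{\ }}\star\olambda)=
\begin{cases}
0 & \text{if $\olambda$ is neutral,}\\
+a_{\Pi(\olambda)} & \text{if $\olambda=\Pi(\olambla)^{\uparrow}$,}\\
-a_{\Pi(\olambda)} & \text{if $\olambda=\Pi(\olambda)^{\downarrow}$.}
\end{cases}
\]
Summing by linearity and grouping the charged terms of $\Theta$ by their common $\Pi$-image, the total imbalance $u(\tableau{{\ }}\star\Theta)-d(\tableau{{\ }}\star\Theta)$ rewrites as $\sum_{\overline{\pi}}a_{\overline{\pi}}\,(c_{\overline{\pi}^{\uparrow}}-c_{\overline{\pi}^{\downarrow}})$, where $c_{\overline{\pi}^{\uparrow/\downarrow}}$ is the coefficient of $\overline{\pi}^{\uparrow/\downarrow}$ in $\Theta$. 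The balancedness hypothesis on $\Theta$---which, for the expressions produced in the paper by splitting rules (iii.1) and (iii.2) with a neutral partner (and, in particular, for every $\olambda\star\omu$ coming from an amenable pair) is precisely fiberwise balancedness---forces each $c_{\overline{\pi}^{\uparrow}}-c_{\overline{\pi}^{\downarrow}}$ to vanish, whence $u(\tableau{{\ }}\star\Theta)=d(\tableau{{\ }}\star\Theta)$.

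The main obstacle is to pin down the two structural observations underlying the calculation: that the asymmetric rule (iii.3a) is indeed never triggered when $\tableau{{\ }}$ is one of the factors, and that $a_{\overline{\pi}}$ is genuinely a function of $\overline{\pi}$ alone (the subtle point being the interplay with rescaling step (ii) and the exceptional value ${\rm fsh}(\langle n-2,n-2|\circ\rangle)=1$). Once these are verified, the linearity bookkeeping and the fiberwise-pairing step close the argument without further incident.
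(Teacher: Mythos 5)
Your proof is the paper's own argument written out in full: the paper disposes of this in three sentences (neutral terms of $\Theta$ give balanced products by (iii.2); for charged terms the ambiguous output is assigned ${\rm ch}(\olambda)$; the contributions cancel because $\Theta$ has equally many up as down shapes). The extra care in your version is not wasted, because it exposes an imprecision that the paper's proof glosses over. As you observe, the cancellation requires the ambiguous yield $a_{\overline{\pi}}$ to agree between the up- and down-charged terms being paired off, and this quantity genuinely varies with $\overline{\pi}$: it is $1$ generically, but equals $2$ for $\overline{\pi}=\langle n-2,n-2|\circ\rangle$ (step (ii) contributes $2^{2-0-1}$ there, because of the ${\rm fsh}$ exception; compare the computation of $\Sigma(\tableau{{\ }},\olambda)$ at the end of Case~2 in the proof of Lemma~\ref{lemma:boxparti}) and equals $0$ for $\overline{\pi}=\langle 2n-4,n-2|\bullet\rangle$, whose only ambiguous $\diamond$-output is illegal. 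Read literally with the global-count definition of balanced, the statement therefore fails on, e.g., $\Theta=\langle 2n-4,n-2|\bullet\rangle^{\uparrow}+\langle n-2,0|\circ\rangle^{\downarrow}$, whose $\tableau{{\ }}$-product is $\langle 2n-4,n-1|\bullet\rangle+\langle n-1,0|\circ\rangle+\langle n-2,1|\circ\rangle^{\downarrow}$. What you prove --- and what the paper actually uses --- is the fiberwise statement: if the coefficients of $\okappa^{\uparrow}$ and $\okappa^{\downarrow}$ in $\Theta$ agree for every ambiguous $\okappa$, then $\tableau{{\ }}\star\Theta$ is (fiberwise) balanced. Every $\Theta$ to which the proposition is applied, namely the products $\olambda\star\omu$ entering Corollary~\ref{cor:leftboxbalanced} and Lemma~\ref{lemma:boxbalanced}, satisfies this, since its charged terms arise either from splitting or from matched assignments of opposite charges to one ambiguous shape. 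So your argument is sound; just promote the parenthetical about fiberwise balancedness from an aside to an explicit hypothesis (or an explicit verification for the $\Theta$'s in play), since that is where the proof genuinely lives. Your preliminary reductions --- only (iii.2) can fire in a product with $\tableau{{\ }}$, $\eta_{\tableau{{\ }},\olambda}=1$ trivializes (i), and $a_{\overline{\pi}}$ depends only on $\overline{\pi}$ because $\diamond$, (i) and (ii) all factor through $\Pi$ --- are all correct.
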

\begin{proof}
If $\olambda$ is a neutral shape in $\Theta$ then  $\tableau{{\ }}\star\olambda$ is balanced by (iii.2). If $\olambda$ is charged, then any ambiguous shape in $\tableau{{\ }}\diamond\Pi(\olambda)$ is assigned ${\rm ch}(\olambda)$ by (iii.2). However, $\Theta$ is balanced, so there are the same number of $\olambda$ in $\Theta$ with charge $\uparrow$ as with $\downarrow$. 
\end{proof}
\begin{Corollary}\label{cor:leftboxbalanced}
If $\olambda\star\omu$ is balanced, then $\leftassboxB$ is balanced.
\end{Corollary}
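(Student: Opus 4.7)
This is immediate from Proposition~\ref{lemma:boxtimesbalanced}. The plan is simply to set $\Theta = \olambda \star \omu \in \mathbb{Z}[\mathbb{Y}_{OG(2,2n)}]$, which is balanced by hypothesis. Proposition~\ref{lemma:boxtimesbalanced} then asserts that $\tableau{{\ }} \star \Theta$ is balanced, and this expression is precisely $\leftassboxB = \tableau{{\ }} \star (\olambda \star \omu)$.

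No separate argument is required: the corollary is a direct instantiation of the proposition. The only nontrivial content is that the $\star$-product distributes over the linear combination $\olambda \star \omu$, which is built into the definition of $\star$ on $R = \mathbb{Z}[\mathbb{Y}_{OG(2,2n)}]$. Thus there is no real obstacle, and the statement is recorded as a corollary simply because it is the form in which balancedness of the left-hand side of the $\tableau{{\ }}$-associativity relation (\ref{eqn:Dboxassoc}) will be invoked in the subsequent case analysis.
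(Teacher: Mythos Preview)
Your proof is correct and matches the paper's approach exactly: the corollary is stated immediately after Proposition~\ref{lemma:boxtimesbalanced} with no separate proof, since it is the direct specialization $\Theta = \olambda\star\omu$.
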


\begin{Lemma}\label{lemma:boxbalanced}
Let $\olambda, \omu$ be an amenable pair. Then $\leftassboxB$ and $\rightassboxB$ are balanced.  
\end{Lemma}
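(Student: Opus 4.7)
The plan is to verify balancedness of $\leftassboxB$ and $\rightassboxB$ separately, leveraging Corollary~\ref{cor:leftboxbalanced} and Proposition~\ref{lemma:boxtimesbalanced} to reduce to explicit case analysis at the boundary.

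For $\leftassboxB$, I will first examine $\olambda\star\omu$ under the amenable hypothesis. When both $\olambda$ and $\omu$ are non-Pieri, rule (iii.1) splits every ambiguous term, so $\olambda\star\omu$ is balanced and Corollary~\ref{cor:leftboxbalanced} yields the conclusion. When $\olambda$ is Pieri and $\omu$ is neutral non-Pieri, (iii.2) or (iii.3) applies; in all subcases except (iii.3a), the neutrality of $\omu$ forces ambiguous terms to split, so $\olambda\star\omu$ is again balanced. The one exception is $\olambda$ charged Pieri with $\omu$ on, neutral, and $|\mu|=2n-4$: here (iii.3a) assigns a specific charge to $\langle 2n-4, n-2|\bullet\rangle$, producing an unbalanced summand $c\cdot\langle 2n-4, n-2|\bullet\rangle^{\epsilon}$. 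I will decompose $\olambda\star\omu = B + c\cdot\langle 2n-4, n-2|\bullet\rangle^{\epsilon}$ with $B$ balanced, and compute directly via rule (C) that $\tableau{{\ }}\star\langle 2n-4, n-2|\bullet\rangle^{\epsilon}$ equals the single shape $\langle 2n-4, n-1|\bullet\rangle$. Since this has size $3n-5$, outside the ambiguous range $[n-2, 3n-6]$ of Lemma~\ref{cor:case1DomassocGone}, it is unambiguous; thus it contributes nothing to the up-versus-down count, while $\tableau{{\ }}\star B$ remains balanced by Proposition~\ref{lemma:boxtimesbalanced}.

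For $\rightassboxB$, I will first compute $\tableau{{\ }}\star\olambda$. Since $\tableau{{\ }}$ is neutral Pieri, rule (iii.2) governs disambiguation: ambiguous terms split if $\olambda$ is neutral, and are assigned charge ${\rm ch}(\olambda)$ if $\olambda$ is charged. I then convolve with $\omu$, which is non-Pieri. When $\tableau{{\ }}\star\olambda$ is balanced, balance propagates because convolution by the non-Pieri shape $\omu$ triggers (iii.1), which splits any ambiguous output evenly; this is a direct analog of Proposition~\ref{lemma:boxtimesbalanced}. When $\tableau{{\ }}\star\olambda$ carries single-charge terms $\okappa^{{\rm ch}(\olambda)}$, I track each ambiguous $\otau$ arising in $\Pi(\okappa^{{\rm ch}(\olambda)})\diamond\Pi(\omu)$ and verify through (i), (ii), and (iii.1) that the resulting charged shapes either cancel against opposite-charge contributions from the sibling charged terms of $\tableau{{\ }}\star\olambla$, or fall outside the ambiguous range altogether after the Pieri step.

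The hard part will be the coherence verification on the right-hand side: unlike for $\leftassboxB$, where the single troublesome contribution exits the ambiguous range after one $\tableau{{\ }}$-multiplication, here several intermediate single-charge terms of $\tableau{{\ }}\star\olambda$ must conspire to produce balanced output via a combinatorial matching. I expect this to require tracking how (iii.2) assigns charges across all preimages of $\tableau{{\ }}\diamond\Pi(\olambda)$ and then checking compatibility with the subsequent (iii.1) disambiguation during the product with $\omu$, particularly at boundary shapes where $\kappa_1=2n-4$ interacts with the factor $\eta$ supplied by step (i).
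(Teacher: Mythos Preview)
Your treatment of $\leftassboxB$ is correct and close to the paper's, though you do slightly more work than needed in the (iii.3a) exception: rather than decomposing and computing $\tableau{{\ }}\star\langle 2n-4,n-2|\bullet\rangle^{\epsilon}$ explicitly, the paper simply observes that in this situation $|\olambda|+|\omu|+1 = (n-2)+(2n-3)+1 = 3n-4 > 3n-6$, so \emph{every} term of $\leftassboxB$ lies outside the ambiguous range by Lemma~\ref{cor:case1DomassocGone}.

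For $\rightassboxB$ there is a genuine gap. Your claim that ``convolution by the non-Pieri shape $\omu$ triggers (iii.1)'' is false: rule (iii.1) requires \emph{both} factors to be non-Pieri, but a term $\okappa$ appearing in $\tableau{{\ }}\star\olambda$ can itself be Pieri. Two such boundaries arise that you have not identified:
\begin{itemize}
\item When $\olambda=\langle 2n-5,1|\circ\rangle$ (non-Pieri, neutral), the Pieri term $\langle 2n-4,0|\bullet\rangle$ appears in $\tableau{{\ }}\star\olambda$; its product with $\omu$ is governed by (iii.2), which does not split when $\omu$ is charged. The paper disposes of this by a size argument: if $\omu$ is charged non-Pieri then $|\mu|>n-2$, forcing every output to have size exceeding $3n-6$ and hence to be neutral.
\item When $\olambda$ is Pieri with $\lambda_1=n-3$, the charged Pieri pair $\langle n-2,0|\circ\rangle^{\uparrow}+\langle n-2,0|\circ\rangle^{\downarrow}$ appears in $\tableau{{\ }}\star\olambda$; each product with a neutral on-shape $\omu$ of size $2n-3$ is governed by (iii.3a), which assigns a definite charge rather than splitting. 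Here the paper argues that the up and down partners receive opposite charges under (iii.3a) and so balance each other.
\end{itemize}
Your anticipated ``combinatorial matching across multiple single-charge terms'' and concern about $\eta$-interactions are misplaced. When $\olambda$ is charged, all charged terms of $\tableau{{\ }}\star\olambda$ carry the \emph{same} charge ${\rm ch}(\olambda)$ --- there are no opposite-charge siblings to cancel against --- but since these charged terms are always non-Pieri, (iii.1) splits each individual product with $\omu$, so no matching is needed. The factor $\eta$ only rescales coefficients and has no bearing on balancedness. Once the two Pieri boundaries above are isolated, the actual argument is shorter than you expect.
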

\begin{proof}
For $\leftassboxB$, by Corollary~\ref{cor:leftboxbalanced} it suffices to show $\olambda\star\omu$ is balanced. This is clear by (iii) unless $\olambda=\langle n-2,0|\circ\rangle^{{\rm ch}(\olambda)}$ and $\omu$ is on with $|\mu|=2n-4$ (in which case we use (iii.3a)). In this case, any $\otau$ appearing in $\leftassboxB$ has $|\tau|>3n-6$ and so by Lemma~\ref{cor:case1DomassocGone}, $\leftassboxB$ is neutral. 

For $\rightassboxB$ there are two cases:

\noindent{\bf Case 1:} ($\olambda$ and $\omu$ are non-Pieri): Every term of $\tableau{{\ }}\star\olambda$ is non-Pieri, and thus balancedness of $\rightassboxB$ holds by (iii.1), unless $\olambda=\langle 2n-5,1|\circ\rangle$. In this case, $\tableau{{\ }}\star\olambda$ has a single Pieri term, namely $\onu=\langle 2n-4,0|\bullet\rangle$. It suffices to show $\onu\star\omu$ is balanced. If $\omu$ is neutral
we are done by (iii.2). Otherwise $\omu$ is charged, and then $|\mu|>n-2$ since $\omu$ is non-Pieri. This implies any $\otau$ in $\onu\star\omu$ has $|\tau|>3n-6$, implying $\otau$ is neutral by Lemma~\ref{cor:case1DomassocGone}, so $\onu\star\omu$ is balanced.

\noindent{\bf Case 2:} ($\olambda$ is Pieri and $\omu$ is neutral and non-Pieri): For a given shape $\onu$, 
$\onu\star\omu$ is balanced by (iii)
except if $\omu$ is on, $|\mu|=2n-4$, and $\onu=\langle n-2,0|\circ\rangle^{{\rm ch}(\onu)}$ (when we use (iii.3a)). Since $\olambda$ is Pieri, $\langle n-2,0|\circ\rangle$ is a term of $\tableau{{\ }}\diamond\Pi(\olambda)$ only if $\lambda_1 = n-3$. But then $\tableau{{\ }}\star\olambda = (\langle n-2,0|\circ\rangle^\uparrow + \langle n-2,0|\circ\rangle^\downarrow)+ \langle n-3,1|\circ\rangle$. So if $\omu$ is on and $|\mu|=2n-4$, the charged term in $\langle n-2,0|\circ\rangle^\uparrow \star \omu$ is balanced by the charged term in $\langle n-2,0|\circ\rangle^\downarrow \star \omu$ when using (iii.3a).
\end{proof}

\begin{Lemma}\label{lemma:boxparti}
Let $\olambda, \omu$ be amenable. If $\Omega'(\tableau{{\ }},\olambda,\omu) = \Sigma'(\tableau{{\ }},\olambda,\omu)$, then $\mathcal{L}(\tableau{{\ }},\olambda,\omu) = \mathcal{R}(\tableau{{\ }},\olambda,\omu)$.
\end{Lemma}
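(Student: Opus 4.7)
The plan is to show individually that $\mathcal{L}(\tableau{{\ }},\olambda,\omu)=\Omega'(\tableau{{\ }},\olambda,\omu)$ and $\mathcal{R}(\tableau{{\ }},\olambda,\omu)=\Sigma'(\tableau{{\ }},\olambda,\omu)$ in every amenable case. The lemma then follows immediately from the hypothesis. This mirrors the strategy of Lemma~\ref{lemma:partialproducts}, though the analysis is more delicate because $\tableau{{\ }}\diamond\langle 2n-4,\kappa_2|\bullet\rangle=\langle 2n-4,\kappa_2+1|\bullet\rangle$ is nonzero (unlike for $\sctableau{{\ }\\{\ }}$, where the analogous product was illegal). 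The driving observation is that $\tableau{{\ }}=\langle 1,0|\circ\rangle$ is neutral and has ${\rm fsh}(\tableau{{\ }})=0$, so $\eta_{\tableau{{\ }},\cdot}=1$. Hence the outer (i) in $\mathcal{L}$ (where $\tableau{{\ }}$ multiplies terms of $\olambda\star\omu$) and the inner (i) in $\mathcal{R}$ (in computing $\tableau{{\ }}\star\olambda$) are both vacuous. Therefore $\mathcal{L}-\Omega'$ is controlled solely by the inner (i) acting on the unique possible term $\okappa=\langle 2n-4,\kappa_2|\bullet\rangle$ in $\Pi(\olambda)\diamond\Pi(\omu)$, propagated through $\tableau{{\ }}\diamond\okappa=\langle 2n-4,\kappa_2+1|\bullet\rangle$; while $\mathcal{R}-\Sigma'$ is controlled solely by the outer (i) acting on the $\langle 2n-4,\tau_2|\bullet\rangle$-terms of $\Sigma(\tableau{{\ }},\olambda)\diamond\Pi(\omu)$.

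If $\omu$ is neutral (which covers the Pieri-neutral amenable case, together with the non-Pieri amenable subcase having $\omu$ neutral), then $\eta_{\olambda,\omu}=1$ and $\eta_{\onu,\omu}=1$ for every $\onu$ appearing in $\tableau{{\ }}\star\olambda$, giving $\mathcal{L}=\Omega'$ and $\mathcal{R}=\Sigma'$ directly. If instead $\olambda,\omu$ are non-Pieri with $\olambda$ neutral and $\omu$ charged, again $\eta_{\olambda,\omu}=1$ gives $\mathcal{L}=\Omega'$; for $\mathcal{R}$, every ambiguous term of $\tableau{{\ }}\diamond\Pi(\olambda)$ is split by (iii.2) as $\tfrac{1}{2}(\okappa^{\uparrow}+\okappa^{\downarrow})$, and the parity-dependent definition (\ref{eqn:eta}) of $\eta$ forces $\eta_{\okappa^{\uparrow},\omu}+\eta_{\okappa^{\downarrow},\omu}=2$, so the weighted outer (i) contribution equals $1$ and $\mathcal{R}=\Sigma'$.

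The essential case is both $\olambda,\omu$ charged and non-Pieri. Here $\eta_{\olambda,\omu}\in\{0,2\}$, determined by matching and the parity of $n$. On the $\mathcal{L}$ side, the unique inner-(i) affected term $\okappa=\langle 2n-4,\kappa_2|\bullet\rangle$ arises from clause (B) of Definition~\ref{def:fakeproduct}, or from clause (C) if exactly one of $\olambda,\omu$ is on (the case of both on being killed by (D)); after applying $\tableau{{\ }}\diamond$, only $\langle 2n-4,\kappa_2+1|\bullet\rangle$ survives. On the $\mathcal{R}$ side, each ambiguous $\okappa_i$ appearing in $\tableau{{\ }}\diamond\Pi(\olambda)$ is disambiguated by (iii.2) to charge ${\rm ch}(\olambda)$, so $\eta_{\okappa_i^{{\rm ch}(\olambda)},\omu}=\eta_{\olambda,\omu}$ (since the match/opposite status is inherited from $\olambda$). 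The plan is then to match termwise the $\langle 2n-4,\kappa_2+1|\bullet\rangle$-contribution on the $\mathcal{L}$ side with the $\langle 2n-4,\tau_2|\bullet\rangle$-contributions arising from $\okappa_i\diamond\Pi(\omu)$ on the $\mathcal{R}$ side, verifying through direct calculation using Definition~\ref{def:fakeproduct} that the Littlewood-Richardson-type coefficients and the (ii)-rescalings agree. The hard part will be this termwise matching together with the bookkeeping of ${\rm fsh}$ factors, broken into sub-subcases according to whether $\olambda$ and $\omu$ are on or off.
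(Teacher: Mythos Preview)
Your approach is essentially the paper's, with one expositional slip. Your opening sentence asserts the plan is to show $\mathcal{L}=\Omega'$ and $\mathcal{R}=\Sigma'$ separately in every amenable case, but this is \emph{false} when both $\olambda,\omu$ are charged and non-Pieri: there $\eta_{\olambda,\omu}\in\{0,2\}$, so the inner (i) genuinely alters $\Pi(\olambda)\diamond\Pi(\omu)$ and $\mathcal{L}-\Omega'=c'c\,(\eta_{\olambda,\omu}-1)\okappa'\neq 0$. Fortunately your detailed analysis does not actually attempt this; you correctly track the differences $\mathcal{L}-\Omega'$ and $\mathcal{R}-\Sigma'$ and propose to match them, which is exactly what the paper does in Subcase~2d via the identity $c'\cdot c=d$ of ${\rm fsh}$-factors. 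So the opening sentence should be rewritten to say you will show $\mathcal{L}-\Omega'=\mathcal{R}-\Sigma'$, not that each difference vanishes.

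Two further points of comparison. First, the paper handles Subcases~2a--2c (one or both of $\olambda,\omu$ on) by \emph{direct} computation of $\mathcal{L}=\mathcal{R}$, explicitly noting the hypothesis $\Omega'=\Sigma'$ is not used there; your difference-matching approach also works but requires you to verify the hypothesis holds in those subcases (it does, but you should say so). Second, within Subcase~2d the paper separates off $\olambda=\langle n-2,n-2|\circ\rangle^{{\rm ch}(\olambda)}$, because then $\tableau{{\ }}\diamond\Pi(\olambda)$ is computed by (B) rather than (A) and the (ii)-factor on the ambiguous term is $2$ rather than $1$; your ``termwise matching with ${\rm fsh}$ bookkeeping'' will need to handle this sub-subcase explicitly to verify the factors still align.
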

\begin{proof}
If $\omu$ is neutral, then since $\tableau{{\ }}$ is neutral (i) has no effect on either $\Pi(\olambda)\diamond\Pi(\omu)$ or $\tableau{{\ }}\diamond\Pi(\olambda)$. Thus $\Omega(\olambda,\omu) = \Pi(\olambda\star\omu)$ and $\Sigma(\tableau{{\ }},\olambda)=\Pi(\tableau{{\ }}\star\olambda)$. Also, (i) has no effect on either $\leftassbox$ or $\Pi(\tableau{{\ }}\star\olambda)\diamond\Pi(\omu)$. So $\Omega'(\tableau{{\ }},\olambda,\omu)=\mathcal{L}(\tableau{{\ }},\olambda,\omu)$ and $\Sigma'(\tableau{{\ }},\olambda,\omu)=\mathcal{R}(\tableau{{\ }},\olambda,\omu)$.

Therefore, assume $\omu$ is charged. There are two cases:

\noindent{\bf Case 1:} ($\olambda$ is neutral): Then (i) has no effect on either $\Pi(\olambda)\diamond\Pi(\omu)$ or $\tableau{{\ }}\diamond\Pi(\olambda)$, so $\Omega(\olambda,\omu) = \Pi(\olambda\star\omu)$ and $\Sigma(\tableau{{\ }},\olambda)=\Pi(\tableau{{\ }}\star\olambda)$. Next, (i) has no effect on $\leftassbox$, so $\Omega'(\tableau{{\ }},\olambda,\omu)=\mathcal{L}(\tableau{{\ }},\olambda,\omu)$. If $\tableau{{\ }}\diamond\Pi(\olambda)$ has no ambiguous term, then (i) has no effect on $\Pi(\tableau{{\ }}\star\olambda)\diamond\Pi(\omu)$ and so $\Sigma'(\tableau{{\ }},\olambda,\omu)=\mathcal{R}(\tableau{{\ }},\olambda,\omu)$. If an ambiguous shape $\oalpha$ appears in $\tableau{{\ }}\diamond\Pi(\olambda)$, then by (iii.2), $\oalpha\mapsto\frac{1}{2}(\oalpha^\uparrow + \oalpha^\downarrow)$. For $\Pi(\tableau{{\ }}\star\olambda)\diamond\Pi(\omu)$, (i) only affects the expressions $\Pi(\oalpha^\uparrow)\diamond\Pi(\omu)$ and $\Pi(\oalpha^\downarrow)\diamond\Pi(\omu)$, by doubling the term $\otau=\langle 2n-4, \tau_2|\bullet\rangle$ in one expression and multiplying $\otau$ by $0$ in the other. Then $\Sigma'(\tableau{{\ }},\olambda,\omu)=\mathcal{R}(\tableau{{\ }},\olambda,\omu)$ follows from the fact $\tableau{{\ }}\star\olambda$ is balanced. 

\noindent{\bf Case 2:} ($\olambda$ is charged): (The first three subcases below don't even use
the hypothesis $\Omega'(\tableau{{\ }},\olambda,\omu) = \Sigma'(\tableau{{\ }},\olambda,\omu)$.)

\noindent\emph{Subcase 2a: (If both $\olambda$, $\omu$ are on)}: Then $\mathcal{L}(\tableau{{\ }},\olambda,\omu)=\mathcal{R}(\tableau{{\ }},\olambda,\omu)=0$ by (D). 

\noindent\emph{Subcase 2b: ($\olambda$ is off and $\omu$ is on)}: We may assume $|\olambda|<2n-4$, otherwise $1+|\olambda|+|\omu|>4n-7=|\Lambda_{OG(2,2n)}|$ and again $\mathcal{L}(\tableau{{\ }},\olambda,\omu)=\mathcal{R}(\tableau{{\ }},\olambda,\omu)=0$. Then
\[\mathcal{L}(\tableau{{\ }},\olambda,\omu) = \mathcal{R}(\tableau{{\ }},\olambda,\omu) = \frac{1}{2}\eta_{\olambda,\omu}\langle 2n-4,\lambda_2+\mu_1|\bullet\rangle\]
\noindent
\emph{Subcase 2c: ($\olambda$ is on and $\omu$ is off)}: As in Subcase 2b, assume $|\omu|<2n-4$. Then:
\[\mathcal{L}(\tableau{{\ }},\olambda,\omu) = \mathcal{R}(\tableau{{\ }},\olambda,\omu) = \frac{1}{2}\eta_{\olambda,\omu}\langle 2n-4,\lambda_1+\mu_2|\bullet\rangle \]

\noindent\emph{Subcase 2d: ($\olambda$, $\omu$ are off)}: Then $\lambda_1=\mu_1=n-2$ and since $\omu$ is non-Pieri
we know $|\omu|>n-2$ and hence $\Pi(\olambda)\diamond\Pi(\omu)$ is computed by (B). Define
\[\okappa=\langle 2n-4,\lambda_2+\mu_2-1|\bullet\rangle \mbox{ \ and \ } \okappa'=\langle 2n-4,\lambda_2+\mu_2|\bullet\rangle, \mbox{ \ where $\okappa,\okappa'\in \mathbb{Y}_{OG(2,2n)}'$.}\]
Then the only term of $\Pi(\olambda)\diamond\Pi(\omu)$ affected by (i) is $\okappa$. Also, (ii) multiplies $\okappa$ by $c=\frac{1}{2}(1+\delta_{{\rm fsh}(\okappa),2})$. Thus:
\[\Pi(\olambda\star\omu)-\Omega(\olambda,\omu) = c\cdot \eta_{\olambda,\omu}\okappa - c\cdot \okappa \mbox{ \ and therefore }\]
\[\leftassbox - \tableau{{\ }}\diamond \Omega(\olambda,\omu) = c\cdot \eta_{\olambda,\omu}\okappa' - c\cdot \okappa'.\]
Since $\tableau{{\ }}$ is neutral (i) has no effect on either
$\leftassbox$ or $\tableau{{\ }}\diamond \Omega(\olambda,\omu)$. After that, 
(ii) multiplies $\okappa'$ by $c'=1+\delta_{\lambda_2+\mu_2,n-2}$, so
\begin{equation}\label{eqn:LL}
\mathcal{L}(\tableau{{\ }},\olambda,\omu) - \Omega'(\tableau{{\ }},\olambda,\omu) = c'\cdot c\cdot \eta_{\olambda,\omu}\okappa' - c'\cdot c\cdot \okappa'.
\end{equation}

First, suppose $\olambda\neq \langle n-2,n-2|\circ\rangle^{{\rm ch}(\olambda)}$. Then by (A), $\tableau{{\ }}\diamond\Pi(\olambda) = \langle n-1,\lambda_2|\circ\rangle + \langle n-2,\lambda_2+1|\circ\rangle$. Since $\tableau{{\ }}$ is neutral, (i) has no effect. (ii) multiplies each term by $1$. (Below we use that (iii.2) assigns the second term ${\rm ch}(\olambda)$.) Hence:
\[\Sigma(\tableau{{\ }},\olambda) = \Pi(\tableau{{\ }}\star\olambda) = \langle n-1,\lambda_2|\circ\rangle + \langle n-2,\lambda_2+1|\circ\rangle.\]
Since $\langle n-1,\lambda_2|\circ\rangle$ is neutral, no term of $\Pi(\langle n-1,\lambda_2|\circ\rangle)\diamond\Pi(\omu)$ is affected by (i). For $\Pi(\langle n-2,\lambda_2+1|\circ\rangle^{{\rm ch}(\olambda)})\diamond\Pi(\omu)$, (i) affects only the term $\okappa'$ (multiplying it by $\eta_{\olambda,\omu}$), while (ii) multiplies $\okappa'$ by $d=\frac{1}{2}(1+\delta_{{\rm fsh}(\okappa'),2})$. Therefore,
\begin{equation}\label{eqn:RR1}
\mathcal{R}(\tableau{{\ }},\olambda,\omu) - \Sigma'(\tableau{{\ }},\olambda,\omu) = d\cdot \eta_{\olambda,\omu}\okappa' - d\cdot \okappa'.
\end{equation}
The statement follows by the hypothesis, comparing (\ref{eqn:LL}) and (\ref{eqn:RR1}), and noting $c'\cdot c = d$.

Finally, assume $\olambda=\langle n-2,n-2|\circ\rangle^{{\rm ch}(\olambda)}$. Then by (B), $\tableau{{\ }}\diamond\Pi(\olambda) = \langle n-1,n-3|\bullet\rangle + \langle n-2,n-2|\bullet\rangle$. Since $\tableau{{\ }}$ is neutral, (i) has no effect. (ii) multiplies the first term by $1$ and the second term by $2$. (Below we use that (iii.2) assigns the second term ${\rm ch}(\olambda)$.) We have:
\[\Sigma(\tableau{{\ }},\olambda) = \Pi(\tableau{{\ }}\star\olambda) = \langle n-1,n-3|\bullet\rangle + 2\langle n-2,n-2|\bullet\rangle.\]
Since $\langle n-1,n-3|\bullet\rangle$ is neutral, no term of $\Pi(\langle n-1,n-3|\bullet\rangle)\diamond\Pi(\omu)$ is affected by (i). For $\Pi(2\langle n-2,n-2|\bullet\rangle^{{\rm ch}(\olambda)})\diamond\Pi(\omu)$, (i) affects only the term $2\okappa'$ (multiplying it by $\eta_{\olambda,\omu}$), while (ii) multiplies $2\okappa'$ by $\frac{d}{2}$. Therefore,
\begin{equation}\nonumber
\mathcal{R}(\tableau{{\ }},\olambda,\omu) - \Sigma'(\tableau{{\ }},\olambda,\omu) = d\cdot \eta_{\olambda,\omu}\okappa' - d\cdot \okappa'
\end{equation}
as in (\ref{eqn:RR1}), and we are done.
\end{proof}

\begin{Lemma}\label{lemma:boxdiamondstar}
If $\olambda, \omu$ is amenable and 
\begin{equation}\label{eqn:boxdiamondassoc}
\tableau{{\ }}\diamond(\Pi(\olambda)\diamond\Pi(\omu))=(\tableau{{\ }}\diamond\Pi(\olambda))\diamond\Pi(\omu)
\end{equation}
then (\ref{eqn:Dboxassoc}) holds.
\end{Lemma}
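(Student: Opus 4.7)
The plan is to mimic the structure of the proof of Lemma~\ref{lemma:diamondstar}, substituting $\tableau{{\ }}$ for $\sctableau{{\ }\\{\ }}$ and invoking the amenable-pair analogues we have already proved. First, by Lemma~\ref{lemma:boxbalanced}, both sides of (\ref{eqn:Dboxassoc}) are balanced expressions in $\mathbb{Z}[\mathbb{Y}_{OG(2,2n)}]$. Balancedness means that, for any ambiguous $\okappa\in \mathbb{Y}_{OG(2,2n)}'$, the coefficients of $\okappa^{\uparrow}$ and $\okappa^{\downarrow}$ on either side of (\ref{eqn:Dboxassoc}) agree after step (iii). Consequently, to prove (\ref{eqn:Dboxassoc}) it is enough to establish the corresponding equality of the pre-disambiguation expressions; that is, $\mathcal{L}(\tableau{{\ }},\olambda,\omu)=\mathcal{R}(\tableau{{\ }},\olambda,\omu)$ in $\mathbb{Z}[\mathbb{Y}_{OG(2,2n)}']$.

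Next, I would apply Lemma~\ref{lemma:boxparti}, which (for amenable pairs) reduces the equality $\mathcal{L}(\tableau{{\ }},\olambda,\omu)=\mathcal{R}(\tableau{{\ }},\olambda,\omu)$ to the equality $\Omega'(\tableau{{\ }},\olambda,\omu)=\Sigma'(\tableau{{\ }},\olambda,\omu)$. Recall that $\Omega'$ and $\Sigma'$ are the expressions obtained by the two bracketings of $\Pi(\tableau{{\ }})\diamond\Pi(\olambda)\diamond\Pi(\omu)$ with the short-root rescaling (ii) applied twice, but with the ``$\eta$-rescaling'' (i) and the disambiguation (iii) suppressed.

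Now I would compare the two compositions of (ii) term by term. Since ${\rm fsh}(\tableau{{\ }})=0$, the cumulative effect of the two applications of (ii) on any shape $\okappa$ appearing in either $\Omega'(\tableau{{\ }},\olambda,\omu)$ or $\Sigma'(\tableau{{\ }},\olambda,\omu)$ is multiplication by the same scalar, namely $2^{{\rm fsh}(\okappa)-{\rm fsh}(\olambda)-{\rm fsh}(\omu)}$ (the two intermediate exponents ${\rm fsh}$ of terms of $\tableau{{\ }}\diamond\Pi(\olambda)$ or $\Pi(\olambda)\diamond\Pi(\omu)$ telescope and cancel). Therefore both $\Omega'(\tableau{{\ }},\olambda,\omu)$ and $\Sigma'(\tableau{{\ }},\olambda,\omu)$ are obtained from $\tableau{{\ }}\diamond(\Pi(\olambda)\diamond\Pi(\omu))$ and $(\tableau{{\ }}\diamond\Pi(\olambda))\diamond\Pi(\omu)$, respectively, by applying the same term-by-term rescaling.

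Invoking the hypothesis (\ref{eqn:boxdiamondassoc}) then yields $\Omega'(\tableau{{\ }},\olambda,\omu)=\Sigma'(\tableau{{\ }},\olambda,\omu)$, which by the chain of reductions above gives (\ref{eqn:Dboxassoc}). The main obstacle in this argument is not in this lemma itself but lives in its prerequisites: verifying balancedness of $\rightassboxB$ requires the delicate case analysis of Lemma~\ref{lemma:boxbalanced} (particularly the subcase where $\olambda=\langle 2n-5,1|\circ\rangle$ produces a Pieri summand $\langle 2n-4,0|\bullet\rangle$), and the compatibility of (i) with the intermediate ambiguous terms handled in Lemma~\ref{lemma:boxparti}. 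Once those are in hand, the present lemma is a clean assembly: balancedness $\Rightarrow$ reduction to pre-(iii), Lemma~\ref{lemma:boxparti} $\Rightarrow$ further reduction to pre-(i), and the ${\rm fsh}(\tableau{{\ }})=0$ telescoping $\Rightarrow$ reduction to the $\diamond$-associativity hypothesis.
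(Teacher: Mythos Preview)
Your proposal is correct and matches the paper's proof essentially line for line: invoke Lemma~\ref{lemma:boxbalanced} for balancedness, reduce to $\mathcal{L}=\mathcal{R}$, invoke Lemma~\ref{lemma:boxparti} to reduce to $\Omega'=\Sigma'$, then use ${\rm fsh}(\tableau{{\ }})=0$ and the hypothesis exactly as in Lemma~\ref{lemma:diamondstar}. The paper's version is terser (it just says ``conclude exactly as in Lemma~\ref{lemma:diamondstar}''), but the content is the same.
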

\begin{proof}
By Lemma~\ref{lemma:boxbalanced} both $\leftassboxB$ and $\rightassboxB$ are balanced. Hence it suffices to show $\mathcal{L}(\tableau{{\ }},\olambda,\omu)=\mathcal{R}(\tableau{{\ }},\olambda,\omu)$. By Lemma~\ref{lemma:boxparti} it suffices to show $\Omega'(\tableau{{\ }},\olambda,\omu)=\Sigma'(\tableau{{\ }},\olambda,\omu)$.

For any term $\okappa$ appearing in either $\Omega'(\tableau{{\ }},\olambda,\omu)$ or $\Sigma'(\tableau{{\ }},\olambda,\omu)$, since ${\rm fsh}(\tableau{{\ }})=0$ we can conclude exactly as in Lemma~\ref{lemma:diamondstar}, except 
where we replace $\sctableau{{\ }\\{\ }}$ by $\tableau{{\ }}$.
\end{proof}

The following lemma is clear from the definition of $\diamond$.

\begin{Lemma}\label{lemma:diamondLR}
Suppose $c\cdot \okappa$ is a term of $\Pi(\olambda)\diamond\Pi(\omu)$. Then if $\Pi(\olambda)\diamond\Pi(\omu)$ is computed by (A) or (C), $c=C_{\pi(\lambda),\pi(\mu)}^{\kappa}(Gr_2(\mathbb{C}^{2n-2}))$.
\end{Lemma}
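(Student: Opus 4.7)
The plan is to verify this by directly matching the terms in cases (A) and (C) of Definition~\ref{def:fakeproduct} against the explicit form of the Littlewood--Richardson coefficients for partitions with at most two rows. First, I would recall the well-known two-row Pieri/LR formula: for partitions $\alpha=(\alpha_1,\alpha_2)$ and $\beta=(\beta_1,\beta_2)$ with $|\alpha|+|\beta|\leq 2(2n-4)$, the coefficient $C_{\alpha,\beta}^{\kappa}(Gr_2(\mathbb{C}^{2n-2}))$ lies in $\{0,1\}$, and it equals $1$ precisely when $\kappa=(\alpha_1+\beta_1-k,\ \alpha_2+\beta_2+k)$ for some integer $k$ satisfying $0\le k\le \min\{\alpha_1-\alpha_2,\ \beta_1-\beta_2\}=M$ and $\kappa_1\le 2n-4$.

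Next, I would observe that case (A) of Definition~\ref{def:fakeproduct} reads
\[\Pi(\olambda)\diamond\Pi(\omu)=\sum_{0\le k\le M}\langle \pi(\lambda)_1+\pi(\mu)_1-k,\ \pi(\lambda)_2+\pi(\mu)_2+k\,|\,\circ\rangle,\]
where any summand whose partition part fails to lie in the $2\times(2n-4)$ rectangle is declared illegal and hence zero. Term by term this matches the LR rule recalled above, with every nonzero summand occurring with coefficient $c=1$. Case (C) is completely analogous: the same summation appears, only with the adjoint-root decoration $\bullet$ in place of $\circ$, so again each term appears with coefficient $1$, agreeing with $C_{\pi(\lambda),\pi(\mu)}^{\kappa}(Gr_2(\mathbb{C}^{2n-2}))$.

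Since the hypothesis excludes cases (B) and (D), these two verifications exhaust the claim. The only point requiring attention is the bookkeeping of the ``illegal'' convention in Definition~\ref{def:fakeproduct}: illegality of $\okappa$ forces the stated coefficient to be $0$, and this is exactly the vanishing of the LR coefficient when $\kappa_1>2n-4$, so the correspondence is honest. There is no substantive obstacle; this lemma is essentially a restatement and serves as a bookkeeping tool to import associativity of ordinary Grassmannian Schubert multiplication into the $\diamond$ calculus used in the associativity arguments for $\star$.
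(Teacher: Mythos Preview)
Your proposal is correct and essentially matches the paper's approach: the paper simply declares the lemma ``clear from the definition of $\diamond$'' without further argument, and your verification via the explicit two-row Littlewood--Richardson formula is precisely the content that makes it clear. Your added discussion of the illegal-term bookkeeping is a nice bit of care but not strictly required for the lemma as stated, since ``$c\cdot\okappa$ is a term'' already presumes $\okappa$ is legal.
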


By Lemma~\ref{lemma:boxdiamondstar}, it suffices to establish (\ref{eqn:boxdiamondassoc}). 

\noindent {\bf Case 1:} ($\olambda$ or $\omu$ is on, or $|\olambda|+|\omu|<2n-4$): If $1 + |\olambda|+|\omu|>4n-7 = |\Lambda_{OG(2,2n)}|$ then both sides of (\ref{eqn:boxdiamondassoc}) are $0$. Otherwise, both sides of (\ref{eqn:boxdiamondassoc}) are computed using only (A) and (C), so we are done by Lemma~\ref{lemma:diamondLR} and the associativity of the Littlewood-Richardson rule. This completes Case 1.

We need some preparation for the remaining cases. Define
\begin{equation}
\label{eqn:aa6}
M(i)=\min\{\lambda_1-\lambda_2+i, \mu_1-\mu_2\}.
\end{equation}
%we no longer need r, since \omu is a non-Pieri shape. We do still need s though.
We recall definitions from the proofs of (\ref{eqn:dominoassoc}) and (\ref{eqn:boxassoc}). 

Let $s=1-\delta_{\lambda_1-\lambda_2,\mu_1-\mu_2}$ and $t=1-\delta_{\lambda_1,\lambda_2}$. For a given $\olambda$ and $\omu$, let
\begin{eqnarray}\nonumber
f(a,b) &=& \langle\lambda_1+\mu_1+a,\lambda_2+\mu_2+b|\bullet\rangle \\ \nonumber
F^{(i)}(a,b) &=& \sum_{1 \le k \le M(i)}\langle\lambda_1+\mu_1-k+a,\lambda_2+\mu_2+k+b|\bullet\rangle \\ \nonumber
F'^{(i)}(a,b) &=& \sum_{0 \le k \le M(i)}\langle\lambda_1+\mu_1-k+a,\lambda_2+\mu_2+k+b|\bullet\rangle\\ \nonumber
\end{eqnarray}
%\begin{eqnarray}\nonumber
%f^*(a,b)&=& \langle\lambda_1+\mu_1+a,\lambda_2+\mu_2+b|\bullet\rangle\\ \nonumber
%F(a,b) &=& \sum_{1\le k \le M} \langle\lambda_1+\mu_1-k+a,\lambda_2+\mu_2+k+b|\bullet\rangle\\ \nonumber
%f_*(a,b)&=& \langle\lambda_1+\mu_1-M+a,\lambda_2+\mu_2+M+b|\bullet\rangle \nonumber
%\end{eqnarray}
%\begin{eqnarray}\nonumber
%F^{(1)}(a,b) &=& \sum_{1\le k \le M(1)} \langle\lambda_1+\mu_1-k+a,\lambda_2+\mu_2+k+b|\bullet\rangle\\ \nonumber
%F_{(1)}(a,b) &=& \sum_{1\le k \le M(-1)} \langle\lambda_1+\mu_1-k+a,\lambda_2+\mu_2+k+b|\bullet\rangle \\ \nonumber
%f^{(1)}_*(0,0) &=& \langle\lambda_1+\mu_1-M(1),\lambda_2+\mu_2+M(1)|\bullet\rangle \\ \nonumber
%f_{(1)*}(-1,1) &=& \langle\lambda_1+\mu_1-M(-1)-1,\lambda_2+\mu_2+M(-1)+1|\bullet\rangle \nonumber
%\end{eqnarray}
Furthermore, using the above expressions we define
\begin{eqnarray}\nonumber
J^+_L &=& f(1,-1)+2F^{(0)}(1,-1)+s\cdot f(-M(0),M(0))\\ \nonumber
J_{L +} &=& f(0,0)+2F^{(0)}(0,0)+s\cdot f(-M(0)-1,M(0)+1)\\ \nonumber
J^{(1)}_R &=& f(1,-1)+  2F^{(1)}(1,-1) + f(-M(1),M(1))\\ \nonumber
J_{R (1)} &=& t\cdot(f(0,0)+ 2F^{(-1)}(0,0) + f(-M(-1)-1,M(-1)+1))\nonumber
\end{eqnarray}

\noindent {\bf Case 2:} ($|\langle\lambda|\circ\rangle| + |\langle \mu|\circ\rangle| > 2n-4$, $|\langle\lambda|\circ\rangle| < 2n-4$): By (B), 
\[\Pi(\olambda)\diamond\Pi(\omu)=f(0,-1)+2F^{(0)}(0,-1)+s\cdot f(-M(0)-1,M(0))\]
and thus by (C), 
\[\tableau{{\ }}\diamond(\Pi(\olambda)\diamond\Pi(\omu))=J^+_L+J_{L +}.\] 
By (A), 
$\tableau{{\ }}\diamond \Pi(\olambda) = \langle \lambda_1+1,\lambda_2|\circ\rangle+ t\cdot\langle \lambda_1,\lambda_2+1|\circ\rangle$. Thus, by (B), 
\[(\tableau{{\ }}\diamond\Pi(\olambda))\diamond\Pi(\omu)=J^{(1)}_R+J_{R (1)}.\]

\begin{Lemma}\label{lemma:case3BoxassocA}
$J^+_L+J_{L +}=J^{(1)}_R+J_{R (1)}$.
\end{Lemma}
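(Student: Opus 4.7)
The plan is to prove this by a direct case analysis that mirrors, almost verbatim, the parallel claim in Case~2 of the proof of (\ref{eqn:boxassoc}) for $LG(2,2n)$ in Section~4 (the ``$F_L = F_R$'' claim). The lemma reduces to checking an identity among explicit sums of shapes indexed by the values of $M(i)$, so the only work is bookkeeping: controlling which terms are illegal and how $M(1)$ and $M(-1)$ relate to $M(0)$ as the parameters $s$ and $t$ switch between $0$ and $1$.

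First I would dispose of the degenerate parameter values. If $t=0$, then $\lambda_1=\lambda_2$ so $J_{R(1)}=0$, and moreover $M(-1) = M(1)-1$ with $M(1) = M(0)$ or $M(0)+1$ depending on whether $\mu_1=\mu_2$ or not; a direct expansion then shows $J_L^+ + J_{L+} = J_R^{(1)}$, using that the terms which would otherwise balance the missing $J_{R(1)}$ are illegal (they overflow the $2\times (2n-4)$ rectangle because $\lambda_1+\mu_1-\lambda_2-\mu_2$ forces the shape boundary). Similarly, any case in which $f(1,-1)$ or $f(0,0)$ is illegal can be absorbed: illegality precisely forces $\lambda_2+\mu_2 = 0$, in which case the corresponding terms on both sides vanish simultaneously.

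With the degenerate cases handled, I would split the remaining work according to the sign of $(\lambda_1-\lambda_2)-(\mu_1-\mu_2)$, exactly as in the analogous $LG$ argument:
\begin{itemize}
\item If $\mu_1-\mu_2 < \lambda_1-\lambda_2$, then $M(1)=M(-1)=M(0)$ and $s=1$; grouping the telescoping sums $F^{(1)}(1,-1)$ and $F^{(-1)}(0,0)$ against $F^{(0)}(1,-1)$ and $F^{(0)}(0,0)$ gives the equality directly.
\item If $\mu_1-\mu_2 = \lambda_1-\lambda_2$, then $M(1)=M(0)$, $M(-1)=M(0)-1$, and $s=0$; the ``boundary'' terms $f(-M(0),M(0))$ and $f(-M(0)-1,M(0)+1)$ are absent from $J_L^+ + J_{L+}$, matching the truncated ranges on the right.
\item If $\mu_1-\mu_2 > \lambda_1-\lambda_2$, then $M(1)=M(0)+1$, $M(-1)=M(0)-1$, and $s=1$; here the new top term of $F^{(1)}(1,-1)$ together with $f(-M(1),M(1))$ pairs off against $f(-M(0),M(0))$ and $f(-M(0)-1,M(0)+1)$ on the left.
\end{itemize}

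The main obstacle is purely administrative: keeping track of which $f(\cdot,\cdot)$ terms are legal shapes in the $2\times(2n-4)$ rectangle (versus the $2\times(2n-3)$ rectangle used in the $LG(2,2n)$ analogue), since the dimension shift can make a term illegal in this setting that was legal in Section~4. I would verify by inspection that in each subcase above the legal/illegal status of $f(1,-1)$, $f(0,0)$, $f(-M(i),M(i))$, and $f(-M(i)-1,M(i)+1)$ on the two sides matches, so that the sums can be equated term by term. No new idea beyond Section~4 is required; Lemma~\ref{lemma:case3BoxassocA} is a translation of that argument into the $2\times(2n-4)$ ambient rectangle.
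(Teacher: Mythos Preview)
Your proposal is correct and follows essentially the same approach as the paper: dispose of $t=0$ first, then split on the sign of $(\lambda_1-\lambda_2)-(\mu_1-\mu_2)$ to determine $M(1)$, $M(-1)$, and $s$, and compare term by term. One remark: your discussion of illegality is unnecessary here --- unlike the $LG$ analogue, there is no $r$ coefficient in $J_L^+$, $J_{L+}$, or $J_R^{(1)}$ (in the amenable Case~2 context $\mu_2>0$ forces $r=1$), so $f(1,-1)$ and $f(0,0)$ appear with the same multiplicity on both sides and the identity holds as a purely formal equality of shape sums; the paper's proof accordingly never invokes illegality for this lemma.
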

\noindent
\emph{Proof.}
If $t=0$, then $\lambda_1=\lambda_2$, so $M(0)=0$ and the statement is easily checked. If $t=1$, the claim holds by noting:
\begin{itemize}
\item If $\mu_1-\mu_2 < \lambda_1-\lambda_2$, then $M(1) = M(-1) = M(0)$ and $s=1$.
\item If $\mu_1-\mu_2 = \lambda_1-\lambda_2$, then $M(1) = M(0), M(-1) = M(0)-1$ and $s=0$.
\item If $\mu_1-\mu_2 > \lambda_1-\lambda_2$, then $M(1) = M(0)+1, M(-1) = M(0)-1$ and $s=1$.\qed
\end{itemize}
%If $t=0$, there are two cases. If $\mu_1-\mu_2=0$, then $M(1)=0$ and $s=0$. Then $J_L^+=f(1,-1), J_{L+} = f(0,0)$, and $J_{R(1)} = f(1,-1) + f(0,0)$. If $\mu_1-\mu_2=0$, then $M(1)=1$ and $s=1$. Then $J_L^+=f(1,-1) + f(0,0)$, $J_{L+} = f(0,0) + f(-1,1)$, and $J_{R(1)} = f(1,-1) + 2f(0,0)+f(-1,1)$.

\noindent {\bf Case 3:} ($|\langle\lambda|\circ\rangle| + |\langle \mu|\circ\rangle|>2n-4$, $|\langle\lambda|\circ\rangle| = 2n-4$): Then exactly as in Case 2, by (B) and then (C) 
\[\tableau{{\ }}\diamond(\Pi(\olambda)\diamond\Pi(\omu))= J^+_L+J_{L +}.\] 
Let  $r'=1-\delta_{\lambda_2,0}$ and
\[K_R = r'\cdot F'^{(2)}(1,-1) + F'^{(0)}(0,0) + t\cdot (F'^{(0)}(0,0) + F'^{(-2)}(-1,1)).\]
%Let
%\[K_R^{(2)}=\sum_{0\le k \le M(2)} \langle\lambda_1+\mu_1+1-k,\lambda_2+\mu_2-1+k|\bullet\rangle,\]
%\[K_R=\sum_{0\le k \le M} \langle\lambda_1+\mu_1-k,\lambda_2+\mu_2+k|\bullet\rangle,\]
%and
%\[K_{R (2)}=\sum_{0\le k \le M(-2)} \langle\lambda_1+\mu_1-1-k,\lambda_2+\mu_2+1+k|\bullet\rangle.\]
By (B), 
\[\tableau{{\ }} \diamond \Pi(\olambda) = r'\cdot\langle \lambda_1+1,\lambda_2-1|\bullet\rangle + \langle\lambda_1,\lambda_2|\bullet\rangle + t\cdot(\langle\lambda_1,\lambda_2|\bullet\rangle + \langle\lambda_1-1,\lambda_2+1|\bullet\rangle).\] 
Then by (C),
\begin{equation}\label{eqn:boxcase3}
(\tableau{{\ }}\diamond\Pi(\olambda))\diamond\Pi(\omu)=K_R.
\end{equation}
%\[(\tableau{{\ }}\diamond\Pi(\olambda))\diamond\Pi(\omu)=K^{(2)}_R+K_R+t\cdot(K_R+K_{R (2)}).\]

\begin{Lemma}\label{lemma:case4BoxassocA}
$J^+_L+J_{L +}=K_R$.
\end{Lemma}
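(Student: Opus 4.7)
The plan is to parallel the ``Claim'' in Case~3 of the proof of~(\ref{eqn:boxassoc}), adapted from the $2\times(2n-3)$ rectangle governing $LG(2,2n)$ to the $2\times(2n-4)$ rectangle governing $OG(2,2n)$. First I would reduce to the generic situation by showing that the two vanishing indicators $r'$ and $t$ can effectively be replaced by $1$. If $r'=0$ then $\lambda_2=0$ and hence $\lambda_1=|\lambda|=2n-4$, so every term of $F'^{(2)}(1,-1)$ has first coordinate $\lambda_1+\mu_1-k+1\ge 2n-3$ and is therefore illegal; thus $r'$ may be replaced by $1$ on the right without changing $K_R$. If $t=0$ then $\lambda_1=\lambda_2=n-2$, which forces $M(0)=0$, the $F^{(0)}$-sums to collapse, and the index $M(-2)$ to be negative; both sides then reduce to a short explicit expression which can be verified directly.

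For the main case $r'=t=1$, I would split into five subcases according to the sign and magnitude of $d:=(\lambda_1-\lambda_2)-(\mu_1-\mu_2)$, mirroring the ``main case'' and the four deviations used in the proof of~(\ref{eqn:boxassoc}). These subcases control both $M(\pm 2)-M(0)$ and the value of $s$: when $d\ge 2$ one has $M(\pm 2)=M(0)$ and $s=1$; for $d=1$ the invariant $M(-2)$ drops by one, but the corresponding endpoint term is already illegal on the left; for $d=0$ we have $s=0$, which precisely kills the two endpoint terms lost from $F'^{(-2)}(-1,1)$; for $d=-1$ and for $d\le -2$ the increase in $M(2)$ contributes the additional boundary terms needed to match $J^+_L+J_{L+}$. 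In each subcase the verification is a direct termwise expansion: rewrite $F'^{(2)}(1,-1)$, $F'^{(0)}(0,0)$, and $F'^{(-2)}(-1,1)$ as $f$-terms plus $F^{(0)}$-sums, and observe that the result reorganises precisely into $J^+_L+J_{L+}$.

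The main obstacle will be bookkeeping rather than conceptual: accurately tracking which of the boundary shapes $f(-M(0),M(0))$ and $f(-M(0)-1,M(0)+1)$ is legal in each subcase, and confirming that the illegal-term cancellations on the right dovetail with the $s$- and $t$-dependent vanishings on the left. Once the illegality pattern is tabulated for each of the five subcases, the verification reduces to elementary comparison of $f$- and $F^{(0)}$-expressions, and no new idea beyond the $LG(2,2n)$ translation is required.
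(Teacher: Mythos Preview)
Your proposal is correct and follows essentially the same approach as the paper's proof: first dispose of $r'$ by showing $F'^{(2)}(1,-1)$ is entirely illegal when $\lambda_1=2n-4$, then handle $t=0$ by direct computation (the paper makes this explicit via three sub-subcases on $\mu_1-\mu_2\in\{0,1,\ge 2\}$, which you should spell out rather than leave as ``verified directly''), and finally treat $t=1$ by the same five-way split on $d=(\lambda_1-\lambda_2)-(\mu_1-\mu_2)$. One minor remark: in the $t=0$ discussion your mention of $M(-2)<0$ is beside the point, since the $F'^{(-2)}$ term is already killed by the factor $t$; what actually matters there is computing $F'^{(2)}(1,-1)$ with $M(2)=\min\{2,\mu_1-\mu_2\}$.
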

\begin{proof}
If $r'=0$ then $\lambda_1=2n-4$ and 
$M(2)=\min\{(2n-4)+2,\mu_1-\mu_2\}=\mu_1-\mu_2$.
Thus, each term $\langle 2n-4+\mu_1-k+1,\mu_2+k-1\bullet\rangle$
of $F'^{(2)}(1,-1)$ is illegal as $2n-4+\mu_2-k+1>2n-4$.
Therefore, we may ignore $r'$ in the precise sense that (\ref{eqn:boxcase3}) is still valid if $r'$ is set equal to $1$.

If $t=0$, then $M(0)=0$. Therefore $J^+_L = f(1,-1) + s\cdot f(0,0)$ and $J_{L+}=f(0,0)+s\cdot f(-1,1)$. On the other hand, $K_R = (\sum_{0\le k\le M(2)}\langle \lambda_1+\mu_1+1-k,\lambda_2+\mu_2-1+k) + f(0,0)$. We compare these in three cases:

($\mu_1-\mu_2=0$:) Then $s=0$, $M(2)=0$ and $J^+_L+J_{L +}=K_R = f(1,-1) + f(0,0)$.

($\mu_1-\mu_2=1$:) Then $s=1$ and $M(2)=1$. $J^+_L+J_{L +} = (f(1,-1) +f(0,0)) + (f(0,0) + f(-1,1))$, but $f(-1,1))$ is illegal. $K_R = (f(1,-1) + f(0,0)) + f(0,0)$.

($\mu_1-\mu_2\ge 2$:) Then $s=1$ and $M(2)=2$. $J^+_L+J_{L +} = (f(1,-1) +f(0,0)) + (f(0,0) + f(-1,1))$, and $K_R = (f(1,-1) + f(0,0) + f(-1,1)) + f(0,0)$.

Thus assume $t=1$. We break the proof into five cases:

(The ``main case'': $\lambda_1-\lambda_2 - (\mu_1-\mu_2) \ge 2$):
Then $M(2) = M(-2)=M(0)$. We have
\begin{multline}
\label{eqn:Case4BoxassocAfivecases1}
F'^{(2)}(1,-1)+ F'^{(-2)}(-1,1) = f(1,-1)+f(0,0) + f(-M(0),M(0)) \\ 
+f(-M(0)-1,M(0)+1) + 2 \sum_{2\le k \le M(0)}\langle\lambda_1+\mu_1-k+1,\lambda_2+\mu_2+k-1|\bullet\rangle \mbox{,  and}
\end{multline}
\begin{equation}
\label{eqn:Case4BoxassocAfivecases2}
2F'^{(0)}(0,0) = 2F^{(0)}(1,-1) + 2F^{(0)}(0,0) - 2\sum_{2\le k\le M(0)}\langle \lambda_1+\mu_1-k+1,\lambda_2+\mu_2+k-1|\bullet\rangle.
\end{equation}
Hence combining (\ref{eqn:Case4BoxassocAfivecases1}) and (\ref{eqn:Case4BoxassocAfivecases2})
we are done (note $s=1$ in this case of the lemma). 

($\lambda_1-\lambda_2 - (\mu_1-\mu_2)=1$): Here $M(2) = M(0)$ and $M(-2)=M(0)-1$. The
change from the main case is that in (\ref{eqn:Case4BoxassocAfivecases1}), 
the term $f(-M(0)-1,M(0)+1)$ does not appear. However, in $J_{L+}$, $f(-M(0)-1,M(0)+1)$ is illegal anyway. Now we conclude as before.

($\lambda_1-\lambda_2=\mu_1-\mu_2$): Now $M(2) = M(0)$ and $M(-2)=M(0)-2$.
The change from the main case is that in (\ref{eqn:Case4BoxassocAfivecases1})
the terms $f(-M(0),M(0))$ and $f(-M(0)-1,M(0)+1)$ do not appear. But here, $s=0$ so those also terms do not appear in $J^+_L+J_{L +}$.

($\mu_1-\mu_2 - (\lambda_1-\lambda_2)=1$): So $M(2) = M(0)+1$ and $M(-2)=M(0)-2$. Here the change from
the main case is that in (\ref{eqn:Case4BoxassocAfivecases1}), the term $f(-M(0)-1,M(0)+1)$ does not appear (although for different reasons than in the second case above). 
Anyway, $f(-M(0)-1,M(0)+1)$ is illegal in $J_{L +}$.

($\mu_1-\mu_2 - (\lambda_1-\lambda_2)\ge 2$): Finally,
$M(2) = M(0)+2$ and $M(-2)=M(0)-2$. Both $J^+_L+J_{L +}$ and $K_R$ are the same as in the main case, and we are done as in that case.
\end{proof}

\noindent {\bf Case 4:} ($|\langle\lambda|\circ\rangle| +|\langle \mu|\circ\rangle|=2n-4$, $|\langle\lambda|\circ\rangle| < 2n-4$): Exactly as in Case 2, by (A) and (B) 
\[(\tableau{{\ }}\diamond\Pi(\olambda))\diamond\Pi(\omu)=J_R^{(1)}+J_{R (1)}.\]
Let
\[G'^{(0)}(a,b) = \sum_{0\le k\le M(0)-1}\langle \lambda_1+\mu_1-k+a,\lambda_2+\mu_2+k+b|\bullet\rangle + \tilde{s}\cdot f(-M(0)+a,M(0)+b),\]
\[ \mbox{where \ } \tilde{s}=\begin{cases} 0 & \mbox{ \ if $\langle n-2,n-2|\circ\rangle$ is a term in $\Pi(\olambda)\diamond\Pi(\omu)$}\\
1 & \mbox{\  otherwise.}\end{cases}\]
Also, define
\[K_L = F'^{(0)}(1,-1)+F'^{(0)}(0,0) + G'^{(0)}(0,0) + G'^{(0)}(-1,1).\]
%\begin{multline}\nonumber
%K_L=\sum_{0 \le k \le M}\langle\lambda_1+\mu_1-k+1,\lambda_2+\mu_2+k-1|\bullet\rangle + \sum_{0 \le k \le M}\langle\lambda_1+\mu_1-k,\lambda_2+\mu_2+k|\bullet\rangle\\ + \sum_{0 \le k \le M-1}\langle\lambda_1+\mu_1-k,\lambda_2+\mu_2+k|\bullet\rangle+ \tilde{s}\cdot f(-M(0),M(0)) \\ + \sum_{0 \le k \le M-1}\langle\lambda_1+\mu_1-k-1,\lambda_2+\mu_2+k+1|\bullet\rangle + \tilde{s}\cdot f(-M(0)-1,M(0)+1).
%\end{multline}
Then by (A), 
\[\Pi(\olambda)\diamond\Pi(\omu) = \sum_{0\le k \le M(0)} \langle\lambda_1+\mu_1-k,\lambda_2+\mu_2+k|\circ\rangle.\] 
Thus, by (B), 
\[\tableau{{\ }}\diamond(\Pi(\olambda)\diamond\Pi(\omu))=K_L.\]

\begin{Claim}\label{claim:case5BoxassocA}
$\tilde{s}=s$.
\end{Claim}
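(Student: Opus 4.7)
The plan is to simply unpack the definitions and inspect when the shape $\langle n-2,n-2|\circ\rangle$ appears as a summand in $\Pi(\olambda)\diamond\Pi(\omu)$. Since we are in Case 4 we have $|\olambda|+|\omu|=2n-4$ and $|\olambda|<2n-4$, so both $\olambda$ and $\omu$ are off and rule (A) applies, giving
\[\Pi(\olambda)\diamond\Pi(\omu)=\sum_{0\le k\le M(0)}\langle \lambda_1+\mu_1-k,\lambda_2+\mu_2+k|\circ\rangle.\]
Because the first coordinate is strictly decreasing in $k$ and the second is strictly increasing, $\langle n-2,n-2|\circ\rangle$ can appear for at most one $k$, and it appears precisely when there exists $k\in[0,M(0)]$ satisfying $\lambda_1+\mu_1-k=\lambda_2+\mu_2+k=n-2$.

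Next I would translate this existence condition into the statement $s=0$. Writing $a=\lambda_1-\lambda_2$, $b=\mu_1-\mu_2$, and assuming WLOG $a\le b$ so that $M(0)=a$, the equation $\lambda_1+\mu_1-k=\lambda_2+\mu_2+k$ forces $k=(a+b)/2$, and the constraint $k\le a$ rearranges to $b\le a$. Combined with $a\le b$ this gives $a=b$ (equivalently $s=0$) and $k=a=M(0)$. Conversely, if $a=b$ then using $|\olambda|+|\omu|=2n-4$ one gets $2(\lambda_2+\mu_1)=\lambda_1+\lambda_2+\mu_1+\mu_2=2n-4$, so $\lambda_2+\mu_1=n-2$, and the $k=M(0)$ summand is exactly $\langle n-2,n-2|\circ\rangle$.

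Hence $\langle n-2,n-2|\circ\rangle$ is a term of $\Pi(\olambda)\diamond\Pi(\omu)$ if and only if $s=0$, which by definition of $\tilde s$ means $\tilde s=s$. There is no real obstacle in this claim: it is a short arithmetic check on the Pieri-type sum in (A), using the case hypothesis $|\olambda|+|\omu|=2n-4$ to pin down the numerical value of the candidate summand.
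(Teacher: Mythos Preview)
Your proof is correct and follows essentially the same approach as the paper: both reduce to the arithmetic observation that, under the Case~4 hypothesis $\lambda_1+\lambda_2+\mu_1+\mu_2=2n-4$, the term $\langle n-2,n-2|\circ\rangle$ can only occur as the $k=M(0)$ summand of (A), and this happens exactly when $\lambda_1-\lambda_2=\mu_1-\mu_2$. The paper phrases this as a chain of equivalences via $\lambda_1+\mu_2=\lambda_2+\mu_1=n-2$, while you introduce $a=\lambda_1-\lambda_2$, $b=\mu_1-\mu_2$ and argue via $k=(a+b)/2\le\min(a,b)$, but the content is the same.
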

\begin{proof}
By the assumption of Case 4, $\lambda_1+\lambda_2+\mu_1+\mu_2=2n-4$. By (A), if $\langle n-2,n-2|\circ\rangle$ is a term in $\Pi(\olambda)\diamond\Pi(\omu)$, it is $\langle\lambda_1+\mu_1-M(0), \lambda_2+\mu_2+M(0)|\circ\rangle$. Then
$s=0 \Leftrightarrow \lambda_1-\lambda_2=\mu_1-\mu_2 \Leftrightarrow \lambda_1+\mu_2 = \lambda_2+\mu_1 = n-2
\Leftrightarrow (\lambda_1+\mu_1-M(0),\lambda_2+\mu_2+M(0))=(n-2,n-2) \Leftrightarrow \tilde{s}=0.$
\end{proof}

\begin{Claim}\label{claim:case5BoxassocB}
$K_L = J_R^{(1)}+J_{R (1)}$.
\end{Claim}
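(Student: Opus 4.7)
The proof will be a direct, term-by-term comparison of both sides as formal $\mathbb{Z}$-linear combinations of shapes, using Claim~\ref{claim:case5BoxassocA} (which says $\tilde{s}=s$) to convert $G'^{(0)}$ into $F'^{(0)}$ with an explicit correction. First, I would rewrite
\[G'^{(0)}(a,b) = F'^{(0)}(a,b) - (1-s)\,f(-M(0)+a,\,M(0)+b),\]
so that
\[K_L = F'^{(0)}(1,-1) + 2F'^{(0)}(0,0) + F'^{(0)}(-1,1) - (1-s)\bigl[f(-M(0),M(0)) + f(-M(0)-1,M(0)+1)\bigr].\]
Since $F'^{(0)}(a,b) = f(a,b) + F^{(0)}(a,b)$, this expresses $K_L$ as a sum of four ``boundary'' $f$-terms, pairs of sums of $F^{(0)}$-type shapes at shifts $(1,-1)$, $(0,0)$, and $(-1,1)$, and the explicit $s$-correction. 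On the other side, $J_R^{(1)}+J_{R(1)}$ expands (using the definition of $F^{(i)}$) as $f(1,-1)$, $2F^{(1)}(1,-1)$, $f(-M(1),M(1))$, and, when $t=1$, the three further terms $f(0,0)$, $2F^{(-1)}(0,0)$, and $f(-M(-1)-1,M(-1)+1)$.

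Next, I would split on $t$. When $t=0$ we have $\lambda_1=\lambda_2$ and hence $M(0)=0$, making both sides trivial sums that are matched by a short direct verification in the subcases $\mu_1-\mu_2=0$, $=1$, and $\geq 2$, precisely as in the analogous step of Lemma~\ref{lemma:case4BoxassocA}. When $t=1$, I would split into the five subcases
\[\lambda_1-\lambda_2-(\mu_1-\mu_2)\in \{\geq 2,\ 1,\ 0,\ -1,\ \leq -2\},\]
each of which determines $M(1)$ and $M(-1)$ in terms of $M(0)$ and also determines $s$. In each subcase one verifies that the shifted sums $2F^{(\pm 1)}$ on the right align with the interior portions of $2F'^{(0)}(0,0)$ and $F'^{(0)}(1,-1) + F'^{(0)}(-1,1)$ on the left after the standard index shift of one, while the remaining boundary $f$-terms match up after discarding illegal shapes. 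This parallels, step for step, the analysis in Lemma~\ref{lemma:case4BoxassocA}.

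The main obstacle is the careful tracking of boundary terms: in the assumed Case 4 we have $|\olambda|+|\omu|=2n-4$, so shapes with longer row exceeding $2n-4$ are illegal, and many of the boundary $f$-terms silently vanish. The delicate case is $s=0$ (i.e., $\lambda_1-\lambda_2=\mu_1-\mu_2$), where $\langle n-2,n-2|\circ\rangle$ appears in $\Pi(\olambda)\diamond\Pi(\omu)$ and the $-(1-s)$ correction to $K_L$ becomes nontrivial. Here Claim~\ref{claim:case5BoxassocA} is precisely what is needed: the correction contributes exactly the two boundary shapes at the extreme $k=M(0)$ that are missing from $J_R^{(1)}+J_{R(1)}$ on account of the reduced range $M(-1)=M(0)-2$, and the match is restored. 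Once this boundary accounting is carried out uniformly across the subcases, the identity $K_L = J_R^{(1)}+J_{R(1)}$ follows, completing the proof of (\ref{eqn:boxdiamondassoc}) in Case 4.
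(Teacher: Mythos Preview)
Your approach is correct, but it takes a longer route than the paper. After using Claim~\ref{claim:case5BoxassocA} to replace $\tilde s$ by $s$, you launch into a direct term-by-term comparison of $K_L$ with $J_R^{(1)}+J_{R(1)}$, splitting on $t$ and then on the sign of $\lambda_1-\lambda_2-(\mu_1-\mu_2)$. The paper instead observes that, once $\tilde s=s$, the expression
\[
K_L = F'^{(0)}(1,-1)+2F'^{(0)}(0,0)+F'^{(0)}(-1,1)-(1-s)\bigl[f(-M(0),M(0))+f(-M(0)-1,M(0)+1)\bigr]
\]
rearranges immediately to $J_L^{+}+J_{L+}$ (expand $F'^{(0)}(a,b)=f(a,b)+F^{(0)}(a,b)$ and telescope the shifted sums), and then simply cites the already-proved Lemma~\ref{lemma:case3BoxassocA}, which states $J_L^{+}+J_{L+}=J_R^{(1)}+J_{R(1)}$. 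Your case analysis is essentially a re-derivation of Lemma~\ref{lemma:case3BoxassocA}; in particular only three subcases (governing $M(\pm 1)$) are needed, not the five you borrowed from Lemma~\ref{lemma:case4BoxassocA} (which concerns $M(\pm 2)$). The paper's route avoids this repetition and reduces Case~4 to a two-line corollary of Case~2.
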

\begin{proof}
Since by Claim~\ref{claim:case5BoxassocA} we have $\tilde{s}=s$, $K_L$ rearranges easily to obtain $K_L=J_L^+ + J_{L +}$. 
%In $K_L$, since $\tilde{s}=s$, the sum of the first term of each $F'$ and last term of each $G'$ is equal to $f(1,-1) + f(0,0) + s\cdot f(-M(0),M(0)) + s\cdot f(-M(0)-1,M(0)+1)$. The rest of $K_L$ is equal to $F^{(0)}(1,-1) + F^{(0)}(0,0) + \sum_{0\le k\le M(0)-1}\langle \lambda_1+\mu_1-k,\lambda_2+\mu_2+k|\bullet\rangle + \sum_{0\le k\le M(0)-1}\langle \lambda_1+\mu_1-k-1,\lambda_2+\mu_2+k+1|\bullet\rangle$. After shifting the index of the last two summations by $1$, this is equal to $2F^{(0)}(1,-1) + 2F^{(0)}(0,0)$.
Then we use Lemma~\ref{lemma:case3BoxassocA}, which shows $J_L^+ + J_{L +} = J_R^{(1)}+J_{R (1)}$.
\end{proof}

\subsubsection{Proof of (\ref{eqn:Dboxassoc}) for non-amenable pairs $\olambda, \omu$}
We assumed throughout that $\omu$ is a non-Pieri shape.
It remains to check cases where  $\olambda$ is Pieri and $\omu$ is charged and non-Pieri. 

The following observation is clear from the definition of $\diamond$ and of $M$:
\begin{Lemma}
\label{lemma:firstrow}
If $\okappa=\langle\kappa_1,\kappa_2|{\bullet}{/\circ}\rangle$ appears in $\oalpha\diamond\obeta$ then
\[\kappa_1\geq
\begin{cases}
\max(\alpha_1+\beta_2,\alpha_2+\beta_1) &
\mbox{if  $\oalpha\diamond\obeta$ is described by (A) or (C)}\\
\max(\alpha_1+\beta_2,\alpha_2+\beta_1)-1 & \mbox{if
 $\oalpha\diamond\obeta$ is described by (B)}
\end{cases}\]
\end{Lemma}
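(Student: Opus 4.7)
\bigskip

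\noindent\textbf{Proof proposal.} The plan is to observe that the lemma is a direct bookkeeping consequence of Definition~\ref{def:fakeproduct}, and to verify it case by case. The key identity throughout is
\[
\alpha_1+\beta_1 - M \;=\; \alpha_1+\beta_1 - \min\{\alpha_1-\alpha_2,\beta_1-\beta_2\} \;=\; \max\{\alpha_2+\beta_1,\alpha_1+\beta_2\},
\]
which follows immediately from (\ref{eqn:M}) since pulling the minimum out of $\alpha_1+\beta_1$ and distributing gives one of the two sums depending on which term achieves the minimum.

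First I would handle case (A). Every term of $\oalpha\diamond\obeta$ has the form $\kappa_1 = \alpha_1+\beta_1-k$ with $0\le k\le M$, so $\kappa_1 \ge \alpha_1+\beta_1 - M$, which by the displayed identity is $\max\{\alpha_1+\beta_2,\alpha_2+\beta_1\}$. Case (C) is identical, the only change being that the terms are on rather than off. Case (D) is vacuous since the product is zero.

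For case (B), there are three families of terms. The term $\langle\alpha_1+\beta_1,\alpha_2+\beta_2-1|\bullet\rangle$ and those from the sum $1\le k\le M$ all have $\kappa_1 \ge \alpha_1+\beta_1-M$, which we have already identified with $\max\{\alpha_1+\beta_2,\alpha_2+\beta_1\}$; these satisfy the stronger bound from (A)/(C) and a fortiori the bound in (B). The remaining term $\langle\alpha_1+\beta_1-M-1,\alpha_2+\beta_2+M|\bullet\rangle$ has $\kappa_1 = \alpha_1+\beta_1-M-1 = \max\{\alpha_1+\beta_2,\alpha_2+\beta_1\}-1$, which matches the ``$-1$'' in the (B) bound exactly. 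This is really where the ``$-1$'' in the statement comes from, and it is the only step that demands attention; every other bound is loose.

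There is no genuine obstacle: the lemma is a transparent consequence of the explicit definition of $\diamond$, and its purpose is solely to provide a clean uniform inequality to cite later (for instance, when controlling which ambiguous shapes can appear, or when ruling out illegal terms in the disambiguation arguments of Section~5.5). Accordingly, the proof is a one-paragraph unpacking of Definition~\ref{def:fakeproduct} together with the $M$-identity above. \qed
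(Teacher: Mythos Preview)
Your proof is correct and is precisely the unpacking the paper has in mind: the paper states only that the lemma ``is clear from the definition of $\diamond$ and of $M$'' and gives no further argument, so your identity $\alpha_1+\beta_1-M=\max(\alpha_1+\beta_2,\alpha_2+\beta_1)$ together with the case-by-case check is exactly what is being left to the reader.
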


\noindent {\bf Case 1:} ($\olambda$ is a charged Pieri shape): Then $\olambda=\langle n-2,0|\circ\rangle^{{\rm ch}(\olambda)}$, and
\begin{equation}\label{eqn:boxtimesolambda1}
\tableau{{\ }}\star\olambda=\langle n-1,0|\circ\rangle + \langle n-2,1|\circ\rangle^{{\rm ch}(\olambda)}.
\end{equation}
\noindent{\sf ($\omu$ is on):} Then $\mu_2=n-2$. We compute $\olambda\star\omu = \frac{1}{2}\eta_{\olambda,\omu}\langle 2n-4,\mu_1|\bullet\rangle$ and $\leftassboxB=\frac{1}{2}\eta_{\olambda,\omu}\langle 2n-4,\mu_1+1|\bullet\rangle$ (neutral). For $\rightassboxB$, we compute $\langle n-1,0|\circ\rangle\star\omu=0$ (Lemma~\ref{lemma:firstrow}), and $\langle n-2,1|\circ\rangle^{{\rm ch}(\olambda)}\star\omu=\frac{1}{2}\eta_{\olambda,\omu}\langle 2n-4,\mu_1+1|\bullet\rangle$. Thus $\leftassboxB=\rightassboxB$.

\noindent{\sf ($\omu$ is off):} Then $\mu_1=n-2$. Since $\mu_2>0$ ($\omu$ is non-Pieri), $\Pi(\olambda)\diamond\Pi(\omu)$ is given by (B). So,
\[\olambda\star\omu=\frac{1}{2}\eta_{\olambda,\omu}\langle 2n-4,\mu_2-1|\bullet\rangle + \sum_{1\le k \le n-2-\mu_2}\langle 2n-4-k,\mu_2-1+k|\bullet\rangle + \langle n-2+\mu_2-1,n-2|\bullet\rangle^{{\rm ch}(\omu)}.\]
Then if $\mu_2<n-2$,
\begin{multline}\label{eqn:charged100}
\leftassboxB=\frac{1}{2}\eta_{\olambda,\omu}\langle 2n-4,\mu_2|\bullet\rangle + \langle 2n-4,\mu_2|\bullet\rangle + \sum_{1\le k \le n-2-\mu_2-1}\!\!\!\!\!\!\!\!\! 2\langle  2n-4-k,\mu_2+k|\bullet\rangle \\ 
+ \langle n-2 + \mu_2, n-2|\bullet\rangle^\uparrow + \langle n-2 + \mu_2, n-2|\bullet\rangle^\downarrow + \langle n-2 + \mu_2, n-2|\bullet\rangle^{{\rm ch}(\omu)} + \langle n-2+\mu_2-1,n-1|\bullet\rangle. 
\end{multline}
If instead $\mu_2=n-2$,
\begin{multline}\label{eqn:charged101} 
\leftassboxB=\eta_{\olambda,\omu}\langle 2n-4,n-2|\bullet\rangle + \langle 2n-4,n-2|\bullet\rangle^{{\rm ch}(\omu)} + \langle 2n-5,n-1|\bullet\rangle
\end{multline}
where the first term splits if $\eta_{\olambda,\omu}=2$.
Next, compute the $\star$-product of (\ref{eqn:boxtimesolambda1}) and $\omu$:
\begin{multline}\label{eqn:charged102}
\langle n-1,0|\circ\rangle\star\omu = \sum_{0 \le k \le n-2-\mu_2-1}\langle 2n-4-k,\mu_2+k|\bullet\rangle + \langle n-2 +\mu_2,n-2|\bullet\rangle^{{\rm ch}(\omu)}.
\end{multline}
If $\mu_2<n-2$,
\begin{multline}\label{eqn:charged103}
\langle n-2,1|\circ\rangle^{{\rm ch}(\olambda)}\star\omu = \frac{1}{2}\eta_{\olambda,\omu}\langle 2n-4,\mu_2|\bullet\rangle + \sum_{1\le k\le n-2-\mu_2-1}\langle 2n-4-k,\mu_2+k|\bullet\rangle \\ 
+ \langle n-2+\mu_2,n-2|\bullet\rangle^\uparrow + \langle n-2+\mu_2,n-2|\bullet\rangle^\downarrow +  \langle n-2+\mu_2-1,n-1|\bullet\rangle. 
\end{multline}
If instead $\mu_2=n-2$,
\begin{equation}\label{eqn:charged104}
\langle n-2,1|\circ\rangle^{{\rm ch}(\olambda)}\star\omu = \eta_{\olambda,\omu}\langle 2n-4,n-2|\bullet\rangle + \langle 2n-5,n-1|\bullet\rangle
\end{equation}
where the first term splits if $\eta_{\olambda,\omu}=2$. 
Comparing (\ref{eqn:charged100}) with the sum of (\ref{eqn:charged102}) and (\ref{eqn:charged103}), and comparing (\ref{eqn:charged101}) with the sum of (\ref{eqn:charged102}) and (\ref{eqn:charged104}), we have $\leftassboxB=\rightassboxB$.

\noindent {\bf Case 2:} ($\olambda$ is a neutral Pieri shape): There are three subcases: 

\noindent \emph{Subcase 2a:} ($\lambda_1\ge n-1$): First suppose either $\omu$ is on, or $\olambda = \langle 2n-4,0|\bullet\rangle$. We have $\olambda\star\omu=0$ by Lemma~\ref{lemma:firstrow}, and thus $\leftassboxB=0$. If $\tableau{{\ }}\diamond\Pi(\olambda)$ is computed by (A) or (C), then every term $\okappa$ of $\tableau{{\ }}\diamond\Pi(\olambda)$ contains $\Pi(\olambda)$ and thus also $\rightassboxB=0$ by Lemma~\ref{lemma:firstrow}. If $\tableau{{\ }}\diamond\Pi(\olambda)$ is computed by (B), then $\olambda$ is off (so $\omu$ is on, by assumption) but every term of $\tableau{{\ }}\diamond\Pi(\olambda)$ is on, so $\rightassboxB=0$ by (D). 

Now suppose $\olambda$ and $\omu$ are off. Since both $|\lambda|>n-2$ and $|\mu|>n-2$, $\Pi(\olambda)\diamond\Pi(\omu)$ is computed by (B). We have
\[\olambda\star\omu=\sum_{\lambda_1-(n-2) \le k \le n-2-\mu_2 }\langle \lambda_1+n-2-k,\mu_2+k-1|\bullet\rangle + \langle \lambda_1+\mu_2-1,n-2|\bullet\rangle^{{\rm ch}(\omu)}.\]
%\[\olambda\star\omu=\sum_{0 \le k \le 2n-4-\lambda_1-\mu_2} \langle 2n-4-k,\lambda_1+\mu_2-(n-2)-1+k|\bullet\rangle + \langle \lambda_1+\mu_2-1,n-2|\bullet\rangle^{{\rm ch}(\omu)}\]
Then 
\begin{multline}\label{eqn:bigpieri100}
\leftassboxB=\sum_{\lambda_1-(n-2) \le k \le n-2-\mu_2-1 }2\langle\lambda_1+n-2-k,\mu_2+k|\bullet\rangle + \langle \lambda_1+\mu_2,n-2|\bullet\rangle^{{\rm ch}(\omu)}\\  
+ \langle\lambda_1+\mu_2,n-2|\bullet\rangle^\uparrow + \langle\lambda_1+\mu_2,n-2|\bullet\rangle^\downarrow + \langle\lambda_1+\mu_2-1,n-1|\bullet\rangle.
\end{multline}
If $\olambda=\langle 2n-4,0|\circ\rangle$, then $\tableau{{\ }}\star\olambda = 2\langle 2n-4,0|\bullet\rangle+\langle 2n-5,1|\bullet\rangle$. Then $\rightassboxB = \langle 2n-5+\mu_2,n-1|\bullet\rangle$. This is equal to (\ref{eqn:bigpieri100}) when $\lambda_1=2n-4$; in particular, every term of (\ref{eqn:bigpieri100}) except possibly the last one is necessarily illegal.

Otherwise, $\lambda_1<2n-4$. Hence $\tableau{{\ }}\star\olambda = \langle \lambda_1+1,0|\circ\rangle+\langle \lambda_1,1|\circ\rangle$ and
\begin{multline}\label{eqn:bigpieri101}
\langle \lambda_1+1,0|\circ\rangle\star\omu = \!\!\!\!\!\!\sum_{\lambda_1-(n-2) \le k \le n-2-\mu_2-1 }\!\!\!\!\!\!\!\!\!\!\!\!\!\!\!\!\langle \lambda_1+n-2-k,\mu_2+k|\bullet\rangle + \langle \lambda_1+\mu_2,n-2|\bullet\rangle^{{\rm ch}(\omu)} \mbox{ \ and \ }
\end{multline}
\begin{multline}\label{eqn:bigpieri102}
\langle \lambda_1,1|\circ\rangle\star\omu = \sum_{\lambda_1-(n-2) \le k \le n-2-\mu_2-1 }\langle \lambda_1+n-2-k,\mu_2+k|\bullet\rangle \\ 
+ \langle \lambda_1+\mu_2,n-2|\bullet\rangle^\uparrow + \langle \lambda_1+\mu_2,n-2|\bullet\rangle^\downarrow + \langle \lambda_1+\mu_2-1,n-1|\bullet\rangle. 
\end{multline}
Comparing (\ref{eqn:bigpieri100}) with the sum of (\ref{eqn:bigpieri101}) and (\ref{eqn:bigpieri102}), we have $\leftassboxB=\rightassboxB$. This concludes the case where $\olambda$ is a Pieri shape with $\lambda_1 \ge n-1$.

\noindent \emph{Subcase 2b:} ($\olambda= \langle n-3,0|\circ\rangle$): Then
\begin{equation}\label{eqn:boxtimesolambda2b}
\tableau{{\ }}\star\olambda = \langle n-2,0|\circ\rangle^\uparrow + \langle n-2,0|\circ\rangle^\downarrow + \langle n-3,1|\circ\rangle.
\end{equation}

\noindent{\sf ($\omu$ is on):} Then $\mu_2=n-2$. If $\mu_1=2n-4$, we compute 
\[\leftassboxB=\rightassboxB=\langle 2n-4,2n-4|\bullet\rangle.\]
If $\mu_1=2n-5$, we compute
\[\leftassboxB=\rightassboxB=2\langle 2n-4,2n-5|\bullet\rangle.\]
If $\mu_1=n-2$, we compute 
\[\leftassboxB=\rightassboxB=\langle 2n-4,n-2|\bullet\rangle^{{\rm ch}(\omu)} + \langle 2n-5,n-1|\bullet\rangle.\] 
Otherwise, $n-2<\mu_1<2n-5$ and we compute
\[\leftassboxB=\rightassboxB = 2\langle 2n-4,\mu_1|\bullet\rangle + \langle 2n-5,\mu_1+1|\bullet\rangle.\]

\noindent{\sf ($\omu$ is off):} Then $\mu_1=n-2$. There are two cases:

\noindent
{\sf ($\mu_2=1$):} Then $\Pi(\olambda)\diamond\Pi(\omu)$ is given by (A), and
\[\olambda\star\omu = \sum_{1\le k \le n-3} \langle 2n-4-k,k|\circ\rangle + \langle n-2,n-2|\circ\rangle^{{\rm ch}(\omu)}.\]
Consider:
\begin{multline}\label{eqn:medpieri100}
\leftassboxB = \langle 2n-4,0|\bullet\rangle + 3\langle 2n-5,1|\bullet\rangle + \sum_{2\le k \le n-3}4\langle 2n-4-k,k|\bullet\rangle \\  
+ 2\langle n-2,n-2|\bullet\rangle^{{\rm ch}(\omu)} + \langle n-2,n-2|\bullet\rangle^\uparrow + \langle n-2,n-2|\bullet\rangle^\downarrow;
\end{multline}
For $\rightassboxB$, we compute the $\star$-product of (\ref{eqn:boxtimesolambda2b}) and $\omu$:
\begin{multline}\label{eqn:medpieri101}
(\langle n-2,0|\circ\rangle^\uparrow + \langle n-2,0|\circ\rangle^\downarrow)\star \omu = \langle 2n-4,0|\bullet \rangle + \sum_{1\le k \le n-3}2\langle 2n-4-k,k|\bullet\rangle \\ 
+ 2\langle n-2,n-2|\bullet\rangle^{{\rm ch}(\omu)};
\end{multline}
\begin{multline}\label{eqn:medpieri102}
\langle n-3,1|\circ\rangle\star\omu = \langle 2n-5,1|\bullet\rangle + \sum_{2\le k\le n-3}2\langle 2n-4-k,k|\bullet\rangle \\  
+ \langle n-2,n-2|\bullet\rangle^\uparrow + \langle n-2,n-2|\bullet\rangle^\downarrow. 
\end{multline}
Comparing (\ref{eqn:medpieri100}) with the sum of (\ref{eqn:medpieri101}) and (\ref{eqn:medpieri102}), we have $\leftassboxB=\rightassboxB$.

\noindent
{\sf ($\mu_2>1$):} Then $\Pi(\olambda)\diamond\Pi(\omu)$ is given by (B), and
\[\olambda\star\omu = \langle 2n-5,\mu_2-1|\bullet\rangle + \sum_{1\le k \le n-2-\mu_2}2\langle 2n-5-k, \mu_2-1+k|\bullet\rangle +  2\langle n-2+\mu_2-2,n-2|\bullet\rangle^{{\rm ch}(\omu)}.\]
If $\mu_2\neq n-2$ then
\begin{multline}\label{eqn:medpieri103}
\leftassboxB = \langle 2n-4,\mu_2-1|\bullet\rangle + 3\langle 2n-5, \mu_2|\bullet\rangle + \sum_{1 \le k \le n-3-\mu_2}\!\!\!\!\!4\langle 2n-5-k,\mu_2+k|\bullet\rangle +  \\ 
2\langle n-2+\mu_2-1,n-2|\bullet\rangle^{{\rm ch}(\omu)} + 2\langle n-2+\mu_2-1,n-2|\bullet\rangle^\uparrow + 2\langle n-2+\mu_2-1,n-2|\bullet\rangle^\downarrow\\ + 2\langle n-2+\mu_2-2,n-1|\bullet\rangle, 
\end{multline}
whereas if $\mu_2=n-2$ then
\begin{multline}\label{eqn:medpieri104}
\leftassboxB = \langle 2n-4,n-3|\bullet\rangle + 2\langle 2n-5,n-2|\bullet\rangle^{{\rm ch}(\omu)} \\ 
+ \langle 2n-5,n-2|\bullet\rangle^\uparrow + \langle 2n-5,n-2|\bullet\rangle^\downarrow + 2\langle 2n-6,n-1|\bullet\rangle; 
\end{multline}
For $\rightassboxB$, we compute the $\star$-product of (\ref{eqn:boxtimesolambda2b}) and $\omu$:
\begin{multline}\label{eqn:medpieri105}
(\langle n-2,0|\circ\rangle^\uparrow + \langle n-2,0|\circ\rangle^\downarrow)\star \omu = \langle 2n-4, \mu_2-1|\bullet\rangle + \!\!\sum_{0\le k \le n-3-\mu_2}\!\!\!\!\!\!\!\!2\langle 2n-5-k,\mu_2+k|\bullet\rangle \\ 
+ 2\langle n-2+\mu_2-1,n-2|\bullet\rangle^{{\rm ch}(\omu)}. 
\end{multline}
If $\mu_2\neq n-2$ then
\begin{multline}\label{eqn:medpieri106}
\langle n-3,1|\circ\rangle\star\omu = \langle 2n-5,\mu_2|\bullet\rangle + \sum_{1\le k \le n-3-\mu_2}2\langle 2n-5-k,\mu_2+k|\bullet\rangle \\ 
+ 2\langle n-2+\mu_2-1,n-2|\bullet\rangle^\uparrow + 2\langle n-2+\mu_2-1,n-2|\bullet\rangle^\downarrow + 2\langle n-2+\mu_2-2,n-1|\bullet\rangle 
\end{multline}
while if $\mu_2=n-2$ then
\begin{multline}\label{eqn:medpieri107}
\langle n-3,1|\circ\rangle\star\omu = \langle 2n-5,n-2|\bullet\rangle^\uparrow + \langle 2n-5,n-2|\bullet\rangle^\downarrow + 2\langle 2n-6,n-1|\bullet\rangle.
\end{multline}
Comparing (\ref{eqn:medpieri103}) with the sum of (\ref{eqn:medpieri105}) and (\ref{eqn:medpieri106}), and comparing (\ref{eqn:medpieri104}) with the sum of (\ref{eqn:medpieri105}) and (\ref{eqn:medpieri107}), we have $\leftassboxB=\rightassboxB$. 

\noindent \emph{Subcase 2c:} ($\olambda$ is a neutral Pieri shape and $\lambda_1< n-3$): Then we always have
\begin{equation}\label{eqn:boxtimesolambda2c}
\tableau{{\ }}\star\olambda = \langle\lambda_1+1,0|\circ\rangle + \langle\lambda_1,1|\circ\rangle \mbox{\ (both neutral). \ }
\end{equation}
\noindent{\sf ($\omu$ is on):} Then $\mu_2=n-2$. Here $M(i)=\min\{\lambda_1+i,\mu_1-(n-2)\}$. We compute $\olambda\star\omu=\sum_{0\le k\le M(0)}\langle \lambda_1+\mu_1-k,n-2+k|\bullet\rangle$, where the $k=0$ term, if legal, is assigned ${\rm ch}(\omu)$. Then 
\begin{multline}\label{eqn:smallpieri100}
\leftassboxB=\langle \lambda_1+\mu_1+1,n-2|\bullet\rangle^{{\rm ch}(\omu)} + \sum_{1\le k \le M(0)}2\langle \lambda_1+\mu_1+1-k,n-2+k|\bullet\rangle \\
+ \langle \lambda_1+\mu_1-M(0),n-2+M(0)+1|\bullet\rangle;
\end{multline}
For $\rightassboxB$, we compute the $\star$-product of (\ref{eqn:boxtimesolambda2c}) and $\omu$:
\begin{multline}\label{eqn:smallpieri101}
\langle\lambda_1+1,0|\circ\rangle\star\omu = \!\langle \lambda_1+\mu_1+1,n-2|\bullet\rangle^{{\rm ch}(\omu)} \!+ \!\!\!\!\!\!\sum_{1\le k \le M(1)}\!\!\!\!\!\langle \lambda_1\!+\!\mu_1+1-k,n-2+k|\bullet\rangle \mbox{ \ and \ }
\end{multline}
\begin{equation}\label{eqn:smallpieri102}
\langle\lambda_1,1|\circ\rangle\star\omu = \sum_{0\le k \le M(-1)}\langle\lambda_1+\mu_1-k,n-2+k+1|\bullet\rangle.
\end{equation}
$\leftassboxB=\rightassboxB$ holds by comparing (\ref{eqn:smallpieri100}) with the sum of (\ref{eqn:smallpieri101}) and (\ref{eqn:smallpieri102}) and noting:
\begin{itemize}
\item If $\mu_1-(n-2)<\lambda_1$, then $M(1)=M(-1)=M(0)$;
\item If $\mu_1-(n-2)=\lambda_1$, then $M(1)=M(0)$, $M(-1)=M(0)-1$ and the last term of $\leftassboxB$ is illegal;
\item If $\mu_1-(n-2)>\lambda_1$, then $M(1)=M(0)+1$ and $M(-1)=M(0)-1$. 
\end{itemize}

\noindent{\sf ($\omu$ is off):} Then $\mu_1=n-2$. There are three cases:

{\sf ($\lambda_1+\mu_2<n-2$):} Then $\Pi(\olambda)\diamond\Pi(\omu)$ is computed by (A) and $\lambda_1<n-2-\mu_2$, so
\[\olambda\star\omu = \sum_{0\le k \le \lambda_1}\langle n-2+\lambda_1-k,\mu_2+k|\circ \rangle \]
where the $k=\lambda_1$ term is assigned ${\rm ch}(\omu)$. Then
\begin{equation}\label{eqn:smallpieri103}
\leftassboxB = \sum_{0\le k \le \lambda_1} \langle n-2+\lambda_1+1-k,\mu_2+k|\circ\rangle + \sum_{0 \le k \le \lambda_1}\langle n-2+\lambda_1-k,\mu_2+k+1|\circ\rangle
\end{equation}
where the $k=\lambda_1$ term in the second summation is assigned ${\rm ch}(\omu)$. For $\rightassboxB$, we compute the $\star$-product of (\ref{eqn:boxtimesolambda2c}) and $\omu$:
\begin{equation}\label{eqn:smallpieri104}
\langle\lambda_1+1,0|\circ\rangle\star\omu = \sum_{0\le k \le \lambda_1+1}\langle n-2+\lambda_1+1-k,\mu_2+k|\circ\rangle
\end{equation}
where the $k=\lambda_1 +1$ term is assigned ${\rm ch}(\omu)$, and
\begin{equation}\label{eqn:smallpieri105}
\langle\lambda_1,1|\circ\rangle\star\omu = \sum_{0\le k \le \lambda_1-1}\langle n-2+\lambda_1-k,\mu_2+1+k|\circ\rangle.
\end{equation}
Comparing (\ref{eqn:smallpieri103}) with the sum of (\ref{eqn:smallpieri104}) and (\ref{eqn:smallpieri105}), we have $\leftassboxB=\rightassboxB$.

{\sf ($\lambda_1+\mu_2=n-2$):} Then $\Pi(\olambda)\diamond\Pi(\omu)$ is computed by (A) and
\[\olambda\star\omu = \sum_{0\le k \le \lambda_1}\langle n-2+\lambda_1-k,\mu_2+k|\circ\rangle\]
where the $k=\lambda_1$ term is assigned ${\rm ch}(\omu)$. Using (B), we compute
\begin{multline}\label{eqn:smallpieri106}
\leftassboxB= \langle n-2+\lambda_1+1,\mu_2-1|\bullet\rangle + 3\langle n-2+\lambda_1,\mu_2|\bullet\rangle +   \\ 
\sum_{1 \le k \le \lambda_1-1} 4\langle n-2+\lambda_1-k,\mu_2+k|\bullet\rangle 
+ 2\langle n-2,n-2|\bullet\rangle^{{\rm ch}(\omu)} + \langle n-2,n-2|\bullet\rangle^\uparrow + \langle n-2,n-2|\bullet\rangle^\downarrow. 
\end{multline}
For $\rightassboxB$, we compute the $\star$-product of (\ref{eqn:boxtimesolambda2c}) and $\omu$:
\begin{multline}\label{eqn:smallpieri107}
\langle\lambda_1+1,0|\circ\rangle\star\omu = \langle n-2+\lambda_1+1,\mu_2-1|\bullet\rangle + \\ 
\sum_{0\le k \le \lambda_1-1}2\langle n-2+\lambda_1-k,\mu_2+k|\bullet\rangle + 2\langle n-2,n-2|\bullet\rangle^{{\rm ch}(\omu)} \mbox{ \ and \ } 
\end{multline}
\begin{multline}\label{eqn:smallpieri108}
\langle\lambda_1,1|\circ\rangle\star\omu = \langle n-2+\lambda_1,\mu_2|\bullet\rangle + \sum_{1\le k \le \lambda_1-1}2\langle n-2+\lambda_1-k,\mu_2+k|\bullet\rangle \\ 
+ \langle n-2,n-2|\bullet\rangle^\uparrow + \langle n-2,n-2|\bullet\rangle^\downarrow. 
\end{multline}
Comparing (\ref{eqn:smallpieri106}) with the sum of (\ref{eqn:smallpieri107}) and (\ref{eqn:smallpieri108}), we have $\leftassboxB=\rightassboxB$.

{\sf ($\lambda_1+\mu_2>n-2$):} Then $\Pi(\olambda)\diamond\Pi(\omu)$ is computed by (B) and $n-2-\mu_2<\lambda_1$, so
\[\olambda\star\omu = \langle n-2+\lambda_1,\mu_2-1|\bullet\rangle + \sum_{1 \le k \le n-2-\mu_2}2\langle n-2+\lambda_1-k,\mu_2+k-1|\bullet\rangle + 2\langle \lambda_1+\mu_2-1,n-2|\bullet\rangle^{{\rm ch}(\omu)}.\]
If $\mu_2<n-2$, we compute
\begin{multline}\label{eqn:smallpieri109}
\leftassboxB = \langle n-2 + \lambda_1+1,\mu_2-1|\bullet\rangle + 3\langle n-2 + \lambda_1,\mu_2|\bullet\rangle\\ 
+ \sum_{1\le k \le n-2-\mu_2-1} 4\langle n-2+ \lambda_1-k,\mu_2+k|\bullet\rangle  + 2\langle \lambda_1+\mu_2,n-2|\bullet\rangle^{{\rm ch}(\omu)} + 2\langle \lambda_1+\mu_2,n-2|\bullet\rangle^\uparrow\\ + 2\langle \lambda_1+\mu_2,n-2|\bullet\rangle^\downarrow + 2\langle \lambda_1+\mu_2-1,n-1|\bullet\rangle. 
\end{multline}
Otherwise $\mu_2=n-2$ and we compute
\begin{multline}\label{eqn:smallpieri110}
\leftassboxB = \langle n-2+ \lambda_1+1,n-3|\bullet\rangle +  2\langle n-2+ \lambda_1,n-2|\bullet\rangle^{{\rm ch}(\omu)} \\ 
+ \langle n-2+ \lambda_1,n-2|\bullet\rangle^\uparrow + \langle n-2+ \lambda_1,n-2|\bullet\rangle^\downarrow + 2\langle n-2+ \lambda_1-1,n-1|\bullet\rangle. 
\end{multline}
For $\rightassboxB$, we compute the $\star$-product of (\ref{eqn:boxtimesolambda2c}) and $\omu$:
\begin{multline}\label{eqn:smallpieri111} 
\langle\lambda_1+1,0|\circ\rangle\star\omu = \langle n-2+\lambda_1+1,\mu_2-1|\bullet\rangle + \\ 
\sum_{0\le k \le n-2-\mu_2-1}2\langle n-2+\lambda_1-k,\mu_2+k|\bullet\rangle + 2\langle \lambda_1+\mu_2,n-2|\bullet\rangle^{{\rm ch}(\omu)}. 
\end{multline}
If $\mu_2<n-2$, we compute
\begin{multline}\label{eqn:smallpieri112}
\langle\lambda_1,1|\circ\rangle\star\omu = \langle n-2+\lambda_1,\mu_2|\bullet\rangle + \sum_{1\le k\le n-2-\mu_2-1}2\langle n-2+\lambda_1-k,\mu_2+k|\bullet\rangle \\ 
+ 2\langle \lambda_1+\mu_2,n-2|\bullet\rangle^\uparrow + 2\langle \lambda_1+\mu_2,n-2|\bullet\rangle^\downarrow + 2\langle \lambda_1+\mu_2-1,n-1|\bullet\rangle. 
\end{multline}
Otherwise $\mu_2=n-2$ and we compute
\begin{equation}\label{eqn:smallpieri113}
\langle\lambda_1,1|\circ\rangle\star\omu = \langle n-2+\lambda_1,n-2|\bullet\rangle^\uparrow + \langle n-2+\lambda_1,n-2|\bullet\rangle^\downarrow + 2\langle n-2+\lambda_1-1,n-1|\bullet\rangle.
\end{equation}

Comparing (\ref{eqn:smallpieri109}) with the sum of (\ref{eqn:smallpieri111}) and (\ref{eqn:smallpieri112}), and comparing (\ref{eqn:smallpieri110}) with the sum of (\ref{eqn:smallpieri111}) and (\ref{eqn:smallpieri113}), we have $\leftassboxB=\rightassboxB$. This concludes the proof of (\ref{eqn:Dboxassoc}).
 
\subsection{Proof of relation (\ref{eqn:Dambigassoc})} We split our argument into two main cases.

\subsubsection{At least one of $\olambda$ or $\omu$ is neutral}
Since we know (\ref{eqn:Ddominoassoc}) and (\ref{eqn:Dboxassoc}), we can show:
\begin{Lemma}
\label{lemma:partialgen}
Every neutral shape, as well as the elements
\[\alpha_{(\lambda_2)}:=\langle n-2,\lambda_2 | \circ\rangle^\uparrow + \langle n-2,\lambda_2 | \circ\rangle^\downarrow
\mbox{ \  for $0 \le \lambda_2 \le n-2$, and}
\]
\[\alpha^{(\lambda_1)}:=\langle\lambda_1,n-2 | \bullet\rangle^\uparrow + \langle\lambda_1,n-2 |\bullet\rangle^\downarrow \mbox{ \
for $n-2 \le \lambda_1 \le 2n-4$}\]
have an expression as a ${\mathbb Q}$-linear sum of $\star$-monomials in $\tableau{{\ }}$ and $\sctableau{{\ }\\{\ }}$. 
Any such expression is well-defined, i.e., all associative bracketings of these monomials are equivalent. 
\end{Lemma}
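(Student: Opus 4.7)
The strategy parallels the inductive construction of Proposition~\ref{prop:Cgenerate}, with individual charged generators replaced throughout by the charge-symmetric sums $\alpha_{(\lambda_2)}, \alpha^{(\lambda_1)}$, so that the charge bookkeeping of rules (iii.1)--(iii.3) automatically averages out. Well-definedness of any $\star$-polynomial expression in $\tableau{{\ }}$ and $\sctableau{{\ }\\{\ }}$ then follows from the already-established associativities (\ref{eqn:Ddominoassoc}) and (\ref{eqn:Dboxassoc}) by a standard induction on monomial length (as in Lemma~\ref{lemma:Dassocfinal}).

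First I would generate the neutral off shapes $\langle k,0|\circ\rangle$ for $1\le k\le n-3$ by the same recursion
\[\langle k,0|\circ\rangle \;=\; \tableau{{\ }}\star \langle k-1,0|\circ\rangle \;-\; \sctableau{{\ }\\{\ }}\star\langle k-2,0|\circ\rangle,\]
which lies entirely in the unambiguous neutral range and requires no nontrivial charge rule. The same formula at $k=n-2$ produces $\alpha_{(0)}$: after rule (ii) scales the ambiguous $\langle n-2,0|\circ\rangle$ by its fake-short-root factor of $2$, rule (iii.2) (both inputs are Pieri neutral, $\omu$ neutral) splits it to yield exactly $\alpha_{(0)}$, and the $\langle n-3,1|\circ\rangle$ contributions cancel in the difference.

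Crossing the ambiguous zone then proceeds from the two computations
\[\tableau{{\ }}\star\alpha_{(0)}\;=\;2\langle n-1,0|\circ\rangle+\alpha_{(1)}\qquad\text{and}\qquad 2\sctableau{{\ }\\{\ }}\star\langle n-3,0|\circ\rangle\;=\;\alpha_{(1)},\]
whose difference yields $2\langle n-1,0|\circ\rangle$. Iterating this pattern generates all $\langle k,0|\circ\rangle$ for $n-1\le k\le 2n-4$ (over $\mathbb{Q}$) and simultaneously every $\alpha_{(j)}$, $0\le j\le n-2$. Multiplication by $\sctableau{{\ }\\{\ }}^{\lambda_2}$ then lifts these to arbitrary neutral off shapes $\langle \lambda|\circ\rangle$ and to all $\alpha_{(\lambda_2)}$. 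Once the off shapes are handled, the on shapes are obtained by a mirror-image argument: begin with $\langle 2n-4,0|\bullet\rangle=\tableau{{\ }}\star\langle 2n-4,0|\circ\rangle-\tfrac{1}{2}\sctableau{{\ }\\{\ }}\star\langle 2n-5,0|\circ\rangle$ (verified via rule (B) of $\diamond$ together with the boundary behavior of (i) and (ii)), then descend by multiplying by $\tableau{{\ }}$ to produce $\langle 2n-4,k|\bullet\rangle$ for $k<n-2$, by repeating the Step-2 construction to obtain $\alpha^{(2n-4)}$, and continuing recursively to reach every neutral $\langle \lambda|\bullet\rangle$ and every $\alpha^{(\lambda_1)}$ with $n-2\le\lambda_1\le 2n-4$.

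The main obstacle will be the precise coefficient bookkeeping at the boundary of the ambiguous zone, where the factor $2$ from rule (ii) must exactly cancel the $\tfrac{1}{2}$ in the splitting (iii.2), and where in mixed Pieri--charged products rule (iii.2) assigns charges by $\mathrm{ch}(\omu)$ rather than splitting. The $\alpha$-sums are engineered exactly so that every such charge assignment produces a matched $\uparrow/\downarrow$ pair in the final answer; consequently, individually charged generators such as $\langle n-2,0|\circ\rangle^\uparrow$ are deliberately not produced here. Those require the separate associativity relation (\ref{eqn:Dambigassoc}), established in what follows.
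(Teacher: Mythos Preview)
Your approach is essentially the same recursive construction as the paper's proof, and the overall strategy is correct. However, several of your explicit coefficients are wrong. A direct computation gives $\sctableau{{\ }\\{\ }}\star\langle n-3,0|\circ\rangle=\alpha_{(1)}$ (no factor of $2$): rule (ii) contributes $2^{1-0-0}=2$, and then (iii.2) halves it. Similarly, $\tableau{{\ }}\star\langle 2n-4,0|\circ\rangle - \sctableau{{\ }\\{\ }}\star\langle 2n-5,0|\circ\rangle=\langle 2n-4,0|\bullet\rangle$ with coefficient $1$, not $\tfrac{1}{2}$: here ${\rm fsh}(\langle 2n-5,0|\circ\rangle)=1$, so (ii) acts trivially. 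Since you work over $\mathbb{Q}$ these slips do not invalidate the argument, but they should be corrected.

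More substantively, the sentence ``Multiplication by $\sctableau{{\ }\\{\ }}^{\lambda_2}$ then lifts these \ldots\ to all $\alpha_{(\lambda_2)}$'' is false as written: one computes $\sctableau{{\ }\\{\ }}\star\alpha_{(0)}=2\langle n-1,1|\circ\rangle$, a single neutral shape, not $\alpha_{(1)}$. The $\alpha_{(\lambda_2)}$ must instead come from the earlier iteration you allude to (or, as the paper does, directly from $\tableau{{\ }}\star\langle n-3,\lambda_2|\circ\rangle-\langle n-3,\lambda_2+1|\circ\rangle$ once all shapes $\lambda\subseteq(n-3,n-3)$ are in hand). You should make the order of generation explicit, since as written the iteration needs $\langle n-3,j|\circ\rangle$ for $j\ge 1$ before it can produce $\alpha_{(j+1)}$.
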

\begin{proof}
{\bf Case 1:} (generating $\alpha_{(\lambda_2)}$'s): 
%By taking successive powers of $\sctableau{{\ }\\{\ }}$, we obtain all shapes
%\begin{equation}
%\label{eqn:aaa1}
%\alpha_{(n-2)} \mbox{\ and $\langle k,k|\circ\rangle$ for $1 \le k \le n-3$.}
%\end{equation}
%Multiplying the shapes (\ref{eqn:aaa1}) by $\tableau{{\ }}$ gives us:
%\begin{equation}
%\label{eqn:aaa2}
%\alpha_{(n-3)}=\alpha_{(n-2)}\star \tableau{{\ }},
% \mbox{\ \ and $\langle k+1,k|\circ\rangle
%=\langle k,k|\circ\rangle \star \tableau{{\ }}$ for $1 \le k \le n-4$.}
%\end{equation}
In view of Lemma~\ref{lemma:diamondLR}, given $\tableau{{\ }}$ and $\sctableau{{\ }\\{\ }}$, we can generate any $\olambda$ such that
\begin{equation}\label{eqn:aaa233}
\lambda \subseteq (n-3,n-3). 
\end{equation}
We thus obtain
\[\alpha_{(\lambda_2)}=\tableau{{\ }}\star\langle n-3,\lambda_2|\circ\rangle -\langle n-3, \lambda_2+1|\circ\rangle, \mbox{ \ for all $0 \le \lambda_2 < n-3$;}\]
\[\alpha_{(n-3)}=\tableau{{\ }}\star\langle n-3,n-3|\circ\rangle \mbox{ \ and \ } \alpha_{(n-2)}=\sctableau{{\ }\\{\ }}\star\langle n-3,n-3|\circ\rangle.\]

\noindent
{\bf Case 2:} (generating neutral $\omu=\langle \mu|\circ\rangle$): Since $\tableau{{\ }} \star \alpha_{(0)} = \alpha_{(1)} + 2\langle n-1,0|\circ\rangle \mbox{\ \ and \ \ } \sctableau{{\ }\\{\ }} \star \alpha_{(0)} = 2\langle n-1,1|\circ\rangle$,
we obtain $\langle n-1,0|\circ\rangle$ and $\langle n-1,1|\circ\rangle$. Then
$\tableau{{\ }} \star \langle n-1,0|\circ\rangle = \langle n,0|\circ\rangle + \langle n-1,1|\circ\rangle$,
generates $\langle n,0|\circ\rangle$. Since
$\langle k+1,0|\circ\rangle=\tableau{{\ }}\star\langle k,0|\circ\rangle-\sctableau{{\ }\\{\ }}\star\langle k-1,0|\circ\rangle$ for $k\geq n$,
we have all
\begin{equation}
\label{eqn:aaa234}
\langle k,0|\circ\rangle \mbox{\ for $n-1 \le k \le 2n-4$.}
\end{equation}
Let $\langle \mu|\circ\rangle$ be neutral. Then if $\mu_1-\mu_2=n-2$, $\alpha_{(0)}\star \langle \mu_2,\mu_2|\circ\rangle = 2\langle \mu|\circ\rangle$. Otherwise, $\langle \mu_1-\mu_2,0|\circ\rangle \star \langle \mu_2,\mu_2|\circ\rangle$ is either $\langle \mu|\circ\rangle$ or $2\langle \mu|\circ\rangle$. Since $\langle \mu|\circ\rangle$ is neutral, $\mu_2\leq n-3$,
so we are done by Case 1, (\ref{eqn:aaa233}) and (\ref{eqn:aaa234}).

\noindent
{\bf Case 3:} (generating $\alpha^{(\lambda_1)}$ and neutral $\omu=\langle \mu|\bullet\rangle$): 
We have $\langle 2n-4,0|\bullet\rangle=
\tableau{{\ }} \star \langle 2n-4,0|\circ\rangle - \sctableau{{\ }\\{\ }} \star \langle 2n-5,0|\circ\rangle$. 
Repeatedly multiplying $\langle 2n-4,0|\bullet\rangle$ by $\tableau{{\ }}$ gives us all $\langle 2n-4,k|\bullet\rangle$ as well as $\alpha^{(2n-4)}$.

Since we have $\langle 2n-4,0|\bullet\rangle$, we also have 
$\langle 2n-5,1|\bullet\rangle= \sctableau{{\ }\\{\ }} \star \langle 2n-5,0|\circ\rangle-\langle 2n-4,0|\bullet\rangle$.
Since we have $\langle 2n-4,k|\bullet\rangle$, we can (by repeatedly multiplying $\langle 2n-5,1|\bullet\rangle$ by $\tableau{{\ }}$) obtain all $\langle 2n-5,k|\bullet\rangle$ as well as $\alpha^{(2n-5)}$. 
Now, $\langle 2n-6,2|\bullet\rangle=  \sctableau{{\ }\\{\ }} \star \langle 2n-6,1|\circ\rangle -\langle 2n-5,1|\bullet\rangle$, 
so we obtain all $\langle 2n-6,k|\bullet\rangle$ as well as $\alpha^{(2n-6)}$. 
Continuing in this way we obtain all neutral $\langle \mu| \bullet\rangle$ and all $\alpha^{(\lambda_1)}$ for $\lambda_1 \ge n-1$. 
Finally $\sctableau{{\ }\\{\ }}\star\alpha_{(n-3)} = 2\langle n-1,n-3|\bullet\rangle + 2\alpha^{(n-2)}$, thus we also obtain $\alpha^{(n-2)}$.
\end{proof}
%in Case 3 for "continuing" the last shape we can use to multiply by domino is n-1,n-4|\circ. Get n-1,n-3|\bullet, then also $\alpha^{(\n-1)}$ and n-1,n-1|\bullet.
Let $g^\uparrow:=\langle n-2,0|\circ\rangle^\uparrow \in {\mathbb Z}[{\mathbb Y}_{OG(2,2n)}]$. 
Also, let $g:=\Pi(g^\uparrow)$. Let $\delta^{(d)}=\tableau{{\ }}^{\ a}\star {\sctableau{{\ }\\{\ }}}^{ \ b}$ (for $a+b=d$).

\begin{Lemma}
\label{lemma:DambigassocA}
(\ref{eqn:Dambigassoc}) holds whenever $\olambda$ or $\omu$ is replaced by $\delta^{(d)}$.
\end{Lemma}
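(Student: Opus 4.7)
The plan is to prove Lemma~\ref{lemma:DambigassocA} by induction on $d$, reducing it to the already established associativity relations (\ref{eqn:Dboxassoc}) and (\ref{eqn:Ddominoassoc}). By commutativity of $\star$, it suffices to prove
\[g^\uparrow \star (\olambda \star \delta^{(d)}) = (g^\uparrow \star \olambda) \star \delta^{(d)},\]
since the version with $\omu$ replaced by $\delta^{(d)}$ is obtained by swapping arguments.

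As a preliminary I would derive from (\ref{eqn:Dboxassoc}), (\ref{eqn:Ddominoassoc}), and commutativity the following ``free passage'' principle: for each $X \in \{\tableau{{\ }}, \sctableau{{\ }\\{\ }}\}$ and any $A, B \in R$,
\[X \star (A \star B) \;=\; (X \star A) \star B \;=\; A \star (X \star B).\]
The first equality is the appropriate instance of (\ref{eqn:Dboxassoc}) or (\ref{eqn:Ddominoassoc}), while the second follows by rewriting $A \star (X \star B) = A \star (B \star X) = (A \star B) \star X = X \star (A \star B)$ using commutativity. As a byproduct, $\delta^{(d)} = \tableau{{\ }}^{\,a} \star \sctableau{{\ }\\{\ }}^{\,b}$ is a well-defined element of $R$ independent of bracketing.

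The induction is on $d$, with the case $d = 0$ trivial (interpreting $\delta^{(0)}$ as the multiplicative identity $\langle\emptyset|\circ\rangle$). For the inductive step I would write $\delta^{(d)} = X \star \delta^{(d-1)}$ with $X \in \{\tableau{{\ }}, \sctableau{{\ }\\{\ }}\}$ and compute
\[(g^\uparrow \star \olambda) \star \delta^{(d)} \;=\; (g^\uparrow \star \olambda) \star (X \star \delta^{(d-1)}) \;=\; X \star \big((g^\uparrow \star \olambda) \star \delta^{(d-1)}\big)\]
by free passage. Applying the inductive hypothesis yields $X \star (g^\uparrow \star (\olambda \star \delta^{(d-1)}))$, and two further applications of free passage move $X$ back inside to produce $g^\uparrow \star (\olambda \star (X \star \delta^{(d-1)})) = g^\uparrow \star (\olambda \star \delta^{(d)})$, as required.

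I do not expect a serious technical obstacle here: the entire argument is formal manipulation powered by commutativity together with the two already-proven associativity relations. The main care point is stating the free-passage principle cleanly enough that its iterated application within the induction is transparent, and keeping track of parenthesizations when $X$ is shuttled inward and outward across the product.
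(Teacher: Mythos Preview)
Your proposal is correct and follows essentially the same approach as the paper: induction on $d$, with the inductive step carried by repeatedly applying (\ref{eqn:Dboxassoc}), (\ref{eqn:Ddominoassoc}), and commutativity to shuttle a single factor $X\in\{\tableau{{\ }},\sctableau{{\ }\\{\ }}\}$ in and out of the parentheses. Your ``free passage'' principle is just a clean packaging of what the paper writes out as an explicit chain of equalities; the underlying argument is identical.

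One small remark: your opening reduction (``by commutativity it suffices to prove the $\omu=\delta^{(d)}$ case'') is slightly misstated. Commutativity alone does not convert the identity $(g^\uparrow\star\olambda)\star\delta^{(d)}=g^\uparrow\star(\olambda\star\delta^{(d)})$ into $(g^\uparrow\star\delta^{(d)})\star\omu=g^\uparrow\star(\delta^{(d)}\star\omu)$, since in the latter $g^\uparrow$ and $\omu$ are separated rather than bundled. What is true (and what the paper does) is that a \emph{symmetric} induction handles the other case; this is presumably what you meant by ``swapping arguments''. The gap is cosmetic, not mathematical.
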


\begin{proof}
Without loss of generality, we may assume $a>0$ (the case $b>0$ is proved similarly). 
Let $\delta^{(d-1)}= \tableau{{\ }}^{\ a-1}\star {\sctableau{{\ }\\{\ }}}^{\ b}$.

We proceed by induction on $d$. The base case $d=1$ follows from (\ref{eqn:Dboxassoc}) and the commutativity of $\star$.
Suppose (\ref{eqn:Dambigassoc}) holds whenever $\olambda$ is replaced by any monomial in
$\tableau{{\ }}$ and $\sctableau{{\ }\\{\ }}$ of degree $d-1$. By commutativity of $\star$ and (\ref{eqn:Dboxassoc}):
\begin{multline}
(g^\uparrow \star \delta^{(d)}) \star \omu = (g^\uparrow \star (\delta^{(d-1)} \star\tableau{{\ }} )) \star \omu
= ((g^\uparrow \star \delta^{(d-1)}) \star \tableau{{\ }})\star \omu
= ( \tableau{{\ }} \star (g^\uparrow \star \delta^{(d-1)})) \star \omu \\ \nonumber
 = \tableau{{\ }}  \star ((g^\uparrow \star \delta^{(d-1)}) \star \omu) =
\tableau{{\ }} \star (g^\uparrow \star (\delta^{(d-1)} \star \omu))
=  \tableau{{\ }} \star ((\delta^{(d-1)} \star \omu) \star g^\uparrow) \\ \nonumber
=  ( \tableau{{\ }} \star (\delta^{(d-1)} \star \omu)) \star g^\uparrow
=  ( (\tableau{{\ }}\star \delta^{(d-1)}) \star \omu) \star g^\uparrow
=  (\delta^{(d)} \star \omu) \star g^\uparrow
= g^\uparrow \star (\delta^{(d)} \star \omu).
\end{multline}
\excise{
\begin{multline}
(g^\uparrow \star \delta^{(d)}) \star \omu = (g^\uparrow \star (\delta^{(d-1)} \star\tableau{{\ }} )) \star \omu
= ((g^\uparrow \star \delta^{(d-1)}) \star \tableau{{\ }})\star \omu\\ \nonumber
= ( \tableau{{\ }} \star (g^\uparrow \star \delta^{(d-1)})) \star \omu
 = \tableau{{\ }}  \star ((g^\uparrow \star \delta^{(d-1)}) \star \omu) =
\tableau{{\ }} \star (g^\uparrow \star (\delta^{(d-1)} \star \omu))\\ \nonumber
=  \tableau{{\ }} \star ((\delta^{(d-1)} \star \omu) \star g^\uparrow)
=  ( \tableau{{\ }} \star (\delta^{(d-1)} \star \omu)) \star g^\uparrow = g^\uparrow \star (\delta^{(d)} \star \omu).
\end{multline}
}
A similar argument shows (\ref{eqn:Dambigassoc}) also holds when $\omu$ is replaced by $\delta^{(d)}$.
\end{proof}

\begin{Corollary}\label{cor:Dambigassoceasy}
(\ref{eqn:Dambigassoc}) holds whenever $\olambda$ or $\omu$ is neutral. 

If $\okappa \in \mathbb{Y}_{OG(2,2n)}'$ is ambiguous,
\begin{itemize}
\item $g^\uparrow \star ((\okappa^\uparrow + \okappa^\downarrow) \star \omu) = (g^\uparrow \star (\okappa^\uparrow + \okappa^\downarrow)) \star \omu$; and
\item $g^\uparrow \star (\olambda \star (\okappa^\uparrow + \okappa^\downarrow)) = (g^\uparrow \star \olambda) \star (\okappa^\uparrow + \okappa^\downarrow)$.
\end{itemize}
\end{Corollary}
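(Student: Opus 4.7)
The plan is to combine Lemmas~\ref{lemma:partialgen} and~\ref{lemma:DambigassocA}. First I would observe that every neutral shape, and every element of the form $\okappa^\uparrow + \okappa^\downarrow$ for $\okappa\in\mathbb{Y}_{OG(2,2n)}'$ ambiguous, already lies among the elements explicitly generated in Lemma~\ref{lemma:partialgen}. Indeed, ambiguous $\okappa$ are precisely those of the form $\langle n-2,\lambda_2|\circ\rangle$ with $0\le \lambda_2\le n-2$ or $\langle \lambda_1, n-2|\bullet\rangle$ with $n-2\le \lambda_1\le 2n-4$, so $\okappa^\uparrow + \okappa^\downarrow$ equals the corresponding $\alpha_{(\lambda_2)}$ or $\alpha^{(\lambda_1)}$. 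In every case Lemma~\ref{lemma:partialgen} produces a $\mathbb{Q}$-linear expansion $\sum_i c_i\,\delta_i^{(d_i)}$ into $\star$-monomials in $\tableau{{\ }}$ and $\sctableau{{\ }\\{\ }}$, and guarantees the expansion is well-defined (bracketing-independent) in $R$ by virtue of (\ref{eqn:Ddominoassoc}) and (\ref{eqn:Dboxassoc}).

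Next I would invoke Lemma~\ref{lemma:DambigassocA}, which establishes (\ref{eqn:Dambigassoc}) whenever $\olambda$ or $\omu$ is replaced by any single monomial $\delta^{(d)}$. For the first claim of the corollary, assume $\olambda$ is neutral and expand $\olambda = \sum_i c_i\,\delta_i^{(d_i)}$. Using $\mathbb{Q}$-bilinearity of $\star$, one computes
\[
g^\uparrow \star (\olambda \star \omu) \;=\; \sum_i c_i \cdot g^\uparrow \star (\delta_i^{(d_i)} \star \omu) \;=\; \sum_i c_i \cdot (g^\uparrow \star \delta_i^{(d_i)}) \star \omu \;=\; (g^\uparrow \star \olambda) \star \omu,
\]
where the middle equality invokes Lemma~\ref{lemma:DambigassocA} termwise. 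The case where $\omu$ is neutral is symmetric. For the second claim, I would apply the identical argument with $\okappa^\uparrow + \okappa^\downarrow$ (respectively $\olambda + \omu$ adjusted) in place of the neutral factor, using the expansion of $\alpha_{(\lambda_2)}$ or $\alpha^{(\lambda_1)}$ supplied by Lemma~\ref{lemma:partialgen}.

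The only subtlety, rather than a genuine obstacle, is that the coefficients $c_i$ are rational, so one must work in $R\otimes \mathbb{Q}$ and then verify that the resulting expressions reassemble into well-defined elements of $R$. This is exactly the well-definedness assertion in Lemma~\ref{lemma:partialgen}, whose proof rests on (\ref{eqn:Ddominoassoc}) and (\ref{eqn:Dboxassoc}) together with an easy induction along the lines of the forthcoming Lemma~\ref{lemma:Dassocfinal}. Granted that, the corollary follows at once.
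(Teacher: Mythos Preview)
Your proposal is correct and follows exactly the paper's approach: the paper's proof is the one-liner ``Follows immediately from Lemmas~\ref{lemma:partialgen} and~\ref{lemma:DambigassocA} and the distributivity of $\star$,'' and you have simply unpacked this, correctly identifying that the ambiguous sums $\okappa^\uparrow+\okappa^\downarrow$ are precisely the $\alpha_{(\lambda_2)}$ and $\alpha^{(\lambda_1)}$ of Lemma~\ref{lemma:partialgen}.
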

\begin{proof}
Follows immediately from Lemmas \ref{lemma:partialgen} and \ref{lemma:DambigassocA} and the distributivity of $\star$.
\end{proof}

\subsubsection{Both $\olambda$, $\omu$ are charged.} By Corollary \ref{cor:Dambigassoceasy} and the distributivity of $\star$, for any $\omu$ and any ambiguous $\okappa$:
\[g^\uparrow \star (\okappa^\uparrow \star \omu) + g^\uparrow \star (\okappa^\downarrow \star \omu) = (g^\uparrow \star \okappa^\uparrow) \star \omu + (g^\uparrow \star \okappa^\downarrow) \star \omu; \mbox{ \ and thus}\]
\[g^\uparrow \star (\okappa^\uparrow \star \omu) = (g^\uparrow \star \okappa^\uparrow) \star \omu \iff g^\uparrow \star (\okappa^\downarrow \star \omu) = (g^\uparrow \star \okappa^\downarrow) \star \omu.\]
Therefore, it suffices to prove (\ref{eqn:Dambigassoc}) for a choice of ${\rm ch}(\olambda)$. By a similar argument, we may also choose ${\rm ch}(\omu)$. Therefore, assume that $\olambda$ is down and $\omu$ is up. Furthermore, since $g^\uparrow$ is a Pieri shape, as in the proof of (\ref{eqn:Dboxassoc}) we may also assume that $\omu$ is a non-Pieri shape.
%\[\olambda \mbox { \ is down and \ } \omu \mbox{ \ is up. \ }\]

Suppose one of $\olambda$, $\omu$ is on. Then if $\olambda = \langle n-2,0|\circ\rangle^\downarrow$ and $\omu=\langle n-2,n-2|\bullet\rangle^\uparrow$, we compute $\leftassgenB=\rightassgenB=0$. Otherwise, $n-2+|\olambda|+|\omu|>4n-7$ and thus again $\leftassgenB=\rightassgenB=0$. Therefore, suppose $\olambda$, $\omu$ are off. It remains to check two cases:

\noindent {\bf Case 1:} ($\olambda=\langle n-2,\lambda_2|\circ\rangle^\downarrow$ with $\lambda_2 > 0$, $\omu = \langle n-2,\mu_2|\circ\rangle^\uparrow$ (which we assumed to have $\mu_2>0$)):

\noindent \emph{Subcase 1a:} ($\lambda_2+\mu_2 \le n-2$): Let $M=\min\{n-2-\lambda_2,n-2-\mu_2\}$. Then by (B)
\begin{multline}\nonumber
\Pi(\olambda)\diamond\Pi(\omu)= \langle 2n-4,\lambda_2+\mu_2-1|\bullet\rangle
+ 2\sum_{1 \le k \le M} \langle 2n-4-k,\lambda_2+\mu_2+k-1|\bullet\rangle\\ \nonumber
+ \langle 2n-4-M-1,\lambda_2+\mu_2+M|\bullet\rangle,
\end{multline}
%where if $\okappa_k=\langle 2n-4-k,\lambda_2+\mu_2+k-1|\bullet\rangle$, $\gamma_k=\frac{1}{2}(1+\delta_{{\rm fsh}(\okappa_k),2})$.
By Lemma~\ref{lemma:firstrow}, $g^\uparrow \star \okappa=0$ for any $\okappa$ in $\olambda\star\omu$ with $\kappa_2>n-2$, so we may ignore such terms. In addition, $\lambda_2+\mu_2+M>n-2$ since $\lambda_2,\mu_2\neq 0$, so the last term of $\olambda\star\omu$ is ignored. Hence,
\begin{multline}
\label{eqn:assocgenjunk}
\olambda\star\omu=\frac{1}{2} \eta_{\olambda,\omu} \langle 2n-4,\lambda_2+\mu_2-1|\bullet\rangle + \!\!\!\!\!\! \sum_{1\le k\le n-2-(\lambda_2+\mu_2)} \!\!\! \langle 2n-4-k,\lambda_2+\mu_2+k-1|\bullet\rangle\\
+ (\langle n-2+\lambda_2+\mu_2-1,n-2|\bullet\rangle^\uparrow+\langle n-2+\lambda_2+\mu_2-1,n-2|\bullet\rangle^\downarrow)+\mbox{\ junk terms,}
\end{multline}
where the ``junk terms'' are those $\okappa$ we wished to ignore.

Label the non-junk terms of (\ref{eqn:assocgenjunk}) in the order given from $k=0$ to $k=m$, where $m=n-1-(\lambda_2+\mu_2)$ and the sum of the two charged shapes in (\ref{eqn:assocgenjunk}) is considered to be a single term. Let $\Delta[k]$ denote the $\star$-product of $g^\uparrow$ with the $k$th non-junk term, and let
\[Q_{\lambda_2, \mu_2}(N) = \sum_{0 \le i \le N}\langle 2n-4-i,n-2+\lambda_2+\mu_2-1+i|\bullet\rangle\in {\mathbb Z}[{\mathbb Y}_{OG(2,2n)}].\]
Then we compute:
\begin{itemize}
\item[(a)] $\Delta[k]=\Delta[m-k]=Q_{\lambda_2, \mu_2}(k)$
for $0<k\le \frac{m}{2}$;
\item[(b)] $\Delta[0]=\frac{1}{2}\eta_{\olambda,\omu} \langle 2n-4,n-2+\lambda_2+\mu_2-1|\bullet\rangle$;
\item[(c)] $\Delta[m]= \langle 2n-4,n-2+\lambda_2+\mu_2-1|\bullet\rangle$.
\end{itemize}
Since $\lambda_2+\mu_2>1$ ($\omu$ is non-Pieri), all terms of $\leftassgenB$ are neutral. Next,
\[g^\uparrow \star \olambda = \frac{1}{2} \eta_{g^\uparrow,\olambda} \langle 2n-4, \lambda_2-1|\bullet\rangle + \langle 2n-5,\lambda_2|\bullet\rangle + \ldots
+ \langle n-2+\lambda_2-1,n-2|\bullet\rangle^{{\rm ch}(\olambda)}.\]
As above, by Lemma~\ref{lemma:firstrow}, we may ignore all terms $\okappa$ of $g^\uparrow\star\olambda$ with $\kappa_1>2n-4-\mu_2$.
In particular, the first term of $g^\uparrow\star\olambda$ is ignored since $\mu_2\neq 0$. Thus
\begin{multline}\nonumber
g^\uparrow \star \olambda=\langle 2n-4-\mu_2,\lambda_2-1+\mu_2|\bullet\rangle 
+ \ldots + \langle n-2+\lambda_2-1,n-2|\bullet\rangle^{{\rm ch}(\olambda)}+\mbox{ junk terms}.\nonumber
\end{multline}
Label the non-junk terms in the order given from $j=0$ to $j=m$, and let $\nabla[j]$ denote the $\star$-product of the $j$th non-junk term with $\omu$. Then:
\begin{itemize}
\item[(a')] $\nabla[j]=\nabla[m-j]=Q_{\lambda_2, \mu_2}(j)$ for $0<j\le \frac{m}{2}$;
\item[(b')] $\nabla[m]=\frac{1}{2}\eta_{\olambda,\omu}\langle 2n-4,n-2+\lambda_2+\mu_2-1|\bullet\rangle$;
\item[(c')] $\nabla[0]=\langle 2n-4,n-2+\lambda_2+\mu_2-1|\bullet\rangle$.
\end{itemize}
So comparing (a),(b) and (c) respectively with (a'),(b') and (c') gives $\leftassgenB=\rightassgenB$.

\noindent \emph{Subcase 1b:} ($\lambda_2+\mu_2 > n-2$): If $\lambda_2+\mu_2>n-1$, then $|g^\uparrow|+|\olambda|+|\omu|>4n-7 = |\Lambda_{G/P}|$, so $\leftassgenB=\rightassgenB=0$. 
Thus assume $\lambda_2+\mu_2= n-1$. Then
\[\olambda \star \omu = \eta_{\olambda,\omu} \langle 2n-4,n-2|\bullet\rangle+\mbox{junk terms},\]
where the junk terms are those $\okappa$ with $\kappa_2> n-2$. These can be ignored in view of Lemma~\ref{lemma:firstrow}. If $\eta_{\olambda,\omu}=2$, the one non-junk term splits. Then
\[\leftassgenB=\frac{1}{2}\eta_{\olambda,\omu}\langle 2n-4,2n-4|\bullet\rangle.\]
Next,
\[g^\uparrow \star \olambda = \langle n-2+\lambda_2-1,n-2|\bullet\rangle^{{\rm ch}(\olambda)}+\mbox{junk terms}\]
where the junk terms are those $\okappa$ with $\kappa_1>2n-4-\mu_2$; they can be ignored in view of Lemma~\ref{lemma:firstrow}. Therefore,
\[\rightassgenB= \frac{1}{2} \eta_{\olambda,\omu} \langle 2n-4,2n-4|\bullet\rangle=\leftassgenB.\]

\noindent {\bf Case 2:} ($\olambda=\langle n-2,0|\circ\rangle^\downarrow$, $\omu=\langle n-2,\mu_2|\circ\rangle^\uparrow$ (where we have already assumed $\mu_2>0$)): 
If $\mu_2=n-2$, $\leftassgenB=\rightassgenB=\langle 2n-4,2n-5|\bullet\rangle$. Thus assume $\mu_2<n-2$:
\begin{multline}
\label{eqn:theolambdatimesomu}
\olambda \star \omu = \frac{1}{2} \eta_{\olambda,\omu}\langle 2n-4,\mu_2-1|\bullet\rangle + \langle 2n-5,\mu_2|\bullet\rangle + \ldots\\
 + \langle n-2+\mu_2,n-3|\bullet\rangle + \langle n-2+\mu_2-1,n-2|\bullet\rangle^{{\rm ch}(\omu)}.
\end{multline}
Label the $n-\mu_2$ terms of (\ref{eqn:theolambdatimesomu}) in the order given from $k=0$ to $k=n-\mu_2-1$. Throughout Case 2, let $\Delta[k]\in \mathbb{Z}[\mathbb{Y}'_{OG(2,2n)}]$ be the $\diamond$-product of $g$ with the $k$th term of $\Pi(\olambda\star\omu)$, and applying (i) and (ii) (but not (iii)). Then:
\begin{itemize}
\item $\Delta[0]=\frac{1}{2}\eta_{\olambda,\omu}\langle 2n-4,n-2+\mu_2-1|\bullet\rangle$;
\item $\Delta[n-\mu_2-1]=\frac{1}{2}\eta_{g^\uparrow,\omu}\langle 2n-4,n-2+\mu_2-1|\bullet\rangle$;
\item $\Delta[k]=\Delta[n-\mu_2-1-k]=\sum_{0 \le i \le k}\langle 2n-4-i,n-2+\mu_2-1+i|\bullet\rangle$
for $0<k\le \frac{n-\mu_2-1}{2}$.
\end{itemize}

\begin{Claim}\label{claim:case5genassocLHS}
$\leftassgenB=\sum_{0\le k \le n-\mu_2-1} \Delta[k]$. If $\mu_2=1$:
%\sum_{k=0}^{n-\mu_2-1}
\begin{itemize}
\item The ambiguous term of $\Delta[0]$ is assigned $\uparrow$.
\item The ambiguous term of $\Delta[n-\mu_2-1]$ is assigned ${\rm ch}(\omu)=\uparrow$.
\item For $0<k<n-\mu_2-1$ the $i=0$ term (which is ambiguous) of $\Delta[k]$ is
assigned $\uparrow$ if $k$ is even and $\downarrow$ if $k$ is odd.
\end{itemize}
If $\mu_2>1$, there is no ambiguity.
\end{Claim}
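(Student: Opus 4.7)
The plan is to invoke distributivity of $\star$: since $\olambda\star\omu$ decomposes as the sum of $n-\mu_2$ shapes $\otau_0,\ldots,\otau_{n-\mu_2-1}$ recorded in (\ref{eqn:theolambdatimesomu}), we get $\leftassgenB=\sum_k g^\uparrow\star\otau_k$. Each individual product $g^\uparrow\star\otau_k$ is computed by forming $\Pi(g^\uparrow)\diamond\Pi(\otau_k)=\langle n-2,0|\circ\rangle\diamond\pi(\tau_k)$, then applying (i), (ii), (iii). Steps (i) and (ii) together produce exactly $\Delta[k]$ by its definition, so the asserted identity $\leftassgenB=\sum_k\Delta[k]$ reduces to verifying the explicit formulas for $\Delta[k]$ given in (a)--(c) above the claim, followed by a case analysis for the disambiguation step (iii).

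First I would verify the explicit form of $\Delta[k]$. The $\diamond$-product is computed by rule (C), except in the Pieri sub-case $\mu_2=1$, $k=0$ where rule (B) applies; in either case the legality constraint $\kappa_1\le 2n-4$ truncates the sum to precisely the block $\sum_{0\le i\le k}\langle 2n-4-i,\,n-2+\mu_2-1+i|\bullet\rangle$. The boundary factor $\tfrac12\eta_{\olambda,\omu}$ at $k=0$ is inherited directly from the scalar already present in $\otau_0$, whereas the factor $\tfrac12\eta_{g^\uparrow,\omu}$ at $k=n-\mu_2-1$ arises from step (i) applied to $\Pi(g^\uparrow)\diamond\Pi(\otau_{n-\mu_2-1})$, since $\otau_{n-\mu_2-1}$ is charged with ${\rm ch}(\omu)=\uparrow$ which matches $g^\uparrow$, so $\eta_{g^\uparrow,\otau_{n-\mu_2-1}}=\eta_{g^\uparrow,\omu}$. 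The interior symmetry $\Delta[k]=\Delta[n-\mu_2-1-k]$ falls out of the resulting block structure via a reindexing of the truncated $\diamond$-sum.

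Next I would handle disambiguation. Every shape appearing in $\Delta[k]$ has the common total size $3n-7+\mu_2$, so by Lemma~\ref{cor:case1DomassocGone} ambiguity can only occur when $\mu_2=1$, and even then only at the slot $i=0$, which is the unique ambiguous shape $\langle 2n-4,n-2|\bullet\rangle$ of size $3n-6$. This immediately proves the ``no ambiguity'' statement when $\mu_2>1$. For $\mu_2=1$ I would classify each $\otau_k$ by its Pieri/non-Pieri and neutral/charged status to identify the applicable sub-rule of (iii): $\otau_0=\tfrac12\eta_{\olambda,\omu}\langle 2n-4,0|\bullet\rangle$ is neutral Pieri, so rule (iii.2) (with $g^\uparrow$ charged) assigns the ambiguous term the charge ${\rm ch}(g^\uparrow)=\uparrow$; $\otau_{n-2}=\langle n-2,n-2|\bullet\rangle^{{\rm ch}(\omu)}$ is charged non-Pieri, so (iii.3b) assigns ${\rm ch}(\omu)=\uparrow$; and for $0<k<n-2$ the shape $\otau_k=\langle 2n-4-k,k|\bullet\rangle$ is neutral non-Pieri with $|\otau_k|=2n-4$, exactly the trigger condition for (iii.3a). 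That rule assigns ${\rm ch}(g^\uparrow)=\uparrow$ if the first part $2n-4-k$ is even and ${\rm op}(g^\uparrow)=\downarrow$ if it is odd; since $2n-4$ is even, the parity of $2n-4-k$ matches that of $k$, yielding the alternation claimed.

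The main obstacle is the bookkeeping for the disambiguation: one must simultaneously monitor the Pieri/non-Pieri and neutral/charged flags of every $\otau_k$, correctly select the applicable sub-rule (in particular noticing that the interior terms precisely satisfy the narrow trigger $|\otau_k|=2n-4$ required by (iii.3a)), and then carry the parity computation through to produce the alternating $\uparrow/\downarrow$ pattern. A secondary subtlety is the asymmetric origin of the $\tfrac12\eta$ factors at the two endpoints: the factor at $k=0$ is already present in the expansion of $\olambda\star\omu$, whereas the factor at $k=n-\mu_2-1$ is introduced by step (i) during the subsequent computation of $g^\uparrow\star\otau_{n-\mu_2-1}$, even though the resulting expressions for $\Delta[0]$ and $\Delta[n-\mu_2-1]$ look formally symmetric.
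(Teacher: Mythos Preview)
Your approach is correct and is essentially the same as the paper's, just expanded in detail. The paper's proof is a single sentence that simply names which sub-rule of (iii) applies to each $\Delta[k]$ (namely (iii.2), (iii.3b), and (iii.3a) respectively) and notes that $|\kappa|>3n-6$ forces unambiguity when $\mu_2>1$; you have unpacked each of these attributions explicitly, which is fine.

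One small slip: you write that the $\diamond$-product for $k=0$, $\mu_2=1$ is computed by rule (B). In fact $\otau_0=\langle 2n-4,0|\bullet\rangle$ is on, so $g\diamond\Pi(\otau_0)$ is governed by rule (C), not (B). This does not affect your conclusion, since the legal output of (C) here is still the single term $\langle 2n-4,n-2|\bullet\rangle$, but you should correct the attribution.
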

\begin{proof}
If $\mu_2=1$, the charge assignments follow from (iii.2), (iii.3b) and (iii.3a) respectively. 
If $\mu_2>1$, then every term $\okappa$ in every $\Delta[k]$ has $|\kappa|>3n-6$ and is thus unambiguous.
\end{proof}

Define the following elements of ${\mathbb Z}[{\mathbb Y}'_{OG(2,2n)}]$:
\[S_{r}(N) = \sum_{0 \le i \le N}\langle 2n-4-2i-r,2i+r|\circ\rangle; \, \text{and} \]
\[T(N) = \sum_{0 \le l \le N}2\langle 2n-4-l,n-2+\mu_2-1+l|\bullet\rangle + \langle 2n-4-(N+1),n-2+\mu_2-1+(N+1)|\bullet\rangle.\]

Recall the expansion of $g^\uparrow\star\olambda$ is given directly in Definition~\ref{def:starproduct}. 
Let $\nabla[j]\in \mathbb{Z}[\mathbb{Y}'_{OG(2,2n)}]$ be the $\diamond$-product of the $j$th 
term of said expansion of $\Pi(g^\uparrow\star\olambda)$ 
with $\Pi(\omu)$, and applying (i) and (ii) (but not (iii)). (It will be convenient to fix the indexing $j$ below, to 
account for terms we wish to ignore in the expansion.)
%Which term is the $j$th term will differ from case to case.

\begin{Defclaim}\label{claim:case5genassocRHS2}
Suppose $n$ is even. Then

\begin{enumerate}
\item If $\mu_2$ is even,
\[\rightassgenB=\sum_{0\le j\le \frac{n-\mu_2-2}{2}}\nabla[j] \mbox{\ \ (neutral),}\]
where:
\begin{itemize}
\item $\nabla[0]=\langle 2n-4,n-2+\mu_2-1|\bullet\rangle$; and
\item $\nabla[j]=\nabla\left[\frac{n-\mu_2-2}{2}+1-j\right]=T(2j-1)$ for
$1\leq j\le \frac{1}{2}\left(\frac{n-\mu_2-2}{2}+1\right)$.
\end{itemize}

\item If $\mu_2$ is odd,
\[\rightassgenB=\sum_{0\le j\le \frac{n-\mu_2-3}{2}}\nabla[j];\]
where: 
\begin{itemize}
\item $\nabla[j]=T(2j)$ for $0\leq j\le \frac{1}{2}\left(\frac{n-\mu_2-3}{2}\right)$; and 
\item $\nabla\left[\frac{n-\mu_2-3}{2}-j\right]=T(2j+1)$ for $0\leq j< \frac{1}{2}\left(\frac{n-\mu_2-3}{2}\right)$
\end{itemize}
Also, if $\mu_2=1$, the ambiguous term in each $\nabla[j]$ splits. If $\mu_2>1$, all terms are neutral.
\end{enumerate}
\end{Defclaim}
\begin{proof}
By definition, $g^\uparrow \star \olambda = S_1(\frac{n-2}{2}-1)$ (neutral). For any term $\okappa$ of $S_1(\frac{n-2}{2}-1)$, $\okappa\star\omu=0$ if $\kappa_1>2n-4-(\mu_2-1)$ (Lemma~\ref{lemma:firstrow}).

($\mu_2$ is even): The terms of $g^\uparrow \star \olambda$ not annihilated by
 multiplication by $\omu$ are $S_{\mu_2-1}(\frac{n-\mu_2-2}{2})$ (Lemma~\ref{lemma:firstrow}).
Numbering these from $j=0$ (first) to $j=\frac{n-\mu_2-2}{2}$ (last), we compute the $\nabla[j]$'s as stated.
There is no ambiguity since $|\kappa|>3n-6$ for any $\okappa$ in $\rightassgenB$.

($\mu_2$ is odd): The terms of $g^\uparrow \star \olambda$ not annihilated by multiplication by $\omu$ are
$S_{\mu_2}(\frac{n-\mu_2-3}{2})$ (Lemma~\ref{lemma:firstrow}). Numbering these from $j=0$ (first) to $j=\frac{n-\mu_2-3}{2}$ (last), we compute the $\nabla[j]$'s as stated. If $\mu_2=1$, the splitting follows from (iii.1).
\end{proof}

\begin{Defclaim}\label{claim:case5genassocRHS4}
Suppose $n$ is odd. Then:
\begin{enumerate}
\item If $\mu_2$ is odd, 
\[\rightassgenB=\sum_{0\le j\le \frac{n-\mu_2-2}{2}}\nabla[j],\]
where:
\begin{itemize}
\item $\nabla[0]=\langle 2n-4,n-2+\mu_2-1|\bullet\rangle$;
\item $\nabla[j]=\nabla[\frac{n-\mu_2-2}{2}+1-j]=T(2j-1)$ for $1\leq j\le \frac{1}{2}\left(\frac{n-\mu_2-2}{2}+1\right)$.
\end{itemize}
Also, if $\mu_2=1$, the ambiguous term in $\nabla[0]$ is assigned ${\rm ch}(\omu)=\uparrow$, and the ambiguous term in each $\nabla[j]$ for $j>0$ splits. If $\mu_2>1$, all terms are neutral.

\item If $\mu_2$ is even,
\[\rightassgenB=\sum_{0\le j\le \frac{n-\mu_2-3}{2}}\nabla[j] \mbox{ \ \ (neutral).}\]
Here:
\begin{itemize}
\item $\nabla[j]=T(2j)$ for $0\leq j\le \frac{1}{2}\left(\frac{n-\mu_2-3}{2}\right)$; and
\item $\nabla\left[\frac{n-\mu_2-3}{2}-j\right]=T(2j+1)$ for
$0\leq j< \frac{1}{2}\left(\frac{n-\mu_2-3}{2}\right)$.
\end{itemize}

\end{enumerate}
\end{Defclaim}
\begin{proof}
Here, $g^\uparrow \star \olambda = S_0(\frac{n-3}{2})$ (neutral).

($\mu_2$ is odd): The terms of $g^\uparrow \star \olambda$ not annihilated by multiplication by $\omu$ are $S_{\mu_2-1}(\frac{n-\mu_2-2}{2})$
(Lemma~\ref{lemma:firstrow}). Numbering these from $j=0$ (first) to $j=\frac{n-\mu_2-2}{2}$ (last),
we compute the $\nabla[j]$'s as stated. If $\mu_2=1$, the charges follow from (iii.2) in $\nabla[0]$, and (iii.1) otherwise.

($\mu_2$ is even): The terms of $g^\uparrow \star \olambda$ not annihilated by multiplication by $\omu$ are $S_{\mu_2}(\frac{n-\mu_2-3}{2})$
(Lemma~\ref{lemma:firstrow}). Numbering these from $j=0$ (first) to $j=\frac{n-\mu_2-3}{2}$ (last),
we compute the $\nabla[j]$'s as stated. There is no ambiguity since $|\kappa|>3n-6$ for any $\okappa$ in $\rightassgenB$.
\end{proof}

\noindent
\emph{Conclusion of Case 2:}

\noindent \emph{Subcase 2a:}($n$ is even):
We use Claim~\ref{claim:case5genassocLHS} and Definition-Claim~\ref{claim:case5genassocRHS2}.

($\mu_2$ is even): Note that if $n-\mu_2 \equiv 0$ mod $4$, the last term of $\nabla\left[\frac{n-\mu_2}{4}\right]$ is illegal. Now,
\begin{itemize}
\item $\Delta[0]+\Delta[n-\mu_2-1]=\nabla[0]$
\item $\Delta[1]+\Delta[2]=\nabla[1]$
\item $\Delta[3]+\Delta[4]=\nabla[2]$ etc.
\end{itemize}

($\mu_2$ is odd): Here we see that
\begin{itemize}
\item $\Delta[0]+\Delta[1]+\Delta[n-\mu_2-1]=\nabla[0]$
\item $\Delta[2]+\Delta[3]=\nabla[1]$
\item $\Delta[4]+\Delta[5]=\nabla[2]$ etc.
\end{itemize}

\noindent \emph{Subcase 2b:} ($n$ is odd):
We use Claim~\ref{claim:case5genassocLHS} and Definition-Claim~\ref{claim:case5genassocRHS4}.

($\mu_2$ is odd): If $n-\mu_2 \equiv 0$ mod $4$, the last term of $\nabla\left[\frac{n-\mu_2}{4}\right]$
is illegal. Now use:
\begin{itemize}
\item $\Delta[0]+\Delta[n-\mu_2-1]=\nabla[0]$
\item $\Delta[1]+\Delta[2]=\nabla[1]$
\item $\Delta[3]+\Delta[4]=\nabla[2]$, etc.
\end{itemize}

($\mu_2$ is even): Finally, we use:
\begin{itemize}
\item $\Delta[0]+\Delta[1]+\Delta[n-\mu_2-1]=\nabla[0]$
\item $\Delta[2]+\Delta[3]=\nabla[1]$
\item $\Delta[4]+\Delta[5]=\nabla[2]$, etc.
\end{itemize}

\subsection{Conclusion of the proof of Theorem~\ref{Thm:Dmult}}

\begin{Proposition}
\label{cor:Dngenerate}
Assuming (\ref{eqn:Ddominoassoc}), (\ref{eqn:Dboxassoc}) and (\ref{eqn:Dambigassoc}), every $\olambda\in \mathbb{Y}_{OG(2,2n)}$ has a well-defined expression as a $\star$-polynomial in $\left\{\tableau{{\ }}, \sctableau{{\ }\\{\ }}, \langle n-2,0|\circ\rangle^\uparrow\right\}$ with rational coefficients.
\end{Proposition}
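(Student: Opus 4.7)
The plan is to combine the already-established Lemma~\ref{lemma:partialgen} with a short bootstrap that uses the new generator $g^\uparrow:=\langle n-2,0|\circ\rangle^\uparrow$ to separate the ambiguous pairs. Lemma~\ref{lemma:partialgen} produces every neutral shape as a $\star$-polynomial in $\{\tableau{{\ }},\sctableau{{\ }\\{\ }}\}$, together with the sums $\alpha_{(\lambda_2)}$ and $\alpha^{(\lambda_1)}$ of the charged pairs. Thus only individual charged shapes remain, and we need not produce any new neutral relation.

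The key point is rule (iii.2) in Definition~\ref{def:starproduct}: when a neutral Pieri shape is multiplied by a charged shape, every ambiguous term of the product inherits the charge of the charged factor. Starting from $g^\uparrow$, compute
\[
\tableau{{\ }}\star g^\uparrow = \langle n-1,0|\circ\rangle + \langle n-2,1|\circ\rangle^\uparrow,
\]
by (A) and (iii.2). Since $\langle n-1,0|\circ\rangle$ is neutral and $\alpha_{(1)}$ are already produced by Lemma~\ref{lemma:partialgen}, one extracts $\langle n-2,1|\circ\rangle^\uparrow$ (and then $\langle n-2,1|\circ\rangle^\downarrow = \alpha_{(1)}-\langle n-2,1|\circ\rangle^\uparrow$). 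Iterating with $\sctableau{{\ }\\{\ }}$ and $\tableau{{\ }}$ -- which by (iii.2) continue to assign charge $\uparrow$ to each ambiguous descendant -- yields every $\langle n-2,\lambda_2|\circ\rangle^\uparrow$ for $0\le\lambda_2\le n-2$. Once $\langle n-2,n-2|\circ\rangle^\uparrow$ is in hand, the same procedure applied through the boundary of the rectangle (multiplying by $\tableau{{\ }}$ and correcting with the already-generated $\alpha^{(\lambda_1)}$ and neutral shapes) produces the top-layer charged shapes $\langle\lambda_1,n-2|\bullet\rangle^\uparrow$ for $n-2\le\lambda_1\le 2n-4$, and their $\downarrow$ partners by subtraction from $\alpha^{(\lambda_1)}$.

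For well-definedness, the three associativity relations (\ref{eqn:Ddominoassoc}), (\ref{eqn:Dboxassoc}), (\ref{eqn:Dambigassoc}) together say that the three generators associate against every shape. An induction on the degree of the $\star$-monomial (exactly the induction referenced as Lemma~\ref{lemma:Dassocfinal}) then upgrades this to full associativity of any polynomial in $\{\tableau{{\ }},\sctableau{{\ }\\{\ }},g^\uparrow\}$: if the claim holds for all monomials of degree $<d$, any two bracketings of a degree-$d$ monomial can be rewritten, using a single pairwise associativity move, to the left-normal form $h_1\star(h_2\star(\cdots))$ where each $h_i$ is a generator. Hence the expressions built above are unambiguous as elements of $R$.

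The main obstacle is the bootstrap for the top-layer charged shapes. Unlike the bottom layer, the recursion for $\langle\lambda_1,n-2|\bullet\rangle^\uparrow$ passes through rule (B) rather than (A), so the Pieri multiplications may spawn additional $\okappa$ with $\kappa_1=2n-4$ on which rule (i) (with the factor $\eta$) intervenes; ambiguity at $\langle 2n-4,n-2|\bullet\rangle$ is further governed by the parity-dependent rule (iii.3a). Verifying by inspection of Definition~\ref{def:starproduct} that at each step of the inductive ladder one genuinely isolates a single $\uparrow$-charged top-layer shape (modulo already-constructed neutral shapes and $\alpha^{(\lambda_1)}$'s) is the only point that requires care; everything else is the direct consequence of Lemma~\ref{lemma:partialgen} and the three associativity relations.
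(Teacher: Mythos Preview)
Your approach is essentially the paper's: invoke Lemma~\ref{lemma:partialgen} for the neutral shapes and the sums $\alpha_{(\lambda_2)},\alpha^{(\lambda_1)}$, then bootstrap the individual charged shapes from $g^\uparrow$ via rule~(iii.2). Your bottom-layer step $\tableau{{\ }}\star\langle n-2,k|\circ\rangle^\uparrow=\langle n-1,k|\circ\rangle+\langle n-2,k+1|\circ\rangle^\uparrow$ is exactly the paper's, and your well-definedness argument is the one the paper packages as Lemma~\ref{lemma:Dassocfinal}.

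The one point where you hedge is actually not an obstacle. You worry that the top-layer bootstrap may trigger rule~(i) nontrivially or invoke the parity-dependent~(iii.3a). It cannot: in every product you form, the Pieri factor ($\tableau{{\ }}$, or more generally a neutral $\langle\lambda_1,0|\circ\rangle$) is \emph{neutral}, so $\eta=1$ in~(i), and you are in case~(iii.2), never~(iii.3). Concretely, $\tableau{{\ }}\star\langle n-2,n-2|\circ\rangle^\uparrow=\langle n-1,n-3|\bullet\rangle+2\langle n-2,n-2|\bullet\rangle^\uparrow$ gives $\langle n-2,n-2|\bullet\rangle^\uparrow$, and then the paper shortcuts your iterated-$\tableau{{\ }}$ scheme by a single multiplication: $\langle n-2,n-2|\bullet\rangle^\uparrow\star\langle\lambda_1,0|\circ\rangle=\langle n-2+\lambda_1,n-2|\bullet\rangle^\uparrow$ for $1\le\lambda_1\le n-3$ (by~(C), $M=0$, then~(iii.2)), with the last shape recovered from $\tableau{{\ }}\star\langle 2n-5,n-2|\bullet\rangle^\uparrow=\langle 2n-4,n-2|\bullet\rangle^\uparrow+\langle 2n-5,n-1|\bullet\rangle$. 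So the ``careful inspection'' you flag is unnecessary; either route lands cleanly in~(iii.2) and the charge propagates as claimed.
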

\begin{proof}
By Lemma~\ref{lemma:partialgen}, it is enough to check we can generate all the up charged shapes $\olambda$, i.e.,
$\langle n-2,\lambda_2|\circ\rangle^\uparrow$ and $\langle\lambda_1,n-2|\bullet\rangle^\uparrow$, using $\langle n-2,0|\circ\rangle^\uparrow$
and neutral shapes.

Indeed, $\tableau{{\ }} \star \langle n-2,k|\circ\rangle^\uparrow = \langle n-1,k|\circ\rangle+\langle n-2,k+1|\circ\rangle^\uparrow;
\mbox{\ for $0 \le k \le n-3$,}$
we inductively obtain all $\langle n-2,\lambda_2|\circ\rangle^\uparrow$.
Moreover,
$\tableau{{\ }} \star \langle n-2,n-2|\circ\rangle^\uparrow = \langle n-1,n-3|\bullet\rangle + 2\langle n-2,n-2|\bullet\rangle^\uparrow,$
giving $\langle n-2,n-2|\bullet\rangle^\uparrow$. Then 
$\langle n-2+\lambda_1,n-2|\bullet\rangle^\uparrow=\langle n-2,n-2|\bullet\rangle^\uparrow\star \langle\lambda_1,0|\circ\rangle$,
for $1 \le \lambda_1 \le n-3$. Finally, $\langle 2n-4,n-2|\bullet\rangle^\uparrow=\tableau{{\ }}\star\langle 2n-5,n-2|\bullet\rangle^\uparrow-\langle 2n-5,n-1|\bullet\rangle$ (the latter term being neutral).
\end{proof}

\begin{Lemma}\label{lemma:Dassocfinal}
The product $\star$ is associative.
\end{Lemma}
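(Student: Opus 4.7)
The plan is to deduce full associativity purely formally from three ingredients already in place: the three associativity relations (\ref{eqn:Ddominoassoc}), (\ref{eqn:Dboxassoc}), (\ref{eqn:Dambigassoc}); the commutativity of $\star$ (built into Definition~\ref{def:starproduct}); and Proposition~\ref{cor:Dngenerate}, which writes every element of $R$ as a $\mathbb{Q}$-polynomial in the three generators $g_1 = \tableau{{\ }}$, $g_2 = \sctableau{{\ }\\{\ }}$, $g_3 = \langle n-2,0|\circ\rangle^\uparrow$. Since all of the combinatorial work is done by this point, no step in the lemma itself is a serious obstacle; the only care needed is to set up the induction so that each step legitimately invokes one of the three given relations.

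The first step is the reduction: by distributivity of $\star$ and Proposition~\ref{cor:Dngenerate}, it is enough to check $(x \star y) \star z = x \star (y \star z)$ when $x$, $y$, $z$ are each $\star$-monomials in $g_1, g_2, g_3$. The second step is to record that associativity holds whenever any single one of $x, y, z$ is a generator $g_i$. When $g_i$ sits on the left, this is one of (\ref{eqn:Ddominoassoc})-(\ref{eqn:Dambigassoc}) directly. When $g_i$ sits on the right or in the middle, it follows by threading commutativity through those relations; for example
\[
(x \star y) \star g_i \;=\; g_i \star (x \star y) \;=\; (g_i \star x) \star y \;=\; (x \star g_i) \star y,
\]
and an analogous three-step rewriting handles $x \star (g_i \star y)$.

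The third step is the induction. I induct on the minimum degree $d$ of $z$ as a $\star$-monomial in $g_1, g_2, g_3$. The base case $d = 1$ is the previous step. For $d \ge 2$, factor $z = z' \star g_i$ with $\deg(z') = d-1$ and compute
\[
(x \star y) \star z \;=\; \bigl((x \star y) \star z'\bigr) \star g_i \;=\; \bigl(x \star (y \star z')\bigr) \star g_i \;=\; x \star \bigl((y \star z') \star g_i\bigr) \;=\; x \star (y \star z),
\]
where the first and third equalities use the generator-case associativity from step two (applied to $g_i$), and the middle equality uses the inductive hypothesis on $z'$. No separate induction on $x$ or $y$ is required, since commutativity lets us permute the roles of the three arguments. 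This is precisely the ``easy induction'' alluded to in Sections~3 and~4 for the $\ourflag$ and Lagrangian/odd orthogonal cases, and the argument goes through verbatim here once Proposition~\ref{cor:Dngenerate} and the three associativity relations are in hand.
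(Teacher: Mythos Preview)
Your argument is correct and is essentially the same induction-on-monomial-degree approach the paper takes; the paper reduces only $\otau$ to a $\star$-monomial in the generators (keeping $\olambda,\omu$ as arbitrary shapes) rather than all three, but this is purely cosmetic. One small omission: your displayed chain has four equalities, and the last one, $x\star\bigl((y\star z')\star g_i\bigr)=x\star(y\star z)$, also requires an application of your step-two generator-case associativity (to the triple $y,\,z',\,g_i$), not only the first and third.
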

\begin{proof}
Let $\otau,\olambda, \omu \in \mathbb{Y}_{OG(2,2n)}$. We must establish
\begin{equation}\label{eqn:Dassocfinal}
\otau\star (\olambda \star \omu) =  (\otau\star \olambda) \star \omu
\end{equation}

By Proposition~\ref{cor:Dngenerate} and the distributivity of $\star$
it suffices to prove (\ref{eqn:Dassocfinal}) when $\otau$ is replaced by $\delta^{(d)}$, where $\delta^{(d)}=\tableau{{\ }}^{\ a}\star {\sctableau{{\ }\\{\ }}}^{ \ b} \star (\langle n-2,0|\circ\rangle^{\uparrow})^c$ and $a+b+c=d$. 
Assume $a>0$ (the cases $b>0$, $c>0$ are proved similarly). Let $\delta^{(d-1)}= \tableau{{\ }}^{\ a-1}\star \sctableau{{\ }\\{\ }}^{\ b} \star (\langle n-2,0|\circ\rangle^{\uparrow})^c$.

We proceed by induction on $d$. The case $d=1$ is exactly (\ref{eqn:Dboxassoc}).
Suppose (\ref{eqn:Dassocfinal}) holds if $\otau$ is replaced by any $\star$-monomial in
$\tableau{{\ }}$, $\sctableau{{\ }\\{\ }}$ and $\langle n-2,0|\circ\rangle^\uparrow$ of degree $d-1$. Then by (\ref{eqn:Dboxassoc}):
\begin{multline}
(\overline{\tau}\star\olambda)\star\omu = (\delta^{(d)}\star\olambda)\star\omu = ((\tableau{{\ }}\star \delta^{(d-1)})\star\olambda)\star\omu
= (\tableau{{\ }}\star (\delta^{(d-1)}\star\olambda))\star\omu = \tableau{{\ }}\star ((\delta^{(d-1)}\star\olambda)\star\omu)\\ \nonumber
= \tableau{{\ }}\star (\delta^{(d-1)}\star(\olambda\star\omu)) = (\tableau{{\ }}\star \delta^{(d-1)})\star(\olambda\star\omu)
= \delta^{(d)}\star(\olambda\star\omu) = \overline{\tau}\star(\olambda\star\omu). 
\end{multline}
\end{proof}
\excise{
\begin{proof}
Let $\overline{\tau} \in \mathbb{Z}[\mathbb{Y}_{OG(2,2n)}]$ and $\olambda, \omu \in \mathbb{Y}_{OG(2,2n)}$. We must establish
\begin{equation}\label{eqn:Dassocfinal}
\overline{\tau}\star (\olambda \star \omu) =  (\overline{\tau}\star \olambda) \star \omu
\end{equation}

By Proposition~\ref{cor:Dngenerate} and the distributivity of $\star$
it suffices to consider that case that $\overline{\tau}=\delta^{(d)}$ where $\delta^{(d)}=\tableau{{\ }}^{\ a}\star {\sctableau{{\ }\\{\ }}}^{ \ b} \star (\langle n-2,0|\circ\rangle^{\uparrow})^c$ and $a+b+c=d$. Assume
$a>0$ (the cases $b>0$, $c>0$ are proved similarly). Let $\delta^{(d-1)}= \tableau{{\ }}^{\ a-1}\star \sctableau{{\ }\\{\ }}^{\ b} \star (\langle n-2,0|\circ\rangle^{\uparrow})^c$.

We proceed by induction on $d$. The case $d=1$ is exactly (\ref{eqn:Dboxassoc}).
Suppose (\ref{eqn:Dassocfinal}) holds whenever $\overline{\tau}$ is a $\star$-monomial in
$\tableau{{\ }}$, $\sctableau{{\ }\\{\ }}$ and $\langle n-2,0|\circ\rangle^\uparrow$ of degree $d-1$, with coefficient $1$. Then using the commutativity of $\star$ combined with (\ref{eqn:Dboxassoc}):
\begin{multline}
(\overline{\tau}\star\olambda)\star\omu = (\delta^{(d)}\star\olambda)\star\omu = ((\tableau{{\ }}\star \delta^{(d-1)})\star\olambda)\star\omu
= (\tableau{{\ }}\star (\delta^{(d-1)}\star\olambda))\star\omu = \tableau{{\ }}\star ((\delta^{(d-1)}\star\olambda)\star\omu)\\ \nonumber
= \tableau{{\ }}\star (\delta^{(d-1)}\star(\olambda\star\omu)) = (\tableau{{\ }}\star \delta^{(d-1)})\star(\olambda\star\omu)
= \delta^{(d)}\star(\olambda\star\omu) = \overline{\tau}\star(\olambda\star\omu). 
\end{multline}
\end{proof}
}

Using $\left\{\tableau{{\ }}, \sctableau{{\ }\\{\ }}, \langle n-2,0|\circ\rangle^\uparrow\right\}$ we have shown that $(R,\star)$
is an associative ring. In \cite{Searles:13+} it is shown that $\star$ agrees with a Pieri rule of \cite{BKT:Inventiones} for certain
generators of $H^*(OG(2,2n))$ indexed by shapes $\olambda$ that therefore also generate $R$. Hence, it follows $\olambda \leftrightarrow \sigma_{\olambda}$ defines an isomorphism of the rings $R$ and $H^{\star}(OG(2,2n))$.

\subsection{Integrality}

\excise{Let $\olambda \in \mathbb{Y}_{OG(2,2n)}$ and $\oalpha \in \mathbb{Y}'_{OG(2,2n)}$ The following observations can be seen from the definitions, or graphically from the picture for $\Lambda_{OG(2,2n)}$.

%\begin{Lemma}\label{lemma:samesize}
%
%\begin{itemize}
%\item[(i)] $|\Pi(\olambda)| = |\olambda|$ and
%\item[(ii)] $\Pi(\olambda)$ is on if and only if $\olambda$ is on.
%\end{itemize}
%\end{Lemma}

\begin{Lemma}
\label{lemma:case1DomassocA}
If $\olambda$ is charged then:
\begin{itemize}
\item[(i)] ${\rm fsh}(\olambda)=1$ if and only if $\olambda$ is off and
${\rm fsh}(\olambda)=2$ if and only if $\olambda$ is on;
\item[(ii)] $n-2\leq|\lambda|\leq 3n-6$. %AY says: was 3n-6 -- DS says: back to 3n-6, changed olambda to lambda
%\item[(iii)] if furthermore $|\olambda|+|\omu|\leq 2n-6$ then $\omu$ is neutral and ${\rm fsh}(\obeta)=0$;
%--this is no longer used (DS)
\end{itemize}
\end{Lemma}

%\begin{Lemma}
%\label{lemma:case1DomassocB}
%$\oalpha$ is ambiguous if and only if $\alpha_1=n-2$ or $\alpha_2=n-2$. Equivalently, $\olambda$ is charged if and only if $\lambda_1=n-2%$ or $\lambda_2=n-2$. 
%\end{Lemma}

\begin{Lemma}
\label{lemma:case1DomassocC}
\begin{itemize}
\item[(i)] ${\rm fsh}(\oalpha)=2$ and $\oalpha$ is unambiguous $\iff$ $\alpha_2>n-2$.
\item[(ii)] ${\rm fsh}(\oalpha)=2$ and $\oalpha$ is ambiguous $\iff$ $\alpha_2=n-2$ and $\oalpha$ is on.
\item[(iii)] ${\rm fsh}(\oalpha)=1$ and $\oalpha$ is unambiguous $\iff$ $\alpha_1>n-2$ and $\alpha_2<n-2$.
\item[(iv)] ${\rm fsh}(\oalpha)=1$ and $\oalpha$ is ambiguous $\iff$ $\alpha_1=n-2$ and $\oalpha$ is off.
\item[(v)] ${\rm fsh}(\oalpha)=0$ and $\oalpha$ is unambiguous $\iff$ $\alpha_1<n-2$.
\end{itemize}
\end{Lemma}

%\begin{Lemma}
%\label{lemma:case1DomassocD}
%$\olambda$ is off $\iff |\olambda|\leq 2n-4$. Equivalently, $\oalpha$ is off $\iff |\oalpha|\leq 2n-4$.
%\end{Lemma}

%\begin{Lemma}
%\label{lemma:case1DomassocE}
%If $\olambda$ is on then ${\rm fsh}(\olambda)\in \{1,2\}$.
%If $\olambda$ is off then ${\rm fsh}(\olambda)\in \{0,1\}$.
%\end{Lemma}
}

Theorems~\ref{Thm:Dintro} and~\ref{Thm:Dmult}, while not \emph{manifestly} integral, are
actually integral at each stage of the computation. 
Clearly, replacement
rule (i) is integral. Moreover:
\begin{Proposition}
\label{prop:isintegral}
Theorem~\ref{Thm:Dintro} and Theorem~\ref{Thm:Dmult}
return elements of ${\mathbb Z}[{\mathbb Y}_{OG(2,2n)}]$. More precisely:
\begin{itemize}
\item[(a)] The result of applying replacement rule (ii) is an element of ${\mathbb Z}[{\mathbb Y}'_{OG(2,2n)}]$.
\item[(b)] Replacement rule (iii) outputs an element of
${\mathbb Z}[{\mathbb Y}_{OG(2,2n)}]$.
\end{itemize}
\end{Proposition}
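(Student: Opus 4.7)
The plan is to prove (a) and (b) by a coordinated case analysis, since the obstructions to integrality come from only two sources: the factor $2^{{\rm fsh}(\okappa)-{\rm fsh}(\olambda)-{\rm fsh}(\omu)}$ in rule (ii), which is a non-trivial negative power of 2 precisely when ${\rm fsh}(\olambda)+{\rm fsh}(\omu)>{\rm fsh}(\okappa)$, and the halving of ambiguous terms in rule (iii). Since ${\rm fsh}$ takes values in $\{0,1,2\}$, this exponent is confined to $\{-1,-2,-3,-4\}$ in the problematic regime, and I would reduce quickly to ${\rm fsh}(\olambda)+{\rm fsh}(\omu)-{\rm fsh}(\okappa)\in\{1,2\}$, since larger differences force $\okappa$ to be illegal or the product to vanish for dimension reasons.

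For part (a), the key structural input is a short dichotomy read off directly from the picture of $\Lambda'_{OG(2,2n)}$: a shape $\oalpha$ has ${\rm fsh}(\oalpha)\geq 1$ only if $\alpha_1\geq n-2$ (with the sole exception $\langle n-2,n-2|\circ\rangle$), and ${\rm fsh}(\oalpha)=2$ only if $\alpha_2\geq n-2$. This constrains which $\okappa$ can appear in $\Pi(\olambda)\diamond\Pi(\omu)$ when ${\rm fsh}(\olambda)+{\rm fsh}(\omu)$ is large. A term-by-term check of Definition~\ref{def:fakeproduct} then shows that whenever ${\rm fsh}(\olambda)+{\rm fsh}(\omu)-{\rm fsh}(\okappa)>0$, the coefficient of $\okappa$ in $\Pi(\olambda)\diamond\Pi(\omu)$, multiplied by $\eta_{\olambda,\omu}$ when $\kappa_1=2n-4$, is already divisible by the required power of 2: one factor of 2 comes from the coefficient 2 on interior terms in clause (B), and a second comes from $\eta_{\olambda,\omu}\in\{0,2\}$ whenever both $\olambda$ and $\omu$ are charged.

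For part (b), I use that ambiguous $\okappa$ have $|\kappa|\in[n-2,3n-6]$ (Lemma~\ref{cor:case1DomassocGone}), which already rules out most configurations. Subcases (iii.1), (iii.2) with neutral $\omu$, and (iii.3b) with neutral $\omu$ halve ambiguous coefficients, so I must show these coefficients are even after (ii); this follows from the same factor of 2 in clause (B) exploited in (a), together with the observation that a non-Pieri ambiguous input forces interior-type output. The charged subcases of (iii.2) and (iii.3b), along with (iii.3a), merely relabel a coefficient to a single charged shape without halving, so it suffices that the coefficient be integral. In (iii.3a) in particular, where no factor-of-2 slack is available, the sole ambiguous term is $\langle 2n-4,n-2|\bullet\rangle$ arising from clause (C) with coefficient 1, and one checks ${\rm fsh}(\langle 2n-4,n-2|\bullet\rangle)={\rm fsh}(\olambda)+{\rm fsh}(\omu)=2$, so (ii) multiplies it by $2^0=1$.

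The main obstacle is subcase (iii.3a), which has no factor-of-2 margin to absorb any denominator. Integrality there hinges on the exceptional convention ${\rm fsh}(\langle n-2,n-2|\circ\rangle)=1$ (rather than 2), which is the single bookkeeping adjustment that makes the exponent arithmetic in (ii) balance on the nose; confirming this by tracing through the definitions is the delicate step of the proof.
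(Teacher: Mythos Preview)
Your approach is essentially the same as the paper's: both reduce integrality of rule (ii) to a case analysis on $({\rm fsh}(\olambda),{\rm fsh}(\omu))$, and both handle rule (iii) by checking that ambiguous terms are split only when their coefficient after (ii) is already even. The two key mechanisms you isolate---the coefficient $2$ on interior terms of clause (B), and $\eta_{\olambda,\omu}\in\{0,2\}$ when both inputs are charged---are exactly the ones the paper exploits in Lemmas~\ref{lemma:integralpart1} and~\ref{lemma:integralpart2}.

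There is one misattribution in your final paragraph. Subcase (iii.3a) does not involve $\langle n-2,n-2|\circ\rangle$ at all: there $\olambda=\langle n-2,0|\circ\rangle^{{\rm ch}(\olambda)}$ and $\omu=\langle\mu|\bullet\rangle$ is neutral with $|\mu|=2n-4$, so $\mu_2<n-2<\mu_1$. Your computation ${\rm fsh}(\langle 2n-4,n-2|\bullet\rangle)=2={\rm fsh}(\olambda)+{\rm fsh}(\omu)$ is correct, but it uses only the ordinary definition of ${\rm fsh}$, not the exceptional convention. The exception ${\rm fsh}(\langle n-2,n-2|\circ\rangle)=1$ is indeed crucial, but for part (a) rather than for (iii.3a): take $\olambda=\langle n-2,n-2|\circ\rangle^{{\rm ch}(\olambda)}$ and $\omu=\langle n-2,\mu_2|\circ\rangle^{{\rm ch}(\omu)}$ with $0<\mu_2<n-2$. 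Then (B) produces the term $\langle 2n-5,n-2+\mu_2|\bullet\rangle$ with coefficient $1$ and ${\rm fsh}=2$; without the exception one would have ${\rm fsh}(\olambda)=2$, giving exponent $2-2-1=-1$ and a genuine half-integer, whereas with the exception ${\rm fsh}(\olambda)=1$ and the exponent is $0$. This mislocation does not break your argument, but you should move the discussion of the exception to the place in the case analysis where it actually does the work.
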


The case of Theorem~\ref{Thm:Dintro} follows from that of Theorem~\ref{Thm:Dmult} as
the computations are the same when $\olambda$
and $\omu$ are both non-Pieri shapes. We prove the latter case, directly from the definitions, in two lemmas:

\begin{Lemma}\label{lemma:integralpart1}
If $\Pi(\olambda)$, $\Pi(\omu)$ are not both $\langle n-2,0|\circ\rangle$ then applying (i) and (ii) to $\Pi(\olambda)\diamond\Pi(\omu)$ gives an element of ${\mathbb Z}[{\mathbb Y}'_{OG(2,2n)}]$.
\end{Lemma}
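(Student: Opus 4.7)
The obstruction to integrality lies entirely in step (ii), which replaces each $\okappa$ appearing in $\Pi(\olambda)\diamond\Pi(\omu)$ by $2^{{\rm fsh}(\okappa)-a-b}\okappa$, where $a={\rm fsh}(\olambda)$ and $b={\rm fsh}(\omu)$; step (i) multiplies only by $\eta_{\olambda,\omu}\in\{0,1,2\}$ and, if anything, supplies an additional power of $2$. Thus I would prove, for each admissible pair $(\olambda,\omu)$, that either ${\rm fsh}(\okappa)\geq a+b$ pointwise, or the initial $\diamond$-coefficient carries enough powers of $2$ to absorb the shortfall.

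The plan is to case-split on which branch (A)--(D) of Definition~\ref{def:fakeproduct} computes $\Pi(\olambda)\diamond\Pi(\omu)$, and within each branch on the pair $(a,b)\in\{0,1,2\}^2$. Case (D) is trivial and $(a,b)=(0,0)$ is automatic in every branch. The fundamental input is the size/column translation of fake short roots: ${\rm fsh}(\oalpha)\geq 1$ iff $\alpha_1\geq n-2$, and ${\rm fsh}(\oalpha)=2$ iff $\alpha_2\geq n-2$, with the single exception ${\rm fsh}(\langle n-2,n-2|\circ\rangle)=1$. This immediately rules out many combinations on dimensional grounds; for example $a=b=2$ forces $|\lambda|+|\mu|\geq 4(n-2)$, incompatible with (A), and similar size inequalities restrict the $(a,b)$-menu in each of the remaining branches.

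In branches (A) and (C) the $\diamond$-coefficients are all $0$ or $1$, so I must verify ${\rm fsh}(\okappa)\geq a+b$ for each surviving $\okappa$. For a term $\okappa=\langle\lambda_1+\mu_1-k,\lambda_2+\mu_2+k|\cdot\rangle$ one has the two elementary bounds $\kappa_1\geq\max(\lambda_1+\mu_2,\lambda_2+\mu_1)$ and $\kappa_2\geq\lambda_2+\mu_2$, and the required inequality follows in each admissible $(a,b)$ subcase. The excluded case $\Pi(\olambda)=\Pi(\omu)=\langle n-2,0|\circ\rangle$ is exactly what eliminates the otherwise problematic subcase of (A) with $(a,b)=(1,1)$, in which the term $\langle n-2,n-2|\circ\rangle$ would arise with ${\rm fsh}=1$ by the exception but $a+b=2$; in every other admissible configuration the exception is not triggered.

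The main obstacle is branch (B), where the extra factor of $2$ on the interior terms of the defining sum is indispensable. The boundary terms $f(0,-1)$ and $f(-M-1,M)$ carry $\diamond$-coefficient only $1$, but they occupy the extreme corners where $\kappa_1$ or $\kappa_2$ reaches the maximal value $2n-4$, forcing ${\rm fsh}(\okappa)$ up to $2$; combined with the inequality $|\lambda|+|\mu|>2n-4$ defining branch (B), this suffices in each admissible $(a,b)$ combination. Throughout, the exceptional shape $\langle n-2,n-2|\circ\rangle$ appears only when $\lambda_1+\mu_2=\lambda_2+\mu_1=n-2$ (compare Claim~\ref{claim:case5BoxassocA}), and one verifies in each such case that the $\diamond$-coefficient produced is even, thereby absorbing the extra $\tfrac{1}{2}$ coming from the exceptional value ${\rm fsh}(\langle n-2,n-2|\circ\rangle)=1$. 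This is the one place where a purely arithmetic, rather than geometric, check is needed, and it is precisely the hypothesis excluding $\Pi(\olambda)=\Pi(\omu)=\langle n-2,0|\circ\rangle$ that keeps that check finite.
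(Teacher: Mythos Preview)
Your overall strategy---case-splitting on the branch (A)--(D) of $\diamond$ and on $(a,b)=({\rm fsh}(\olambda),{\rm fsh}(\omu))$---is sound and close in spirit to the paper's. But there is a genuine gap in your treatment of branch (C), and a parallel imprecision in branch (B).

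You assert that in branches (A) and (C) the inequality ${\rm fsh}(\okappa)\geq a+b$ holds for every surviving $\okappa$. This is impossible in (C) when $(a,b)=(2,1)$ or $(1,2)$, since ${\rm fsh}$ never exceeds $2$. Concretely, take $\olambda=\langle n-1,n-2\,|\,\bullet\rangle$ (so $a=2$) and $\omu=\langle n-2,0\,|\,\circ\rangle$ (so $b=1$): the single legal output is $\langle 2n-4,n-1\,|\,\bullet\rangle$ with ${\rm fsh}=2$ and $\diamond$-coefficient $1$, so step (ii) alone contributes $2^{-1}$. Your ``size inequalities restrict the $(a,b)$-menu'' does not rule this out. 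The actual argument---this is the paper's Case~2---is that $a+b=3$ forces (say) $\lambda_2\geq n-2$ and $\mu_1\geq n-2$; legality then pins $\kappa_1=2n-4$ and $\lambda_2=\mu_1=n-2$, so \emph{both} $\olambda$ and $\omu$ are charged and step (i) multiplies by $\eta_{\olambda,\omu}\in\{0,2\}$. You mention that (i) ``if anything supplies an additional power of $2$,'' but you never invoke it for (C), and your stated bounds cannot close this case without it.

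The same mechanism is needed in branch (B) with $(a,b)=(1,1)$: the first boundary term $f(0,-1)=\langle\lambda_1+\mu_1,\lambda_2+\mu_2-1\,|\,\bullet\rangle$, when legal, has $\kappa_1=2n-4$ but may have ${\rm fsh}=1$ (e.g.\ $\lambda=\mu=(n-2,1)$), contradicting your ``${\rm fsh}(\okappa)$ up to $2$.'' Again it is step (i), via the forced chargedness of both inputs, that supplies the missing factor; this is the paper's Case~1. Finally, your closing paragraph about the exceptional shape $\langle n-2,n-2\,|\,\circ\rangle$ in branch (B) is misplaced: every output of (B) is on, so that off shape cannot occur there.
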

\begin{proof}
Since $\diamond$ and (i) are integral, it suffices to check (ii). Let $c\cdot\okappa$ be a term of $\Pi(\olambda)\diamond\Pi(\omu)$. 

\excise{In each of the following cases, $2^{{\rm fsh}(\okappa)-{\rm fsh}(\olambda)-{\rm fsh}(\omu)}$ is an integer.
This is obvious if ${\rm fsh}(\olambda)={\rm fsh}(\omu)=0$ or ${\rm fsh}(\olambda)={\rm fsh}(\omu)=2$. Suppose ${\rm fsh}(\olambda)+{\rm fsh}(\omu)=1$. Then if $\okappa$ is on, we have ${\rm fsh}(\okappa)\geq 1$ while if $\okappa$ is off, by (C) $\okappa$ contains both $\Pi(\olambda)$, $\Pi(\omu)$ and thus ${\rm fsh}(\okappa)=1$. Suppose ${\rm fsh}(\olambda)=2$ and ${\rm fsh}(\omu)=0$ (or vice versa). Then $\Pi(\olambda)$ (respectively $\Pi(\omu)$) is on, thus (by (C)) is contained in $\okappa$, so ${\rm fsh}(\okappa)=2$.}

We have ${\rm fsh}(\olambda), {\rm fsh}(\omu)\in\{0,1,2\}$. Of these, we only worry about the two cases where it is possible that $2^{{\rm fsh}(\okappa)-{\rm fsh}(\olambda)-{\rm fsh}(\omu)}=\frac{1}{2}$.

\noindent
{\bf Case 1:} (${\rm fsh}(\olambda)={\rm fsh}(\omu)=1$): 
Then $\lambda_1, \mu_1 \ge n-2$. By our hypothesis, $\okappa$ is on, ${\rm fsh}(\okappa)\ge 1$ and $\Pi(\olambda)\diamond\Pi(\omu)$ is given by (B). Then if $c\cdot\okappa$ is the $j$th term of (B) for $1\le j\le M$, then $c=2$, so $\okappa$'s coefficient remains integral after applying (ii). If the $(M+1)$th term is legal it has $2$ fake short roots. Finally, $\langle \lambda_1+\mu_1,\lambda_2+\mu_2-1|\bullet\rangle$ is legal only if $\lambda_1=\mu_1=n-2$, in which case (i) rescales it by $2$ or $0$, so its coefficient remains integral after applying (ii).

\noindent
{\bf Case 2:} (${\rm fsh}(\olambda)=2$ and ${\rm fsh}(\omu)=1$): (The same argument works if  ${\rm fsh}(\olambda)=1$ and ${\rm fsh}(\omu)=2$.) We may assume $\omu$ is off, otherwise $\Pi(\olambda)\diamond\Pi(\omu)=0$ by (D). Now $\lambda_1,\lambda_2,\mu_1 \ge n-2$. By Lemma~\ref{lemma:firstrow}, $\Pi(\olambda)\diamond\Pi(\omu)=0$ unless $\mu_1=\lambda_2=n-2$; in that case, $\Pi(\olambda)\diamond\Pi(\omu)=\langle 2n-4,\lambda_1+\mu_2|\bullet\rangle$. But then (i) rescales this term by $2$ or $0$ before (ii) rescales it by $\frac{1}{2}$. 
\end{proof}
\begin{Lemma}\label{lemma:integralpart2}
$\olambda\star\omu\in {\mathbb Z}[{\mathbb Y}_{OG(2,2n)}]$. 
\end{Lemma}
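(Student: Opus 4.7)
The plan is to leverage Lemma~\ref{lemma:integralpart1} and then verify case-by-case that rule (iii) preserves integrality. First, if $\Pi(\olambda)=\Pi(\omu)=\langle n-2,0|\circ\rangle$, I would check integrality by direct inspection of the four explicit formulas in Definition~\ref{def:starproduct}: every summand appears with coefficient $1$, and the charge assignment on $\langle n-2,n-2|\circ\rangle$ (which only appears in the matching-parity branches) produces a well-defined element of $\mathbb{Y}_{OG(2,2n)}$. This disposes of the special case.

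In the complementary case, Lemma~\ref{lemma:integralpart1} guarantees that $\mathcal{E}$, the result of applying (i) and (ii) to $\Pi(\olambda)\diamond\Pi(\omu)$, lies in $\mathbb{Z}[\mathbb{Y}'_{OG(2,2n)}]$. Rule (iii) touches only ambiguous terms. In its charge-assignment branches (the charged-$\omu$ subcases of (iii.2) and (iii.3b), together with (iii.3a)), the coefficient of $\okappa$ is transferred verbatim to $\okappa^{\mathrm{ch}}$, so integrality is immediate. Only the three splitting branches --- (iii.1), and the neutral-$\omu$ subcases of (iii.2) and (iii.3b) --- replace $c\cdot\okappa$ by $\tfrac{c}{2}(\okappa^\uparrow+\okappa^\downarrow)$; the remaining content of the lemma is to prove $c$ is even whenever this replacement is invoked.

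I would verify this via case analysis indexed by which of (A), (B), (C) produces the term $\okappa$ in $\Pi(\olambda)\diamond\Pi(\omu)$, combined with whether $\okappa$ is ambiguous off ($\kappa_1=n-2$, $\mathrm{fsh}(\okappa)=1$) or ambiguous on ($\kappa_2=n-2$, $\mathrm{fsh}(\okappa)=2$). In the (A) case, the splitting hypothesis forces both $\olambda,\omu$ non-Pieri, hence $\lambda_2,\mu_2\ge 1$; a short arithmetic argument using $\kappa_1=\lambda_1+\mu_1-k$ and $k\le M=\min(\lambda_1-\lambda_2,\mu_1-\mu_2)$ forces $\mathrm{fsh}(\olambda)=\mathrm{fsh}(\omu)=0$, so (ii) multiplies by $2^{1-0-0}=2$ and gives $c=2$. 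In the (B) case, interior terms already carry coefficient $2$ from $\diamond$, and since $\olambda,\omu$ are both off we have $\mathrm{fsh}(\olambda),\mathrm{fsh}(\omu)\le 1$, so (ii) scales by at least $2^{2-2}=1$; for the boundary term with $\kappa_1=2n-4$, rule (i) injects a factor $\eta\in\{0,2\}$ precisely when both shapes are charged (the only configuration in which (ii) would otherwise halve the coefficient), and for the other boundary term $\kappa_1<2n-4$ the fsh arithmetic again produces an even $c$. Rule (C) is the easiest: at least one of $\olambda,\omu$ is on, and combining this with the splitting hypothesis forces the other to be $\langle n-2,0|\circ\rangle$, which is Pieri and so falls into (iii.3a) --- a charge-assignment branch, not a splitting one --- so (C) contributes no odd coefficient in any splitting branch.

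The main obstacle is the combinatorial bookkeeping: the product of choices ((A)/(B)/(C) $\times$ on/off for $\okappa$ $\times$ fsh configurations for $\olambda,\omu$ $\times$ which of the three splitting branches applies) yields many subcases. Most of these are vacuous by shape-admissibility constraints (as in the proof of Lemma~\ref{lemma:integralpart1}), and the delicate point in the remaining cases is exactly the tension between the potential halving from (ii) and the certain halving from (iii). A careful table paralleling the structure of Lemma~\ref{lemma:integralpart1}'s proof --- but now tracking evenness of the coefficient instead of mere integrality --- completes the argument.
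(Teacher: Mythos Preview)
Your overall strategy---leveraging Lemma~\ref{lemma:integralpart1} and then verifying that each splitting branch of (iii) only sees even coefficients---is sound and matches the paper's approach. But two of your specific case arguments contain errors.

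\textbf{In your (A) case}, the claim ``the splitting hypothesis forces both $\olambda,\omu$ non-Pieri'' is false: rule (iii.2) splits when $\olambda$ is neutral Pieri and $\omu$ is neutral, and such pairs can certainly land in (A). For instance, with $n=5$, take $\olambda=\langle 1,0|\circ\rangle$ (neutral Pieri) and $\omu=\langle 2,1|\circ\rangle$; then (A) produces the ambiguous term $\langle 3,1|\circ\rangle$, which splits under (iii.2). The conclusion $c=2$ still holds here (because ${\rm fsh}(\olambda)={\rm fsh}(\omu)=0$ is forced by $\kappa_1=n-2$ together with the constraints $\lambda_1\ne n-2$, $\mu_1\le n-2$, $\omu$ neutral), but your stated reason does not establish it.

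\textbf{In your (C) case}, the claim that the off factor is forced to be $\langle n-2,0|\circ\rangle$ is also false. Take $\olambda=\langle n-1,n-3|\bullet\rangle$ (neutral, on, non-Pieri) and $\omu=\langle 2,1|\circ\rangle$: this is (iii.1), product by (C), and the ambiguous term $\langle n+1,n-2|\bullet\rangle$ appears; neither factor is $\langle n-2,0|\circ\rangle$. Again the coefficient is even (here ${\rm fsh}(\olambda)=1$, ${\rm fsh}(\omu)=0$, ${\rm fsh}(\okappa)=2$), but your argument does not show this. The correct analysis must track when the ambiguous term is actually legal---in the (C) subcases where ${\rm fsh}(\olambda)+{\rm fsh}(\omu)=2$, the ambiguous term always has $\kappa_1>2n-4$ and is illegal, which is what rescues evenness.

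The paper sidesteps these pitfalls by organizing the case analysis around the charge/Pieri status of $\olambda,\omu$ (both neutral; both non-Pieri with one charged; one charged Pieri and the other neutral non-Pieri) rather than around which of (A), (B), (C) applies. This makes the interaction with rules (i) and (iii) more transparent, since those rules are themselves keyed to charge and Pieri status. Your decomposition by (A)/(B)/(C) is workable, but each branch then has to be further split by which of (iii.1)/(iii.2)/(iii.3b) is in force, and your sketch collapses subcases that behave differently.
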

\begin{proof}
This is true by definition when $\olambda=\langle n-2,0|\circ\rangle^{{\rm ch}(\olambda)}$ and $\omu=\langle n-2,0|\circ\rangle^{{\rm ch}(\omu)}$. Otherwise, by Lemma~\ref{lemma:integralpart1}, it remains to check the possibilities for when (iii) splits an ambiguous  term $c\cdot\okappa$ of $\Pi(\olambda)\diamond\Pi(\omu)$.

\noindent{\bf Case 1:} ($\olambda, \omu$ are both neutral classes): Then (i) has no effect. If either ${\rm fsh}(\olambda)=2$ or ${\rm fsh}(\omu)=2$, there is no ambiguity. If ${\rm fsh}(\olambda)={\rm fsh}(\omu)=0$ then ${\rm fsh}(\okappa)\ge 1$, so $2^{{\rm fsh}(\okappa)-{\rm fsh}(\olambda)-{\rm fsh}(\omu)}$ is even. Suppose ${\rm fsh}(\olambda)=1$ and ${\rm fsh}(\omu)=0$, or vice versa. Then if $\Pi(\olambda)\diamond\Pi(\omu)$ is computed by (A) there is no ambiguity. If $\Pi(\olambda)\diamond\Pi(\omu)$ is computed by (B) or (C), then $\okappa$ is on. Then ${\rm fsh}(\okappa)=2$, so $2^{{\rm fsh}(\okappa)-{\rm fsh}(\olambda)-{\rm fsh}(\omu)}$ is even.

Suppose ${\rm fsh}(\olambda)={\rm fsh}(\omu)=1$. If either $\olambda$, $\omu$ is on, 
then $|\kappa|>3n-6$, so actually $\okappa$ is not  
ambiguous, by Lemma~\ref{cor:case1DomassocGone}. 
Otherwise, $\olambda$, $\omu$ are both off. Then $\Pi(\olambda)\diamond\Pi(\omu)$ is computed by (B), so ${\rm fsh}(\okappa)=2$. 
Hence (ii) multiplies by $1$.
Since $\olambda, \omu$ are neutral, $\lambda_1,\mu_1>n-2$, so the first term in (B) is illegal. The last term is unambiguous since $\lambda_2+\mu_2+M+1>n-2$ since $\lambda_2+\mu_2+M+1\in \{\lambda_1+\mu_2+1, \lambda_2+\mu_1+1\}$. Thus $c=2$ when we
apply (iii).

\noindent{\bf Case 2:} ($\olambda, \omu$ are both non-Pieri, at least one of which is charged): We may assume $\olambda$ is charged. If $\olambda$ is also on, then $\lambda_2=n-2$. Then since $\mu_2\neq 0$, by (C) $\kappa_2>n-2$ and $\okappa$ cannot be ambiguous. 

Assume $\olambda$ is off. Thus $\lambda_1=n-2$. If $\omu$ is on then $|\mu|\ge 2n-4$, and since $\lambda_2\neq 0$ we have $|\kappa|>3n-6$ thus $\okappa$ again cannot be ambiguous.  So suppose $\omu$ is off. If $|\olambda|+|\omu|\le 2n-4$, then $\okappa$ is off. By Lemma~\ref{lemma:firstrow}, $\kappa_1 \ge \lambda_1+\mu_2 >n-2$, so $\okappa$ can't be ambiguous. Otherwise, $|\olambda|+|\omu|> 2n-4$. Then $\Pi(\olambda)\diamond\Pi(\omu)$ is computed by (B) and $\okappa$ is on. Hence $\okappa= \langle \kappa_1,n-2|\bullet\rangle$. Assume ${\rm fsh}(\omu)=1$, since $2^{{\rm fsh}(\okappa)-{\rm fsh}(\olambda)-{\rm fsh}(\omu)}$ is even if ${\rm fsh}(\omu)=0$. Then $\okappa$ is not the last term of (B) since $\lambda_2+\mu_2+M>n-2$. If it is the first term of (B), then $\mu_1=n-2$ and $\omu$ is also charged. Then (i) multiplies $\okappa$ by $2$ or $0$, after which (ii) multiplies $\okappa$ by $1$. Otherwise, $c=2$, (i) has no effect, and (ii) multiplies $\okappa$ by $1$.

\noindent{\bf Case 3:} (One of $\olambda, \omu$ is charged and Pieri, the other is neutral and non-Pieri with size $\neq 2n-3$): Assume $\olambda=\langle n-2,0|\circ\rangle$ is the charged Pieri shape. If $|\omu|> 2n-3$ then any $\okappa$ has $|\kappa|>3n-6$, so $\okappa$ can't be ambiguous. So assume $|\omu|\le 2n-4$, i.e., $\omu$ is off. Since $\omu$ is neutral, $\mu_1 \neq n-2$. Since $\lambda_2=0$, by the definition of $\diamond$ we have $\kappa_2 \le \mu_1$.

If $|\omu|\le n-2$, then $\Pi(\olambda)\diamond\Pi(\omu)$ is computed by (A) and $\okappa$ is off. Since $|\okappa|=|\omu|+n-2$ and $\kappa_2 \le \mu_1$, we have $\kappa_1 \ge \mu_2+n-2 > n-2$, since $\mu_2\neq 0$. So $\okappa$ can't be ambiguous.

If $n-2<|\omu|\le 2n-4$, then $\Pi(\olambda)\diamond\Pi(\omu)$ is computed by (B) and $\okappa$ is on. Since $\okappa$ is ambiguous, then $\okappa=\langle \kappa_1,n-2|\bullet\rangle$. If $\mu_1<n-2$ then also $\kappa_2 < n-2$, so $\okappa$ is unambiguous. Thus assume $\mu_1>n-2$. Therefore, $\okappa=\langle \mu_1+\mu_2-1,n-2|\bullet\rangle$. This is not the last term of (B) since $\lambda_2+\mu_2+M>n-2$, and it is not the first term since $\mu_1+\mu_2-1<2n-4<\lambda_1+\mu_1$. Since applying (i) has no effect, and (ii) multiplies $\okappa$ by $1$, we are done here too, as $c=2$.

Concluding, (iii.1) falls to Cases 1 and 2, while (iii.2) falls to Case 1. Since for (iii.3) only (iii.3b) splits an ambiguous term, (iii.3) falls to Case 3.
\end{proof}

Finally:

\noindent\emph{Proof of Corollary~\ref{cor:Dintegral}:}
Since disambiguation either introduces a factor of $\frac{1}{2}$ or leaves the coefficients unchanged, it
 is now clear from the definition of $\star$ and ${\rm fsh}$ that all nonzero $C_{\olambda,\omu}^{\onu}
(OG(2,2n))$ are powers of $2$. The proof is then similar to that of Corollary~\ref{cor:BCcoefficients} 
(see the end of Section~4). 
%We note that or $n\ge 7$, we have $C_{\langle n-2,n-3|\circ\rangle,\langle n-2,n-3|\circ\rangle}^{\langle 2n-5,2n-6|\bullet\rangle}(OG(2,2n))=8$.

\noindent\emph{Proof of Corollary~\ref{cor:Dnonzero}:}  The first sentence of the corollary is true by definition, so we may assume we are not in that
case. In the case $\nu_1=\frac{|\Lambda_{G/P}|-1}{2}$ if $\eta_{\olambda,\omu}=0$ then
clearly $C_{\olambda,\omu}^{\onu}(OG(2,2n))=0$, so we may assume that $\eta_{\olambda,\omu}\neq 0$ in this situation.

The idea is to reduce the problem to the analogous argument for Corollary~\ref{cor:BCnonzero}
by running a ``flattened'' argument. To do this it is easiest to use
the reformulation of the rule from the introduction given by Theorem~\ref{Thm:Dmult}. 

To be precise, let $\okappa = \Pi(\onu)$. Then $C_{\olambda,\omu}^{\onu}(OG(2,2n))\neq 0$ if and only if the 
coefficient $c$ of $\okappa$ in the expansion $\Pi(\olambda)\diamond\Pi(\omu)$ is nonzero and 
applying (i), (ii) and (iii) (of Theorem~\ref{Thm:Dmult}) yields a nonzero coefficient for $\onu$ 
(note that applying (i), (ii) and (iii) to a zero coefficient never yields a nonzero coefficient). Since we 
only need to consider  $(\olambda,\omu,\onu)$ where $\eta_{\olambda,\omu}\neq 0$, applying (i) 
multiplies $c$ by a nonzero number. By definition, (ii) multiplies the result of (i) by a nonzero number. 
Finally, since we only need to worry about non-Pieri cases, if $\onu$ is charged then (iii.1) multiplies 
the result of (ii) by $\frac{1}{2}$ and both $C_{\olambda,\omu}^{\okappa^\uparrow}(OG(2,2n))$ 
and $C_{\olambda,\omu}^{\okappa^\downarrow}(OG(2,2n))$ are equal to this resulting number. 
Thus $C_{\olambda,\omu}^{\onu}(OG(2,2n))\neq 0$ if and only if the coefficient of $\okappa$ in 
the expansion $\Pi(\olambda)\diamond\Pi(\omu)$ is nonzero, and then the proof is the same as that 
for Corollary~\ref{cor:BCnonzero} (see the end of Section~4).\qed

\section*{Acknowledgements}

We thank Nicolas Perrin, Kevin Purbhoo, Hugh Thomas and Alexander Woo for helpful 
conversations. A preliminary version of this text 
appears as an extended abstract for FPSAC 2013; we thank the anonymous
referees for their comments. DS and AY were supported by NSF grants DMS 
0901331 and DMS 1201595, and  
a grant from UIUC's Campus Research Board. This work was partially completed 
while AY was a Beckman fellow at UIUC's Center for Advanced Study and while 
supported by a Helen Corley Petit endowment.

\end{document}